\subjclass[2010]{Primary 03C60, 11G25. Secondary 12H10, 14G10, 14G15.}
\keywords{difference scheme, Galois stratification, Galois formula, Frobenius automorphism, ACFA}
\title{A twisted theorem of Chebotarev}
\date{\today}
\author{Ivan Toma{\v s}i{\'c}}
\address{Ivan Toma{\v s}i{\'c}\\
         School of Mathematical Sciences\\
  	Queen Mary University of London\\
         London, E1 4NS\\
        United Kingdom}
\email{i.tomasic@qmul.ac.uk}
\theoremstyle{plain}
\newtheorem{theorem}{Theorem}[section]
\newtheorem{corollary}[theorem]{Corollary}
\newtheorem{proposition}[theorem]{Proposition}
\newtheorem{lemma}[theorem]{Lemma}
\theoremstyle{definition}
\newtheorem{definition}[theorem]{Definition}
\newtheorem{example}[theorem]{Example}
\theoremstyle{remark}
\newtheorem{remark}[theorem]{Remark}
\newtheorem{notation}[theorem]{Notation}
\newtheorem{conjecture}[theorem]{Conjecture}
\providecommand{\OO}{\mathcal{O}}
\providecommand{\Z}{\mathbb{Z}}
\providecommand{\N}{\mathbb{N}}
\providecommand{\p}{\mathfrak{p}}
\providecommand{\ii}{\mathfrak{i}}
\providecommand{\cF}{\mathcal{F}}
\providecommand{\cA}{\mathcal{A}}
\providecommand{\cC}{\mathcal{C}}
\providecommand{\cD}{\mathcal{D}}
\providecommand{\C}{\mathbb{C}}
\providecommand{\LL}{\mathbb{L}}
\providecommand{\dl}{\mathop{\rm dl}\nolimits}
\providecommand{\dt}{\mathop{\rm dimtot}\nolimits}
\providecommand{\dte}{\mathop{\rm dimtoteff}\nolimits}
\providecommand{\sd}{\mathop{\sigma\text{\rm-dim}}\nolimits}
\providecommand{\trdeg}{\mathop{\rm tr.deg}\nolimits}
\providecommand{\dd}{{\mathbf{d}}}
\providecommand{\dde}{{\mathbf{d}_{\rm eff}}}
\providecommand{\kk}{\mathbf{k}}
\providecommand{\specd}{{\rm Spec}^\sigma}
\providecommand{\spec}{{\rm Spec}}
\providecommand{\F}{\mathbb{F}}
\providecommand{\Gal}{\text{\rm Gal}}
\providecommand{\Hom}{\text{\rm Hom}}
\providecommand{\diff}{\text{\it Diff}}
\providecommand{\aut}{Aut}
\def\mathrlap{\mathpalette\mathrlapinternal}
\def\mathrlapinternal#1#2{%
           \rlap{$\mathsurround=0pt#1{#2}$}}
\providecommand{\ztild}[1]{\rlap{$\smash{\tilde{\phantom{#1}}}$}\rlap{$\mathring{\phantom{#1}\kern1.1ex}$}#1\kern.1ex}
\providecommand{\lexp}[2]{{\vphantom{#2}}^{#1}{\kern-.1ex#2}}
\providecommand{\acirc}[1]{\phantom{a}\llap{$\scriptstyle#1$}
\kern.01ex\lower.75ex\hbox{$\smash{\mathring{}}$}}
\providecommand{\lzexp}[3]{{\vphantom{#2}}^{\lower0.0ex\hbox{\smash{$\acirc{#1}$}}}\kern-.1ex #2}
\providecommand{\lrexp}[3]{{\vphantom{#2}}^{#1}{\kern-.1ex#2^#3}}
\begin{document}

\begin{abstract} 
We prove a function-field version of Chebotarev's density theorem 
in the framework of difference algebraic geometry by developing
the notion of Galois coverings of generalised difference schemes, and
using Hrushovski's twisted Lang-Weil estimate.
\end{abstract}
\maketitle

\tableofcontents

\section{Introduction}

\noindent{\bf The main result.}
The classical function fields version of  Chebotarev's  theorem
states that the local Frobenius substitutions associated with a Galois covering
of algebraic varieties over a finite field are equidistributed with respect to
a suitably defined Dirichlet density. It can be proved using the Lang-Weil
estimate for the number of points of varieties over finite fields, together with
an `untwisting trick'. We encourage the interested reader to compare 
the original number-theoretic  theorem and the function field version by consulting 
\cite{fried-jarden}, and to find a beautiful unification in \cite{serre-L}.

We prove an analogue of the function fields version of Chebotarev's theorem
in \emph{difference algebraic geometry}. Suppose 
$p:(Z,\Sigma)\to (X,\sigma)$ is an \emph{\'etale Galois covering} of finite-dimensional difference
schemes over a finite field with a power of Frobenius. 
Intuitively, $\Sigma$ is a set of endomorphisms of $Z$ closed under
a binary operation reminiscent of conjugation and a finite group with operators 
$(G,\tilde{\Sigma})$ acts on $(Z,\Sigma)$ in a particular fashion so that
$p$ identifies $X$  with the quotient $Z/G$ and $\Sigma/G$ identifies with 
$\{\sigma\}$. Let $C$ be a 
conjugacy domain in $\Sigma$.
For a point $z\in Z(\bar{\F}_q,\varphi)$
with values in the algebraic closure of a finite field equipped with
a power of Frobenius $\varphi$, the \emph{Frobenius substitution} at $z$
is the element $\varphi_z\in\Sigma$ which matches the action of the Frobenius 
power $\varphi$ on $z$, i.e.,
$$
\varphi_z.z=z\varphi.
$$
For a point $x\in X(\bar{\F}_q,\varphi)$, the \emph{Frobenius substitution} at $x$
is the conjugacy class $\varphi_x\subseteq\Sigma$ of any $\varphi_z$ with
$p(z)=x$. The following is an informal restatement of Theorem~\ref{chebotarev-density}.
\begin{theorem}\label{main-ch-th}
The Dirichlet density of the set 
of $x\in X(\bar{\F}_q,\varphi)$ with varying $\varphi$, with the property
that $\varphi_x\subseteq C$ is equal to $|C|/|\Sigma|$.
\end{theorem}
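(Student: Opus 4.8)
The plan is to imitate the classical function-field argument, replacing the Lang-Weil estimate by Hrushovski's twisted version and the auxiliary twisted varieties by the difference schemes $(Z,\gamma)$ obtained by singling out an individual endomorphism $\gamma\in\Sigma$ as the difference operator. The first step is a translation. For a fixed $\gamma\in\Sigma$, a point $z\in Z(\bar{\F}_q,\varphi)$ satisfies $\varphi_z=\gamma$ precisely when $\gamma.z=z\varphi$, that is, precisely when $z$ is a $\varphi$-point of the finite-dimensional difference scheme $Z_\gamma:=(Z,\gamma)$. Thus
$$
\{z\in Z(\bar{\F}_q,\varphi):\varphi_z=\gamma\}=Z_\gamma(\bar{\F}_q,\varphi),
$$
and controlling Frobenius substitutions becomes a matter of counting $\varphi$-points of the various $Z_\gamma$.

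Next I would record two structural consequences of the Galois hypothesis. Since $\Sigma/G=\{\sigma\}$, any two elements $\gamma,\gamma'\in\Sigma$ are conjugate under the operator action of $G$, and the conjugating element induces an isomorphism $Z_\gamma\xrightarrow{\sim}Z_{\gamma'}$ of difference schemes over $(X,\sigma)$. Applying the twisted Lang-Weil estimate to each $Z_\gamma$ therefore yields, as $\varphi$ ranges over increasing powers $\mathrm{Frob}^n$ of Frobenius, an asymptotic $|Z_\gamma(\bar{\F}_q,\varphi)|\sim \mu_Z\,q^{nd}$, where $d$ is the common $\sigma$-dimension of $(X,\sigma)$ and of every $Z_\gamma$, and where the leading coefficient $\mu_Z$ is \emph{independent of} $\gamma$. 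The second consequence is that, for $x\in X(\bar{\F}_q,\varphi)$, every point $z$ of the geometric fibre $p^{-1}(x)$ carries a well-defined substitution $\varphi_z\in\Sigma$, that as $z$ ranges over the fibre these substitutions sweep out exactly the conjugacy class $\varphi_x$, and that the fibre is a $G$-torsor. Hence the disjoint union $\bigsqcup_{\gamma\in\Sigma}Z_\gamma(\bar{\F}_q,\varphi)$ lies $|G|$-to-one over $X(\bar{\F}_q,\varphi)$, so comparing leading terms with $|X(\bar{\F}_q,\varphi)|\sim\mu_X q^{nd}$ gives the normalisation $|\Sigma|\,\mu_Z=|G|\,\mu_X$.

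The counting argument then proceeds as classically. Because $C$ is a conjugacy domain and each $\varphi_x$ is a single conjugacy class, for every $x$ one has either $\varphi_x\subseteq C$ or $\varphi_x\cap C=\emptyset$; consequently a point $z\in p^{-1}(x)$ satisfies $\varphi_z\in C$ for all $|G|$ points of the fibre when $\varphi_x\subseteq C$, and for none otherwise. This yields
$$
\sum_{\gamma\in C}|Z_\gamma(\bar{\F}_q,\varphi)|=|G|\cdot|\{x\in X(\bar{\F}_q,\varphi):\varphi_x\subseteq C\}|.
$$
Evaluating the left-hand side with the uniform estimate gives $|C|\,\mu_Z q^{nd}=|C|\tfrac{|G|}{|\Sigma|}\mu_X q^{nd}$, whence $|\{x:\varphi_x\subseteq C\}|\sim\tfrac{|C|}{|\Sigma|}\mu_X q^{nd}$; dividing by $|X(\bar{\F}_q,\varphi)|\sim\mu_X q^{nd}$ produces the claimed Dirichlet density $|C|/|\Sigma|$.

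I expect the main obstacle to be the second step: making precise, inside the difference-scheme formalism, both the identification $Z_\gamma(\bar{\F}_q,\varphi)=\{z:\varphi_z=\gamma\}$ and—more delicately—the uniformity of the leading coefficient $\mu_Z$ across all $\gamma$. The latter rests on the $G$-conjugacy of the elements of $\Sigma$, so the genuine work is to arrange the group-with-operators action so that this conjugacy really produces isomorphisms of difference schemes, and to ensure that the error terms in the twisted Lang-Weil estimate are uniform enough as $\varphi$ varies to survive the passage to Dirichlet density. The fibre bookkeeping and the final division are then routine.
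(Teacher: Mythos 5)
Your skeleton---decompose $Z(\bar{\F}_q,\varphi)$ into the fixed-point schemes $Z_\gamma=(Z,\gamma)$, apply the twisted Lang--Weil estimate to each piece, and count along the $G$-torsor fibres using $\varphi_{gz}=g\varphi_z g^{-1}$---is indeed the skeleton of the paper's argument (it is exactly how \ref{geomgaleq} and \ref{trace-formula} are proved). But the mechanism you propose for the crucial uniformity of the leading coefficient is wrong. The hypothesis $\Sigma/G=\{\sigma\}$ means that any two elements of $\Sigma$ are related by \emph{translation}, $\gamma'=g\gamma$ (almost-strictness: $\Sigma=G\tilde{\sigma}$), not by conjugation; the (twisted) conjugacy classes partition $\Sigma$ nontrivially in general. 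Indeed your own final counting step presupposes this: if all elements of $\Sigma$ were conjugate, the only nonempty conjugacy domain would be $C=\Sigma$ and the theorem would be vacuous. Conjugation by $h\in G$ does give an isomorphism $Z_\gamma\simeq Z_{h\gamma h^{-1}}$ over $(X,\sigma)$, but this only proves uniformity of $\mu_Z$ \emph{within} a conjugacy class, whereas equidistribution is precisely a comparison \emph{across} classes; translation by $g$ gives no isomorphism $Z_\gamma\to Z_{g\gamma}$ at all (these twists are typically non-isomorphic forms of one another). The paper obtains the cross-class uniformity in \ref{geomgaleq} by a different device: taking $\bar{\alpha}$ to be the $G$-orbit of a generator of $\kk(Z)$ over $\kk(X)$, one has $g\tilde{\sigma}\bar{\alpha}=\tilde{\sigma}\bar{\alpha}$, so the towers computing the limit degree of $(\kk(Z),g\tilde{\sigma})$ over the base coincide for all $g\in G$, and hence all the $Z_\gamma$ share the same constant $\mu$ in Hrushovski's estimate \ref{udiLW}.

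The second gap is that your uniform asymptotic $|Z_\gamma(\bar{\F}_q,\varphi_n)|\sim\mu_Z q^{nd}$, claimed for every $\gamma\in\Sigma$ and all large $n$, is false whenever $k$ is not relatively algebraically closed in $\kk(Z)$. Writing $k_1$ for this algebraic closure (the constants extension of the covering), a point $z$ with $\varphi_z=\gamma$ forces $\gamma$ and $\varphi_n$ to agree on $k_1$, so $Z_\gamma(\bar{k},\varphi_n)=\emptyset$ for all $\gamma$ incompatible with $\varphi_n$ on $k_1$: the individual counts oscillate periodically in $n$, and only their sum equals $|G|\,|X(\bar{k},\varphi_n)|$ for each fixed $n$. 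This oscillation is the very reason the theorem is a statement about Dirichlet density over varying $\varphi$ rather than a pointwise count. The paper absorbs it by pushing the central function forward to the base point (\ref{defpushfwd}), where it becomes a central function on $T=\Gal(k_1/k)\varphi_0$; the sequence $n\mapsto p_*\alpha(\varphi_{n,s})$ is then periodic with period $|T|$, and Lemma \ref{lem-dirichl} performs the averaging inside the density limit in the proof of \ref{chebotarev-density}. Your termwise division of asymptotics silently assumes $k_1=k$ (equivalently, that $Z$ is geometrically transformally integral over $k$); in general you must insert this periodicity-and-averaging step, or your intermediate estimates are simply false for most $n$.
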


\noindent{\bf Motivation and historical overview.}
There is a significant body of work related in one way or another to
counting solutions of difference
polynomial equations over  algebraic closures of finite fields equipped
with powers of the Frobenius automorphism. Firstly, since counting the
number of solutions of polynomial equations over finite fields is a special case,
it subsumes the amazing achievements of Grothendieck's circle around the
Weil conjectures and Deligne's proof of the Riemann hypothesis over finite fields.
Given a well-known translation mechanism between the languages of difference
equations and algebraic correspondences (as expounded in 
\cite[\ref{s:diff-pro}]{ive-tgs}), the work of Pink \cite{pink}, Fujiwara \cite{fujiwara} 
and Varshavsky \cite{varshavsky} on
Deligne's conjecture regarding the number of fixed points of correspondences  
twisted by powers of Frobenius is highly relevant. 
However, due to the strong properness assumptions 
these authors require to prove a very precise trace formula,  these
results cannot be applied to a general difference polynomial system.
Without the restrictive assumptions, Hrushovski produced an ingenious yet
very difficult proof  of a difference analogue of
the Lang-Weil estimate for the number of points on a difference scheme over
fields with Frobenii \cite{udi}. 

Inspired by these considerations,  we embarked on a progamme to
develop \emph{difference algebraic geometry} to the level where it reveals the
fine number-theoretic information regarding numbers of points of difference
schemes over fields with Frobenii. In the first instance, we aim to generalise
the techniques  developed by Fried, Jarden et al.\ \cite{fried-sacer},
\cite{fried-jarden}, \cite{FHJ} over finite fields, collectively known
under the name of \emph{Galois stratification}.
 The techniques  around the
theorem of Chebotarev developed in
 this paper are crucial for this work, and the development of \emph{twisted}
  Galois stratification is
 described in the follow-up paper \cite{ive-tgs}.

 One of the main obstacles
was that, apart from the pioneering work in \cite{udi} and \cite{laszlo},
which we quickly review in Section~\ref{strict-dga},
there are no other attempts of a systematic study of difference algebraic geometry,
so all prerequisites would have to be developed from first principles. 
Cohn's monograph \cite{cohn} and a recent book by Levin \cite{levin}
are sources of some of the difference algebra needed. 
We must emphasise that a typical difference scheme that arises when
studying difference polynomial equations  is of \emph{finite transformal type} over a difference field (\ref{finsigmatype}), 
but its ambient scheme is of infinite type 
and thus it falls just beyond the reach of  tools and methodology of the
classical algebraic geometry.

\noindent{\bf A need for generalised difference algebraic geometry.} 
The first key observation we made was that the context of \emph{ordinary} (or \emph{strict}\/)
difference schemes (with a single endomorphism) 
is too rigid and does not allow meaningful Galois actions, coverings or quotients.
Thus we are led into a study of \emph{generalised difference schemes,} 
endowed
with a set of (not necessarily commuting) endomorphisms, closed
under a binary operation of `conjugation'. 

Various authors made attempts to generalise the framework of ordinary difference
algebra and geometry. 
The treatment of
partial difference equations with respect to 
several commuting endomorphisms in \cite{levin} is too restrictive
for our requirements, because we are exactly interested in phenomena
arising in the case of \emph{non-commuting} endomorphisms.
The authors of \cite{acfa2} went in the direction of considering not
just fixed points of a single endomorphism $\sigma$ but also those of
its powers $\sigma^n$. A similar approach is  taken in \cite{wibmer},
where the author proves a Chevalley-type theorem on (near-)constructibility of
images of morphisms of difference schemes (our version is
\cite[\ref{chevalley}]{ive-tgs}). Intuitively, the object that results from a 
consideration of higher powers
$\sigma^n$ is closer to the ambient scheme and is therefore better-behaved. 

Our approach generalises both of these, and has a very interesting interaction with the latter, our framework being slightly more precise
when dealing with Galois actions.
 Section~\ref{gen-dga}
contains the extensive development
of difference
algebraic geometry of generalised difference schemes needed to formulate
the key notion of a \emph{Galois covering} of difference schemes in
Section~\ref{sect:galois}.

Section~\ref{ch:ttCh} contains the main results of the paper.  The
\emph{Dirichlet density} is introduced as an analytic density defined by means of 
\emph{zeta} and 
\emph{$L$-functions} associated
with \emph{constructible functions} on Galois coverings of difference schemes over
fields with Frobenii. This density statement \ref{main-ch-th} in its precise 
formulation \ref{chebotarev-density}  follows
from the trace formula \ref{trace-formula}, which is an approximative difference
avatar of the classical Lefschetz trace formula. It is proved using Hrushovski's 
twisted Lang-Weil estimate.

\noindent{\bf Applications.}
We consider our approach to generalised difference algebra a major advance
in its own right, and it should be of intrinsic interest
in the difference algebra community, 
with possible applications to Galois theory of difference
equations along the lines of \cite{wibmer}. 

Moreover, there are other naturally-occurring contexts where it may be
advantageous to study objects with several endomorphisms. In particular,
as Tom Scanlon pointed out, a key step in most approaches to Manin-Mumford conjecture (\cite{udi-MM}, for example) is to find a difference polynomial
equation that captures all the relevant torsion points, which is very hard to
achieve with a single endomorphism. On the other hand,
it is much easier to capture all of the torsion if one uses \emph{two} endomorphisms.

Apart from number theory and algebraic geometry, our results (especially \ref{trace-formula}) should be of interest
in model-theory and logic since, in conjunction with the description
of definable sets in terms of Galois stratifications from \cite{ive-tgs}, they reduce
the problem of counting points on  definable sets over fields with powers of Frobenius
to a calculation of (twisted) character sums.
Thus, the present paper can be viewed as a  conceptualisation of
the ideas of \cite{CDM} and \cite{ive-mark} for fields with Frobenii.
As already mentioned,  \ref{trace-formula} and its consequence 
\ref{twisted-cebotarev} are used in our subsequent work on Galois stratifications
in \cite{ive-tgs}.

\section{A formulary of difference schemes}\label{strict-dga}

Before embarking on a development of \emph{generalised difference schemes}
in the next section, we include a summary of known results (\cite{udi}, \cite{laszlo}) 
for difference
schemes in the \emph{strict sense} for the benefit of the reader.

A \emph{difference ring} is a ring $R$ together with a distinguished 
monomorphism $\sigma$. Given an element $a\in A$ we may also write $a^\sigma$ for $\sigma a$.
A difference ring homomorphism 
$f:(R,\sigma)\to(S,\tau)$ is a ring map making the following diagram
commutative  

Given a difference ring extension $(R,\sigma)\subseteq (S,\sigma)$, the difference subring of $S$
generated by a set $T\subseteq S$ over $R$ is denoted by $R\{T\}$ or $R[T]_\sigma$. 
Similarly, given a difference
field extension $(K,\sigma)\subseteq (L,\sigma)$, the difference subfield of $L$ generated by a
set $T\subseteq L$ over $K$ is denoted by $K\langle T\rangle$ or $K(T)_\sigma$.
\begin{definition}
Let $I$ be an ideal in a difference ring $(R,\sigma)$. We say that:
\begin{enumerate}
\item $I$ is a $\sigma$-ideal if $\sigma(I)\subseteq I$;
\item $I$ is \emph{well-mixed} if  $ab\in I$ implies $ab^\sigma\in I$;
\item $R$ itself is well-mixed if the zero ideal is;
\item $I$ is \emph{perfect} if $aa^\sigma\in I$ implies $a$ and $a^\sigma$ are both in $I$.
\end{enumerate}
\end{definition}

\begin{definition}
Given a difference ring $(R,\sigma)$, let 
$$
\spec^\sigma(R)=
\{\mathfrak{p}\in\spec(R):\sigma^{-1}(\mathfrak{p})=\mathfrak{p}\},
$$
as a locally ringed space, 
together with the topology induced by the Zariski topology of $\spec(R)$ and the
induced structure sheaf $\OO_{\spec^\sigma(R)}=\OO_{\spec(R)}\restriction\spec^\sigma(R)$.
\end{definition}
The following notation is useful when discussing the induced topology. For $f\in R$ and an ideal $I$ in $R$, we let $V^\sigma(I)=V(I)\cap\spec^\sigma(R)$ and $D^\sigma(f)=D(f)\cap\spec^\sigma(R)$.

Since the endomorphism of $\spec(R)$ induced by $\sigma$ gives a morphism
$\sigma^{-1}\OO_{\spec(R)}\to\spec(R)$ and $\sigma$ is the identity on $\OO_{\spec^\sigma(R)}$,
we obtain a sheaf morphism $\sigma:\OO_{\spec^\sigma(R)}\to\OO_{\spec^\sigma(R)}$.
It defines a morphism of locally ringed spaces since for $\p\in\spec^\sigma(R)$, the
corresponding morphism of stalks is just the morphism $R_\p\to R_\p$ induced by $\sigma$,
which is local. This also makes the residue field $\kk(\p)=R_\p/\p R_{\p}$ into a 
\emph{difference field.}

It is clear that a prime $\sigma$-ideal $\p$ is in $\specd(R)$ if and only if it is perfect. Every 
$\sigma$-ideal $I$ has a \emph{perfect closure} $\{I\}$. If
$I$ is well-mixed, its perfect closure is clearly given by
$$
\{I\}=\{a\in R: \exists\nu\in\N[\sigma], a^\nu\in I\}.
$$
Given a $\sigma$-ideal $I$, the set 
$V^\sigma(I)$
only depends on the
perfect closure $\{I\}$ and henceforth we adopt the notation $V\{I\}$ for it.
The closed sets in the topology on $\spec^\sigma(R)$  are the sets $V\{I\}$ and
we have the following.
\begin{proposition}
Let $(R,\sigma)$ be a difference ring.
\begin{enumerate}
\item $V\{I\}\subseteq V\{J\}$ if and only if $\{J\}\subseteq\{I\}$. If $R$ is well-mixed, an
element $r\in R$ defines a zero section in $\bar{R}$ if and only if $r$ is $\sigma$-nilpotent.
\item If $I$ is perfect, $V\{I\}$ is irreducible if and only if $I\in\spec^\sigma(R)$.
\item $\spec^\sigma(R)$ is a quasi-compact topological space.
\end{enumerate}
\end{proposition}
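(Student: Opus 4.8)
The plan is to reduce all three parts to a single difference-algebraic input, namely a Nullstellensatz for perfect $\sigma$-ideals asserting that for every $\sigma$-ideal $I$ one has
\[
\{I\}=\bigcap_{\p\in V\{I\}}\p,
\]
the intersection being taken over all perfect prime $\sigma$-ideals (that is, points of $\specd(R)$) containing $I$. The inclusion $\subseteq$ is immediate, since each such $\p$ is perfect and contains $I$, hence contains the smallest perfect ideal $\{I\}$ over $I$. For the reverse inclusion I would run a Zorn's lemma argument: given $a\notin\{I\}$, the poset of perfect $\sigma$-ideals containing $I$ but not $a$ has a maximal element $\q$, and one checks that $\q$ is prime---if $xy\in\q$ with $x,y\notin\q$, then $\{\q,x\}$ and $\{\q,y\}$ strictly contain $\q$ and hence both contain $a$, whereupon the multiplicative behaviour of perfect closures forces $a\in\q$, a contradiction. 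This is the only genuinely difference-algebraic step and I expect it to be the main obstacle; if one prefers, it can be quoted from Cohn \cite{cohn}.

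\textbf{Part (1).} Since every $\p\in\specd(R)$ is perfect, $\p\supseteq I$ is equivalent to $\p\supseteq\{I\}$, so $V\{I\}=\{\p\in\specd(R):\p\supseteq I\}$ depends only on $\{I\}$, and $\{J\}\subseteq\{I\}$ immediately gives $V\{I\}\subseteq V\{J\}$. Conversely, if $V\{I\}\subseteq V\{J\}$, the Nullstellensatz yields
\[
\{J\}=\bigcap_{\p\in V\{J\}}\p\subseteq\bigcap_{\p\in V\{I\}}\p=\{I\},
\]
establishing an inclusion-reversing bijection between perfect $\sigma$-ideals and closed subsets. The second assertion is the case $I=(0)$: the element $r$ defines the zero section precisely when it vanishes at every point, i.e.\ $r\in\bigcap_{\p\in\specd(R)}\p=\{(0)\}$, and under the well-mixed hypothesis the displayed perfect-closure formula identifies $\{(0)\}$ with $\{r:\exists\nu\in\N[\sigma],\,r^\nu=0\}$, the set of $\sigma$-nilpotents.

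\textbf{Part (2).} I would prove both implications using the anti-isomorphism of Part (1) together with the primeness of the points. If $I$ is prime and $V\{I\}=V\{J_1\}\cup V\{J_2\}=V(\{J_1\}\cap\{J_2\})$, then $\{J_1\}\cap\{J_2\}$ is again perfect and equals $I$; were both pieces proper, choosing $a\in\{J_1\}\setminus I$ and $b\in\{J_2\}\setminus I$ would give $ab\in\{J_1\}\cap\{J_2\}=I$, contradicting primeness, so $V\{I\}$ is irreducible. Conversely, if $I$ is not prime, pick $a,b\notin I=\{I\}$ with $ab\in I$; since each point $\p\in V\{I\}$ is prime and $ab\in I\subseteq\p$, it contains $a$ or $b$, so $V\{I\}=V\{(I,a)\}\cup V\{(I,b)\}$, and both parts are proper by Part (1) because $a,b\notin\{I\}$. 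Hence $V\{I\}$ is irreducible if and only if $I\in\specd(R)$.

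\textbf{Part (3).} It suffices to treat covers by basic opens $D^\sigma(f_\alpha)$, whose complements are $V\{(f_\alpha)\}$. Such a family covers $\specd(R)$ exactly when $\bigcap_\alpha V\{(f_\alpha)\}=\emptyset$, i.e.\ when no point contains all the $f_\alpha$, which by the Nullstellensatz means $1\in\{J\}$ for $J$ the $\sigma$-ideal generated by the $f_\alpha$. Now $J$ is the directed union of its finitely generated sub-$\sigma$-ideals $J_F=\sum_{\alpha\in F}(f_\alpha)$, and because the perfect closure is built by a finitary iteration of the generating operations---each produced element depending on only finitely many predecessors---one has $\{J\}=\bigcup_F\{J_F\}$. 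Consequently $1\in\{J\}$ forces $1\in\{J_F\}$ for some finite $F$, so the corresponding finite subfamily already covers $\specd(R)$, proving quasi-compactness. The finitariness of the perfect-closure operation is the one point here that, like the Nullstellensatz itself, rests on difference algebra rather than on formal topology.
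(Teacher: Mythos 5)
Your proof is correct and takes essentially the same route as the paper: the proposition itself is stated there without proof, as a summary of known results, but the paper's proofs of the generalised ($\Sigma$-)analogues in Section~\ref{gen-dga} --- the multiplicativity of perfect closures in Lemmas~\ref{perf1} and~\ref{perf2} (precisely the step you defer to Cohn), the Zorn-based Nullstellensatz~\ref{perfintrs} with its Corollary~\ref{wknulsatz}, and the quasi-compactness argument of Corollary~\ref{qcompact} resting on the finitary construction of the perfect closure --- are exactly the steps you outline. Note also that your reading of `defines a zero section' as pointwise vanishing (i.e.\ lying in every $\p\in\spec^\sigma(R)$), rather than as having zero image under $R\to\bar{R}$, is the only reading compatible with the injectivity statement in~\ref{wmaffi}, so you resolved that ambiguity correctly.
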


\begin{proposition}\label{wmaffi}
Suppose $(R,\sigma)$ is well-mixed. 
\begin{enumerate}
\item The canonical morphism
$$
R\to \bar{R}:=H^0(\spec^\sigma(R),\OO_{\spec^\sigma(R)})
$$
is injective. In particular, if $R$ is nontrivial, $\spec^\sigma(R)$ is non-empty.
\item For $f\in R$, the map $R[1/f]_\sigma\to H^0(D^\sigma(r),\OO_{\spec^\sigma(R)})$
is injective.
\item The induced morphism $\spec^\sigma(\bar{R})\to\spec^\sigma(R)$ is
an isomorphism.
\end{enumerate}
\end{proposition}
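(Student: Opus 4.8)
\emph{Plan.} The three parts are handled in order, with (2) reduced to (1) by a difference localisation and (3) established as a reconstruction theorem; I expect (3) to be the genuinely delicate step.

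For (1) the plan is to compute the kernel of $R\to\bar R$ stalkwise and show it is trivial. An element $r$ lies in the kernel precisely when its germ vanishes in every stalk $\OO_{\spec^\sigma R,\p}=R_\p$, $\p\in\spec^\sigma(R)$; since a germ in $R_\p$ vanishes exactly when it is killed by some $s\notin\p$, this says the annihilator $\mathrm{Ann}(r)=\{s\in R:sr=0\}$ is contained in no perfect prime, i.e.\ $V^\sigma(\mathrm{Ann}(r))=\emptyset$. The first key step is to observe that well-mixedness makes $\mathrm{Ann}(r)$ a \emph{well-mixed} $\sigma$-ideal: from $sr=0$ one gets $r\sigma(s)=0$, and from $abr=0$ one gets $ar\sigma(b)=0$, in each case by the defining implication $xy=0\Rightarrow xy^\sigma=0$. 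The explicit perfect-closure formula then applies to $\mathrm{Ann}(r)$, and combining it with the correspondence $V\{I\}\subseteq V\{J\}\Leftrightarrow\{J\}\subseteq\{I\}$ and the quasi-compactness of $\spec^\sigma(R)$ from the preceding proposition, $V^\sigma(\mathrm{Ann}(r))=\emptyset$ forces $\{\mathrm{Ann}(r)\}=R$, hence $1\in\{\mathrm{Ann}(r)\}$. Since $1^\nu=1$ for every $\nu\in\N[\sigma]$, this yields $1\in\mathrm{Ann}(r)$, i.e.\ $r=0$. The ``in particular'' clause is then immediate: if $\spec^\sigma(R)=\emptyset$ then $\bar R=0$, and injectivity forces $R=0$.

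For (2) the plan is to realise $D^\sigma(f)$ as a difference spectrum and invoke (1). As $\sigma$ extends to the localisation only after every $\sigma^n(f)$ is inverted, the correct object is the difference localisation $R[1/f]_\sigma$, and one checks that $\p\mapsto\p\,R[1/f]_\sigma$ identifies $D^\sigma(f)$ with $\spec^\sigma(R[1/f]_\sigma)$ as locally difference-ringed spaces: a perfect prime avoids $f$ iff it avoids every $\sigma^n(f)$, and the stalks agree since $(R[1/f]_\sigma)_\p=R_\p$. Because localisations of well-mixed rings are well-mixed, part (1) applied to $R[1/f]_\sigma$ gives injectivity of $R[1/f]_\sigma\to\overline{R[1/f]_\sigma}=H^0(D^\sigma(f),\OO_{\spec^\sigma R})$, which is the assertion.

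For (3) the plan is to show that the morphism $\spec^\sigma(\bar R)\to\spec^\sigma(R)$ induced by $R\to\bar R$ is an isomorphism of locally difference-ringed spaces, a difference analogue of the affine reconstruction $X\cong\spec\Gamma(X,\OO_X)$. I would first prove it is a homeomorphism: pulling back perfect primes is continuous, and an inverse is furnished by sending a perfect prime $\p$ of $R$ to the perfect prime of $\bar R$ consisting of germs vanishing at $\p$, using (1) to see that $R\to\bar R$ creates no new points and induces a bijection on stalks $R_\p=\bar R_{\bar\p}$. One then verifies that the structure sheaves correspond, which amounts to the idempotence $\overline{\bar R}=\bar R$; here quasi-compactness reduces the global comparison to the basic opens $D^\sigma(f)$, where it follows from (2). \textbf{The main obstacle} is precisely this last point: the structure sheaf on $\spec^\sigma(R)$ is a sheafification rather than an honestly affine object, so matching its sections on both sides---i.e.\ proving the idempotence of $\bar{(\,\cdot\,)}$---is the subtle step, and it is exactly where the injectivity statements (1) and (2) must be combined with the quasi-compactness of $\spec^\sigma$.
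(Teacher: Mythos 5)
Parts (1) and (2) of your proposal are correct, and part (1) is essentially the paper's own argument: the paper proves the generalised statement \ref{wmaff}(\ref{jennn}) (of which \ref{wmaffi} is the strict special case) by exactly your device --- the annihilator $\mathop{\rm Ann}(ha-gb)$ is a well-mixed $\sigma$-ideal because $R$ is well-mixed, an empty vanishing locus on $\spec^\sigma$ forces $1\in\{\mathop{\rm Ann}\}$ via the Nullstellensatz-type correspondence, and the explicit perfect-closure formula for well-mixed ideals then produces an actual annihilating element $f^\nu$. The paper runs this once for general fractions, so that (1) and (2) come out uniformly; your alternative reduction of (2) to (1) through the identification $D^\sigma(f)\cong\spec^\sigma(R[1/f]_\sigma)$ (with the correct check that difference localisation preserves well-mixedness) is a legitimate repackaging. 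Note that quasi-compactness is not actually needed in (1).

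Part (3), however, has a genuine gap, and it sits exactly where you flagged "the main obstacle". You claim that injectivity of $R\to\bar R$ shows the map $\spec^\sigma(\bar R)\to\spec^\sigma(R)$ "creates no new points and induces a bijection on stalks", but injectivity of a ring map never by itself yields injectivity of the induced map on spectra (fibres of $\spec$ over an injective, even integral, extension can contain many primes), nor surjectivity of the stalk maps $R_\p\to\bar R_{\bar\p}$. What the paper's proof of \ref{wmaff}(\ref{sesttt}) actually uses are the intermediate statements \ref{wmaff}(\ref{dvaaa})--(\ref{cetriii}): by quasi-compactness every global section $\bar s\in\bar R$ is locally represented by fractions over $R$, the annihilator $\mathop{\rm Ann}(\bar s)$ is well-mixed, and consequently for every $\p$ there exist $g\notin\p$ and $a\in R$ with $g\bar s-a=0$ on all of $\spec^\sigma(R)$. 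This local-fraction fact is what (i) pins down the unique prime $\bar\p=\{\bar s:\bar s(\p)\in\p R_\p\}$ above $\p$ (since then $\bar s\in\bar\p\Leftrightarrow a\in\p$), and (ii) makes $R_\p\to\bar R_{\bar\p}$ surjective, hence an isomorphism, which is the sheaf comparison. Your proposed substitute --- idempotence $\overline{\bar R}=\bar R$, reduced to basic opens "where it follows from (2)" --- cannot close this hole: (2) is an injectivity statement, whereas what is missing is a surjectivity-type statement (every germ of $\bar R$ is a fraction of elements of $R$). The local-fraction lemma is the missing idea; without it, both the homeomorphism claim and the structure-sheaf isomorphism in your sketch are unsupported.
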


In every difference ring $(R,\sigma)$ there exists a smallest well-mixed ideal $0_w$
and thus we have the largest well-mixed quotient $R_w=R/0_w$, with the universal
property that every morphism from $(R,\sigma)$ to a well-mixed $(S,\sigma)$ factors through
$R_w$. The closed
immersion $\spec(R_w)\hookrightarrow\spec(R)$ induces a homeomorphism
$\spec^\sigma(R_w)\stackrel{\sim}{\to}\spec^\sigma(R)$.
In view of this discussion and the merits of \ref{wmaffi}, we shall not
hesitate to assume well-mixedness when necessary.

\begin{definition}
\begin{enumerate}
\item An \emph{affine difference scheme} $(X,\OO_X,\sigma)$ consists of a locally ringed space
$(X,\OO_X)$ with a morphism $\sigma:\OO_X\to\OO_X$, which is  isomorphic
to $\spec^\sigma(R,\sigma)$ for some well-mixed difference ring $(R,\sigma)$.

\item A \emph{difference scheme} $(X,\OO_X,\sigma)$ is a locally ringed space which
is locally isomorphic to an affine difference scheme.

\item A \emph{morphism of difference schemes} 
$f:(X,\OO_X,\sigma_X)\to(Y,\OO_Y,\sigma_Y)$ is a morphism of locally ringed spaces
which respects the difference structure, $\sigma_Y\circ f=f\circ\sigma_X$.
\end{enumerate}
\end{definition}

For a point $x$ on a difference scheme $X$, we denote by $\OO_x$ the local
(difference) ring at $x$, and by $\kk(x)$ the residue (difference) field at $x$.

The following is an important consequence of \ref{wmaff}. 
\begin{proposition}\label{strembedcat}
The `global sections' functor $H^0$ is left adjoint to the contravariant functor 
$\spec^\sigma$ from the category of well-mixed difference rings
to the category of difference schemes. For any difference scheme $(X,\sigma)$ and
any well-mixed difference ring $(R,\sigma)$,
$$
\Hom(X,\spec^\sigma(R))\stackrel{\sim}{\to}\Hom(R,H^0(X)).
$$
\end{proposition}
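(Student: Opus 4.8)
The plan is to establish the displayed bijection directly, transcribing the classical adjunction between rings and schemes while carefully accounting for the passage to perfect closures and the discrepancy between $R$ and its global-sections ring $\bar R$. Throughout I would regard both sides as sets of difference ring homomorphisms and difference scheme morphisms respectively, and verify that the comparison map is natural in each argument.

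First I would construct the forward map $\alpha$. Given a morphism of difference schemes $f\colon X\to\spec^\sigma(R)$, taking global sections of the sheaf morphism $f^\#$ yields a difference ring homomorphism $H^0(\spec^\sigma(R))=\bar R\to H^0(X)$, and precomposing with the canonical map $R\to\bar R$ of \ref{wmaffi}(1) produces $\alpha(f)\colon R\to H^0(X)$. Since both ingredients commute with $\sigma$, so does $\alpha(f)$, and functoriality of $H^0$ makes $\alpha$ natural in $X$ and in $R$. The entire content of the proposition is then that $\alpha$ is bijective.

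Next I would treat the affine case $X=\spec^\sigma(A)$ with $(A,\sigma)$ well-mixed. Using the isomorphism $\spec^\sigma(\bar A)\xrightarrow{\sim}\spec^\sigma(A)$ of \ref{wmaffi}(3) I reduce to $A=\bar A=H^0(X)$, so that a prescribed $\phi\colon R\to H^0(X)$ becomes an honest difference ring homomorphism $\phi\colon R\to A$. To build an inverse $\beta(\phi)=g$, define the underlying map on points by $\p\mapsto\phi^{-1}(\p)$; the key observation is that the preimage of a perfect prime $\sigma$-ideal under a difference homomorphism is again a perfect prime $\sigma$-ideal, since $\phi(a)\phi(a)^\sigma\in\p$ forces $a,a^\sigma\in\phi^{-1}(\p)$ by perfectness of $\p$, so $g$ lands in $\spec^\sigma(R)$. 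Continuity follows because the preimage of a closed set $V\{I\}$ is $V\{\phi(I)A\}$, and the localisations of $\phi$ assemble into a sheaf morphism $g^\#\colon\OO_{\spec^\sigma R}\to g_*\OO_{\spec^\sigma A}$ whose stalk maps $R_{\phi^{-1}(\p)}\to A_{\p}$ are local and commute with $\sigma$ because $\phi$ does. Unwinding the definitions shows that $\alpha$ and $\beta$ are mutually inverse in this affine setting, exactly as for ordinary schemes.

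Finally I would glue. For general $X$, choose a cover by affine difference schemes $U_i=\spec^\sigma(A_i)$; a given $\phi\colon R\to H^0(X)$ restricts to $\phi_i\colon R\to H^0(X)\to H^0(U_i)$, and the affine case produces morphisms $g_i\colon U_i\to\spec^\sigma(R)$. To see that $g_i$ and $g_j$ agree on $U_i\cap U_j$, I cover the intersection by affine difference opens and invoke the injectivity of restriction maps from \ref{wmaffi}(2) together with the affine bijection already proved, so the two restrictions induce the same ring homomorphism and hence the same morphism; they therefore glue to a global $g\colon X\to\spec^\sigma(R)$ with $\alpha(g)=\phi$. I expect the main obstacle to be precisely the bookkeeping around $H^0(\spec^\sigma R)=\bar R\neq R$: one must consistently factor through perfect closures, use \ref{wmaffi}(3) to identify $\spec^\sigma(\bar R)$ with $\spec^\sigma(R)$, and rely on the separatedness statement \ref{wmaffi}(2) to ensure the local morphisms glue uniquely. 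Apart from this, the argument is a routine transcription of the scheme-theoretic adjunction.
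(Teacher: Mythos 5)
Your proposal is correct and follows essentially the same route as the paper: both directions of the comparison map are built from the canonical maps of \ref{wmaffi} (the injection $R\to\bar R$ and the isomorphism $\spec^\sigma(\bar A)\stackrel{\sim}{\to}\spec^\sigma(A)$), and the mutual-inverse check in the affine case is the classical stalk-locality argument, which is precisely how the paper proves the generalised affine version \ref{embedcataff} on which \ref{strembedcat} rests. The only addition is your explicit gluing step for non-affine $X$ (and the routine injectivity of the comparison map in the glued case, which you leave tacit); the paper leaves this globalisation implicit when it presents \ref{strembedcat} as a consequence of \ref{wmaff}.
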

For an overly enthusiastic reader, it is worth remarking that, unlike in the algebraic case, $\spec^\sigma$ and $H^0$ do not
determine an equivalence of categories of difference rings and affine difference schemes, see \ref{rem-embedcat}.
 Moreover,  the global sections functor $H^0$ on the category of
 quasi-coherent sheaves of $(\OO_X,\sigma)$-modules on an affine difference scheme $X$ is not necessarily exact.

\section{Generalised difference schemes}\label{gen-dga}

In this section we wish to broaden the class of difference schemes in order to allow certain
Galois actions. We shall henceforth refer to difference schemes with a single endomorphism
as discussed above as \emph{strict difference schemes}, or difference schemes in the strict
sense, and we shall expand the term `difference scheme' to include objects with multiple
endomorphisms.

\subsection{Difference structure}

\begin{definition}
Let us consider the category $\diff$ as follows. An object  of $\diff$ is a
 (finite) set  $\Sigma$, %
 equipped with a map $\Sigma\times\Sigma\to\Sigma$, 
 $(\sigma,\tau)\mapsto \sigma^{\tau}$ such that $\sigma^\sigma=\sigma$.

A morphism %
$()^\varphi:\Sigma \to T$ is a function such that
for all $\sigma,\tau\in\Sigma$,
$$
(\sigma^{\tau})^\varphi=(\sigma^\varphi)^{({\tau}^\varphi)}.
$$
\end{definition}

\begin{definition}\label{diffcat}
Let $\cC$ be a category. The \emph{difference category} over $\cC$,
denoted $\diff(\cC)$, is defined as follows. Its objects are of form $(X,\Sigma)$,
where $X$ is an object of $\cC$, and $\Sigma$ is a set of $\cC$-endomorphisms of $X$
such that there exists a function $\Sigma\times\Sigma\to\Sigma$, 
$(\sigma,\tau)\mapsto \sigma^{\tau}$ such that:
\begin{enumerate}
\item for every $\sigma,\tau\in\Sigma$, $$\sigma^\tau\circ\tau=\tau\circ\sigma;$$
\item\label{dvaa} $\left(\Sigma,(\cdot)^{(\cdot)}\right)$ is an object of $\diff$;
\item\label{trii} for every $\sigma\in\Sigma$, $()^\sigma:\Sigma\to\Sigma$ is a $\diff$-morphism.
\end{enumerate}

A morphism $(\varphi,()^\varphi): (X,\Sigma)\to(Y,T)$ consists of a $\diff$-morphism
$()^\varphi:\Sigma\to T$ and a $\cC$-morphism $\varphi:X\to Y$ such that for
every $\sigma\in\Sigma$,
$$
\sigma^\varphi\circ\varphi=\varphi\circ\sigma.
$$

For the \emph{strong difference category} over $\cC$, we require that, additionally, 
for every object $(X,\Sigma)$, all endomorphisms in $\Sigma$ must be $\cC$-epimorphisms
of $X$.
\end{definition}

\begin{definition}\label{opdifcat}
Let $\cA$ be a category. 
The (strong) \emph{dual difference category over} $\cA$, is the opposite category of the 
(strong) difference category of the opposite category of $\cA$, 
$\diff(\cA^{op})^{op}$. To avoid misunderstandings, let us specify the details in the strong case.
 
 The objects are of form $(A,\Sigma)$,
where $A$ is an object of $\cA$, and $\Sigma$ is a 
set of $\cA$-monomorphisms $A\to A$
such that for every $\sigma, \tau\in\Sigma$,
there exists a (necessarily unique) $\sigma^{\tau}\in\Sigma$ such that
$$
\tau\circ\sigma^{\tau}=\sigma\circ\tau.
$$
It also follows that $\sigma^\sigma=\sigma$ for every $\sigma\in\Sigma$.

A morphism $\varphi:(B,T)\to (A,\Sigma)$ consists of an $\cA$-morphism 
$\varphi:B\to A$ and a map $()^\varphi:\Sigma\to T$ such that 
$$\varphi\circ\sigma^\varphi=\sigma\circ\varphi.$$
Moreover, we require that
$$
(\tau^{\sigma})^\varphi=(\tau^\varphi)^{({\sigma}^\varphi)}.
$$
\end{definition}

\begin{remark}
\begin{enumerate}
\item In the definition of a \emph{strong} difference category, the conditions
\ref{dvaa} and \ref{trii} from \ref{diffcat} are superfluous by arguments analogous to 
the discussion for the dual case below.
\item 
If $(A,\Sigma)$ is an object of the strong dual difference category over $\cA$, 
then every $\sigma\in\Sigma$ automatically
defines an endomorphism of $(A,\Sigma)$. Indeed, the equality $(\tau^\sigma)^{(\rho^\sigma)}=(\tau^\rho)^\sigma$ for arbitrary $\rho,\tau\in\Sigma$ follows from the fact that all
the relevant arrows are monomorphisms so the following diagram can be completed 
in only one way.
$$
 \begin{tikzpicture}
[cross line/.style={preaction={draw=white, -,
line width=4pt}}]
\matrix(m)[matrix of math nodes, row sep=.05cm, column sep=.6cm, text height=1.2ex, text depth=0.25ex]
{
|(a)|{A}	&		& |(b)|{A}&			\\[.6cm]
&			|(A)|{A} &			& |(B)|{A}\\[.3cm]          
|(c)|{A}	&		& |(d)|{A} &			\\[.6cm]
&			|(C)|{A} &			& |(D)|{A}   \\};
\path[->,font=\scriptsize,>=to, thin,inner sep=1pt]
(a)edge node[pos=0.5,above]{$\rho^\sigma$}(b)
(b)edge node[pos=0.3, right%
]{$\tau^\sigma$}(d)
(a) edge[dashed] node[pos=0.5,left]{$(\tau^\sigma)^{(\rho^\sigma)}=(\tau^\rho)^\sigma$}(c)
(c)edge node[pos=0.25,above%
]{$\rho^\sigma$}(d)
(A)edge[cross line]  node[pos=0.25,above]{$\rho$}(B)
(B)edge[cross line]  node[pos=0.5,right]{$\tau$}(D)
(A)edge[cross line]  node[pos=0.62,right%
]{$\tau^\rho$}(C)
(C)edge[cross line]  node[pos=0.5,below]{$\rho$}(D)
(a)edge node[pos=0.5,above right]{$\sigma$}(A)
(c)edge node[pos=0.5,above right]{$\sigma$}(C)
(b)edge node[pos=0.5,above right]{$\sigma$}(B)
(d)edge node[pos=0.5,above right]{$\sigma$}(D);
\end{tikzpicture}
$$ 

\item The last requirement from \ref{opdifcat} shows that the composite of an arbitrary morphism $\varphi$ and the structure morphism $\sigma\in\Sigma$ in  $\diff(\cA^{op})^{op}$
is well defined, 
$\tau^{\sigma\circ\varphi}=(\tau^{\sigma})^\varphi=(\tau^\varphi)^{({\sigma}^\varphi)}$.
In case $\varphi$ is a monomorphism itself, the condition is superfluous by a diagram
similar to the one above.
\end{enumerate}
\end{remark}

\begin{remark}\label{fibrediff}
It is quite illuminating to view the construction of $\diff(\cC)$ in the language of
fibred categories. Let us consider the functor $H:\diff^{op}\to\text{\bf Cat}$, such that
$H(\Sigma)=\cC(\Sigma)$, the category of $\cC$-objects with distinguished
endomorphisms $\Sigma$, with $\Sigma$-equivariant $\cC$-morphisms.
To a $\diff$-morphism $\phi:\Sigma\to T$ we assign $H(\phi)=\phi^*:\cC(T)\to\cC(\Sigma)$,
which maps a $T$-object $Y$ to the $\Sigma$-object $\phi^*(Y)$ which is just $Y$
considered with the $\Sigma$-action where $\sigma$ acts as $\phi(\sigma)$.

The split fibration $\int H\to\diff$ associated with $H$ is exactly the functor
$\diff(\cC)\to\diff$ assigning to each $(X,\Sigma)$ its `structure' $\Sigma$. The canonical
arrows $\phi^*Y\to Y$ are cartesian and a $\diff(\cC)$-morphism $(X,\Sigma)\to (Y,T)$
is a pair consisting of a $\diff$-morphism $\phi:\Sigma\to T$ and a 
 $\cC(\Sigma)$-morphism $f:X\to\phi^*Y$.
 
 The dual fibration of $\int H\to\diff$ is the fibration $\int ()^{op}\circ H$ which in our
 language corresponds to $\diff(\cC^{op})$.
\end{remark}

\begin{definition}
\begin{enumerate}
\item
A difference object $(X,\Sigma)$ is called \emph{inversive}, if every $\sigma\in\Sigma$ is an automorphism of $X$.
\item A difference object $(X,\Sigma)$ is \emph{almost-strict}, 
  if there exists a finite subgroup $G$ of $\aut(X,\Sigma)$
 such that for all $\sigma,\sigma'\in\Sigma$, there exists a $g\in G$ such that $\sigma'=g\sigma$,
 i.e., $\Sigma\subseteq G\sigma$ for some $\sigma\in\Sigma$.
\end{enumerate}
\end{definition}

\begin{definition}\label{def-regular}
\begin{enumerate}
\item A $\diff$-structure $\Sigma$ is \emph{regular}, if for every $\sigma\in \Sigma$,
the map $()^\sigma:\Sigma\to\Sigma$ is bijective.
\item A regular $\diff$-structure $\Sigma$ is \emph{full}, if it is  
equipped with %
 \emph{generalised conjugation} in the sense that for any
$\tau,\tau'\in \Sigma$, there is an bijective assignment
 $\sigma\to\lrexp{\tau}{\sigma}{{\tau'}}$ on 
$\Sigma$ which has the property that $()^\sigma\circ()^{\tau'}=()^{{\tau}}\circ()^{\lrexp{\tau}{\sigma}{{\tau'}}}$. In a given $\Sigma$-object $X$, this element gets interpreted as
a morphism satisfying
$$\sigma\circ\tau'=\tau\circ\lrexp{\tau}{\sigma}{{\tau'}}.$$
Intuitively, in an inversive structure, $\lrexp{\tau}{\sigma}{{\tau'}}$ should be thought
of as ${\tau}^{-1}\sigma\tau'$.
\end{enumerate}
\end{definition}
As Shahn Majid pointed out, a regular $\diff$-object is usually called a \emph{quandle} in the literature.
\begin{remark}
\begin{enumerate}
\item If $(X,\Sigma)$ is inversive, then $\Sigma$ is regular.
\item If $(X,\Sigma)$ is almost-strict, then $\Sigma$ is full.
\end{enumerate}
\end{remark}

 \begin{remark}\label{FC}
 If $(X,\Sigma)$ is inversive and almost-strict, then the group of automorphisms
 $\langle\Sigma\rangle$ 
 of $X$ generated by $\Sigma$ is  finite-by-cyclic,
 i.~e., for any $\sigma\in\Sigma$, we have an exact sequence
$$
 \begin{tikzpicture} 
\matrix(m)[matrix of math nodes, row sep=0em, column sep=2em, text height=1.5ex, text depth=0.25ex]
 {  |(1)|{1} & |(2)|{G}& |(3)|{\langle\Sigma\rangle} & |(4)|{\langle\sigma\rangle}
 & |(5)|{1}\\}; 
\path[->,font=\scriptsize,>=to, thin]
(1) edge  (2) (2) edge (3)
(3) edge  (4)
(4) edge  (5);
\end{tikzpicture}
$$
Clearly, $G$ can be thought of as a group with an operator $()^\sigma$ and
a morphism $\varphi:(X,\Sigma)\to(Y,T)$ of inversive almost-strict objects
 gives rise to a homomorphism of groups with operators, or, 
equivalently, a commutative diagram
$$
 \begin{tikzpicture} 
\matrix(m)[matrix of math nodes, row sep=2em, column sep=2em, text height=1.5ex, text depth=0.25ex]
 {
 |(1)|{1} & |(2)|{G}& |(3)|{\langle\Sigma\rangle} & |(4)|{\langle\sigma\rangle}& |(5)|{1}\\
 |(l1)|{1} & |(l2)|{H}& |(l3)|{\langle T\rangle} & |(l4)|{\langle\sigma^\varphi\rangle}& |(l5)|{1}\\
 }; 
\path[->,font=\scriptsize,>=to, thin]
(1) edge  (2) (2) edge (3) (3) edge  (4) (4) edge  (5)
(l1) edge  (l2) (l2) edge (l3) (l3) edge  (l4) (l4) edge  (l5)
(2) edge  (l2) (3) edge (l3) (4) edge  (l4);
\end{tikzpicture}
$$
 \end{remark}

\begin{proposition}\label{definvring}
Every (strong) difference ring $(R,\Sigma)$ with $\Sigma$ finite and regular has an \emph{inversive closure} with the following
universal property. There is an embedding
$(R,\Sigma)\hookrightarrow(R^{inv},\Sigma^{inv})$ such that every morphism 
$(R,\Sigma)\to(S,T)$ to an inversive difference ring factors uniquely through 
$R^{inv}$.
\end{proposition}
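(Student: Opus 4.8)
The plan is to construct $R^{inv}$ as an Ore-type localisation of $R$, obtained by formally inverting the action of the whole monoid $M=\langle\Sigma\rangle\le\mathrm{End}(R)$ generated by $\Sigma$. This generalises the strict case, where the inversive closure of $(R,\sigma)$ is the colimit of $R\xrightarrow{\sigma}R\xrightarrow{\sigma}\cdots$. In the strong case every $\sigma\in\Sigma$ is a monomorphism, so every element of $M$ is an injective operator and $M$ is left-cancellative: $m\circ a=m\circ b$ forces $a=b$. This left-cancellativity is exactly what will make the localisation map $R\to R^{inv}$ an \emph{embedding}, matching the wording of the statement.

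The crucial structural input is that $M$ admits common left multiples, i.e.\ for all $m,n\in M$ there are $a,b\in M$ with $a\circ m=b\circ n$. First I would promote the defining relation $\sigma\tau=\tau\sigma^\tau$ to the monoid: iterating it shows that for a generator $\sigma$ and a word $n\in M$ one has $\sigma n=n\sigma^{(n)}$ for some $\sigma^{(n)}\in\Sigma$, and left-cancellativity forces $\sigma^{(n)}$ to depend only on the operator $n$, not on a chosen word representing it. Here regularity is indispensable: each $()^\tau$ is a bijection of $\Sigma$, so the induced map $()^{(n)}\colon\Sigma\to\Sigma$ is a bijection with a well-defined inverse $()^{(n^{-1})}$. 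Extending $()^{(n^{-1})}$ letterwise to $M$ and solving the relation backwards yields $n\,m=m^{(n^{-1})}\,n$ for all $m,n\in M$. Consequently $w:=nm$ is a common left multiple of $m$ and $n$, having $m$ as a right factor on the nose and $n$ as a right factor through this identity. Thus common left multiples exist with no finiteness hypothesis, regularity doing all the work.

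With $M$ left-cancellative and admitting common left multiples, I would define $R^{inv}$ as the set of formal fractions $m^{-1}(r)$, $m\in M$, $r\in R$, modulo the relation declaring $m^{-1}(r)=n^{-1}(s)$ whenever $am=bn$ and $a(r)=b(s)$ for some $a,b\in M$. Transitivity of this relation reduces, by choosing a common left multiple of the two mediating denominators, to the functoriality $a(b(r))=(ab)(r)$ of the action, and needs no right-cancellation. Ring operations are defined by passing to a common denominator, and $r\mapsto\mathrm{id}^{-1}(r)$ is injective by left-cancellativity, giving $R\hookrightarrow R^{inv}$. Since for each $\sigma\in\Sigma$ the fraction $\sigma^{-1}(r)$ already lives in $R^{inv}$, every $\sigma$ extends to an automorphism of $R^{inv}$, so the set $\Sigma^{inv}$ of these extended operators is inversive; using $nm=m^{(n^{-1})}n$ one checks that the assignment $\sigma\mapsto\sigma^\tau$ survives verbatim, becoming genuine conjugation $\sigma^\tau=\tau^{-1}\sigma\tau$ inside the group of automorphisms of $R^{inv}$ generated by $\Sigma^{inv}$. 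Hence $(R^{inv},\Sigma^{inv})$ is a difference ring with $\Sigma^{inv}\cong\Sigma$ as a $\diff$-object, still finite when $\Sigma$ is.

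Finally, for the universal property, a morphism $f\colon(R,\Sigma)\to(S,T)$ into an inversive difference ring sends each $\sigma$ to an automorphism $\sigma^f$ of $S$; hence every $m\in M$ maps to an automorphism $m^f$, and $m^{-1}(r)\mapsto(m^f)^{-1}(f(r))$ is a well-defined ring homomorphism $R^{inv}\to S$ extending $f$, unique because its values on the generating fractions are forced. I expect the main obstacle to be not any single deep estimate but the coherent bookkeeping around the conjugation maps: proving that $\sigma^{(n)}$ is well-defined on operators and that $()^{(n)}$ is bijective, that the fraction identities $nm=m^{(n^{-1})}n$ hold for arbitrary words, and that these assemble into a bona fide $\diff$-structure on $R^{inv}$ independent of the chosen word representatives. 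It is precisely at these points that regularity (rather than finiteness) is the essential hypothesis, and where the $\diff$-axioms must be invoked to guarantee consistency.
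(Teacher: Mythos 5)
Your proposal is correct, but it takes a genuinely different route from the paper. The paper argues by induction on the finite set $\Sigma=\{\sigma_1,\dots,\sigma_n\}$: it forms the known strict inversive closure $R_1=(R,\sigma_1)^{\rm inv}$ (the colimit of $R\xrightarrow{\sigma_1}R\xrightarrow{\sigma_1}\cdots$), re-equips $R_1$ with a full $\Sigma$-structure by the twisted formula $\tau(a)=(\sigma_1^{\rm inv})^{-r}\bigl(\bar\sigma_1^{-r}(\tau)\bigr)(\sigma_1^{\rm inv})^{r}a$ --- regularity entering exactly through the inverse of the bijection $\bar\sigma_1=()^{\sigma_1}$ --- and then iterates over $\sigma_2,\dots,\sigma_n$. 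You instead perform a single Ore localisation at the whole monoid $M=\langle\Sigma\rangle$, with regularity supplying the common-left-multiple identity $nm=m^{(n^{-1})}n$ and strength (injectivity of the operators) supplying left cancellation, hence the embedding. The trade-off: the paper's route reuses Levin's strict-case construction and avoids setting up a fraction calculus, but it genuinely needs $\Sigma$ finite for the induction to terminate, and it leaves implicit both the check that operators inverted at earlier stages remain invertible later and the verification of the universal property; your route carries the Ore bookkeeping but is uniform, makes the injectivity of $R\to R^{\rm inv}$ and the universal property explicit, and, as you observe, works for arbitrary regular $\Sigma$ with no finiteness hypothesis. One caveat in your bookkeeping: the letterwise extension of $()^{(n^{-1})}$ from $\Sigma$ to $M$ may depend on the chosen word representing $m$, since $M$ is left- but not right-cancellative ($m_1n=m_2n$ only forces $m_1=m_2$ on the image of $n$); this is harmless for your argument, which uses only the existence of common left multiples and the well-definedness of $()^{(n)}$ on single elements of $\Sigma$ (which left cancellation does give), but it should be stated rather than glossed over.
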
 
\begin{proof}
The proof of \cite[2.1.7]{levin} can be lifted to our framework in spite
of having to deal with non-commuting endomorphisms.
Let $\Sigma=\{\sigma_1,\ldots,\sigma_n\}$, and let $\bar{\sigma}:\Sigma\to\Sigma$
denote the bijection $()^\sigma$, for $\sigma\in\Sigma$.
We know how to take inversive closures of strict difference rings, so let
$R_1=(R,\sigma_1)^{\rm inv}$. We must show that $R_1$ can be endowed with
a $\Sigma$-structure. Indeed, if $\tau\in\Sigma\setminus\{\sigma_1\}$, and
$a\in R_1$, then there is a positive $r$ such that $(\sigma_1^{\rm inv})^r a\in R$. 
We define 
$$
\tau(a)=(\sigma_1^{\rm inv})^{-r}\left(\bar{\sigma}_1^{-r}(\tau)\right)(\sigma_1^{\rm inv})^r a,
$$
and it can be checked that this is independent of the choice of $r$.
We continue inductively, by letting $R_{i+1}=(R_i,\sigma_{i+1})^{\rm inv}$ and endowing
it with $\Sigma$-structure for $i\geq 1$. It is clear that $(R_n,\Sigma^{\rm inv})$ is the inversive closure of $(R,\Sigma)$.
\end{proof}
Clearly, for every $a\in R^{inv}$, there exists a $\nu$ in the free semigroup generated by $\Sigma$  such that $\nu(a)\in R$ (see \ref{diff-ops}). 

As a consequence, every difference scheme $(X,\Sigma)$ can be dominated by an inversive difference scheme
$(X^{inv},\Sigma^{inv})\to (X,\Sigma)$ with the dual universal property, where the morphism is bijective at the level of points.

 \subsection{Difference spectra}

\begin{definition}\label{defspecsigma}
Let $(R,\Sigma)$ be an object of a difference category over the category of
commutative rings with identity.
We shall consider each of the following subsets of $\spec(R)$ as locally ringed spaces
with the Zariski topology and the structure sheaves induced from $\spec(R)$:
\begin{enumerate}
\item $\spec^\sigma(R)=\{\p\in\spec(R):\sigma^{-1}(\p)=\p\}$, for any $\sigma\in\Sigma$;
\item $\spec^\Sigma(R)=\cup_{\sigma\in\Sigma}\spec^\sigma(R)$;
\item $\spec^{(\Sigma)}(R)=\cap_{\sigma\in\Sigma}\spec^\sigma(R)$.
\end{enumerate}
 In discussions of induced topology, we shall use the notation
 $V^\sigma(I)$, $D^\sigma(I)$, $V^\Sigma(I)$, $D^\Sigma(I)$, $V^{(\Sigma)}(I)$, $D^{(\Sigma)}(I)$
for the traces of $V(I)$ and $D(I)$ on $\spec^\sigma(R)$, $\spec^\Sigma(R)$, $\spec^{(\Sigma)}(R)$, respectively.
 \end{definition}
 To be more precise, let us denote $X=\spec(R)$, $X^\Sigma=\spec^\Sigma(R)$, 
 $i:X^\Sigma\hookrightarrow X$ and we define the sheaf $\OO_{X^\Sigma}=i^{-1}\OO_X$. 
 Using the adjunction $1\to i_*i^{-1}$, we get a morphism $\OO_X\to i_*\OO_{X^\Sigma}$, i.e., 
 for every $U$ open in $X$, we get a map 
 $$
 \OO_X(U)\to \OO_{X^\Sigma}(U\cap X^{\Sigma}).
 $$
 Moreover, for each $x\in X^\Sigma$, the stalks $\OO_{X,x}$ and $\OO_{X^\Sigma,x}$
 are both isomorphic to $R_{\ii_x}$.
 
 For each 
 $\sigma\in\Sigma$ we have the morphism of locally ringed spaces
  $({\lexp{a}{\sigma}},\tilde{\sigma}):(X,\OO_X)\to(X,\OO_X)$  induced by $\sigma$, where
  $\tilde{\sigma}:\OO_X\to{\lexp{a}{\sigma}}_*\OO_X$. 
  For an arbitrary $\tau\in\Sigma$, ${\lexp{a}{\sigma}}(X^\tau)\subseteq X^{\tau^\sigma}$,
  so we have ${\lzexp{a}{\sigma}{0}}={\lexp{a}{\sigma}}\restriction X^\Sigma:X^\Sigma\to X^\Sigma$ and
 we get a diagram
 $$
 \begin{tikzpicture} 
\matrix(m)[matrix of math nodes, row sep=2.6em, column sep=2.8em, text height=1.5ex, text depth=0.25ex]
 {X^\Sigma& X\\
   X^\Sigma & X\\}; 
\path[->,font=\scriptsize,>=to, thin]
(m-2-1) edge node[auto] {$i$} (m-2-2) edge node[left] {${\lzexp{a}{\sigma}{0}}$} (m-1-1)
(m-2-2) edge node[right] {${\lexp{a}{\sigma}}$} (m-1-2)
(m-1-1) edge node[auto] {$i$} (m-1-2);
\end{tikzpicture}
$$
 The `restriction' of the sheaf morphism $\tilde{\sigma}$ to $X^\Sigma$ is the composite 
$$
\ztild{\sigma}:\OO_{X^\Sigma}=i^{-1}\OO_X\stackrel{i^{-1}\tilde{\sigma}}{\longrightarrow}i^{-1}\,{\lexp{a}{\sigma}}_*\OO_X
\stackrel{\text{\rm BC}}{\longrightarrow}{\lzexp{a}{\sigma}{0}}_*i^{-1}\OO_X=
{\lzexp{a}{\sigma}{0}}_*\OO_{X^\Sigma},
$$
 where the base change morphism (BC) is derived from the adjunction maps
 $1\to i_*i^{-1}$ and $i^{-1}i_*\to 1$ as the composite
 $$
 i^{-1}\,{\lexp{a}{\sigma}}_*\stackrel{\text{\rm adj}}{\longrightarrow} i^{-1}\,{\lexp{a}{\sigma}}_*i_*i^{-1}\stackrel{\sim}{\longrightarrow}
 i^{-1}i_*\,{\lzexp{a}{\sigma}{0}}_*i^{-1}\stackrel{\text{\rm adj}}{\longrightarrow}{\lzexp{a}{\sigma}{0}}_*i^{-1}.
 $$
 In order to avoid cumbersome notation, we may write $(\lexp{a}{\sigma}, \tilde{\sigma})$
 for $(\lzexp{a}{\sigma}{0}, \ztild{\sigma})$, when considered on 
 $(X^\Sigma,\OO_{X^\Sigma})$. 
 
 Thus, the locally ringed space $(X^\Sigma,\OO_{X^\Sigma})$ is equipped with
 a set of endomorphisms of locally ringed spaces 
 $$\lexp{a}{\Sigma}=\{({\lexp{a}{\sigma}},{\tilde{\sigma}}):\sigma\in\Sigma\},$$
 which, by functoriality (as in the case of general morphisms below), happens to be closed under `conjugation', i.e., has a $\diff$-structure.
 In other words, $(X^\Sigma,{\lexp{a}{\Sigma}})$ is an object of the difference category over the
 opposite category of locally ringed spaces.
 
 Suppose we are given a morphism $\varphi:(S,T)\to(R,\Sigma)$ in the difference category over
 the category of commutative rings with identity. Let us write $X=\spec(R)$, 
 $X^\Sigma=\spec^\Sigma(R)$, $i:X^\Sigma\hookrightarrow X$, $Y=\spec(S)$, $Y^T=\spec^T(S)$,
 $j:Y^T\hookrightarrow Y$ and we let $(\lexp{a}{\varphi},\tilde{\varphi})$ be the
 morphism of locally ringed spaces $(X,\OO_X)\to(Y,\OO_Y)$ induced by $\varphi$,
 where $\tilde{\varphi}:\OO_Y\to\lexp{a}{\varphi}_*\OO_X$. 
 One easily verifies that for any $\sigma\in\Sigma$, 
 $\lexp{a}{\varphi}(X^\sigma)\subseteq Y^{\sigma^\varphi}$, so 
 $\lzexp{a}{\varphi}{0}=\lexp{a}{\varphi}\restriction X^\Sigma:X^\Sigma\to Y^T$,
 and we get a diagram
 $$
 \begin{tikzpicture} 
\matrix(m)[matrix of math nodes, row sep=2.6em, column sep=2.8em, text height=1.5ex, text depth=0.25ex]
 {X & Y\\
   X^\Sigma & Y^T\\}; 
\path[->,font=\scriptsize,>=to, thin]
(m-2-1) edge node[auto] {$i$} (m-1-1) edge node[below] {${\lzexp{a}{\varphi}{0}}$} (m-2-2)
(m-2-2) edge node[right] {$j$} (m-1-2)
(m-1-1) edge node[auto] {$\lexp{a}{\varphi}$} (m-1-2);
\end{tikzpicture}
$$
The `restriction' of the sheaf morphism $\tilde{\varphi}$ to $Y^T$ is the composite 
$$
{\ztild{\varphi}}:\OO_{Y^T}=j^{-1}\OO_Y\stackrel{j^{-1}\tilde{\varphi}}{\longrightarrow}j^{-1}\,{\lexp{a}{\varphi}}_*\OO_X
\stackrel{\text{\rm BC}}{\longrightarrow}{\lzexp{a}{\varphi}{0}}_*i^{-1}\OO_X={\lzexp{a}{\varphi}{0}}_*\OO_{X^\Sigma},
$$
 where the base change morphism (BC) is derived from the adjunction maps
 $1\to i_*i^{-1}$ and $j^{-1}j_*\to 1$ as the composite
 $$
 j^{-1}\,{\lexp{a}{\varphi}}_*\stackrel{\text{\rm adj}}{\longrightarrow} j^{-1}\,{\lexp{a}{\varphi}}_*i_*i^{-1}\stackrel{\sim}{\longrightarrow}
 j^{-1}j_*\,{\lzexp{a}{\varphi}{0}}_*i^{-1}\stackrel{\text{\rm adj}}{\longrightarrow}{\lzexp{a}{\varphi}{0}}_*i^{-1}.
 $$

In order to show that $(\lzexp{a}{\varphi}{0},\ztild{\varphi})$ defines a morphism
$(X^\Sigma,\lexp{a}\Sigma)\to(Y^T,\lexp{a}{T})$ in the difference category over
the opposite category of the locally ringed spaces, for each $\sigma\in\Sigma$ one 
must chase through the following diagram, where we write $\tau=\sigma^\varphi$ for
ease of notation.
$$
 \begin{tikzpicture}
[cross line/.style={preaction={draw=white, -,
line width=4pt}}]
\matrix(m)[matrix of math nodes, row sep=.6cm, column sep=.6cm, text height=1.5ex, text depth=0.25ex]
{		& |(u4)| {X}	&			& |(u3)| {Y}	\\   %
|(u1)|	{X^\Sigma} &		& |(u2)|{Y^T} 	&			\\
		& |(d4)| {X}	&			& |(d3)| {Y}	\\   %
|(d1)|	{X^\Sigma} &		& |(d2)|{Y^T} 	&			\\};
\path[->,font=\scriptsize,>=to, thin]
(d1) edge node[left]{$\lzexp{a}{\sigma}{0}$} (u1) edge node[below]{$\lzexp{a}{\varphi}{0}$} (d2) 
	edge node[auto]{$i$} (d4) 
(d4) edge node[pos=0.25,left]{$\lexp{a}{\sigma}$} (u4)  edge node[pos=0.25, below]{$\lexp{a}{\varphi}$} (d3)

(u1) edge [cross line] node[pos=.8, above]{$\lzexp{a}{\varphi}{0}$} (u2)  edge node[auto]{$i$} (u4)
	
(d2) edge [cross line] node[pos=0.75, right]{$\lzexp{a}{\tau}{0}$} (u2)  edge node[below]{$j$} (d3)
(u4) edge node[above]{$\lexp{a}{\varphi}$} (u3)	
(u2) edge node[pos=0.4,above]{$j$} (u3)	
(d3) edge node[right]{$\lexp{a}{\tau}$} (u3);
\end{tikzpicture}
$$ 
The crucial property that needs to be verified is that
$$
\lzexp{a}{\tau}{0}_*\ztild{\varphi}\circ{\ztild{\tau}}=
\lzexp{a}{\varphi}{0}_*\ztild{\sigma}\circ\ztild{\varphi}.
$$
This is unpleasant to verify directly due to a number of base change morphisms, but it is
straightforward to check the equivalent condition for the relevant morphisms obtained by
adjunction. Indeed, let us write 
$\ztild{\varphi}^{\sharp}:\lzexp{a}{\varphi}{0}^{-1}\OO_{Y^T}\to\OO_{X^\Sigma}$,
$\ztild{\sigma}^{\sharp}:\lzexp{a}{\sigma}{0}^{-1}\OO_{X^\Sigma}\to\OO_{X^\Sigma}$,
$\ztild{\tau}^{\sharp}:\lzexp{a}{\tau}{0}^{-1}\OO_{Y^T}\to\OO_{Y^T}$
for the morphisms corresponding to $\ztild{\varphi}$, $\ztild{\sigma}$, $\ztild{\tau}$ by adjointness. Up to some canonical identifications, 
$\ztild{\varphi}^{\sharp}=i^{-1}\tilde{\varphi}^\sharp$, 
$\ztild{\sigma}^{\sharp}=i^{-1}\tilde{\sigma}^\sharp$ and
 $\ztild{\tau}^{\sharp}=j^{-1}\tilde{\tau}^\sharp$.
The condition 
$$\ztild{\sigma}^{\sharp}\circ \lzexp{a}{\sigma}{0}^{-1}\ztild{\varphi}^{\sharp}=
\ztild{\varphi}^{\sharp}\circ \lzexp{a}{\varphi}{0}^{-1}\ztild{\tau}^{\sharp}$$
is now readily verified, using the fact that 
$$\tilde{\sigma}^{\sharp}\circ \lexp{a}{\sigma}^{-1}\tilde{\varphi}^{\sharp}=
\tilde{\varphi}^{\sharp}\circ \lexp{a}{\varphi}^{-1}\tilde{\tau}^{\sharp}.$$

To avoid the above discombobulation with the notation, we may choose to write
$(\lexp{a}{\varphi},\tilde{\varphi})$ in place of $(\lzexp{a}{\varphi}{0},\ztild{\varphi})$
when it is clear from the context that we are referring to a morphism 
$(X^\Sigma,\OO_{X^\Sigma},\lexp{a}{\Sigma})\to (Y^T,\OO_{Y^T},\lexp{a}{T})$
induced by $\varphi:(S,T)\to (R,\Sigma)$.
 
 \begin{remark}\label{specsigma}
 We would like to state now that the functor $\spec$ mapping an object $(R,\Sigma)$ to the object  $(\spec^\Sigma(R),\OO_{\spec^\Sigma(R)},\lexp{a}{\Sigma})$, and a morphism $(\varphi,()^\varphi):(S,T)\to(R,\Sigma)$ to the morphism
 $$(\lexp{a}{\varphi},\tilde{\varphi},()^\varphi):(\spec^\Sigma(R),\OO_{\spec^\Sigma(R)},\lexp{a}{\Sigma})\to(\spec^T(S),\OO_{\spec^T(S)},\lexp{a}{T})$$ is a `contravariant' functor 
 from the strong dual difference category of
 commutative rings with identity to the difference category of locally ringed spaces which
 respects the difference structure. This statement can be made precise in the language
 of fibered categories. Using the terminology from \ref{fibrediff}, let $\cD$ denote the
 strong dual difference category over commutative rings with identity, and let $\mathcal{G}$
 denote the difference category of locally ringed spaces. We have fibrations
 $F:\cD^{op}\to\diff$ and $E:\mathcal{G}\to\diff$. Then $\spec$ defines a Cartesian functor
$F^{op}\to E$ from the opposite fibration of $F$ to $E$.

When all $\spec^\Sigma(R)$ are quasi-compact, (e.g.\ when $\Sigma$
is finite), the target difference category is also strong.
 \end{remark}

 \subsection{Structure sheaf in the well-mixed case}
 
\begin{definition}\label{diff-ops}
Given a difference structure $\Sigma$, we shall write $\langle\Sigma\rangle$
for the semigroup generated by $\Sigma$, which is the free semigroup 
generated by $\Sigma$ modulo the relations $\tau\sigma^\tau=\sigma\tau$
for $\sigma,\tau\in\Sigma$. We
 let the \emph{difference operators} rig 
$\N[\Sigma]$ be the set of $\N$-linear combinations of the elements of 
$\langle\Sigma\rangle$.
If $(R,\Sigma)$ is a difference ring, every $\nu\in\N[\Sigma]$ can be
thought of as a difference operator on $R$ via its natural 
action $a\mapsto a^\nu$.
We have:
\begin{enumerate}
\item $a^\sigma=\sigma(a)$ for $\sigma\in\Sigma$;
\item $a^{\nu_1+\nu_2}=a^{\nu_1}a^{\nu_2}$;
\item $(ab)^\nu=a^\nu b^\nu$;
\item if $\nu_1,\nu_2\in\langle\Sigma\rangle$, $\left(a^{\nu_1}\right)^{\nu_2}=a^{\nu_2\nu_1}$.
\end{enumerate}
For an element $\nu\in\N[\Sigma]$ given by $\nu=\sum_i n_i\tau_i$ with $\tau_i\in\langle\Sigma\rangle$, it will be convenient to write $|\nu|=\sum_i n_i$.
\end{definition}

Given a difference ring extension $(R,\Sigma)\subseteq (S,\Sigma)$, the difference subring of $S$
generated by a set $T\subseteq S$ over $R$ is denoted by $R[T]_\Sigma$, and it equals
$R[\N[\Sigma]T]$.
Similarly, given a difference
field extension $(K,\Sigma)\subseteq (L,\Sigma)$, the difference subfield of $L$ generated by a
set $T\subseteq L$ over $K$ is denoted by $K(T)_\Sigma$ and it equals $K(\N[\Sigma]T)$.
\begin{definition}
Let $I$ be an ideal in a difference ring $(R,\Sigma)$. We say that:
\begin{enumerate}
\item $I$ is a \emph{$\Sigma$-ideal} if $\sigma(I)\subseteq I$ for every $\sigma\in\Sigma$;
\item $I$ is \emph{$\Sigma$-reflexive} if $\sigma^{-1}(I)=I$ for every $\sigma\in\Sigma$;
\item $I$ is \emph{$\Sigma$-well-mixed} if  $ab\in I$ implies $ab^\sigma\in I$ for any $\sigma\in\Sigma$;
\item $R$ itself is well-mixed if the zero ideal is;
\item $I$ is \emph{$\Sigma$-perfect} if for every $\sigma\in\Sigma$, $aa^\sigma\in I$ implies $a$ and $a^\sigma$ are both in $I$.
\end{enumerate}
\end{definition}

For a set $T$, we denote by $\{T\}_\Sigma$  the least $\Sigma$-perfect ideal
containing $T$, for which we have the following construction. For a subset $S$ of $(R,\Sigma)$, let 
$S'=\{f\in R: f^\nu\in S\mbox{ for some }\nu\in\N[\Sigma]\}$. Starting with $T_0=T$, let
$T_{n+1}=[T_n]'_\Sigma$. It is clear that $\{T\}_\Sigma=\cup_nT_n$.

\begin{remark}\label{perfwm}
\begin{enumerate}
\item
Clearly, $I$ is $\Sigma$-perfect if and only if $a\in I$ whenever $a^\nu\in I$ for $\nu\in\N[\Sigma]\setminus\{0\}$,
and the notion of a perfect ideal is a natural generalisation of the notion of a radical ideal in the
difference context. 
\item
If $I$ is $\Sigma$-perfect, then $I$ is $\Sigma$-well-mixed. Indeed, if $ab\in I$ and 
$\sigma\in\Sigma$, then $(ba)^\sigma\in I$ so $a(ba)^\sigma b^{\sigma^2}=(ab^\sigma)(ab^\sigma)^\sigma\in I$ and thus $ab^\sigma\in I$.
\item 
If $I$ is $\Sigma$-well-mixed, clearly $\{I\}_\Sigma=\{a\in R: a^\nu\in I\mbox{ for some }\nu\in\N[\Sigma]\}$.
\end{enumerate}
\end{remark}

\begin{lemma}\label{perf1}
For $n\geq 0$, $S_nT_n\subseteq (ST)_{n+1}$. Consequently, $\{S\}_\Sigma\{T\}_\Sigma\subseteq\{ST\}_\Sigma$.
\end{lemma}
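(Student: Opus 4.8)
The plan is to establish the finite inclusions $S_nT_n\subseteq (ST)_{n+1}$ by induction on $n$, and then read off the displayed consequence by passing to the union. The latter step is routine: once the finite inclusions are known, a generator $st$ of the product ideal $\{S\}_\Sigma\{T\}_\Sigma$ has $s\in S_m$ and $t\in S_{m'}$ for some $m,m'$, hence $s,t\in S_n,T_n$ for $n=\max(m,m')$ by the monotonicity $S_k\subseteq S_{k+1}$ (which holds since $I\subseteq I'$ for any $\Sigma$-ideal $I$, because $a\in I$ forces $a^\sigma\in I$ and thus $a\in I'$). Therefore $st\in S_nT_n\subseteq (ST)_{n+1}\subseteq \{ST\}_\Sigma$, and since $\{ST\}_\Sigma$ is an ideal, the whole product ideal lies inside it.

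Before the induction I would record two easy formal properties of the operation $()'$: it is monotone ($X\subseteq Y\Rightarrow X'\subseteq Y'$), and it is idempotent, $(X')'=X'$, since $f^\nu\in X'$ means $(f^\nu)^{\nu'}=f^{\nu'\nu}\in X$. The heart of the matter is then the following sublemma, which I would isolate: \emph{for any two $\Sigma$-ideals $A$ and $B$ one has $A'\cdot B'\subseteq (AB)'$}, where $AB$ is the product ideal. Indeed, given $g\in A'$ and $h\in B'$, pick $\nu,\xi\in\N[\Sigma]\setminus\{0\}$ with $g^\nu\in A$ and $h^\xi\in B$ and set $\theta=\nu+\xi$; since $A$ and $B$ are ideals, $g^\theta=g^\nu g^\xi\in A$ and $h^\theta=h^\nu h^\xi\in B$, whence $(gh)^\theta=g^\theta h^\theta\in AB$ and so $gh\in (AB)'$. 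The decisive feature here is that by working with the \emph{whole} ideals, rather than with individual generators, the difference operators attached to $g$ and $h$ never have to be reconciled: the product ideal already contains every product of an element of $A$ with an element of $B$, with no matching of transforms required.

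With the sublemma in hand the inductive step is clean. Each stage of the construction is a $\Sigma$-ideal (the $\Sigma$-ideal is generated \emph{after} the root-extraction $()'$), so $(ST)_{n+2}$ is a $\Sigma$-ideal containing $\big((ST)_{n+1}\big)'$. Applying the sublemma to $A=[S_n]_\Sigma$, $B=[T_n]_\Sigma$ sends each generator $gh$ of $S_{n+1}T_{n+1}$ into $\big([S_n]_\Sigma[T_n]_\Sigma\big)'$; the induction hypothesis $S_nT_n\subseteq (ST)_{n+1}$ together with monotonicity then places it in $\big((ST)_{n+1}\big)'\subseteq (ST)_{n+2}$, and because $(ST)_{n+2}$ is an ideal the sums that make up arbitrary elements of $S_{n+1}T_{n+1}$ are absorbed. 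The base case $n=0$ is immediate, since $S_0T_0=ST\subseteq [ST]_\Sigma\subseteq (ST)_1$.

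The step I expect to be the genuine obstacle is precisely the one the sublemma is designed to avoid. A naive term-by-term attack would try to show each $s_i^{w_i}t_j^{v_j}$ lies in the next stage; but for $w_i\neq v_j$ these \emph{misaligned} products are not controlled by $S_nT_n$, and although one can sometimes realign transforms using the relation $\sigma\tau=\tau\sigma^{\tau}$ of $\langle\Sigma\rangle$, this realignment does not survive summation because $()'$ applied to a $\Sigma$-ideal need not itself be an ideal. The reformulation via $A'\cdot B'\subseteq (AB)'$, combined with the fact that the next stage is an honest $\Sigma$-ideal and that $()'$ is idempotent, is what lets the argument bypass both the operator mismatch and the failure of additive closure; I would therefore concentrate the care of the write-up on verifying the sublemma and on the bookkeeping that each stage is a $\Sigma$-ideal.
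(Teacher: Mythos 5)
Your sublemma $A'B'\subseteq(AB)'$ is correct, and it is in fact the same exponent\-/sum trick that drives the paper's proof: for $\Sigma$-invariant $P,Q$, from $p^{\nu_1}\in[P]_\Sigma$ and $q^{\nu_2}\in[Q]_\Sigma$ the paper observes that $(pq)^{\nu_1+\nu_2}$ is a ring multiple of $p^{\nu_1}q^{\nu_2}$, whence $pq\in (PQ)_1$. The genuine gap lies in the structural premise your induction leans on, namely that ``each stage of the construction is a $\Sigma$-ideal, the ideal being generated \emph{after} the root-extraction''. This misreads the construction. In the paper the root operation is applied \emph{after} generating the ideal, i.e.\ $P_1=([P]_\Sigma)'$: this is explicit in the paper's own proof, which characterises $p\in P_1$ by the existence of $\nu_1$ with $p^{\nu_1}\in[P]_\Sigma$. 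A stage is therefore the root-closure of a $\Sigma$-ideal: it is closed under multiplication by ring elements and, by idempotence, root-closed, but it is \emph{not} closed under addition --- the failure of $X\mapsto X'$ applied to an ideal to be additively closed is the very reason the construction must be iterated at all. (Your alternative reading $T_{n+1}=[T_n']_\Sigma$ is untenable on independent grounds: for $T=\{x\}$ in the free difference ring no $f$ satisfies $f^\nu=x$, so $T'=\emptyset$ and $\bigcup_n T_n$ would not even contain $T$, hence could not equal $\{T\}_\Sigma$.)

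Once that premise is removed, your inductive step does not close. After the sublemma you must move $\bigl([S_n]_\Sigma[T_n]_\Sigma\bigr)'$ into the chain; since $(ST)_{n+1}$ is not additively closed, the only viable target is $[(ST)_{n+1}]_\Sigma$, and so you need every generator $s^{\lambda}t^{\mu}$ (with $s\in S_n$, $t\in T_n$, $\lambda,\mu\in\langle\Sigma\rangle$) of the product ideal to lie in $[(ST)_{n+1}]_\Sigma$. The induction hypothesis controls only the aligned products ($\lambda=\mu$). For $n\geq 1$ one can invoke $\Sigma$-invariance of the stages (this is what the paper's parenthetical remark that $S_1$, $T_1$ are invariant is for), but at $n=0$ the sets $S$ and $T$ are arbitrary, with no invariance available, and the mismatched product $s^{\lambda}t^{\mu}$ is controlled by nothing you have proved: your sublemma only ever places such elements inside primes of products of $[S]_\Sigma$ and $[T]_\Sigma$, and never connects them to the chain generated by the aligned products $ST$. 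Making that connection is precisely the paper's realignment computation --- $(st)^{\nu_2\nu_1}$ divides $(s^{\nu_1}t^{\nu_2})^{\nu_1+\nu_2^{\nu_1}}$, which rests on the relation $\nu_1\nu_2^{\nu_1}=\nu_2\nu_1$ in $\langle\Sigma\rangle$ --- combined with the reduction to the invariant closures $S^\Sigma$, $T^\Sigma$. So your sublemma reproduces one half of the paper's argument (the invariant/ideal half), but the realignment half you explicitly discarded is not a dispensable nicety: without it your induction already fails at its first application, $S_1T_1\subseteq (ST)_2$, and neither the stage-wise inclusion nor the displayed consequence follows.
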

\begin{proof}
Let $P$ and $Q$ be $\Sigma$-invariant sets (such as $S_1$, $T_1$).
For $p\in P_1$, $q\in Q_1$, there exist $\nu_1,\nu_2\in\N[\Sigma]$ such that
$p^{\nu_1}\in [P]_\Sigma$ and $q^{\nu_2}\in[Q]_\Sigma$. By invariance of $P$ and $Q$,
we conclude that $p^{\nu_1}q^{\nu_2}\in[PQ]_\Sigma$. If we take some $\nu\in\N[\Sigma]$
such that $p^{\nu_1}q^{\nu_2}$ divides $(pq)^\nu$ ($\nu=\nu_1+\nu_2$, for example), we will have that $(pq)^\nu\in[PQ]_\Sigma$ and thus $pq\in [PQ]'$. 

Let $S^\Sigma=\N[\Sigma]S$ and $T^\Sigma=\N[\Sigma]T$ be the $\Sigma$-invariant
closures of $S$ and $T$. We can write arbitrary $\tilde{s}\in S^\Sigma$, $\tilde{t}\in T^\Sigma$
as $\tilde{s}=s^{\nu_1}$, $\tilde{t}=t^{\nu_2}$ for some $s\in S$, $t\in T$, 
$\nu_1,\nu_2\in\N[\Sigma]$. In order to show that there exist $\nu,\mu\in\N[\Sigma]$ such that 
$(st)^\mu$ divides $(\tilde{s}\tilde{t})^\nu=(s^{\nu_1}t^{\nu_2})^\nu$, we may reduce to the
case where $\nu_1,\nu_2\in\langle\Sigma\rangle$. Then $\nu=\nu_1+\nu_2^{\nu_1}$ and $\mu=\nu_2\nu_1$ suffice:
$$
s^{\nu_1(\nu_1+\nu_2^{\nu_1})}t^{\nu_2(\nu_1+\nu_2^{\nu_1})}=
s^{\nu_1^2+\nu_2\nu_1}t^{\nu_2\nu_1+\nu_2\nu_2^{\nu_1}}=
(st)^{\nu_2\nu_1}s^{\nu_1^2}t^{\nu_2\nu_2^{\nu_1}}.
$$
Therefore, $S^\Sigma T^\Sigma\subseteq[ST]'_\Sigma=(ST)_1$ and from the property
established above for invariant sets, 
$$
S_nT_n\subseteq(S^\Sigma)_n(T^\Sigma)_n\subseteq(S^\Sigma T^\Sigma)_n\subseteq
(ST)_{n+1}.
$$ 
\end{proof}

\begin{lemma}\label{perf2}
Let $S$ and $T$ be subsets of a difference ring $(R,\Sigma)$.
\begin{enumerate}
\item\label{jen} $(ST)_n\subseteq S_n\cap T_n$ for $n\geq 1$;
\item\label{dva} $S_n\cap T_n\subseteq (ST)_{n+1}$.
\end{enumerate}
As a consequence, $\{S\}_\Sigma\cap\{T\}_\Sigma=\{ST\}_\Sigma$.
\end{lemma}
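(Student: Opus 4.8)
The plan is to reduce everything to the already-established product inequality \ref{perf1}, to the evident monotonicity of the two operations $[\cdot]_\Sigma$ and $(\cdot)'$ out of which the stages $(\cdot)_n$ are built, and to the key structural fact that a stage of the form $[A]'_\Sigma$ is closed under extraction of $\N[\Sigma]$-roots. As preliminaries I would record two routine observations: since both $A\mapsto[A]_\Sigma$ and $A\mapsto A'$ preserve inclusions, an easy induction gives that $A\subseteq B$ implies $A_n\subseteq B_n$ for every $n$; and since $A_n\subseteq[A_n]_\Sigma\subseteq[A_n]'_\Sigma=A_{n+1}$, each chain $(A_n)_n$ is increasing.

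For \ref{jen} I would argue by induction on $n\geq 1$ that $(ST)_n\subseteq S_n$, the inclusion $(ST)_n\subseteq T_n$ being entirely symmetric. For $n=1$, note that $ST\subseteq[S]_\Sigma$, so $[ST]_\Sigma\subseteq[S]_\Sigma$ and hence $(ST)_1=[ST]'_\Sigma\subseteq[S]'_\Sigma=S_1$ by monotonicity of $(\cdot)'$. The inductive step is immediate: assuming $(ST)_n\subseteq S_n$, monotonicity yields $(ST)_{n+1}=[(ST)_n]'_\Sigma\subseteq[S_n]'_\Sigma=S_{n+1}$. Intersecting the two inclusions gives $(ST)_n\subseteq S_n\cap T_n$.

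For \ref{dva}, which is the heart of the matter, I would use a squaring trick. Given $f\in S_n\cap T_n$, applying \ref{perf1} to the pair $(f,f)$ yields $f^2=f\cdot f\in S_nT_n\subseteq (ST)_{n+1}$. Now $(ST)_{n+1}=[(ST)_n]'_\Sigma$, so $(f^2)^\mu\in[(ST)_n]_\Sigma$ for some $\mu\in\N[\Sigma]$; using the rules of \ref{diff-ops} one computes $(f^2)^\mu=(f^\mu)^2=f^{2\mu}$ with $2\mu\in\N[\Sigma]$, whence $f\in[(ST)_n]'_\Sigma=(ST)_{n+1}$. The crucial and slightly delicate point is precisely that \emph{no extra stage is incurred}: because the prime operation merely records membership of some $\N[\Sigma]$-power and $\N[\Sigma]$ is closed under $\mu\mapsto 2\mu$, the set $(ST)_{n+1}$ already contains $f$ as soon as it contains $f^2$, so the conclusion lands in stage $n+1$ rather than in the naively expected stage $n+2$. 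This is the step I expect to be the main obstacle, together with the harmless bookkeeping that $2\mu\neq 0$ unless $\mu=0$, in which degenerate case $[(ST)_n]_\Sigma=R$ and $f\in(ST)_{n+1}$ trivially anyway.

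Finally, the displayed consequence follows by passing to unions. Part \ref{jen} gives $\{ST\}_\Sigma=\bigcup_n(ST)_n\subseteq\bigcup_n(S_n\cap T_n)\subseteq\{S\}_\Sigma\cap\{T\}_\Sigma$. Conversely, if $f\in\{S\}_\Sigma\cap\{T\}_\Sigma$, then $f\in S_n$ and $f\in T_m$ for some $n,m$; taking $N=\max(n,m)$ and using that the chains are increasing, we get $f\in S_N\cap T_N\subseteq (ST)_{N+1}\subseteq\{ST\}_\Sigma$ by \ref{dva}. Hence $\{S\}_\Sigma\cap\{T\}_\Sigma=\{ST\}_\Sigma$.
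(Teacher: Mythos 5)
Your proposal is correct and follows essentially the same route as the paper: part \ref{jen} by monotonicity of the operations $[\cdot]_\Sigma$ and $(\cdot)'$, part \ref{dva} by the same squaring trick $f^2\in S_nT_n\subseteq(ST)_{n+1}$ via \ref{perf1}, and the consequence by passing to increasing unions. The only difference is that you spell out the details the paper leaves implicit --- notably that $(ST)_{n+1}=[(ST)_n]'_\Sigma$ is closed under extraction of $\N[\Sigma]$-roots (so $f^2\in(ST)_{n+1}$ forces $f\in(ST)_{n+1}$ without incurring an extra stage), which is exactly the justification behind the paper's terse ``so $a\in(ST)_{n+1}$''.
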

\begin{proof}
The inclusion (\ref{jen}) immediately follows from the fact that $[ST]_\sigma$ is contained in both $[S]_\Sigma$ and $[T]_\Sigma$.

For (\ref{dva}), if $a\in S_n\cap T_n$, then, using \ref{perf1} $a^2\in S_nT_n\subseteq(ST)_{n+1}$
so $a\in (ST)_{n+1}$.
\end{proof}

\begin{proposition}\label{perfintrs}
Let $(R,\Sigma)$ be a difference ring and let $I$ be a $\Sigma$-perfect ideal. Then
$$
I=\bigcap_{\p\in V^{(\Sigma)}(I)}\p.
$$
\end{proposition}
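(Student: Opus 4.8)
The plan is to prove the two inclusions separately. The inclusion $I\subseteq\bigcap_{\p\in V^{(\Sigma)}(I)}\p$ is immediate, since by definition every $\p\in V^{(\Sigma)}(I)$ contains $I$. All the content is in the reverse inclusion, which amounts to showing that for every $a\notin I$ there exists a $\Sigma$-reflexive prime $\p\supseteq I$ with $a\notin\p$; granting this, $a$ fails to lie in the intersection.

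First I would fix $a\notin I$ and apply Zorn's lemma to the collection $\Omega$ of all $\Sigma$-perfect ideals $J$ with $I\subseteq J$ and $a\notin J$. This collection is nonempty since it contains $I$, and the union of a chain of $\Sigma$-perfect ideals is again $\Sigma$-perfect — if $bb^\sigma$ lies in the union it already lies in one member of the chain, whence so do $b$ and $b^\sigma$ — while manifestly still avoiding $a$. Thus chains have upper bounds and I obtain a maximal element $\p$ of $\Omega$.

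The main step, and the crux of the whole argument, is to show that $\p$ is prime; this is where I expect the real work to lie. I would argue by contradiction: suppose $b,c\notin\p$ but $bc\in\p$. The $\Sigma$-perfect ideals $\{\p\cup\{b\}\}_\Sigma$ and $\{\p\cup\{c\}\}_\Sigma$ properly contain $\p$, so by maximality of $\p$ in $\Omega$ both must contain $a$, giving $a\in\{\p\cup\{b\}\}_\Sigma\cap\{\p\cup\{c\}\}_\Sigma$. By Lemma~\ref{perf2} this intersection equals $\{(\p\cup\{b\})(\p\cup\{c\})\}_\Sigma$, and since every generator $xy$ with $x\in\p\cup\{b\}$ and $y\in\p\cup\{c\}$ lies in $\p$ — the only nontrivial case being $bc\in\p$ — we have $(\p\cup\{b\})(\p\cup\{c\})\subseteq\p$ and therefore $a\in\{\p\}_\Sigma=\p$, a contradiction. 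It is precisely the multiplicative behaviour of perfect closures recorded in Lemma~\ref{perf2} that makes this step go through cleanly.

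Finally I would verify that $\p\in\spec^{(\Sigma)}(R)$, i.e.\ that $\sigma^{-1}(\p)=\p$ for every $\sigma\in\Sigma$. Being $\Sigma$-perfect, $\p$ is a $\Sigma$-ideal, since for $a\in\p$ we have $aa^\sigma\in\p$ and hence $a^\sigma\in\p$, giving $\p\subseteq\sigma^{-1}(\p)$; conversely, if $a^\sigma\in\p$ then $aa^\sigma\in\p$ forces $a\in\p$, so $\sigma^{-1}(\p)\subseteq\p$. Thus $\p$ is $\Sigma$-reflexive, so $\p\in V^{(\Sigma)}(I)$ with $a\notin\p$, which establishes the reverse inclusion and completes the proof.
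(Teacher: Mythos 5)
Your proof is correct and takes essentially the same route as the paper's: Zorn's lemma produces a maximal $\Sigma$-perfect ideal containing $I$ and avoiding the given element, primality is forced by applying Lemma~\ref{perf2} to $\{\p\cup\{b\}\}_\Sigma\cap\{\p\cup\{c\}\}_\Sigma$ and noting that $(\p\cup\{b\})(\p\cup\{c\})\subseteq\p$, and membership in $\spec^{(\Sigma)}(R)$ follows because perfect ideals are reflexive. The only difference is that you spell out details the paper leaves implicit, namely that unions of chains of $\Sigma$-perfect ideals are $\Sigma$-perfect and the explicit two-inclusion verification that $\Sigma$-perfect implies $\sigma^{-1}(\p)=\p$ for every $\sigma\in\Sigma$.
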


\begin{proof}
Suppose $I$ is a $\Sigma$-perfect difference ideal and $x\notin I$. It suffices to find
a $\p\in V^{(\Sigma)}(I)$ such that $x\notin\p$. By Zorn's lemma, there exists a maximal
$\Sigma$-perfect ideal $J\supseteq I$, $x\notin J$.

Suppose that $J$ is not prime, i.~e.\ there exist $a,b\in R$, $ab\in J$, $a\notin J$, $b\notin J$.
From maximality of $J$, $x\in\{J\cup\{a\}\}_\Sigma$ and  $x\in\{J\cup{b}\}_\Sigma$. But then
$$
x\in\{J\cup\{a\}\}_\Sigma\cap\{J\cup\{b\}\}_\Sigma=\{J\cup\{a\}\cdot J\cup\{b\}\}_\Sigma=\{J\}_\Sigma
=J,$$
which is a contradiction.

Thus $J$ is prime and for every $\sigma\in\Sigma$, $\sigma^{-1}J=J$
since $J$ is $\Sigma$-perfect.
\end{proof}

\begin{corollary}\label{wknulsatz}
$V^{(\Sigma)}(I)\subseteq V^{(\Sigma)}(J)$ if and only if $\{I\}_\Sigma\supseteq\{J\}_\Sigma$.
\end{corollary}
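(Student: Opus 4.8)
The plan is to deduce the corollary directly from the intersection formula of Proposition~\ref{perfintrs}, after making the elementary observation that the closed set $V^{(\Sigma)}(I)$ depends only on the $\Sigma$-perfect closure $\{I\}_\Sigma$.

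First I would record that every $\p\in\spec^{(\Sigma)}(R)$ is $\Sigma$-perfect. Indeed, such a $\p$ is prime and satisfies $\sigma^{-1}(\p)=\p$ for all $\sigma\in\Sigma$, so $a^\sigma\in\p$ if and only if $a\in\p$; hence if $aa^\sigma\in\p$, then primality forces $a\in\p$ or $a^\sigma\in\p$, and reflexivity then puts both $a$ and $a^\sigma$ in $\p$. Consequently, for any such $\p$ and any subset $I$ we have $I\subseteq\p$ if and only if $\{I\}_\Sigma\subseteq\p$, because $\{I\}_\Sigma$ is by definition the smallest $\Sigma$-perfect ideal containing $I$. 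In particular $V^{(\Sigma)}(I)=V^{(\Sigma)}(\{I\}_\Sigma)$, and I may freely replace $I$ and $J$ by their perfect closures throughout.

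For the implication $\{I\}_\Sigma\supseteq\{J\}_\Sigma\Rightarrow V^{(\Sigma)}(I)\subseteq V^{(\Sigma)}(J)$, I would simply invoke monotonicity: any $\p\in V^{(\Sigma)}(I)$ contains $\{I\}_\Sigma$, hence contains $\{J\}_\Sigma\subseteq\{I\}_\Sigma$, and therefore lies in $V^{(\Sigma)}(J)$. For the converse, assume $V^{(\Sigma)}(I)\subseteq V^{(\Sigma)}(J)$. Since $\{I\}_\Sigma$ and $\{J\}_\Sigma$ are $\Sigma$-perfect, Proposition~\ref{perfintrs} expresses each as the intersection of the reflexive primes containing it:
$$
\{I\}_\Sigma=\bigcap_{\p\in V^{(\Sigma)}(I)}\p,\qquad \{J\}_\Sigma=\bigcap_{\p\in V^{(\Sigma)}(J)}\p.
$$
Intersecting over a larger index set yields a smaller ideal, so the containment of closed sets gives $\{J\}_\Sigma\subseteq\{I\}_\Sigma$ directly.

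Since the genuine content has already been extracted in Proposition~\ref{perfintrs}, I do not anticipate any real obstacle here; the only point requiring care is the verification that prime $\Sigma$-reflexive ideals are automatically $\Sigma$-perfect, which underlies the identification $V^{(\Sigma)}(I)=V^{(\Sigma)}(\{I\}_\Sigma)$ that makes both directions run.
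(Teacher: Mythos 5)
Your proof is correct and follows the route the paper intends: the corollary is stated immediately after Proposition~\ref{perfintrs} precisely because it falls out of the intersection formula applied to $\{I\}_\Sigma$ and $\{J\}_\Sigma$, together with the observation that $V^{(\Sigma)}(I)=V^{(\Sigma)}(\{I\}_\Sigma)$ (prime $\Sigma$-reflexive ideals being $\Sigma$-perfect). Your explicit verification of that last point, which the paper records only in the strict case, is a welcome bit of care but does not change the argument.
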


\begin{remark}
With some extra care, a similar string of results to the above can be established
even in the case where $\Sigma$ is just a set of monomorphisms on $R$, without
$(R,\Sigma)$ being a difference ring, but we decided to abandon this generality since
it did not contribute to the geometrical picture we are describing. The results are
as follows. If we denote by $S^\Sigma$ and $T^\Sigma$ the $\Sigma$-invariant closures of 
$S$ and $T$, from the proof of \ref{perf1} we obtain 
$\{S\}_\Sigma\{T\}_\Sigma\subseteq\{S^\Sigma T^\Sigma\}_\Sigma$.
The analogue of \ref{perf2} is $\{S\}_\Sigma\cap\{T\}_\Sigma=\{S^\Sigma T^\Sigma\}_\Sigma$,
and the statement of \ref{perfintrs} is the same. The proof of \ref{perfintrs} is modified
as follows. If $ab\in J$ and $J$ is perfect, then \ref{perfwm} implies that $J$ is well-mixed
so $a^{\nu_1}b^{\nu_2}\in J$ for any $\nu_1,\nu_2\in J$, and we have:
$$
x\in\{J\cup\{a\}\}_\Sigma\cap\{J\cup\{b\}\}_\Sigma=\{(J\cup\{a\})^\Sigma\cdot (J\cup\{b\})^\Sigma\}_\Sigma=\{J\}_\Sigma
=J,$$
and we obtain the same conclusion.
\end{remark}

\begin{corollary}\label{qcompact}
Let $(R,\Sigma)$ be a difference ring, and let $f\in R$. Then $D^{(\Sigma)}(f)$ is quasi-compact.
\end{corollary}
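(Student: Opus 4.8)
The plan is to reduce the statement to a finiteness property of the $\Sigma$-perfect closure operation, exactly as one proves quasi-compactness of basic opens in ordinary algebraic geometry, but with the radical replaced by $\{\cdot\}_\Sigma$ and the Nullstellensatz replaced by Corollary \ref{wknulsatz}.

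First I would recall that the sets $D^{(\Sigma)}(g)$, $g\in R$, form a basis for the topology of $\spec^{(\Sigma)}(R)$, being the traces of the basic opens $D(g)$ of $\spec(R)$; hence it suffices to show that any covering of $D^{(\Sigma)}(f)$ by basic opens $D^{(\Sigma)}(g_i)$, $i\in\Lambda$, admits a finite subcover. Taking complements inside $\spec^{(\Sigma)}(R)$, the covering condition $D^{(\Sigma)}(f)\subseteq\bigcup_{i\in\Lambda} D^{(\Sigma)}(g_i)$ is equivalent to
$$
\bigcap_{i\in\Lambda} V^{(\Sigma)}(g_i) = V^{(\Sigma)}(\{g_i : i\in\Lambda\})\subseteq V^{(\Sigma)}(f).
$$
By Corollary \ref{wknulsatz} this is in turn equivalent to $\{f\}_\Sigma\subseteq\{\{g_i : i\in\Lambda\}\}_\Sigma$, and in particular to the membership $f\in\{\{g_i : i\in\Lambda\}\}_\Sigma$.

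The heart of the argument is then the observation that this membership only involves finitely many of the $g_i$. Unwinding the construction $\{T\}_\Sigma=\bigcup_n T_n$ with $T_0=\{g_i:i\in\Lambda\}$ and $T_{n+1}=[T_n]'_\Sigma$: a single application of the $\Sigma$-ideal operation $[\cdot]_\Sigma$ produces finite $R$-linear combinations of $\langle\Sigma\rangle$-transforms of finitely many elements, and a single application of the operation $S\mapsto S'$ only requires one witness $\nu\in\N[\Sigma]$ with $h^\nu\in S$; hence each element of $T_{n+1}$ depends on finitely many elements of $T_n$. By induction on $n$, every element of $T_n$ — and thus every element of $\{\{g_i:i\in\Lambda\}\}_\Sigma=\bigcup_n T_n$ — already lies in $\{\{g_i : i\in\Lambda_0\}\}_\Sigma$ for some finite $\Lambda_0\subseteq\Lambda$. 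Applying this to $f$ yields such a $\Lambda_0$; since $\{\{g_i:i\in\Lambda_0\}\}_\Sigma$ is a $\Sigma$-perfect ideal containing $f$, it contains $\{f\}_\Sigma$, so reversing the equivalences via Corollary \ref{wknulsatz} gives $V^{(\Sigma)}(\{g_i:i\in\Lambda_0\})\subseteq V^{(\Sigma)}(f)$, i.e.\ $D^{(\Sigma)}(f)\subseteq\bigcup_{i\in\Lambda_0} D^{(\Sigma)}(g_i)$, the desired finite subcover.

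I expect the finitary induction of the previous paragraph to be the only genuinely delicate step: one must track that at every layer of the closure construction only finitely many generators are consulted, and that the nested $[\cdot]_\Sigma$ and $(\cdot)'$ operations compose without secretly introducing infinitely many of the $g_i$. Everything else is a formal translation between the lattice of closed subsets of $\spec^{(\Sigma)}(R)$ and the lattice of $\Sigma$-perfect ideals afforded by \ref{wknulsatz}.
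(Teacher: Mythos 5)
Your proof is correct and takes essentially the same route as the paper's: both translate the covering condition into the membership $f\in\{\{g_i:i\in I\}\}_\Sigma$ via Corollary~\ref{wknulsatz} and then invoke the finitary character of the perfect closure $\{\cdot\}_\Sigma$. The paper simply asserts that this membership already holds for a finite subfamily, whereas you justify it by the layer-by-layer induction on $T_{n+1}=[T_n]'_\Sigma$ --- a welcome amplification of the same argument, not a different one.
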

\begin{proof}
It is clear from \ref{wknulsatz} that $D^{(\Sigma)}(g_i)$, $i\in I$, cover $D^{(\Sigma)}(f)$
if and only if $f\in\{\{g_i:i\in I\}\}_\Sigma$, which holds
 if and only if $f\in\{\{g_i:i\in I_0\}\}_\Sigma$ for some
finite $I_0\subseteq I$.
\end{proof}
Clearly, if $(A,\Sigma)$ is a difference ring with $\Sigma$ \emph{finite}, and $f\in A$, \ref{qcompact}
allows us to conclude that $D^\Sigma(f)$ is quasi-compact.

\begin{definition}
Let $(A,\Sigma)$ be a difference ring, and let $f\in A$. The \emph{multiplicative system of generalised
powers} of $f$ is the set $S_{f_\Sigma}=\{f^\nu:\nu\in\N[\Sigma]\}$.
The 
\emph{saturated multiplicative system} associated with $f$ is the set 
$$
S_{\{f\}_\Sigma}=\{g\in A:\{g\}_\Sigma\supseteq\{f\}_\Sigma\}\supseteq S_{f_\Sigma}.
$$
In view of \ref{wknulsatz}, $g\in S_{\{f\}_\Sigma}$ if and only if 
$D^{(\Sigma)}(g)\supseteq D^{(\Sigma)}(f)$. 

Lemma~\ref{perf2} shows that
$S_{\{f\}_\Sigma}$ is a multiplicative set and we define the difference ring localisations
$A_{f_\Sigma}=S_{f_\Sigma}^{-1}A$ and $A_{\{f\}_\Sigma}=S_{\{f\}_\Sigma}^{-1}A$.
\end{definition}

\begin{proposition}\label{wmaff}
Let $(A,\Sigma)$ be a well-mixed difference ring (or even ring with a set of monomorphisms),
$f\in A$. 
\begin{enumerate}
\item\label{jennn}
Both canonical morphisms
$$
A_{f_\Sigma}\to A_{\{f\}_\Sigma}\stackrel{\theta}{\to}\OO_{\spec^{(\Sigma)}A}(D^{(\Sigma)}(f)),
$$
are injective.
\newcounter{enumTemp}
    \setcounter{enumTemp}{\theenumi}
\end{enumerate}
If moreover $D^\Sigma(f)$ is quasi-compact, we have the following.
\begin{enumerate}\setcounter{enumi}{\theenumTemp}
\item \label{dvaaa}
For each $\bar{s}\in\OO_{X^\Sigma}(D^\Sigma(f))$, there exist $g_1,\ldots,g_r\in A$ such that
$D^\Sigma(f)=\cup_i D^\Sigma(g_i)$ and there is a section $s\in\OO_X(\cup_i D(g_i))$
such that $\bar{s}(x)=s(x)$ for $x\in D^\Sigma(f)$.
\item\label{dvaipol}
Let $\bar{s}\in\OO_{X^\Sigma}(D^\Sigma(f))$. The ideal
 $\mathop{\rm Ann}(\bar{s})=\{g\in A:g\bar{s}=0\}$ is well-mixed.
\item\label{triii}
Suppose $\bar{s}\in\OO_{X^\Sigma}(D^\Sigma(f))$ and $\p\in D^\Sigma(f)$ such that
$\bar{s}(\p)=0$. Then there is a $g\notin\p$ such that 
$g\bar{s}=0$ (on $D^\Sigma(f)$).
\item\label{cetriii}
Let $\bar{s}\in\OO_{X^\Sigma}(D^\Sigma(f))$ and $\p\in D^\Sigma(f)$. There exist
$g\notin\p$ and $a\in A$ such that $g\bar{s}-a=0$.
\item\label{pettt}
Let $\bar{s}\in\OO_{X^\Sigma}(X^\Sigma)$ such that $\bar{s}\restriction D^{(\Sigma)}(f)=0$.
Then there exists a $\nu\in\N[\sigma]$ such that $f^\nu\bar{s}=0$ (on $X^\Sigma$).
\item\label{sesttt}
There exist canonical injections $A\stackrel{i}{\hookrightarrow}\bar{A}=
\OO_{X^\Sigma}(X^\Sigma)\hookrightarrow\OO_{X^{(\Sigma)}}(X^{(\Sigma)})$ inducing an isomorphism
of difference schemes $(\lexp{a}{i},\tilde{\imath}):\spec^\Sigma(\bar{A})\to\spec^\Sigma(A)$.
\end{enumerate}
\end{proposition}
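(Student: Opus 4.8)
The plan is to read Proposition~\ref{wmaff} as the difference-geometric analogue of the classical affine comparison theorem (that the structure sheaf of $\spec A$ has sections $A_f$ over $D(f)$, and that global sections recover $A$), and to run the standard two-step argument---injectivity of the comparison maps first, then local representability of abstract sections---systematically replacing radicals by the $\Sigma$-perfect closure $\{-\}_\Sigma$ and tracking well-mixedness throughout. The whole proposition splits accordingly: part~\ref{jennn} is the injectivity statement and is independent of quasi-compactness, while parts~\ref{dvaaa}--\ref{sesttt} are the representability/recovery statements and all rest on $D^\Sigma(f)$ being quasi-compact (guaranteed for $\Sigma$ finite by \ref{qcompact}).

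For part~\ref{jennn} the key preliminary observation is that, since $A$ is commutative and well-mixed, $\mathop{\rm Ann}(a)$ is a \emph{well-mixed $\Sigma$-ideal} for every $a\in A$: from $ga=0$, read as $ag=0$, well-mixedness gives $ag^\sigma=0$, so $g^\sigma\in\mathop{\rm Ann}(a)$; and from $gha=0$, read as $(ga)\cdot h=0$, well-mixedness gives $gah^\sigma=gh^\sigma a=0$, so $gh^\sigma\in\mathop{\rm Ann}(a)$. Now suppose $a/s$ (with $s\in S_{\{f\}_\Sigma}$, or $s\in S_{f_\Sigma}$) maps to $0$ in $\OO_{\spec^{(\Sigma)}A}(D^{(\Sigma)}(f))$. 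Vanishing of every stalk means $\mathop{\rm Ann}(a)\not\subseteq\p$ for all $\p\in D^{(\Sigma)}(f)$, i.e.\ $V^{(\Sigma)}(\mathop{\rm Ann}(a))\subseteq V^{(\Sigma)}(f)$; by \ref{wknulsatz} this forces $f\in\{\mathop{\rm Ann}(a)\}_\Sigma$, and since $\mathop{\rm Ann}(a)$ is well-mixed, \ref{perfwm}(3) yields $f^\nu\in\mathop{\rm Ann}(a)$ for some $\nu\in\N[\Sigma]$, that is $f^\nu a=0$ with $f^\nu\in S_{f_\Sigma}$. Hence $a/s=0$ already in the smallest localisation, which proves both maps (and their composite) injective at once.

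The technical heart is part~\ref{dvaaa}, which I expect to be the main obstacle. Unwinding $\OO_{X^\Sigma}=i^{-1}\OO_X$, a section $\bar s$ over $D^\Sigma(f)$ is, near each point, the pullback of a genuine section of $\OO_X$ defined on an open of $X$ meeting $X^\Sigma$; refining such opens to basic $D(g)$ (these form a basis of $X$) and using quasi-compactness of $D^\Sigma(f)$ produces finitely many $g_1,\dots,g_r$ with $D^\Sigma(f)=\bigcup_i D^\Sigma(g_i)$ and honest representatives $s_i\in\OO_X(D(g_i))=A_{g_i}$ of $\bar s$ on $D^\Sigma(g_i)$. The difficulty is reconciling these representatives: on an overlap $s_i-s_j$ is only required to vanish on the $X^\Sigma$-trace $D^\Sigma(g_ig_j)$, not on all of $D(g_ig_j)$, so the $s_i$ do not glue in $\OO_X$ automatically. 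One must shrink the cover and adjust the $s_i$---using the injectivity of part~\ref{jennn} on overlaps to control the discrepancies---until they patch into a single $s\in\OO_X(\bigcup_i D(g_i))$ agreeing with $\bar s$ on $D^\Sigma(f)$. This reconciliation, compounded by the constant need to pass between the union spectrum $X^\Sigma$ (where the sheaf lives) and the intersection spectrum $X^{(\Sigma)}$ (where the Nullstellensatz \ref{perfintrs},~\ref{wknulsatz} is available), is where the real work lies.

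With part~\ref{dvaaa} established, the rest follows by pulling sheaf-theoretic assertions back to $A$ along the local representatives. Part~\ref{cetriii} is immediate, since $s_i=a/g_i^n$ gives $g_i^n\bar s=a$ near $\p$; part~\ref{triii} records that a section vanishing at $\p$ is annihilated by some $g\notin\p$; and part~\ref{dvaipol}, the well-mixedness of $\mathop{\rm Ann}(\bar s)$, reduces via these representatives to the ring-level well-mixedness of annihilators established in part~\ref{jennn}---this reduction being the one further point that requires care. Part~\ref{pettt} then combines \ref{dvaipol} with the Nullstellensatz exactly as in part~\ref{jennn}: vanishing of $\bar s$ on $D^{(\Sigma)}(f)$ yields $f\in\{\mathop{\rm Ann}(\bar s)\}_\Sigma$, whence $f^\nu\bar s=0$ by well-mixedness of $\mathop{\rm Ann}(\bar s)$. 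Finally, in part~\ref{sesttt} the injection $A\hookrightarrow\bar A$ is part~\ref{jennn} with $f=1$, the second injection is a restriction map, and the induced $\spec^\Sigma(\bar A)\stackrel{\sim}{\to}\spec^\Sigma(A)$ is an isomorphism because it is a bijection on points (controlled by parts~\ref{pettt} and~\ref{jennn}) that is the identity on stalks, since $\OO_{X^\Sigma,\p}\cong A_\p\cong\bar A_{\bar\p}$ at corresponding primes; hence it is an isomorphism of locally ringed spaces respecting the difference structure.
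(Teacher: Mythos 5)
Your part~(\ref{jennn}) is correct and is essentially the paper's own argument: pass to the annihilator of the relevant numerator, observe that it is a well-mixed $\Sigma$-ideal because $A$ is well-mixed, and combine \ref{wknulsatz} with \ref{perfwm} to produce a generalised power $f^\nu$ annihilating it. The genuine gap is at part~(\ref{dvaaa}), precisely the point you yourself single out as ``where the real work lies'' and then leave unresolved. Your proposed repair --- shrink the cover and adjust the $s_i$, ``using the injectivity of part~(\ref{jennn}) on overlaps to control the discrepancies'' --- cannot succeed as described. What part~(\ref{jennn}) gives on an overlap is that the discrepancy $s_i-s_l=c/(g_ig_l)^N$ dies in the \emph{generalised} localisation $A_{(g_ig_l)_\Sigma}$, i.e.\ $(g_ig_l)^\nu c=0$ for some $\nu\in\N[\Sigma]$; but the factors $\tau(g_ig_l)$ for nontrivial $\tau\in\langle\Sigma\rangle$ are not invertible in $A_{g_ig_l}=\OO_X(D(g_ig_l))$, so this does not make $s_i=s_l$ there. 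Nor does shrinking help: ordinary powers give $D(g^n)=D(g)$, while passing to a generalised power $g^\mu$ shrinks the trace $D^\Sigma(g^\mu)$ (a point of $\spec^{\sigma_0}(A)$ avoiding $g$ need not avoid $\tau(g)$) and so destroys the covering of $D^\Sigma(f)$.

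The missing idea is a small lemma that the proposition's hypotheses (well-mixed, operators \emph{injective}) make available, and which the paper itself deploys in its proof of~(\ref{dvaipol}) when it passes from $g_i^{\nu_i}aba_i=0$ to $g_i^{|\nu_i|}a\sigma(b)a_i=0$: annihilation by a generalised power upgrades to annihilation by a true power. Indeed, from $\sigma(x)y=0$, well-mixedness gives $\sigma(x)\sigma(y)=\sigma(xy)=0$, hence $xy=0$ by injectivity of $\sigma$; stripping the factors of $g^\nu$ one at a time yields $g^\nu c=0\Rightarrow g^{|\nu|}c=0$. With this, the $s_i$ of part~(\ref{dvaaa}) glue \emph{outright}, no adjustment needed: the numerator $c$ of $s_i-s_l$ has well-mixed annihilator contained in no prime of $D^{(\Sigma)}(g_ig_l)$, so $(g_ig_l)^\nu c=0$ by \ref{wknulsatz} and \ref{perfwm}, hence $(g_ig_l)^{|\nu|}c=0$, and $g_ig_l$ \emph{is} invertible on $D(g_ig_l)$, so $s_i=s_l$ there. (The paper is admittedly laconic at this step, but this is the mechanism that justifies its assertion.) Without this lemma your sketches of the remaining parts also stay incomplete: the paper's proofs of (\ref{triii}) and (\ref{dvaipol}) run exactly through the glued section and this power upgrade; your ``near $\p$'' version of (\ref{cetriii}) still needs (\ref{triii}) to make $g\bar s-a$ vanish on all of $D^\Sigma(f)$ rather than just near $\p$; and the injectivity of $\spec^\Sigma(\bar A)\to\spec^\Sigma(A)$ on points in (\ref{sesttt}) rests on (\ref{cetriii}), not on (\ref{pettt}) and (\ref{jennn}) as you claim.
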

\begin{proof}

\noindent(\ref{jennn}) Recall that, for an element $a/g\in A_{\{f\}_\Sigma}$,
and $\p\in D^{(\Sigma)}(f)\subseteq D^{(\Sigma)}(g)$, $\theta(a/g)(\p)=a/g\in A_\p$. 

If $\theta(a/g)=\theta(b/h)$, then for every $\p\in D^{(\Sigma)}(f)$, $a/g=b/h$ in
$A_\p$, i.~e., there exists an  $l_\p\notin\p$, $l_\p(h a-g b)=0$ in $A$. 
Let $\mathfrak{a}=\mathop{\rm Ann}(h a-g b)$. 
 Thus, for every $\p\in D^{(\Sigma)}(f)$,
$l_\p$ witnesses 
$\mathfrak{a}\not\subseteq\p$.

 In other words, $V^{(\Sigma)}(\mathfrak{a})\cap D^{(\Sigma)}(f)=\emptyset$ so
  $V^{(\Sigma)}(\mathfrak{a})\subseteq V^{(\Sigma)}(f)$. Note that the assumption that
  $A$ is $\Sigma$-well-mixed implies that
$\mathfrak{a}$ is a $\Sigma$-well-mixed ideal. By \ref{wknulsatz}, we conclude that
$f\in\{\mathfrak{a}\}_\Sigma$ and by \ref{perfwm}, there exists a $\nu\in\N[\Sigma]$ such
that $f^\nu\in\mathfrak{a}$. Thus, $f^\nu(h a-g b)=0$ so $a/g=b/h$ in
$A_{\{f\}_\Sigma}$. Had we started with $g=f^{\nu_1}$ and $h=f^{\nu_2}$, we would have finished
with an equality in $A_{f_\Sigma}$, so this not only shows that $\theta$ is injective but also that the composite
map is injective.

\noindent(\ref{dvaaa}) By definition, $\OO_{X^\Sigma}=j^{-1}\OO_X$, where
$j:X^\Sigma\hookrightarrow X$ is the inclusion. Unravelling the definition of the
inverse image sheaf in the case of an inclusion (\cite{EGAI}, \S0, 3.7.1) we get that
if $\bar{s}\in\OO_{X^\Sigma}(D^\Sigma(f))$, then for every $x\in D^\Sigma(f)$
there exists a $g_x\in A$ with $x\in D^\Sigma(g_x)\subseteq D^\Sigma(f)$ and a
section $s\in\OO_X(D(g_x))$ such that $\bar{s}(z)=s(z)$ for every $z\in D^\Sigma(g_x)$.
Using the assumption that $D^\Sigma(f)$ is quasi-compact, 
we find a finite subcovering $D^\Sigma(f)=\cup_i D^\Sigma(g_i)$
of $\{D^\Sigma(g_x):x\in D^\Sigma(f)\}$ and $\bar{s}\restriction D^\Sigma(g_i)$ is
represented by a section $s_i\in\OO_X(D(g_i))$ which glue together to 
form a section $s\in\OO_X(\cup_i D(g_i))$, as required.

\noindent(\ref{dvaipol}) 
Suppose $ab\in\mathop{\rm Ann}(\bar{s})$. With the notation of (\ref{dvaaa}), let $s_i=s\restriction D(g_i)=a_i/{g_i^{r_i}}$ for some $r_i$ and we have that for every $i$,
$\theta_{{g_i}_\sigma}(aba_i/{g_i^{r_i}})=0$ on $D^{(\Sigma)}(g_i)$. By (\ref{jennn}), there
exists a $\nu_i$ such that $g_i^{\nu_i}aba_i=0$. By well-mixedness, we get that 
$g_i^{|\nu_i|}a\sigma(b)a_i=0$ and since $g_i^{|\nu_i|}$ is invertible on $D(g_i)$,
we conclude that  $a\sigma(b)s_i=0$ on $D(g_i)$ (and therefore on $D^\Sigma(g_i)$)
for every $i$, so $a\sigma(b)\in\mathop{\rm Ann}(\bar{s})$.

\noindent(\ref{triii}) Assume that $\bar{s}$ is represented by an $s$ as in (\ref{dvaaa})
and that $\p$ falls in some $D^\Sigma(g_i)$. Then $s_i=s\restriction D(g_i)$ is
represented by $a_i/{g_i^{r_i}}$ for some $r_i$, and $a_i/{g_i^{r_i}}=0$ in $A_\p$,
so there exists a $h\notin\p$ such that $ha_i=0$ in $A$.
Alternatively, since $g_i$ is invertible on 
$D(g_i)$, we get that  that $hs_i=0$ on $D(g_i)$.

By replacing $\bar{s}$ by $h\bar{s}$, we may assume
$s_i=0$. For any $l$, on $D(g_i)\cap D(g_l)=D(g_ig_l)$, $s_l$ (represented by $a_l/{g_l^{r_l}}$)
coincides with $s_i$ (represented by $0/1$) so there exists an $n_l$ such that
$(g_ig_l)^{n_l}a_l=0$ in $A$. %
Since $g_l$ is invertible on $D(g_l)$, we get that $g_i^{n_l}s_l=0$ on $D(g_l)$.
Letting $n=\max\{n_l\}$, $g_i^ns=0$ on $\cup_l D(g_l)$ so
$g_i^n\bar{s}=0$ on $D^\Sigma(f)$.

\noindent(\ref{cetriii}) If $\bar{s}(\p)=a/g\in A_\p$, apply (\ref{triii}) to $g\bar{s}-a$.

\noindent(\ref{pettt}) Claim (\ref{triii}) provides, for each $\p\in D^{(\Sigma)}(f)$ a witness
to $\mathop{\rm Ann}(\bar{s})\not\subseteq\p$. Thus, 
$V^{(\Sigma)}(\mathop{\rm Ann}(\bar{s}))\cap D^{(\Sigma)}(f)=\emptyset$ so we conclude that 
$f\in\{\mathop{\rm Ann}(\bar{s})\}_\Sigma$, and, since (\ref{dvaipol}) guarantees that
 $\mathop{\rm Ann}(\bar{s})$ is well-mixed,
according to \ref{perfwm} there exists a $\nu\in\N[\sigma]$ with $f^\nu\in\mathop{\rm Ann}(\bar{s})$.

\noindent(\ref{sesttt}) To see that $i:A\rightarrow\bar{A}$ is injective, note that the composite
of the given canonical maps is injective by (\ref{jennn}) while the injectivity of the right map 
is guaranteed by (\ref{pettt}). Consider 
$\lexp{a}{i}:\spec^\Sigma(\bar{A})\to\spec^\Sigma(A)$, given by 
$\lexp{a}{i}(\bar{\p})=i^{-1}(\bar{\p})=\bar{\p}\cap A$. This map is surjective since for every 
$\sigma\in\Sigma$ and $\p\in\spec^\sigma(A)$, the ideal 
$\{\bar{s}\in\bar{A}:\bar{s}(\p)\in\p A_\p\}\in\spec^\sigma(\bar{A})$
 maps to $\p$. To see that
it is injective, fix a $\p\in\spec^\Sigma(A)$ and take some $\bar{\p}\in\spec^\Sigma(\bar{A})$
such that $\bar{\p}\cap A=\p$. Given $\bar{s}\in\bar{A}$, by (\ref{cetriii}), there exist
$g\notin\p$ and $a\in A$ such that $g\bar{s}-a=0$ on $\spec^\Sigma(A)$.
Thus $\bar{s}\in\bar{\p}$ if and only if $a\in\p$ so $\bar{\p}$ is uniquely determined by $\p$. 

To see that $i$ moreover defines an isomorphism of locally ringed spaces, it suffices to note
that if $\lexp{a}{i}(\bar{\p})=\p$, $\tilde{\imath}^\sharp_{\bar{\p}}:A_\p\to\bar{A}_{\bar{\p}}$
is an isomorphism.
\end{proof}

\begin{remark}
Using \ref{qcompact}, the conditional statements (\ref{dvaaa})--(\ref{sesttt}) become unconditional
if we replace every occurrence of $X^\Sigma$ and $D^\Sigma(f)$ by $X^{(\Sigma)}$ and 
$D^{(\Sigma)}(f)$, respectively.
\end{remark}

\begin{remark}\label{univgs}
A commutative diagram of well-mixed difference rings
$$
 \begin{tikzpicture} 
\matrix(m)[matrix of math nodes, row sep=2.6em, column sep=2.8em, text height=1.5ex, text depth=0.25ex]
 {\bar{A} & \bar{B}\\
   A &  B\\}; 
\path[->,font=\scriptsize,>=to, thin]
(m-2-1) edge node[auto] {$i$} (m-1-1) edge node[below] {$\alpha$} (m-2-2)
(m-2-2) edge node[right] {$j$} (m-1-2)
(m-1-1) edge node[above] {$\beta$} (m-2-2);
\end{tikzpicture}
$$
is uniquely completed to a square by $\bar{\alpha}:\bar{A}\to\bar{B}$, i.e.,
$\bar{\alpha}=j\circ\beta$. Indeed, this follows from
$$
 \begin{tikzpicture} 
\matrix(m)[matrix of math nodes, row sep=2.6em, column sep=2.8em, text height=1.5ex, text depth=0.25ex]
 {\bar{\bar{A}} & \\
 \bar{A} & \bar{B}\\
   A &  B\\}; 
\path[->,font=\scriptsize,>=to, thin]
(m-1-1) edge node[above]{$\bar{\beta}$} (m-2-2)
(m-3-1) edge node[auto] {$i$} (m-2-1) edge node[below] {$\alpha$} (m-3-2)
(m-3-2) edge node[right] {$j$} (m-2-2)
(m-2-1) edge node[above] {$\beta$} (m-3-2) edge node[above]{$\bar{\alpha}$} (m-2-2) edge node[left]{$\bar{\imath}$} (m-1-1);
\end{tikzpicture}
$$
and the fact that $\bar{\alpha}=\bar{\beta}\circ\bar{\imath}$ and $\bar{\beta}\circ\bar{\imath}=j\circ\beta$. Of course, $\bar{\imath}$ is an isomorphism.
\end{remark}

\begin{remark}\label{intrmed}
The proof of \ref{wmaff}(\ref{sesttt}) in fact shows that for any difference ring $(A_1,\Sigma)$ such that
$(A,\sigma)\hookrightarrow(A_1,\sigma)\hookrightarrow(\bar{A},\sigma)$, we obtain an isomorphism of difference
schemes $\spec^\Sigma(A_1)\to \spec^\Sigma(A)$. This observation will prove invaluable 
for proving certain finiteness properties later on.
\end{remark}

 \begin{proposition}\label{embedcataff}
 Let $(R,\Sigma)$ and $(S,T)$ be well-mixed difference rings whose spectra are quasi-compact and let 
 $\bar{R}=\OO_{\spec^\Sigma(R)}(\spec^\Sigma(R))=H^0(\spec^\Sigma(R))$ be the ring of global sections.
 There is a natural isomorphism
 $$
 \Hom(\spec^\Sigma(R),\spec^T(S))\stackrel{\sim}{\longrightarrow}
 \Hom((S,T),(\bar{R},\Sigma)),
 $$
 where the first $\Hom$ is in the difference category over the category of locally ringed spaces and the second is in the category of difference rings.
In other words, the functor $H^0$ is left adjoint to $\spec$.
\end{proposition}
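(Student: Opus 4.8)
The plan is to establish the adjunction by producing mutually inverse maps between the two Hom-sets and verifying naturality. The key observation is that a morphism of locally ringed spaces $\spec^\Sigma(R) \to \spec^T(S)$ in the difference category carries the same data as a morphism $\spec^\Sigma(\bar R) \to \spec^T(S)$, because \ref{wmaff}(\ref{sesttt}) supplies an isomorphism $\spec^\Sigma(\bar R) \xrightarrow{\sim} \spec^\Sigma(R)$ compatible with the difference structure. So the crux is to show that difference-scheme morphisms out of $\spec^\Sigma(\bar R)$ correspond to difference-ring homomorphisms $(S,T) \to (\bar R, \Sigma)$.

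First I would define the map from right to left: given a difference-ring homomorphism $\varphi \colon (S,T) \to (\bar R, \Sigma)$, apply the functor $\spec$ of \ref{specsigma} to obtain a morphism $\spec^\Sigma(\bar R) \to \spec^T(S)$ in the difference category of locally ringed spaces, then precompose with the canonical isomorphism $(\lexp{a}{i}, \tilde{\imath}) \colon \spec^\Sigma(\bar R) \xrightarrow{\sim} \spec^\Sigma(R)$ from \ref{wmaff}(\ref{sesttt}). For the map from left to right, given a morphism $f \colon \spec^\Sigma(R) \to \spec^T(S)$ of difference schemes, I would take global sections: the sheaf morphism underlying $f$ induces a map on $H^0$, namely $S \to H^0(\spec^T(S)) \to H^0(\spec^\Sigma(R)) = \bar R$, and the compatibility of $f$ with the difference structures guarantees this global-sections map is a morphism of difference rings $(S,T) \to (\bar R, \Sigma)$.

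The two constructions are inverse to each other, and here the quasi-compactness hypotheses enter crucially, since they are exactly what makes the unconditional form of \ref{wmaff}(\ref{dvaaa})--(\ref{sesttt}) available (so that every section of $\OO_{X^\Sigma}$ on a basic open set is, up to refinement, represented by an honest section of $\OO_X$). That a difference morphism $\spec^\Sigma(\bar R) \to \spec^T(S)$ is determined by, and can be reconstructed from, its effect on global sections is the content of the strict-sense adjunction \ref{strembedcat} adapted to the $\Sigma$-setting; the role of passing to $\bar R$ is precisely to guarantee that $H^0$ sees enough, so that no information is lost when restricting from $X$ to $X^\Sigma$. The bijectivity on the nose then follows from the universal property of localisation combined with \ref{wmaff}(\ref{cetriii}), which shows that the value of any global section at a prime is pinned down by an element of $A$ up to a unit, forcing the two assignments to compose to the identity.

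The main obstacle I expect is \emph{bookkeeping with the difference structure} rather than any single deep point: one must check that the bijection of \ref{strembedcat}-type, which is essentially classical at the level of underlying locally ringed spaces, is compatible with the maps $()^\varphi \colon \Sigma \to T$ and with the $\diff$-morphism conditions. Concretely, I would verify that under the global-sections correspondence the relation $\sigma^\varphi \circ \varphi = \varphi \circ \sigma$ transports correctly, using the description of the restricted sheaf morphisms $\ztild{\sigma}$ and $\ztild{\varphi}$ and the compatibility diagram established just before \ref{specsigma}. The fibred-category formulation of \ref{fibrediff} and \ref{specsigma} makes this transparent in principle: the claimed adjunction is an adjunction of \emph{Cartesian} functors over $\diff$, so it suffices to exhibit the adjunction fibrewise (for fixed $\Sigma$, $T$ and a fixed $\diff$-morphism $\Sigma \to T$) and then observe that it is natural in the base $\diff$-morphism. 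I therefore expect the proof to reduce the genuinely new content to the single-$\sigma$ calculation already packaged in \ref{wmaff}, with the multi-endomorphism aspect handled formally.
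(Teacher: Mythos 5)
Your proposal is correct and takes essentially the same route as the paper: the two maps are constructed identically (the $\spec$ functor of \ref{specsigma} composed with the isomorphism from \ref{wmaff}(\ref{sesttt}) in one direction, global sections precomposed with $S\to H^0(\spec^T(S))$ in the other), and mutual inverseness is reduced to the same localisation considerations. The only caveat is that the paper makes your compressed final step explicit --- it recovers $(\psi,\theta)$ from its global-sections map by using locality of the stalk homomorphisms $\theta^\sharp_x$ in a commutative diagram yielding $\tilde{\varphi}^\sharp_{\bar{x}}=\tilde{\imath}^\sharp_{\bar{x}}\circ\theta^\sharp_x$, so that the morphism is pinned down stalkwise --- and note that \ref{strembedcat} is itself a special case of the present statement (the paper states it as a consequence of \ref{wmaff}), so it should be treated as an analogy rather than as a citable lemma.
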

\begin{proof}
Let $X=\spec^\Sigma(R)$, $Y=\spec^T(S)$,
$\bar{X}=\spec^\Sigma(\bar{R})$, $\bar{Y}=\spec^T(\bar{S})$, 
and write $i:R\hookrightarrow\bar{R}$ and $j:S\hookrightarrow\bar{S}$ for
the inclusions from \ref{wmaff}(\ref{sesttt}). We know by \ref{wmaff}(\ref{sesttt}) that
$(\lexp{a}{i},\tilde{\imath})$ is an isomorphism $\bar{X}\to X$ and that
$(\lexp{a}{j},\tilde{\jmath})$ is an isomorphism $\bar{Y}\to Y$.

We have already shown in \ref{specsigma} that for every $\varphi:(S,T)\to(\bar{R},\Sigma)$,
the pair $(\lexp{a}{\varphi},\tilde{\varphi})$ is a morphism 
$(\bar{X},\OO_{\bar{X}},\lexp{a}{\Sigma})\to(Y,\OO_Y,\lexp{a}{T})$, and we can produce 
a morphism $(X,\OO_{X},\lexp{a}{\Sigma})\to(Y,\OO_Y,\lexp{a}{T})$ as the composite
$(\lexp{a}{\varphi},\tilde{\varphi})\circ(\lexp{a}{i},\tilde{\imath})^{-1}$.

Suppose that we are given $(\psi,\theta):(X,\OO_X)\to(Y,\OO_Y)$ a morphism of locally
ringed spaces.
By definition, $\theta:\OO_X\to\psi_*\OO_Y$, so taking global sections 
we get $\bar{\varphi}:\bar{S}\to\bar{R}$ and
precomposing with the inclusion $i:S\to\bar{S}$ from \ref{wmaff}(\ref{jennn}) 
we obtain a morphism $\varphi:S\to\bar{R}$, $\varphi=\bar{\varphi}i$.

We claim that $(\lexp{a}{\varphi},\tilde{\varphi})=(\psi,\theta)\circ(\lexp{a}{i},\tilde{\imath})$. 
By assumption, for $x\in X$, $\theta^\sharp_x:\OO_{Y,\psi(x)}\to \OO_{X,x}$ is a local
homomorphism of local rings. Writing $x=\lexp{a}{i}(\bar{x})$, $y=\psi(x)=\lexp{a}{j}(\bar{y})$, 
the  diagram 
$$
\begin{tikzpicture} 
\matrix(m)[matrix of math nodes, row sep=2.0em, column sep=1.3em, text height=1.5ex, text depth=0.25ex]
 {S		& \bar{S}		&		& \bar{R} \\
 S_{y}& 			&R_{x} &\\
 		&\bar{S}_{\bar{y}}& 	&\bar{R}_{\bar{x}}\\}; 
\path[->,font=\scriptsize,>=to, thin]
(m-1-1) edge node[below] {$i$} (m-1-2) edge (m-2-1) edge [bend left=20] node[pos=0.6,above]{$\varphi$}(m-1-4)
(m-1-2) edge node[below] {$\bar{\varphi}$} (m-1-4) edge (m-3-2)
(m-2-1) edge node[pos=0.7,above] {$\theta^\sharp_x$} (m-2-3) edge node[left] {$\tilde{\jmath}^\sharp_{\bar{y}}$} (m-3-2) 
(m-1-2) edge  (m-2-1)
(m-1-4) edge  (m-2-3) edge (m-3-4)
(m-2-3) edge node[pos=0.35,right] {$\tilde{\imath}^\sharp_{\bar{x}}$} (m-3-4) 
(m-3-2) edge node[auto] {$\bar{\varphi}_{\bar{x}}$} (m-3-4)
(m-2-1) edge [dashed] node[above] {$\varphi_{\bar{x}}$} (m-3-4);
\end{tikzpicture}
$$
is commutative, where the top parallelogram follows from the definition of stalks and the discussion
after \ref{defspecsigma} which allows the identifications  $\OO_{Y,y}=S_y$ and $\OO_{X,x}=R_x$, and the vertical arrows are the localisation maps. It follows that 
$\varphi^{-1}(\ii_{\bar{x}})=\ii_{\psi(x)}$, 
i.e., $(\psi\circ\lexp{a}{i})(x)=\lexp{a}{\varphi}(x)$ and
that, writing $\varphi_{\bar{x}}$ for the morphism obtained by localisation of $\varphi$ at $\bar{x}$, 
$\tilde{\varphi}^\sharp_{\bar{x}}=\varphi_{\bar{x}}=\tilde{\imath}^\sharp_{\bar{x}}\circ\theta^\sharp_x$. Since a morphism is characterised by its action on stalks, we conclude that
$(\lexp{a}{\varphi},\tilde{\varphi})=(\psi,\theta)\circ(\lexp{a}{i},\tilde{\imath})$, as required.
\end{proof}

\begin{remark}\label{rem-embedcat}
It is worth remarking that, unlike in the algebraic case, $\spec$ and $H^0$ do not
determine an equivalence of categories, only a weaker notion which we might dub temporarily
`an embedding of categories' for the lack of a reference:
the unit of the adjunction $1\to \spec\circ H^0$ is a natural isomorphism, while
the counit $1\to H^0\circ\spec$ is only a natural injection by \ref{wmaff}(\ref{sesttt}).
\end{remark}

\subsection{Difference schemes}

\begin{definition}
\begin{enumerate}
\item  An \emph{affine difference scheme} 
is an object $(X,\OO_X,\Sigma)$ of the difference category over the category of locally ringed spaces, which is
isomorphic to
 some $\spec^\Sigma(R)$ for some well-mixed $(R,\Sigma)$.
 \item
A \emph{difference scheme} 
is an object $(X,\OO_X,\Sigma)$ of the difference category over the category of locally ringed spaces, which is locally an affine difference scheme.
\item 
A \emph{morphism of difference schemes} 
$(X,\OO_X,\Sigma)\to(Y,\OO_Y,T)$ is just a morphism in the difference category over
the category of locally ringed spaces.
\end{enumerate}
\end{definition}

\begin{remark}
Given a difference scheme $(X,\OO_X,\Sigma)$ and $\sigma\in\Sigma$, we 
define a locally ringed space
$X^\sigma=\{x\in X:\sigma(x)=x\}$, together with the topology and structure sheaf 
induced from $(X,\OO_X)$. Since $\OO_{X^\sigma}:=\OO_X\restriction X^\sigma$, clearly 
$\sigma^\sharp\OO_{X^\sigma}\subseteq\OO_{X^\sigma}$ and  $(X^\sigma,\OO_{X^\sigma},\sigma)$ is a strict difference
scheme. We have the following properties:
\begin{enumerate}
\item $X=\bigcup_{\sigma\in\Sigma} X^{\sigma}$.
\item For every $\sigma, \tau\in \Sigma$,
there is a unique element $\sigma^{\tau}\in \Sigma$ such that
$$
{\tau}:(X^\sigma,\OO_{X^\sigma},\sigma)\to (X^{\sigma^{\tau}}, \OO_{X^{\sigma^{\tau}}},\sigma^{\tau})
$$
is a morphism of difference schemes in the strict sense.
\item
If $\varphi:(X,\OO_X,\Sigma)\to (Y,\OO_Y,T)$ is a morphism, then for every $\sigma\in \Sigma$ there exists a $\tau:=\sigma^\varphi\in T$
such that
$$
\varphi\restriction X^{\sigma}:(X^\sigma,\OO_{X^\sigma},\sigma)\to (Y^\tau,\OO_{Y^\tau},\tau)
$$
is a morphism of difference schemes in the strict sense. 
\end{enumerate}
\end{remark}

\begin{remark}\label{fulfinorb}
If $(X,\Sigma)$ is a full difference scheme with $\Sigma$ finite, 
then the $\langle\Sigma\rangle$-orbit
of any $x\in X$ is finite of bounded length.  As a direct consequence, the
morphisms $\tau:X^\sigma\to X^{\sigma^\tau}$ discussed above are bijective
at the level of points.

Indeed, let $x\in X$, and
consider the set $O(x)=\{\sigma x:\sigma\in\Sigma\}$. Suppose $x\in X^{\tau_1}$
for some $\tau_1\in\Sigma$. For an arbitrary $\tau,\sigma\in\Sigma$, take a
 $\rho\in\Sigma$ with $\lrexp{\tau}{\rho}{{\tau_1}}=\sigma$.
 Then 
 $$
 \tau\sigma x=\rho\tau_1 x=\rho x,
 $$
 so we conclude that $\tau(O(x))\subseteq O(x)$.

This is consistent with our intuition that a full (or almost-strict) 
difference scheme is a finitary perturbation of a strict difference scheme.  
\end{remark}

\begin{proposition}\label{embedcat}
The `global sections' functor $H^0$ is left adjoint to the contravariant %
functor 
$\spec$ from the category of well-mixed difference rings
to the category of difference schemes. For any difference scheme $(X,\Sigma)$ and
any well-mixed difference ring $(S,T)$ (with $T$ finite),  %
$$
\Hom\left((X,\Sigma),(\spec^T(S),T)\right)\stackrel{\sim}{\to}\Hom\left((S,T),(H^0(X),\Sigma)\right).
$$
\end{proposition}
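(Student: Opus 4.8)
The plan is to reduce the statement to its affine incarnation \ref{embedcataff} by a covering-and-gluing argument, carrying the single global difference structure $\Sigma$ through the gluing. \emph{The forward map.} To a morphism $(\psi,\tilde\psi,()^\psi)\colon(X,\Sigma)\to(\spec^T(S),T)$ I associate the difference ring morphism obtained by applying $H^0$ to the sheaf map $\tilde\psi\colon\OO_{\spec^T(S)}\to\psi_*\OO_X$, which produces $\bar S=H^0(\spec^T(S))\to H^0(X)$, precomposing with the canonical injection $S\hookrightarrow\bar S$ of \ref{wmaff}(\ref{sesttt}), and retaining the same $\diff$-morphism $()^\psi\colon\Sigma\to T$. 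That this is a morphism of difference rings, and that the assignment is natural, is immediate from the functoriality of $H^0$ and from the definition of a morphism in the difference category.

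\emph{The inverse, affine reduction.} For the converse I would choose an open cover $\{U_i\}$ of $X$ by affine difference schemes $U_i\cong\spec^\Sigma(R_i)$; since the affine difference spectra are $\Sigma$-stable (each induced endomorphism maps $X^\tau$ into $X^{\tau^\sigma}$, as recorded before \ref{specsigma}), the charts inherit the global $\Sigma$, and invoking \ref{wmaff}(\ref{sesttt}) together with \ref{intrmed} I may assume $R_i=H^0(U_i)$ with quasi-compact spectrum; the target $\spec^T(S)$ is quasi-compact because $T$ is finite (\ref{qcompact}). Given a difference ring morphism $(\phi,()^\phi)\colon(S,T)\to(H^0(X),\Sigma)$, the restriction $\rho_i\colon H^0(X)\to H^0(U_i)=R_i$ is $\Sigma$-equivariant because $U_i$ is $\Sigma$-stable, so $(\rho_i\phi,()^\phi)\colon(S,T)\to(R_i,\Sigma)$ is again a difference ring morphism; applying \ref{embedcataff} yields morphisms $\varphi_i\colon(U_i,\Sigma)\to(\spec^T(S),T)$, all sharing the single $\diff$-component $()^\phi$.

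\emph{Gluing.} On an overlap $U_i\cap U_j$ I would pass to a cover by affine difference schemes $W$ and observe, using the naturality of the affine adjunction \ref{embedcataff} with respect to the $\Sigma$-equivariant restriction maps, that both $\varphi_i|_W$ and $\varphi_j|_W$ correspond to one and the same difference ring morphism $(S,T)\to(H^0(W),\Sigma)$, namely $\phi$ followed by the restriction $H^0(X)\to H^0(W)$. The injectivity in \ref{embedcataff} then forces $\varphi_i|_W=\varphi_j|_W$, and since morphisms of locally ringed spaces agreeing on an affine cover of the overlap agree there, the $\varphi_i$ glue to a single morphism of locally ringed spaces $\psi\colon X\to\spec^T(S)$. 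Equipped with the common $\diff$-component $()^\phi$, this defines $\Psi(\phi)\colon(X,\Sigma)\to(\spec^T(S),T)$, provided it respects the difference structure.

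\emph{The difference compatibility and the round-trips.} The step I expect to be the main obstacle is precisely verifying that $\Psi(\phi)$ lies in the difference category, i.e.\ that $\sigma^\phi\circ\psi=\psi\circ\sigma$ as morphisms $X\to\spec^T(S)$ for every $\sigma\in\Sigma$, where $\sigma^\phi\in T$ acts on the target. Because the charts are $\Sigma$-stable, I can test this identity chart by chart: on $U_i$ both sides factor through $\varphi_i$, and the equality unwinds, through \ref{embedcataff} and the functoriality of $\spec$ recorded in \ref{specsigma}, to the ring-level identity $\phi\circ\sigma^\phi=\sigma\circ\phi$ that is part of the hypothesis that $(\phi,()^\phi)$ is a morphism of difference rings. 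Had the affine cover failed to be $\Sigma$-stable, one would instead have to control the images $\sigma(U_i)$ by finitely many charts via quasi-compactness, which is the technical heart of the matter. Finally, $\Phi\circ\Psi=\mathrm{id}$ and $\Psi\circ\Phi=\mathrm{id}$ follow by testing on the affine cover and invoking the bijectivity of \ref{embedcataff}, together with the identification $\spec^\Sigma(\bar R)\cong\spec^\Sigma(R)$ of \ref{wmaff}(\ref{sesttt}) which guarantees that $H^0$ recovers the chart rings.
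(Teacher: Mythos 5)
The paper itself records no proof of \ref{embedcat} (it is stated as the globalisation of the affine case \ref{embedcataff}), so the natural benchmark is the implicit argument: reduce to \ref{embedcataff} over an affine cover and glue. Your proposal follows exactly this route, and much of it is sound: the forward map via $H^0$ and $S\hookrightarrow\bar{S}$ mirrors the affine proof; the charts are indeed $\Sigma$-stable, though this holds by the very definition of a difference scheme (the local models are difference subschemes, so the structure endomorphisms restrict to them), not because of the fact recorded before \ref{specsigma}, which concerns the internal maps of a single spectrum; and, granted a correct gluing, the difference-compatibility and round-trip checks do reduce to the affine statement.

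The genuine gap is the gluing step itself: you propose to "pass to a cover of $U_i\cap U_j$ by affine difference schemes", but nothing in the paper supplies such covers, and in the generalised setting this is a real issue rather than a formality. Unlike the strict case, the basic open $D^\Sigma(f)$ of $\spec^\Sigma(R)$ is in general \emph{not} stable under the structure endomorphisms: for $\p$ fixed by $\tau$ with $f\notin\p$ one may have $\sigma(f)\in\p$ for another $\sigma\in\Sigma$, so the image $\sigma^{-1}(\p)$ of $\p$ leaves $D^\Sigma(f)$. The natural stable candidate, the locus of primes avoiding the whole multiplicative system $S_{f_\Sigma}$ (i.e.\ $\spec^\Sigma$ of the localisation $R[1/f]_\Sigma$), is an intersection of infinitely many basic opens and hence not open in general. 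So stable affine opens need not form a basis, overlaps of charts need not be locally affine, and your gluing lemma is unsupported. The repair is to bypass sub-charts entirely: the paper's proof of \ref{embedcataff} shows that the morphism $\varphi_i\colon U_i\to\spec^T(S)$ attached to $\rho_i\phi$ has point map $x\mapsto c_x^{-1}(\ii_x)$, where $c_x\colon S\to H^0(U_i)\to\OO_{X,x}$, and stalk maps the corresponding localisations; since $H^0(X)\to H^0(U_i)\to\OO_{X,x}$ agrees with $H^0(X)\to\OO_{X,x}$, the morphisms $\varphi_i$ and $\varphi_j$ have identical point and stalk maps at every $x\in U_i\cap U_j$, and a morphism of locally ringed spaces is determined by these (a fact the paper itself invokes at the end of the proof of \ref{embedcataff}). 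Equivalently, one can define $\psi$ directly and pointwise by this recipe, which also makes transparent where $X=\bigcup_{\sigma\in\Sigma}X^\sigma$ and the locality of the stalk endomorphisms $\sigma_x$ are used to see that $\psi(x)$ lands in $\spec^T(S)$. One further bookkeeping point you should address: your chart-wise ring morphisms are paired with the global map $()^\phi\colon\Sigma\to T$, whereas a morphism into $(R_i,\Sigma_i)$ requires a map defined on the restricted structure $\Sigma_i$, so one must note that $()^\phi$ factors through (or is simply transported along) the restriction $\Sigma\to\Sigma_i$.
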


\begin{definition}
A \emph{quasi-coherent sheaf} $(\cF,\Sigma)$ on a difference scheme 
$(X,\OO_X,\Sigma)$ 
is a quasi-coherent sheaf $\cF$ on the locally ringed space $(X,\OO_X)$ so
that $(\cF,\Sigma)$ is an $(\OO_X,\Sigma)$-module in a natural sense.
A \emph{difference subscheme} of $(X,\OO_X,\Sigma)$ is the locally ringed space
equipped with $\Sigma$-structure associated with a quasi-coherent
sheaf $(\mathcal{I},\Sigma)$ of $(\OO_X,\Sigma)$-ideals.
\end{definition}

\begin{definition}
Let us give definitions of various versions of reduced schemes associated with
a difference scheme $(X,\OO_X,\Sigma)$. In each case, the underlying
topological space is that of $X$, and we specify the
defining sheaf of ideals by giving its stalks:
\begin{enumerate}
\item $X_{\text{\rm red}}$, the \emph{reduced subscheme} of $X$, defined by
$\mathcal{N}$, where $\mathcal{N}_x$ is the nilradical of $\OO_x$;
\item $X_{w}$, the \emph{well-mixed subscheme} of $X$, defined by
$\mathcal{N}_w$, where $\mathcal{N}_{w,x}$ is the least well-mixed ideal of $\OO_x$;
\item $X_{\Sigma\text{\rm-red}}$, the \emph{reflexively reduced subscheme} of $X$, defined by
$\mathcal{N}_{\Sigma\text{\rm-red}}$, where $\mathcal{N}_{\Sigma\text{\rm-red},x}$ is the 
reflexive closure of $0$ in $\OO_x$;
\item $X_{\{\Sigma\}\text{\rm-red}}$, the \emph{perfectly reduced subscheme} of $X$, defined by
$\mathcal{N}_{\{\Sigma\}\text{\rm-red}}$, where $\mathcal{N}_{\{\Sigma\}\text{\rm-red},x}$ is the 
perfect closure of $0$ in $\OO_x$.
\end{enumerate}
We shall say that $X$ is \emph{perfectly reduced} if $X=X_{\{\Sigma\}\text{\rm-red}}$,
and similarly for other notions above.
\end{definition}

\begin{definition}
Let $(X,\Sigma)$ be  a difference scheme.
\begin{enumerate}
\item
We say that $X$ is irreducible (resp.\ connected) if its underlying topological space is.
\item We say that $X$ is integral (resp.\ transformally integral) if it is irreducible and reduced (resp.\ perfectly reduced).
\end{enumerate}
\end{definition}

\subsection{Products and fibres}

\begin{definition}
\begin{enumerate}
\item Let $(S,\Sigma_0)$ be a difference scheme. The category of \emph{$(S,\Sigma_0)$-difference schemes}
has difference scheme morphisms $(X,\Sigma)\to (S,\Sigma_0)$ as objects (considered as 
structure maps), while a morphism between $(X,\Sigma)\to (S,\Sigma_0)$ and 
$(Y,T)\to (S,\Sigma_0)$ is a commutative diagram of difference scheme maps
$$
 \begin{tikzpicture} 
\matrix(m)[matrix of math nodes, row sep=0pt, column sep=1mm, text height=1.2ex, text depth=0.25ex]
 {
 |(11)|{(X,\Sigma)} & 		& |(12)|{(Y,T)}\\[.7cm]
			 & |(22)|{(S,\Sigma_0)} &	\\};
\path[->,font=\scriptsize,>=to, thin]
(11) edge (12) edge(22)
(12) edge (22);
\end{tikzpicture}
$$
\item Let $(R,\Sigma_0)$ be a difference ring. The category of 
\emph{$(R,\Sigma_0)$-difference schemes} consists of difference schemes which are locally of the form $\spec^\Sigma(A)$,
for a difference $(R,\Sigma_0)$-algebra $(A,\Sigma)$. Morphisms are required to locally
preserve the $(R,\Sigma_0)$-algebra structure.
\end{enumerate}
\end{definition}

\begin{definition}
\begin{enumerate}
\item Let $(X,\Sigma)$ be a difference scheme and $(K,\varphi)$ a difference field.
A \emph{$(K,\varphi)$-rational point} of $(X,\Sigma)$ is a morphism 
$x:\spec^\varphi(K)\to(X,\Sigma)$. When $(X,\Sigma)=\spec^\Sigma(R)$, this
means we have a point $\p\in\spec^{\Sigma}(R)$ and a local map 
$(R_\p,\varphi^x)\to(K,\varphi)$, where $\varphi^x$ is the image of $\varphi$ in
$\Sigma$ by the difference structure map $()^x:\{\varphi\}\to \Sigma$.
Alternatively, we have an inclusion $(\kk(\p),\varphi^x)\hookrightarrow (K,\varphi)$.

\item Let $(X,\Sigma)$ be a difference scheme over a difference field $(k,\sigma)$
and let $(k,\sigma)\subseteq(K,\varphi)$. The set of \emph{$(K,\varphi)$-rational points} of $(X,\Sigma)$, henceforth denoted by
$(X,\Sigma)(K,\varphi)$, is the set of all $(k,\sigma)$-morphisms $\spec^\varphi(K)\to(X,\Sigma)$.
\end{enumerate}
\end{definition}

Modulo a small technical condition, fibre products exist in the category of difference schemes. 
\begin{proposition}\label{fibreprod}
Let $(X_i,\Sigma_i)\to (S,\Sigma)$, $i=1,2$, be morphisms of difference schemes. 
If the difference structure map $\Sigma_1\to\Sigma$ is surjective, then the fibre product
$(X_1,\Sigma_1)\times_{(S,\Sigma)}(X_2,\Sigma_2)$ exists, i.e., the functor
$$
(Z,T)\mapsto \Hom_{(S,\Sigma)}((Z,T),(X_1,\Sigma_1))\times\Hom_{(S,\Sigma)}((Z,T),(X_2,\Sigma_2))$$
from the opposite category of $(S,\Sigma)$-schemes to sets is representable.
\end{proposition}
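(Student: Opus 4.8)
The plan is to reduce to the affine case and there realise the fibre product as the difference spectrum of the \emph{pushout} of difference rings, which under $\spec$ is computed as the tensor product of the underlying rings equipped with the fibre product of the difference structures. Since the formation of fibre products is local on all three schemes and the endomorphisms in a $\diff$-structure restrict to open subschemes, I would first cover $(S,\Sigma)$ by affine difference schemes $\spec^\Sigma(R)$, cover each $(X_i,\Sigma_i)$ by affines $\spec^{\Sigma_i}(A_i)$ lying over them, construct the affine fibre products below, and verify they glue along overlaps. Note that the surjectivity hypothesis on $\Sigma_1\to\Sigma$ is a condition on the globally defined difference structures alone, so it descends verbatim to every local piece.

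In the affine case, write the two structure morphisms as difference ring maps $u_i:(R,\Sigma)\to(A_i,\Sigma_i)$ in the strong dual difference category, with accompanying $\diff$-morphisms $()^{p_i}:\Sigma_i\to\Sigma$. Set $A=(A_1\otimes_R A_2)_w$, the largest well-mixed quotient of the tensor product, so as to remain in the category where \ref{wmaff} and \ref{embedcat} apply, and let $\Sigma'=\Sigma_1\times_\Sigma\Sigma_2$ be the fibre product in $\diff$; its elements are pairs $(\sigma_1,\sigma_2)$ with $\sigma_1^{p_1}=\sigma_2^{p_2}$. Each such pair acts on $A_1\otimes_R A_2$ as $\sigma_1\otimes\sigma_2$. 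The compatibilities $u_i\circ\sigma=\sigma_i\circ u_i$, valid for the \emph{common} image $\sigma=\sigma_1^{p_1}=\sigma_2^{p_2}$, are exactly what makes $\sigma_1\otimes\sigma_2$ well defined over $R$ (one checks this on a simple tensor by moving the $R$-scalar across $\otimes$), and passing to the well-mixed quotient is harmless since these operators are ring endomorphisms. The candidate fibre product is $\spec^{\Sigma'}(A)$.

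I would then verify that $(A,\Sigma')$ is a genuine object of the strong dual difference category. Componentwise conjugation $(\sigma_1,\sigma_2)^{(\tau_1,\tau_2)}=(\sigma_1^{\tau_1},\sigma_2^{\tau_2})$ again lies in $\Sigma'$, since both components have common image $\sigma^\tau$ in $\Sigma$, and it is the unique endomorphism realising the conjugation square in $A_1\otimes_R A_2$, as read off on simple tensors. The projections $\pi_i$ correspond to the ring maps $A_i\to A$ together with the $\diff$-projections $\Sigma'\to\Sigma_i$, and they factor the structure map to $(S,\Sigma)$ correctly. Here is where the hypothesis is used: surjectivity of $\Sigma_1\to\Sigma$ forces $\Sigma'\to\Sigma_2$ to be surjective, which is what guarantees that $\pi_2:\spec^{\Sigma'}(A)\to X_2$ reflects \emph{all} of $\Sigma_2$ and that the covering axiom $\spec^{\Sigma'}(A)=\bigcup_{(\sigma_1,\sigma_2)}\spec^{(\sigma_1,\sigma_2)}(A)$ is compatible with the projections rather than omitting part of $X_2$.

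Finally, the universal property follows by combining the adjunction $H^0\dashv\spec$ of \ref{embedcat} with the universal properties of the tensor product and of the fibre product in $\diff$. Given an $(S,\Sigma)$-scheme $(Z,T)$ and a pair $f_i:(Z,T)\to X_i$ over $(S,\Sigma)$, the accompanying $\diff$-maps $()^{f_i}:T\to\Sigma_i$ agree after projection to $\Sigma$, both equalling the structure $\diff$-map of $(Z,T)\to(S,\Sigma)$; hence $t\mapsto(t^{f_1},t^{f_2})$ is the unique $\diff$-map $T\to\Sigma'$ compatible with the projections. Dually, the induced ring maps $A_i\to H^0(Z)$ agree on $R$, so they factor uniquely through $A_1\otimes_R A_2$ and then through its well-mixed quotient $A$. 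Feeding this data back through \ref{embedcat} yields a unique $(Z,T)\to\spec^{\Sigma'}(A)$ recovering $f_1,f_2$ under $\pi_1,\pi_2$, and naturality in $(Z,T)$ is immediate. I expect the main obstacle to be the bookkeeping forced by the fact, recorded in \ref{rem-embedcat}, that $\spec^\Sigma$ and $H^0$ form only an embedding and not an equivalence of categories: one must pass to well-mixed quotients and to rings of global sections at exactly the right moments, check that the tensor product's difference structure survives these operations, confirm that the functorial bijection is compatible with the merely injective counit $1\to H^0\circ\spec$, and ensure the affine fibre products glue to a global difference scheme.
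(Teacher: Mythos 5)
Your proposal follows the same route as the paper's own proof: reduce to the affine case, take the well-mixed quotient of a tensor product equipped with the difference structure $\Sigma_1\times_\Sigma\Sigma_2$ acting by $\sigma_1\otimes\sigma_2$, and establish the universal property by combining the adjunction of \ref{embedcat} with the universal properties of the tensor product and of the well-mixed quotient (the latter applying because $(H^0(Z),T)$ is well-mixed). However, there is one genuine gap, and it sits exactly at the point where this framework is most treacherous: your opening move in the affine case, ``write the two structure morphisms as difference ring maps $u_i:(R,\Sigma)\to(A_i,\Sigma_i)$,'' is not justified. By \ref{embedcat}, a morphism $\spec^{\Sigma_i}(A_i)\to\spec^{\Sigma}(R)$ of affine difference schemes corresponds to a difference ring map $(R,\Sigma)\to(\bar{A}_i,\Sigma_i)$ into the ring of global sections $\bar{A}_i=H^0(\spec^{\Sigma_i}(A_i))$, and since the counit $1\to H^0\circ\spec$ is only injective rather than an isomorphism (\ref{rem-embedcat}, \ref{fintyp}), such a morphism need not be induced by any map $R\to A_i$ at all. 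Consequently the ring $A_1\otimes_R A_2$, on which your entire construction rests, need not even be defined.

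The repair lies within the paper's toolkit, and is exactly what the paper's proof does: it tensors the global-sections rings, forming $(\bar{A}_1\otimes_R\bar{A}_2)_w$ with the structure $\Sigma_\times=\{\sigma_1\otimes\sigma_2:(\sigma_1,\sigma_2)\in\Sigma_1\times_\Sigma\Sigma_2\}$, since the structure morphisms do exist at the level of $\bar{A}_i$. Alternatively, you could replace each $A_i$ by the subring of $\bar{A}_i$ generated by $A_i$ together with the image of $R$; by \ref{intrmed} this does not change the difference spectrum, and your argument then runs verbatim. With that substitution, the remaining steps of your proposal --- componentwise conjugation on $\Sigma_1\times_\Sigma\Sigma_2$, the induced $\diff$-map $T\to\Sigma_1\times_\Sigma\Sigma_2$ obtained from the agreement of the maps $T\to\Sigma_i$ over $\Sigma$, factoring the ring maps through the tensor product and then through its well-mixed quotient, and passing back through \ref{embedcat} --- coincide with the paper's. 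Incidentally, your remark that surjectivity of $\Sigma_1\to\Sigma$ forces $\Sigma_1\times_\Sigma\Sigma_2\to\Sigma_2$ to be surjective is correct, and is more explicit about the role of that hypothesis than anything stated in the paper's proof.
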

\begin{proof}
Indeed, suppose we have morphisms $(X_1,\Sigma_1)\to (S,\Sigma)$ and 
$(X_2,\Sigma_2)\to (S,\Sigma)$, with $\Sigma_1\to\Sigma$ surjective.
Our task is to show the existence of a Cartesian square:
$$
 \begin{tikzpicture} 
\matrix(m)[matrix of math nodes, row sep=1.5em, column sep=.2em]%
 {                       & |(P)|{X\times_S Y} &           \\
 |(1)|{X} &                         & |(2)| {Y}\\
                & |(h)|{S}            & \\}; 
\path[->,font=\scriptsize,>=to, thin]
(P) edge  (1) edge (2)
(1) edge  (h)
(2) edge  (h);
\end{tikzpicture}
$$
Standard reductions allow us to assume that $X$, $Y$, $S$ are affine, difference spectra
of  $(A_1,\Sigma_1)$, $(A_2,\Sigma_2)$, $(C,\Sigma)$, respectively. Assume we
have a diagram 
$$
 \begin{tikzpicture} 
\matrix(m)[matrix of math nodes, row sep=1.5em, column sep=-.3em]%
 {                       & |(P)|{(Z,T)}&           \\
 |(1)|{(X_1,\Sigma_1)} &                         & |(2)| {(X_2,\Sigma_2)}\\
                & |(h)|{(S,\Sigma)}           & \\}; 
\path[->,font=\scriptsize,>=to, thin]
(P) edge  (1) edge (2)
(1) edge  (h)
(2) edge  (h);
\end{tikzpicture}
$$
Using \ref{embedcat}, we obtain
$$
 \begin{tikzpicture} 
\matrix(m)[matrix of math nodes, row sep=1.5em, column sep=-1.3em]%
 {                       & |(P)|{(H^0(Z),T)} &           \\
 |(1)|{(\bar{A}_1,\Sigma_1)}  &                         & |(2)|{(\bar{A}_2,\Sigma_2)}\\
                & |(h)|{(C,\Sigma)}           & \\}; 
\path[->,font=\scriptsize,>=to, thin]
(h) edge  (1) edge (2)
(1) edge  (P)
(2) edge  (P);
\end{tikzpicture}
$$

Writing $\Sigma_{\times}=\{\sigma_1\otimes\sigma_2:(\sigma_1,\sigma_2)\in\Sigma_1\times_\Sigma\Sigma_2\}$, by the universal property of tensor products as well as the induced difference structure map $T\to\Sigma_1\times_\Sigma\Sigma_2$,
we get a unique morphism 
$
(\bar{A}_1\otimes_C\bar{A}_2,\Sigma_{\times})\to (H^0(Z),T)$.
Since $(H^0(Z),T)$ is well-mixed, we obtain a unique morphism
$(\bar{A}_1\otimes_C\bar{A}_2,\Sigma_{\times})_w\to (H^0(Z),T)$.
 Thus 
$\spec^{\Sigma_{\times}}((\bar{A}_1\otimes_C\bar{A}_2)_w)$ can play the
role of $(X_1,\Sigma_1)\times_{(S,\Sigma)}(X_2,\Sigma_2)$.
\end{proof}

\begin{remark} 
Extra care will be needed when dealing with products in our subsequent work
in order to avoid the following
undesirable situations.
\begin{enumerate}
\item Even in the category of ordinary difference schemes, the (fibre) product
can be $\emptyset$. That is the case when $(X_1,\sigma_1)$ and $(X_2,\sigma_2)$
are \emph{incompatible}, so we cannot find a $(Z,\sigma)$ which admits
a morphism to $(X_1,\sigma_1)$ and $(X_2,\sigma_2)$ at the same time
(see \cite[p.~60]{cohn} for examples). 
\item
 If we omit the surjectivity assumption from \ref{fibreprod}, the product 
need not exist. For example, if $(X,\Sigma)$ is a difference scheme with $|\Sigma|\geq2$,
pick $\sigma_1,\sigma_2\in\Sigma$ and consider the morphisms 
$(X^{\sigma_i},\sigma_i)\to(X,\Sigma)$ for $i=1,2$. The product 
$(X^{\sigma_1},\sigma_1)\times_{(X,\Sigma)}(X^{\sigma_2},\sigma_2)$ will not exist in general.
\end{enumerate}
\end{remark}

\begin{lemma}\label{glsprod}
Suppose we have morphisms of well-mixed difference rings $(C,T)\to(A,\Sigma_1)$ and  $(C,T)\to(B,\Sigma_2)$,
let $\bar{A}=H^0(\spec^{\Sigma_1}(A))$, $\bar{B}=H^0(\spec^{\Sigma_2}(B))$
and let $\Sigma_{\times}$ be as above.
Then we have an isomorphism of difference schemes $$\spec^{\Sigma_{\times}}((A\otimes_C B)_w)\simeq\spec^{\Sigma_{\times}}((\bar{A}\otimes_{C}\bar{B})_w).$$
\end{lemma}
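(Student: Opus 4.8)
The plan is to prove the stronger statement that $\spec^{\Sigma_{\times}}((A\otimes_C B)_w)$ and $\spec^{\Sigma_{\times}}((\bar A\otimes_C\bar B)_w)$ corepresent the same functor on difference schemes, and then conclude by Yoneda. Write $D=(A\otimes_C B)_w$ and $E=(\bar A\otimes_C\bar B)_w$, and set $Q=\spec^{\Sigma_{\times}}(D)$ and $P=\spec^{\Sigma_{\times}}(E)$. The first thing I would isolate is the key input: for any difference scheme $(Z,\mathcal T)$, restriction along the canonical inclusion $A\hookrightarrow\bar A$ of \ref{wmaff}(\ref{sesttt}) induces a bijection
\[
\Hom((\bar A,\Sigma_1),(H^0(Z),\mathcal T))\xrightarrow{\ \sim\ }\Hom((A,\Sigma_1),(H^0(Z),\mathcal T)),
\]
and likewise for $B$. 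This follows by combining the adjunction of \ref{embedcat}, which identifies the two sides with $\Hom((Z,\mathcal T),\spec^{\Sigma_1}(\bar A))$ and $\Hom((Z,\mathcal T),\spec^{\Sigma_1}(A))$ respectively, with the fact that $(\lexp{a}{i},\tilde{\imath})\colon\spec^{\Sigma_1}(\bar A)\to\spec^{\Sigma_1}(A)$ is an isomorphism of difference schemes by \ref{wmaff}(\ref{sesttt}); naturality of the adjunction shows the resulting bijection is exactly precomposition with $i$.

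With this in hand I would run the chain of natural bijections exactly as in the proof of \ref{fibreprod}. Using \ref{embedcat} I pass from morphisms $(Z,\mathcal T)\to Q$ to ring morphisms $D\to H^0(Z)$; the universal property of the well-mixed quotient (here $H^0(Z)$ is well-mixed) replaces $D$ by $A\otimes_C B$; the universal property of the tensor product together with the induced difference-structure map $\mathcal T\to\Sigma_1\times_\Sigma\Sigma_2$ splits this into a compatible pair of morphisms out of $A$ and $B$ over $C$; the key bijection above replaces these by morphisms out of $\bar A$ and $\bar B$; and the tensor and well-mixed-quotient properties reassemble them into a morphism $E\to H^0(Z)$, that is, a morphism $(Z,\mathcal T)\to P$. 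Every step is natural in $(Z,\mathcal T)$, so $P$ and $Q$ corepresent the same functor and are therefore isomorphic.

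The step I expect to be the main obstacle is verifying that the splitting of $\Hom(A\otimes_C B,H^0(Z))$ into its two factors, and its reassembly on the $\bar A,\bar B$ side, respects the difference structures, i.e.\ that the $\diff$-morphisms $\mathcal T\to\Sigma_{\times}$ correspond correctly on both sides. Because the key bijection is induced by an \emph{isomorphism of $\Sigma_1$- (resp.\ $\Sigma_2$-)difference schemes}, it preserves the $\diff$-morphism data, so this bookkeeping does go through; but it is the only place where the generalised, non-commuting difference structure genuinely intervenes, and it must be checked against the definition $\Sigma_{\times}=\{\sigma_1\otimes\sigma_2:(\sigma_1,\sigma_2)\in\Sigma_1\times_\Sigma\Sigma_2\}$.

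As an alternative route that avoids Yoneda, one can argue directly: the inclusions $i,j$ induce $\phi\colon D\to E$, while extending the canonical maps $A,B\to D\to\bar D$ over $\bar A,\bar B$ by means of the same key bijection yields $\psi\colon E\to\bar D$. Checking $\psi\phi=j_D$ on the generators of $D$ and $\bar\phi\,\psi=j_E$ on the generators $\bar A,\bar B$ of $E$, and then invoking functoriality of the bar-construction (\ref{univgs}) together with idempotency $\bar{\bar R}\cong\bar R$, shows that $\bar\phi\colon\bar D\to\bar E$ is a two-sided inverse isomorphism with inverse $\bar\psi$. Applying \ref{wmaff}(\ref{sesttt}) to identify $\spec^{\Sigma_{\times}}(D)\cong\spec^{\Sigma_{\times}}(\bar D)$ and $\spec^{\Sigma_{\times}}(E)\cong\spec^{\Sigma_{\times}}(\bar E)$ then gives the desired isomorphism; as a by-product $\psi=\bar\psi\circ j_E$ is injective, so one could equally finish through the intermediate-ring principle of \ref{intrmed}.
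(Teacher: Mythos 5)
Your proposal is correct, and your primary route is genuinely different from the paper's. The paper works entirely with explicit ring maps: it constructs $\varphi_w\colon(A\otimes_CB)_w\to(\bar A\otimes_C\bar B)_w$ from the inclusions $i,j$, and $\psi_w\colon(\bar A\otimes_C\bar B)_w\to\overline{(A\otimes_CB)_w}$ from the maps $\bar\alpha_i$ obtained by functoriality of global sections; the only delicate point is verifying (via \ref{univgs}) that $\psi_w\circ\varphi_w$ and $\bar\varphi_w\circ\psi_w$ are the canonical maps into the respective rings of global sections, after which the isomorphism is read off from a square of difference spectra whose vertical arrows are isomorphisms by \ref{wmaff}(\ref{sesttt}). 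This is, almost verbatim, your \emph{alternative} route: your $\phi,\psi$ are the paper's $\varphi_w,\psi_w$, your generator check is the paper's commutativity check, and your observation that injectivity of $\psi$ (forced by $\bar\phi\circ\psi=j_E$ and \ref{wmaff}(\ref{jennn})) lets \ref{intrmed} finish is a small streamlining the paper does not make. Your main route instead shows that both spectra corepresent the same functor and invokes Yoneda; its payoff is conceptual: it exhibits $\spec^{\Sigma_{\times}}((A\otimes_CB)_w)$ as a representing object for the fibre-product functor, which the proof of \ref{fibreprod} shows is also represented by $\spec^{\Sigma_{\times}}((\bar A\otimes_C\bar B)_w)$, so the lemma becomes an instance of uniqueness of representing objects rather than a diagram chase.

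One step of your justification oversteps the paper's stated hypotheses: to identify $\Hom((\bar A,\Sigma_1),(H^0(Z),\mathcal T))$ with $\Hom((Z,\mathcal T),\spec^{\Sigma_1}(\bar A))$ you apply \ref{embedcat} to the ring $(\bar A,\Sigma_1)$, but \ref{embedcat} and \ref{embedcataff} are stated only for \emph{well-mixed} rings, and the paper never proves that $\bar A=H^0(\spec^{\Sigma_1}(A))$ is well-mixed. The fact is true and can be supplied: given global sections with $\bar a\bar b=0$ and a point $\p$, use \ref{wmaff}(\ref{cetriii}) to write $g\bar a=a$ and $h\bar b=b$ with $a,b\in A$, $g\notin\p$, $h\notin\sigma^{-1}(\p)$, deduce $ab=0$ in $A$ from injectivity of $A\to\bar A$, apply well-mixedness of $A$ to get $ab^\sigma=0$, and conclude that the germ of $\bar a\,\sigma(\bar b)$ at $\p$ vanishes. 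Alternatively, you can bypass the key bijection altogether: your chain for $Q$ already identifies $\Hom((Z,\mathcal T),Q)$, naturally in $(Z,\mathcal T)$ and using \ref{embedcat} only for the well-mixed rings $A$, $B$, $C$ and $(A\otimes_CB)_w$, with pairs of morphisms into $\spec^{\Sigma_1}(A)$ and $\spec^{\Sigma_2}(B)$ agreeing over $\spec^{T}(C)$; since the proof of \ref{fibreprod} identifies $\Hom((Z,\mathcal T),P)$ with the same functor, Yoneda concludes, and the comparison of $A$ with $\bar A$ never has to be made explicit.
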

\begin{proof}
In the diagram 

$$
 \begin{tikzpicture} 
\matrix(m)[matrix of math nodes, row sep=2em, column sep=1em, text height=1.5ex, text depth=0.25ex]
 {    \hspace{6em}  & &   |(a)|{\overline{(\bar{A}\otimes_C\bar{B})_w}}&       &  \hspace{5em}  \\[2em]
        &&   |(b)|{\overline{(A\otimes_CB)_w}}  &        &              \\[2em]
        & |(d)|{\bar{A}\otimes_C\bar{B}}  &  & |(c)|{(\bar{A}\otimes_C\bar{B})_w}      &    \\    %
                |(f)|{\bar{A}}& &&& |(g)|{\bar{B}}\\           
       &   |(h)|{A\otimes_CB} &  & |(e)|{(A\otimes_CB)_w} &   \\    
 |(i)|{A} &&&& |(j)|{B}\\     
  &&|(k)|{C}&&       \\}; 
\path[->,font=\scriptsize,>=to, thin]
(k) edge  (i) edge (f) edge (g) edge (j)
(i) edge node[left]{$\iota_1$} (f) edge node[above]{$\epsilon_1$} (h) edge[bend left=0] node[above]{$\alpha_1$} (e)
(j) edge node[pos=0.6,below]{$\epsilon_2$} (h) edge node[pos=0.5,above]{$\alpha_2$} (e) edge node[right]{$\iota_2$} (g)
(f) edge[bend left=10] node[above]{$\bar{\alpha}_1$} (b) edge node[below]{$\epsilon_1'$}(d) edge[bend left=10]  (a)
(h) edge node[pos=0.5,right]{$\varphi$} (d) edge node[above]{$\pi$} (e)
(g) edge node[pos=0.6,below]{$\epsilon_2'$} (d) edge[bend right=10] node[right]{$\bar{\alpha}_2$} (b) edge[bend right=10] (a) edge (c)
(e) edge[dashed] node[pos=0.4,right]{$\varphi_w$} (c) edge[dashed] node[pos=0.65,left]{$\gamma$} (b)
(d) edge[bend left=0] node[pos=0.5,right]{$\psi$} (b) edge node[above]{$\pi'$} (c)
(c) edge[dashed] node[right]{$\delta$} (a) edge[dashed] node[left]{$\psi_w$} (b)
(b) edge[dashed] node[pos=0.4,left]{$\bar{\varphi}_w$} (a);
\end{tikzpicture}
$$
the morphism $\varphi$ is obtained from the initial setup by using \ref{wmaff}(\ref{sesttt}) and the universal property of tensor products, and 
$\varphi_w$ is obtained from it by the universal property of passing to the well-mixed quotient.
The morphisms $\alpha_1$ and $\alpha_2$ are obtained by precomposing with the
quotient $A\otimes_C B\to (A\otimes_CB)_w$, and $\bar{\alpha}_1$ and $\bar{\alpha}_2$ result
from them by functoriality of passing to global sections. We define $\psi$ via the universal property of
tensoring, and we use the universal property of well-mixed quotients to  get $\psi_w$ from it.
Finally, by passing to global sections we derive $\bar{\varphi}_w$ from $\varphi_w$. 

We wish to show that the diagram commutes, with particular emphasis on the dashed
part. The only commutativity relations which 
are not a priori clear are $\psi_w\circ\varphi_w=\gamma$ and
 $\bar{\varphi}_w\circ\psi_w=\delta$. In order to show the first relation, since $\pi$ is onto,
 it suffices to verify $\psi_w\circ\varphi_w\circ\pi=\gamma\circ\pi$, i.e., that
 $\psi\circ\varphi=\gamma\circ\pi$. Since $\psi\circ\varphi$ is the unique morphism
 $\chi$ satisfying $\bar{\alpha}_i\circ\iota_i=\chi\circ\epsilon_i$, $i=1,2$,
 the above follows from the construction since $\bar{\alpha}_i\circ\iota_i=\gamma\circ\alpha_i=\gamma\circ\pi\circ\epsilon_i$.

For the second relation, since $\pi'$ is onto, it suffices to show that
$\bar{\varphi}_w\circ\psi=\delta\circ\pi'$. Since $\bar{\varphi}_w\circ\psi$ is
the unique morphism $\chi$ satisfying 
$\chi\circ\epsilon_i'=\bar{\varphi}_w\circ\bar{\alpha}_i$, $i=1,2$,
it is enough to verify that 
$\delta\circ\pi'\circ\epsilon_i'=\bar{\varphi}_w\circ\bar{\alpha}_i=\overline{\varphi_w\circ\alpha_i}$, but say the first one these follows by applying \ref{univgs} to $A$, 
$(\bar{A}\otimes_C\bar{B})_w$, $\varphi_w\circ\alpha_1$ and $\pi\circ\epsilon_1'$.

From the dashed part of the above diagram, we deduce:
$$
 \begin{tikzpicture} 
\matrix(m)[matrix of math nodes, row sep=3em, column sep=2em, text height=1.5ex, text depth=0.25ex]
 { \spec^{\Sigma_{\times}}({\overline{(\bar{A}\otimes_C\bar{B})_w}})&  \spec^{\Sigma_{\times}}({\overline{(A\otimes_CB)_w}})\\
    { \spec^{\Sigma_{\times}}((\bar{A}\otimes_C\bar{B})_w)} &  { \spec^{\Sigma_{\times}}((A\otimes_CB)_w})    \\}; 
\path[->,font=\scriptsize,>=to, thin]
(m-1-1) edge  (m-1-2) edge (m-2-1)
(m-2-1) edge (m-2-2) 
(m-1-2) edge  (m-2-2) edge (m-2-1);
\end{tikzpicture}
$$
 The vertical arrows are isomorphisms by \ref{wmaff}(\ref{sesttt}) and thus  the diagonal is also an isomorphism.
\end{proof}

When working in a relative setting over a base $(S,\Sigma_0)$, it is natural to
think of a morphism $(X,\Sigma)\to (S,\Sigma_0)$ as a \emph{family} of difference
schemes parametrised by parameters from $S$. We make the notion of
a \emph{fibre} of the morphism precise.

\begin{definition}\label{d:fibre}
Let $(X,\Sigma)\to (S,\Sigma_0)$  be a morphism of difference schemes, let
$(K,\varphi)$ be a difference field and let
$s\in (S,\Sigma_0)(K,\varphi)$. The \emph{fibre} $X_s$ is the 
$(K,\varphi)$-difference scheme obtained by base change via the 
morphism $s:\spec^\varphi(K)\to(S,\Sigma_0)$,
$$
(X_s,\Sigma_s)=(X,\Sigma)\times_{(S,\Sigma_0)}\spec^\varphi(K).
$$
\end{definition}

\begin{definition}
Let $P$ be a property of difference schemes. If $(X,\Sigma)\to (S,\Sigma_0)$
is a difference scheme over a given base, we shall say that $X$ is 
\emph{geometrically} $P$, if every base change of $X$ has the property $P$.
\end{definition}

\subsection{Finiteness properties}

If $(R,\Sigma_0)$ is a difference ring and $\pi:\Sigma\to\Sigma_0$ is a $\diff$-morphism, the \emph{difference polynomial ring}
 $R[x_1,\ldots,x_n]_\Sigma$ in $n$ variables over $(R,\Sigma_0)$
is defined as the polynomial ring
$$
R[x_{1,\nu},\ldots,x_{n,\nu}:\nu\in \langle\Sigma\rangle],
$$
where $\langle\Sigma\rangle$ is the semigroup generated by $\Sigma$ defined in
\ref{diff-ops}, 
together with the unique $\Sigma$-structure whereby a $\sigma\in\Sigma$ acts as 
$\pi(\sigma)$  on $R$ and
maps $x_{j,\nu}$ to $x_{j,\sigma\nu}$.

\begin{definition}\label{finsigmatype}
Let $(R,\Sigma_0)$ be a difference ring.
\begin{enumerate}
\item An $(R,\Sigma_0)$-algebra $(S,\Sigma)$ is of \emph{finite $\Sigma$-type} if it is an equivariant quotient of some difference polynomial ring 
$R[x_1,\ldots,x_n]_\Sigma$. Equivalently, 
there exist elements $a_1,\ldots,a_n\in S$ such that $S=R[a_1,\ldots,a_n]_\Sigma$.
\item An $(R,\Sigma_0)$-difference scheme $(X,\Sigma)$ is of \emph{finite $\Sigma$-type}, or of \emph{finite transformal type} if
it is a finite union of affine difference schemes of the form $\spec^\Sigma(S)$, where $(S,\Sigma)$ is of finite $\Sigma$-type over $(R,\Sigma_0)$.
\item\label{finstypmor} A morphism $f:(X,\Sigma)\to (Y,\Sigma_0)$ is
\emph{of finite $\Sigma$-type} if $Y$ is a finite union
of open affine subsets $V_i=\spec^{\Sigma_0}(R_i)$ such that for each $i$, $f^{-1}(V_i)$ is
of finite $\Sigma$-type over $(R_i,\Sigma_0)$. 
\item A morphism $f:(X,\Sigma)\to(Y,\Sigma_0)$ is \emph{integral} (resp.~\emph{finite})
if $Y$ is a finite union of open affine subsets 
$V_i=\spec^{\Sigma_0}(R_i)$ such that for each $i$, 
$f^{-1}(V_i)$ is $\spec^\Sigma(S_i)$, where $S_i$ is integral (resp.~finite)
over $R_i$.
\item A morphism is \emph{transformally finite} if it is integral and of transformally finite type.
\item A morphism $f:(X,\Sigma)\to(Y,\Sigma_0)$ is \emph{quasi-finite} if
if is of finite transformal type and for every $y\in Y$, the fibre $X_y$ is finite
over $\kk(y)$.
\end{enumerate}
\end{definition}

\begin{remark}\label{fullfintyp}
Let $(S,\Sigma)$ be an $(R,\Sigma_0)$ algebra, with $\Sigma$ full, and let
$\pi:\Sigma\to\Sigma_0$ be the corresponding difference structure morphism.
The following statements are equivalent:
\begin{enumerate}
\item $(S,\Sigma)$ is of finite $\Sigma$-type over $(R,\Sigma_0)$;
\item $(S,\sigma)$ is of finite $\sigma$-type over $(R,\pi(\sigma))$ for all $\sigma\in\Sigma$;
\item $(S,\sigma)$ is of finite $\sigma$-type over $(R,\pi(\sigma))$ for some $\sigma\in\Sigma$.
\end{enumerate}
\end{remark}
\begin{proof}
Writing $\rho_{\tau,\sigma'}:\Sigma\to\Sigma$ for the bijection
$\rho_{\tau,\sigma'}(\sigma)=\lrexp{\tau}{\sigma}{{\sigma'}}$,
we have that 
$$
\sigma\tau=\sigma'\rho_{\tau,\sigma'}^{-1}(\sigma).
$$
Suppose $S=R[a]_\Sigma$ for some tuple $a$. Let $\bar{a}=\{\tau a:\tau\in\Sigma\}$.
 By the above generalised conjugation relation, we conclude
that for any $\sigma,\sigma'\in\Sigma$, $\sigma\bar{a}=\sigma'\bar{a}$
and that indeed $S=R[\bar{a}]_\sigma$
for any $\sigma\in\Sigma$.
\end{proof}

\begin{remark}\label{fintyp}
Suppose $X=\spec^\Sigma(R)$ and $Y=\spec^T(S)$ are two affine difference schemes
of finite transformal type over some difference field $(k,\sigma)$ 
and let $f:X\to Y$ be a morphism.
We cannot conclude that $f$ is the spectrum of some morphism 
$(S,T)\to (R,\Sigma)$.
On the other hand, by \ref{embedcat}, we have a map $f^\sharp:S\to \bar{R}$, and even though 
$\bar{R}$ may not be of finite $\Sigma$-type over $k$, 
by \ref{intrmed}, we can get $f$ as the spectrum of the
induced map $S\to R[f^\sharp(S)]$, which is a map of algebras of finite transformal type over $k$.

Therefore, any given commutative diagram of affine difference schemes of finite 
transformal type
can be assumed to arise from a (dual) commutative diagram of difference rings of finite
transformal type (over a given base).

Moreover, this shows that a morphism between affine difference schemes of finite 
transformal type
is of finite transformal type, justifying \ref{finsigmatype}(\ref{finstypmor}).
\end{remark}

\begin{proposition}
Any base change of a morphism of finite transformal type is again of finite
transformal type.
\end{proposition}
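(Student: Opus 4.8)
The plan is to reduce to an affine statement and then apply the explicit description of fibre products from \ref{fibreprod} together with \ref{glsprod}, which is precisely the tool that lets one compute the product using algebras of finite transformal type rather than their (generally non-finite-type) rings of global sections. So let $f:(X,\Sigma)\to(Y,\Sigma_0)$ be of finite transformal type and let $h:(Y',\Sigma_0')\to(Y,\Sigma_0)$ be the base change morphism, whose structure map $\Sigma_0'\to\Sigma_0$ is surjective (this being the hypothesis under which \ref{fibreprod} forms the product); write $(X',\Sigma')=(X,\Sigma)\times_{(Y,\Sigma_0)}(Y',\Sigma_0')$ with projection $p:(X',\Sigma')\to(Y',\Sigma_0')$. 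Being of finite transformal type is checked on an affine cover of the target, and base change commutes both with restricting to opens of the target and with affine covers of the source. Hence, choosing affines $V_i=\spec^{\Sigma_0}(R_i)$ witnessing the finite type of $f$, covering each $h^{-1}(V_i)$ by affines $W_{ij}=\spec^{\Sigma_0'}(S_{ij})$, and covering each $f^{-1}(V_i)$ by affines $\spec^\Sigma(A_{ik})$ with $A_{ik}$ of finite $\Sigma$-type over $R_i$, the problem reduces to the affine assertion: if $A=R[a_1,\ldots,a_n]_\Sigma$ is of finite $\Sigma$-type over $(R,\Sigma_0)$ and $(R,\Sigma_0)\to(S,\Sigma_0')$ is given, then the corresponding piece of $p^{-1}(W_{ij})$ is of finite transformal type over $(S,\Sigma_0')$.

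By \ref{fibreprod} the product exists and by \ref{glsprod} it is isomorphic to $\spec^{\Sigma_\times}((A\otimes_R S)_w)$, where $\Sigma_\times=\{\sigma\otimes\tau:(\sigma,\tau)\in\Sigma\times_{\Sigma_0}\Sigma_0'\}$ and the projection to $\spec^{\Sigma_0'}(S)$ is induced by $S\to(A\otimes_R S)_w$. It therefore suffices to show that $(A\otimes_R S)_w$ is of finite $\Sigma_\times$-type over $(S,\Sigma_0')$, and I claim the elements $a_1\otimes 1,\ldots,a_n\otimes 1$ generate it. Indeed $A\otimes_R S$ is generated as an $S$-algebra by the $a_i^\nu\otimes 1$ for $\nu\in\langle\Sigma\rangle$, and an operator $w\in\langle\Sigma_\times\rangle$ acts by $w(a_i\otimes1)=a_i^{\pi(w)}\otimes 1$, where $\pi:\langle\Sigma_\times\rangle\to\langle\Sigma\rangle$ is the semigroup homomorphism induced by the $\diff$-morphism $\Sigma_\times\to\Sigma$. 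That structure map is the pullback of the surjection $\Sigma_0'\to\Sigma_0$ along $\Sigma\to\Sigma_0$, hence surjective; since $\langle\Sigma\rangle$ is graded by word length with length-one elements exactly $\Sigma$, surjectivity of $\Sigma_\times\to\Sigma$ forces $\pi$ to be surjective. Thus every $a_i^\nu\otimes 1$ is reached, so $S[a_1\otimes1,\ldots,a_n\otimes1]_{\Sigma_\times}=A\otimes_R S$, and passing to the well-mixed quotient (a $\Sigma_\times$-equivariant surjection, so generators map to generators) preserves finite $\Sigma_\times$-type. A finite union of affine pieces of finite transformal type is of finite transformal type, which reassembles the global statement.

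The main obstacle is exactly the generation step: although tensoring visibly preserves generation for ordinary algebras, it is not a priori clear that the finitely many generators of $A$ remain \emph{difference-algebra} generators after base change, since the available difference operators now lie in the fibre product $\Sigma_\times$ rather than in $\Sigma$. The crux is therefore the surjectivity of $\Sigma_\times\to\Sigma$, guaranteed by the surjectivity of the structure map of $h$ — the very condition under which \ref{fibreprod} produces the product at all. When $\Sigma$ is full one could instead invoke \ref{fullfintyp} and verify finite type one endomorphism at a time, but the operator-counting argument above is cleaner and requires no fullness. Everything else is the routine bookkeeping of localising the finite-type condition on the base and recombining the affine charts.
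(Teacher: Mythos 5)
Your proof is correct and takes essentially the same route as the paper's: reduce to the affine case, realise the fibre product as $\spec^{\Sigma_{\times}}((A\otimes_R S)_w)$ via \ref{fibreprod} and \ref{glsprod}, and observe that the finitely many $\Sigma$-generators of $A$ over $R$ remain $\Sigma_{\times}$-generators after base change. The paper compresses this last step into the word ``clearly,'' and your explicit verification --- that surjectivity of $\Sigma_0'\to\Sigma_0$ forces surjectivity of $\Sigma_{\times}\to\Sigma$ and hence of the induced semigroup map on difference operators --- is exactly the content that ``clearly'' rests on, so the two arguments coincide.
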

Suppose that we have a diagram
$$
 \begin{tikzpicture} 
\matrix(m)[matrix of math nodes, row sep=1.5em, column sep=.2em]%
 {                       & |(P)|{X\times_S Y} &           \\
 |(1)|{X} &                         & |(2)| {Y}\\
                & |(h)|{S}            &\\}; 
\path[->,font=\scriptsize,>=to, thin]
(P) edge  (1) edge (2)
(1) edge  (h)
(2) edge  (h);
\end{tikzpicture}
$$
where $(X,\Sigma)\to (S,T)$ is of finite $\Sigma$-type and reduce to the affine case
with notation from the proof of \ref{fibreprod}. In view of \ref{fintyp} and \ref{glsprod}, we can find a difference
ring $A'$ of finite $\Sigma$-type over $C$
such that $X\times_SY$ can be realised as $\spec^{\Sigma_{\times}}((A'\otimes_C\bar{B})_w)$, where $\Sigma_{\times}=\Sigma\times_T\Sigma_Y$
and this is clearly of finite $\sigma$-type over $(Y,\Sigma_Y)$.

An easy corollary of the proposition is that if both $(X,\Sigma)\to (S,T)$ and
$(Y,\Sigma_Y)\to (S,T)$ are of finite transformal type, then $X\times_SY\to S$ is
again of finite transformal type. Thus the \emph{category of difference scheme of finite transformal type over a given base
has fibre products.}

The following result shows that the various finiteness properties of morphisms
 are very close to being independent of the choice of an open affine
covering in \ref{finsigmatype}. On the other hand, the notions of 
having \emph{almost} (with terminology from 
\cite[Section~\ref{local-properties}]{ive-tgs}) the corresponding finiteness property we can derive from it 
are intrinsic.
\begin{proposition}
Let $(A,\sigma)\to(B,\sigma)$ be a homomorphism of well-mixed difference rings,
let
$(M,\sigma)$ be an $(A,\sigma)$-module and let $f_1,\ldots,f_n\in A$ be such that
$\spec^\sigma(A)=\cup_i D^\sigma(f_i)$.
\begin{enumerate}
\item\label{jj} If each $(M_{f_i},\sigma)$ is (algebraically) finite over $(A_{f_i},\sigma)$, then $(M,\sigma)$ 
is almost finite over $(A,\sigma)$.
\item\label{dd} If each $B_{f_i}$ is integral over $A_{f_i}$, then $B$ is almost integral over $A$.
\item\label{tt} If each $(B_{f_i},\sigma)$ is of finite $\sigma$-type over $(A_{f_i},\sigma)$, then $(B,\sigma)$
is almost of finite $\sigma$-type over $(A,\sigma)$.
\item\label{cc} If each $(B_{f_i},\sigma)$ is $\sigma$-finite 
over $(A_{f_i},\sigma)$, then $(B,\sigma)$ 
is almost $\sigma$-finite over $(A,\sigma)$.
\end{enumerate}
\end{proposition}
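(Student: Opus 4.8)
The plan is to treat part (\ref{jj}) as the template and to obtain (\ref{dd}), (\ref{tt}) and (\ref{cc}) by the same mechanism, noting that $\sigma$-finiteness is by definition the conjunction of integrality and finite $\sigma$-type, so that (\ref{cc}) will follow by combining (\ref{dd}) and (\ref{tt}). The starting point is to convert the topological hypothesis $\spec^\sigma(A)=\cup_i D^\sigma(f_i)$ into an algebraic covering relation. By the difference Nullstellensatz \ref{wknulsatz}, this covering is equivalent to $1\in\{f_1,\ldots,f_n\}_\sigma$, and since $(A,\sigma)$ is well-mixed, \ref{perfwm} lets us replace the perfect closure of $(f_1,\ldots,f_n)$ by its well-mixed closure, so that $1=1^\nu$ already lies in the well-mixed closure of $(f_1,\ldots,f_n)$. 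Concretely this furnishes a finite collection of difference transforms $\sigma^{k}(f_i)$ together with coefficients in $A$ realising $1$ as an $A$-combination of them, that is, a \emph{difference-twisted partition of unity} in place of the ordinary relation $\sum_i a_if_i=1$ that drives the classical local-to-global statement (\cite{EGAI}).

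Next I would extract finitely many local data. For (\ref{jj}), each $(M_{f_i},\sigma)$ is finitely generated over $(A_{f_i},\sigma)$, so I pick finitely many $m_{ij}\in M$ whose images generate $M_{f_i}$; for (\ref{dd}) I choose, for each algebra generator of $B_{f_i}$, an integral equation over $A_{f_i}$; and for (\ref{tt}) I choose finitely many $\sigma$-generators of $B_{f_i}$ over $A_{f_i}$. In every case, clearing denominators costs only multiplication by powers of $f_i$ and applications of $\sigma$, so all the resulting elements and relations may be taken to live in $A$, $B$, $M$ themselves. This is the routine half of the argument, mirroring the Stacks-type proof of the classical finiteness statement, and requires nothing beyond \ref{wmaff} to identify sections over $D^\sigma(f_i)$ with honest sections.

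The decisive step is the gluing. In the classical case one pushes $m=\sum_i a_if_i^N m$ through the relation $\sum_i a_i f_i^N=1$ to conclude that the chosen $m_{ij}$ already generate $M$ over $A$. Here the partition of unity is only available \emph{after} applying the difference structure: the relation realising $1$ involves the transforms $\sigma^{k}(f_i)$, and the quasi-compactness of $D^\sigma(f_i)$ (\ref{qcompact}) together with \ref{wmaff} shows that the local identifications of sections patch on $\spec^\sigma$ but not on all of $\spec(A)$. Consequently the finitely many local generators generate the target not over $A$ but over the ring of global sections $\bar A$, equivalently over a finite-$\sigma$-type intermediate ring $A\subseteq A'\subseteq\bar A$ supplied by \ref{intrmed} and \ref{wmaff}(\ref{sesttt}); this discrepancy is exactly the content of the prefix \emph{almost} in the sense of \cite[Section~\ref{local-properties}]{ive-tgs}. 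For (\ref{tt}) the passage to such an $A'$ is precisely the device of \ref{fintyp}, and for (\ref{cc}) one simply intersects the conclusions of (\ref{dd}) and (\ref{tt}).

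I expect the main obstacle to be this gluing step, and specifically the control of the difference operators involved: one must bound the transforms $\sigma^{k}(f_i)$ and the powers of $f_i$ needed to clear denominators so that the residual gap between generation over $A$ and over $\bar A$ is genuinely of the bounded, covering-independent shape demanded by the definition of almost-$P$. Matching the difference-twisted partition of unity coming from \ref{wknulsatz} and \ref{perfwm} to that definition, rather than the mechanical manipulation of the local generators, is where the real work will lie.
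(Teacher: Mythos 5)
Your reduction of the covering hypothesis is fine up to a point: by \ref{wknulsatz} the equality $\spec^\sigma(A)=\cup_i D^\sigma(f_i)$ does give $\{f_1,\ldots,f_n\}_\sigma=A$, and since the well-mixed closure $[(f_1,\ldots,f_n)]_w$ is a well-mixed ideal with the same perfect closure, \ref{perfwm} gives $1\in[(f_1,\ldots,f_n)]_w$. But your next claim --- that this ``furnishes'' a relation $1=\sum_{i,k}a_{ik}\,\sigma^k(f_i)$, a difference-twisted partition of unity --- is false, and it is the engine of your whole gluing step. Membership in the well-mixed closure is not membership in the $\sigma$-ideal generated by the $f_i$: the ideal $\sum_{i,k}A\,\sigma^k(f_i)$ is in general not well-mixed (in $\Q\{x,y\}$ the $\sigma$-ideal generated by $xy$ contains $xy$ but not $xy^\sigma$, as one sees by killing $y_0,x_1$ and all higher variables), and $[T]_w$ is obtained from the $\sigma$-ideal by a further saturation under $ab\in I\Rightarrow ab^\sigma\in I$, which produces elements admitting no such linear expression. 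This failure is precisely why the proposition concludes only \emph{almost} finiteness. The paper's argument never writes down a partition of unity: for part (\ref{jj}) it collects the local generators into a finitely generated $M''$, notes that for each $m\in M$ there is $\mu\in\N[\sigma]$ with $f_i^\mu m\in M''$ for all $i$, and then forms the ideal $J=\{a\in A: a\,m\in[M'']_w\}$. Because $[M'']_w$ is well-mixed, $J$ is a well-mixed difference ideal; it contains the generalised powers $f_i^\mu$, hence $\{J\}_\sigma\supseteq\{f_1,\ldots,f_n\}_\sigma=A$, and applying \ref{perfwm} to the well-mixed ideal $J$ (not to the $f_i$) yields $1\in J$, i.e.\ $m\in[M'']_w$. (Alternatively, the paper lifts $\sigma$ to a finite free module $M'\to M$, checks surjectivity on stalks at every $\p\in\spec^\sigma(A)$, and quotes the local-to-global principle \cite[\ref{locwksurj}]{ive-tgs}; part (\ref{tt}) runs the same way with a tensor product of $\sigma$-polynomial algebras.)

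The second gap is that you have misidentified what ``almost'' means, and hence what the gluing step must deliver. It is not ``generation over the ring of global sections $\bar A$, or over an intermediate ring $A'$ as in \ref{intrmed}''; for part (\ref{jj}) that reading does not even typecheck, since $M$ carries no $\bar A$-module structure, and \ref{intrmed}, \ref{fintyp} address an unrelated issue (realising scheme morphisms by ring maps). In the sense of \cite[Section~\ref{local-properties}]{ive-tgs}, as the paper's own direct argument makes explicit, almost finite means that $M$ is the \emph{well-mixed closure} $[M'']_w$ of a finitely generated submodule --- equivalently, a map from a finite free difference module is \emph{almost surjective}, its image having well-mixed closure equal to all of $M$ --- and similarly for almost finite $\sigma$-type. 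So the residual discrepancy is measured by the closure operator $[\,\cdot\,]_w$ inside $M$ (resp.\ $B$), not by enlarging the base ring. Your extraction of local data by clearing denominators is fine, and deducing (\ref{cc}) from (\ref{dd}) and (\ref{tt}) is reasonable, but as it stands the central step of your proof rests on a nonexistent identity and aims at the wrong target notion.
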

\begin{proof}
(\ref{jj}) By assumption, for every $i$ there exists a finite free $A$-module $M_i'$
and a morphism $\varphi_i:M_i'\to M$ whose localisation
${M_i'}_{(f_i)_\sigma}\to M_{(f_i)_\sigma}$ is surjective. We can lift the 
operator $\sigma$ to $M_i'$ by freeness, and thus achieve that $\varphi_i$ be
a morphism of $(A,\sigma)$-modules.
Let $M'=\oplus_iM_i'$ and let $\varphi=\oplus_i\varphi_i:M'\to M$.
By construction, for every $i$,  ${M'}_{(f_i)_\sigma}\to M_{(f_i)_\sigma}$ is surjective
so, in particular, for every $\p\in\spec^\sigma(A)$, $M'_\p\to M_p$ is surjective.
Thus, by \cite[\ref{locwksurj}]{ive-tgs}, $M'\to M$ is almost surjective and $M$ is almost finite over $A$.  

We can give a more direct proof of the above as follows. By assumption, there is
an $N$ such that for every $i\leq n$, $\{m_{ij}/f_i^{\nu_j}:j\leq N\}$ generate $M_{(f_i)_\sigma}$, for some $m_{ij}\in M$ and $\nu_j\in\N[\sigma]$. Let $M''$ be generated by $\{m_{ij}\}$. %
Then, for every $m\in M$ there exists a $\mu\in\N[\sigma]$ such that for every $i$, 
$f_i^\mu.m\in M''$.  Let $J=\{a\in A:a.m\in [M'']_w\}$, which is clearly a well-mixed difference ideal.
Then, since $f_i^\mu\in J$, we have that $\{J\}_\sigma=\{f_1,\ldots,f_n\}_\sigma=A$,
so $1\in J$ and we conclude that $m\in [M'']_w$.

(\ref{tt}) We use the template from the first proof of (1). By assumption, for every $i$
there exists a $\sigma$-polynomial $(A,\sigma)$ algebra $B'_i$ and a morphism
$B'_i\to B$ such that ${B'_i}_{(f_i)_\sigma}\to B_{(f_i)_\sigma}$ is surjective.
Let $B'=\bigotimes_iB'_i$. Then the associated morphism (which is in particular
a morphism of $(A,\sigma)$-modules) $B'\to B$ has
the property that for every $\p\in\spec^\sigma(A)$, the localisation $B'_\p\to B_\p$ is surjective
and \cite[\ref{locwksurj}]{ive-tgs} implies that $B$ is almost of finite $\sigma$-type over $A$.
\end{proof}

\begin{definition}
A difference ring $(R,\Sigma)$ is \emph{Ritt} if it has the ascending chain condition on $\Sigma$-perfect ideals, or equivalently, if $\spec^{(\Sigma)}(R)$ is topologically Noetherian.
\end{definition}

The following results are known (\cite{cohn}) in the strict difference case.
\begin{proposition}\label{strictritt}
\begin{enumerate}
\item Every difference ring of finite $\sigma$-type over a Ritt difference ring is Ritt.
\item Difference schemes of finite $\sigma$-type over a Ritt difference ring are topologically 
Noetherian.
\item Every open subset of a difference scheme of finite $\sigma$-type over a Ritt difference ring
if of finite $\sigma$-type.
\end{enumerate}
\end{proposition}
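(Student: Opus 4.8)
The plan is to reduce the whole statement to part (1), which is the Ritt--Raudenbush basis theorem in the present generalised framework, and then obtain (2) and (3) by purely topological arguments. For (1) I would induct on the number of difference variables, so the heart of the matter is to show that the difference polynomial ring $R\{y\}=R[y]_\sigma$ in one variable over a Ritt ring $(R,\sigma)$ is again Ritt, i.e.\ satisfies the ascending chain condition on $\sigma$-perfect ideals. Equivalently, every $\sigma$-perfect ideal of $R\{y\}$ is the perfect closure $\{f_1,\dots,f_m\}_\sigma$ of a finite set, which I call a \emph{finite basis}. Suppose this fails. The union of a chain of $\sigma$-perfect ideals without finite basis is again without finite basis: if the union $P$ had a finite basis $\{h_1,\dots,h_k\}_\sigma$, then each $h_i$, and hence all of them, would lie in a single member $P_\beta$, forcing $P=\{h_i\}_\sigma\subseteq P_\beta\subseteq P$ and a finite basis for $P_\beta$, a contradiction. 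By Zorn's lemma we thus obtain a maximal $\sigma$-perfect ideal $\mathfrak{m}$ without a finite basis.

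Next I would show $\mathfrak{m}$ is prime. If $ab\in\mathfrak{m}$ while $a,b\notin\mathfrak{m}$, then $\{\mathfrak{m},a\}_\sigma$ and $\{\mathfrak{m},b\}_\sigma$ strictly contain $\mathfrak{m}$ and so, by maximality, have finite bases $\{f_i\}_\sigma$ and $\{g_j\}_\sigma$. Because $ab\in\mathfrak{m}$ and $\mathfrak{m}$ is an ideal, every product of an element of $\mathfrak{m}\cup\{a\}$ with one of $\mathfrak{m}\cup\{b\}$ lies in $\mathfrak{m}$, so applying Lemma~\ref{perf2} in two ways gives
$$\{\mathfrak{m},a\}_\sigma\cap\{\mathfrak{m},b\}_\sigma=\{f_ig_j:i,j\}_\sigma=\{(\mathfrak{m}\cup\{a\})(\mathfrak{m}\cup\{b\})\}_\sigma\subseteq\mathfrak{m}.$$
As $\mathfrak{m}$ is visibly contained in this intersection, we get $\mathfrak{m}=\{f_ig_j\}_\sigma$, a finite basis, a contradiction. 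Hence $\mathfrak{m}$ is a prime (reflexive) $\sigma$-ideal.

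The crux, and the main technical obstacle, is to contradict the existence of a \emph{prime} perfect $\sigma$-ideal $\mathfrak{m}$ with no finite basis; here I would run Ritt reduction. Fix a rank on difference polynomials by ordering the transforms $y,y^\sigma,y^{\sigma^2},\dots$ and breaking ties by degree, and let $A$ be a nonzero element of $\mathfrak{m}$ of least rank, with initial $I$ and separant $S$. These have lower rank than $A$, so minimality of $A$ together with primality of $\mathfrak{m}$ forces $I,S\notin\mathfrak{m}$. The reduction algorithm then shows that for each $f\in\mathfrak{m}$ there are exponents $p,q$ with $I^pS^qf$ congruent, modulo the $\sigma$-ideal $[A]$ generated by $A$ and its transforms, to a remainder of strictly lower rank, which again lies in $\mathfrak{m}$. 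Descending on rank and invoking the ascending chain condition on perfect ideals of the coefficient ring $(R,\sigma)$ to terminate the process in the rank-zero directions, one extracts finitely many elements that, together with $A$, constitute a finite basis of $\mathfrak{m}$, the non-zero-divisors $I,S$ being absorbed because $\mathfrak{m}$ is prime and perfect. This contradicts the choice of $\mathfrak{m}$. I expect this reduction---developing rank, initials and separants for generalised difference polynomials and controlling the contracted ideal $\mathfrak{m}\cap R$ via the Ritt hypothesis on $R$---to be the principal difficulty; it is precisely the machinery recorded in~\cite{cohn} for the strict case. Induction on the number of variables then upgrades the one-variable statement to arbitrary finite $\sigma$-type, proving (1).

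Finally, (2) and (3) follow formally. For (2), a difference scheme of finite $\sigma$-type over a Ritt ring is by \ref{finsigmatype} a finite union of affine charts $\spec^\sigma(S_i)$ with each $(S_i,\sigma)$ of finite $\sigma$-type, hence Ritt by (1); by the very definition of Ritt each $\spec^\sigma(S_i)$ is topologically Noetherian, and a finite union of Noetherian spaces is Noetherian. For (3), an open subset $U$ of such a scheme is a subspace of a Noetherian space by (2), hence quasi-compact; covering $U$ by finitely many principal opens $D^\sigma(f)$ taken inside the affine charts, and noting that by \ref{wmaff} and \ref{intrmed} each $D^\sigma(f)$ is an affine difference scheme whose coordinate ring may be taken to be $S[1/f]_\sigma$, which is of finite $\sigma$-type over $R$ (adjoin the single generator $1/f$), exhibits $U$ as a finite union of affine difference schemes of finite $\sigma$-type. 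Thus $U$ is itself of finite $\sigma$-type.
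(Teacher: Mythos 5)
The paper offers no proof of this proposition at all: it is introduced with the sentence ``The following results are known (\cite{cohn}) in the strict difference case,'' so the paper's entire argument is a citation to Cohn's book. Your proposal therefore necessarily takes a different route, namely reconstructing the Ritt--Raudenbush basis theorem. Your skeleton is the standard and correct one: the equivalence of the ascending chain condition with every perfect ideal having a finite basis, Zorn's lemma producing a maximal non-finitely-based perfect ideal, and primality of that ideal via $\{S\}_\sigma\cap\{T\}_\sigma=\{ST\}_\sigma$ (the paper's \ref{perf2}, used in exactly this way in the paper's own proof of \ref{perfintrs}); your version of the primality step is in fact cleaner than the textbook one, since the two applications of \ref{perf2} let you avoid the usual lemma that a finite basis can be chosen from the generating set. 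Parts (2) and (3) are also handled correctly: a finite union of Noetherian spaces is Noetherian, and your identification of $D^\sigma(f)$ with $\spec^\sigma(S[1/f]_\sigma)$ is legitimate because a $\sigma$-reflexive prime avoiding $f$ automatically avoids every $f^\nu$, $\nu\in\N[\sigma]$, so that $S[1/f]_\sigma$ is a one-generator difference-ring extension of finite $\sigma$-type. What each approach buys: the paper's citation is economical; your reconstruction makes explicit that (2) and (3) are soft consequences of (1), which the paper leaves implicit.

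The one place your sketch misrepresents how the difference-algebraic crux actually runs is the reduction step, and if you tried to carry it out as written it would fail. Separants are a differential-algebra device: they appear because differentiating $A$ produces the next derivative of the leader linearly, with coefficient $S_A$. In difference algebra, applying $\sigma$ to $A$ preserves the degree of the transformed leader, so reduction uses only transforms of the initial, with multipliers of the form $I^\nu$, $\nu\in\N[\sigma]$ --- which is precisely why perfectness ($f^\nu\in I\Rightarrow f\in I$) rather than radicality is the correct closure notion here. More seriously, a single element $A$ of minimal rank does not suffice: a remainder that is reduced with respect to $A$ can still have \emph{higher} rank than $A$ (its leader may be a higher transform, occurring to lower degree), so no contradiction with the minimality of $A$ arises. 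One needs an autoreduced (characteristic) set of $\mathfrak{m}$ --- which in the difference case may contain several polynomials even in one variable --- and then a second appeal to maximality: writing $H$ for the product of the initials, $H\notin\mathfrak{m}$, so $\{\mathfrak{m},H\}_\sigma$ has a finite basis, and the two bases are merged into a finite basis of $\mathfrak{m}$ using $\{S\}_\sigma\{T\}_\sigma\subseteq\{ST\}_\sigma$ (the paper's \ref{perf1}) together with perfectness. Since you explicitly delegate this machinery to \cite{cohn}, your proposal stands as a correct outline of the proof behind the paper's citation; but as a self-contained argument, the reduction paragraph is where the gap lies.
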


\begin{corollary}\label{strictcomp}
Let $(R,\sigma)$ be a Ritt difference ring. If $I$ is $\sigma$-perfect, we have a unique irredundant decomposition
$V\{I\}=V\{\p_1\}\cup\cdots\cup V\{\p_n\}$ with $\p_i\in\spec^\sigma(R)$. In other words,
$I=\p_1\cap\ldots\cap\p_n$.
\end{corollary}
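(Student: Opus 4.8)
The plan is to derive the decomposition from the purely topological statement about irreducible components of a Noetherian space and then transport it back to the language of $\sigma$-perfect ideals. The four ingredients I would invoke are: the Ritt hypothesis, the earlier criterion characterising which closed sets $V\{\p\}$ are irreducible, the Nullstellensatz-type correspondence \ref{wknulsatz}, and the representation of a $\sigma$-perfect ideal as an intersection of primes from \ref{perfintrs}.

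First I would note that in the strict case $\spec^{(\Sigma)}(R)=\spec^\sigma(R)$, so the assumption that $(R,\sigma)$ is Ritt says exactly that $\spec^\sigma(R)$ is a Noetherian topological space. In any Noetherian space every closed subset is a finite union of irreducible closed subsets, and discarding those contained in another yields a unique irredundant decomposition into irreducible components. Applying this to the closed set $V\{I\}$ produces $V\{I\}=Z_1\cup\cdots\cup Z_n$ with each $Z_i$ irreducible and no inclusions among them.

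Next I would identify the components as vanishing loci of primes. By the earlier criterion that a perfect ideal $\p$ has $V\{\p\}$ irreducible precisely when $\p\in\spec^\sigma(R)$, together with the bijective anti-correspondence \ref{wknulsatz} between closed sets and $\sigma$-perfect ideals, each $Z_i$ equals $V\{\p_i\}$ for a unique $\p_i\in\spec^\sigma(R)$. It then remains to convert the set-theoretic identity $V\{I\}=\bigcup_i V\{\p_i\}$ into the ideal identity. Since a prime $\sigma$-ideal contains $\bigcap_i\p_i$ if and only if it contains some $\p_i$ (using primeness and $\prod_i\p_i\subseteq\bigcap_i\p_i$), one gets $\bigcup_i V\{\p_i\}=V\{\bigcap_i\p_i\}$; and because an intersection of $\sigma$-perfect ideals is again $\sigma$-perfect, \ref{wknulsatz} forces $\{I\}=\{\bigcap_i\p_i\}$, whence $I=\bigcap_i\p_i$ as both sides are already perfect.

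I expect the only load-bearing bookkeeping to be the perfect-closure operators: one must check that $\bigcap_i\p_i$ really is $\sigma$-perfect (immediate, since $aa^\sigma\in\bigcap_i\p_i$ forces $a,a^\sigma\in\p_i$ for every $i$) and that no extra saturation intervenes when passing between $V\{\cdot\}$ and the ideals. Uniqueness of the $\p_i$ and their pairwise incomparability then follow directly from the uniqueness and irredundancy of the irreducible components, using that $V\{\p_i\}\subseteq V\{\p_j\}$ is equivalent to $\p_j\subseteq\p_i$. There is no genuinely hard content here; the single point requiring care is that the Noetherian decomposition theorem be applied to $\spec^\sigma(R)$ rather than the ambient $\spec(R)$, which is exactly what the Ritt hypothesis guarantees.
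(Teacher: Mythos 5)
Your proof is correct and takes exactly the route the paper intends: the paper states \ref{strictcomp} without proof, as an immediate consequence of the Ritt/Noetherianity statement \ref{strictritt} together with the earlier dictionary between closed sets of $\spec^\sigma(R)$ and $\sigma$-perfect ideals (the irreducibility criterion and the Nullstellensatz-type correspondence \ref{wknulsatz}), which is precisely what you spell out. The details you supply — unique irredundant decomposition into irreducible components in a Noetherian space, identification of each component as $V\{\p_i\}$ with $\p_i\in\spec^\sigma(R)$, and passing from $V\{I\}=V\{\bigcap_i\p_i\}$ to $I=\bigcap_i\p_i$ using primeness and perfectness of the intersection — all check out.
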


By generalising techniques of \cite{levin}, it can be shown that if $(S,\Sigma)$
is of finite $\Sigma$-type over a Ritt difference ring $(R,\Sigma_0)$, then
$(S,\Sigma)$ is again Ritt, i.e., $\spec^{(\Sigma)}(S)$ is topologically Noetherian.
We are interested, however, in the study of $\spec^\Sigma(S)$, which might be
unfathomable using these techniques, and that is why we restrict our attention
to rings with full difference structures (which notably includes the almost-strict case).

\begin{lemma}
Let $(S,\Sigma)$ be a full algebra of $\Sigma$-finite type over a Ritt
difference ring $(R,\sigma_0)$, and $\Sigma$ finite.
Then $\spec^\Sigma(S)$ is topologically Noetherian and $S$ has an
ascending chain condition on ideals which are perfect with respect to any
$\sigma\in\Sigma$. 
\end{lemma}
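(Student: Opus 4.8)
The plan is to reduce everything to the strict (single-endomorphism) case already handled in \ref{strictritt}, using fullness to pass freely between $\Sigma$ and its individual members. First I would invoke Remark~\ref{fullfintyp}: since $(S,\Sigma)$ is full and of finite $\Sigma$-type over $(R,\sigma_0)$, the structure morphism $\pi\colon\Sigma\to\{\sigma_0\}$ sends every $\sigma$ to $\sigma_0$, and the equivalence recorded there shows that for \emph{each} $\sigma\in\Sigma$ the strict difference ring $(S,\sigma)$ is of finite $\sigma$-type over the Ritt ring $(R,\sigma_0)$ (indeed, a single generating tuple $\bar a=\{\tau a:\tau\in\Sigma\}$ works simultaneously for all $\sigma$).

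Next I would apply \ref{strictritt}. Part (1) yields that each $(S,\sigma)$ is itself Ritt; in particular, part (2) gives that the strict difference scheme $\spec^{(\sigma)}(S)=\spec^\sigma(S)$ is topologically Noetherian, and $S$ satisfies the ascending chain condition on $\sigma$-perfect ideals, for every $\sigma\in\Sigma$ separately.

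It then remains to assemble these finitely many facts. For the topological statement I would use that, $\Sigma$ being finite, $\spec^\Sigma(S)=\bigcup_{\sigma\in\Sigma}\spec^\sigma(S)$ is a finite union in which each $\spec^\sigma(S)$ carries the subspace topology induced from $\spec(S)$ --- which coincides with the topology it inherits as a subspace of $\spec^\Sigma(S)$. A finite union of topologically Noetherian subspaces is topologically Noetherian: given a descending chain of closed sets $C_1\supseteq C_2\supseteq\cdots$ in $\spec^\Sigma(S)$, each trace $C_m\cap\spec^\sigma(S)$ stabilises, say from index $N_\sigma$ on, so beyond $N=\max_\sigma N_\sigma$ one has $C_m=\bigcup_\sigma(C_m\cap\spec^\sigma(S))=C_N$. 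For the ring-theoretic statement, an ideal that is perfect with respect to every $\sigma\in\Sigma$ is in particular $\tau$-perfect for any single fixed $\tau\in\Sigma$; hence any ascending chain of such ideals is an ascending chain of $\tau$-perfect ideals and stabilises by the Rittness of $(S,\tau)$ established above (and the chain condition for each individual $\sigma$ is of course exactly the Rittness of $(S,\sigma)$).

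I expect no deep obstacle here: the genuine content is precisely the reduction to the strict case. The only points requiring care are the bookkeeping in Remark~\ref{fullfintyp} ensuring that finiteness of $\sigma$-type holds simultaneously for all $\sigma\in\Sigma$, and the verification that the two subspace topologies on $\spec^\sigma(S)$ agree so that the finite-union argument for topological Noetherianity applies verbatim; both are routine.
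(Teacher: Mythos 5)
Your proposal is correct and follows essentially the same route as the paper's own proof: reduce to the strict case via Remark~\ref{fullfintyp}, apply \ref{strictritt} to each $(S,\sigma)$, and conclude by writing $\spec^\Sigma(S)=\bigcup_{\sigma\in\Sigma}\spec^\sigma(S)$ as a finite union of Noetherian spaces. The paper states this in two sentences; your additional details (the agreement of subspace topologies, the explicit finite-union argument, and the observation that a chain of ideals perfect for all $\sigma$ is in particular a chain of $\tau$-perfect ideals for a fixed $\tau$) are correct elaborations of the same argument.
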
   
\begin{proof}
By \ref{fullfintyp}, $(S,\sigma)$ is of finite $\sigma$-type over $R$ for
every $\sigma\in\Sigma$. Thus 
$\spec^\Sigma(S)=\cup_{\sigma\in\Sigma}\spec^\sigma(S)$ is clearly Noetherian 
by \ref{strictritt}.
\end{proof}   
   
Working with generalised difference schemes, we obtain an analogue of
\ref{strictcomp}, as well as identify a new phenomenon where difference 
schemes can decompose topologically but not structurally. We shall return to the
study of
this behaviour in \cite[Section~\ref{sect:bi-fib}]{ive-tgs}. 
\begin{lemma}
Suppose $(R,\Sigma)$ is a full difference ring such that each $(R,\sigma)$ is
Ritt. Let $\iota:\Sigma_0\hookrightarrow\Sigma$ and suppose $I$ is a
$\Sigma$-reflexive ideal which is $\Sigma_0$-well mixed.
Then
we have an irredundant decomposition 
$$
V^{\Sigma}(I)=\bigcup_{i=1}^{n}\iota_*V^{\Sigma_0}(\p_i),$$
where
$\p_i\in\spec^{(\Sigma_0)}(R)$, and 
$$
\iota_*V^{\Sigma_0}(\p_i)=\bigcup_{\tau\in\Sigma}V^{\Sigma_0^\tau}(\tau^{-1}\p_i).
$$
\end{lemma}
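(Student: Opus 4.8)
The plan is to transport the strict decomposition \ref{strictcomp} along the conjugation bijections supplied by fullness, grouping the irreducible pieces of $V^\Sigma(I)$ into $\Sigma$-conjugacy orbits. For the reductions: each $(R,\sigma)$ being Ritt means exactly that $\spec^\sigma(R)$ is topologically Noetherian, so $\spec^\Sigma(R)=\bigcup_{\sigma\in\Sigma}\spec^\sigma(R)$ is a finite union of Noetherian spaces and hence itself topologically Noetherian (as in the lemma above); in particular $V^\Sigma(I)$ has only finitely many irreducible components. Since every $\Sigma_0$-perfect ideal is $\sigma$-perfect for any fixed $\sigma\in\Sigma_0$, the Ritt hypothesis also yields the ascending chain condition on $\Sigma_0$-perfect ideals, so I may replace $I$ by its $\Sigma_0$-perfect closure $\{I\}_{\Sigma_0}$ without altering any of the loci in sight.

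To produce the $\p_i$, I apply \ref{perfintrs} to the $\Sigma_0$-perfect ideal $\{I\}_{\Sigma_0}$, obtaining $\{I\}_{\Sigma_0}=\bigcap_{\p\in V^{(\Sigma_0)}(\{I\}_{\Sigma_0})}\p$, and the chain condition just noted then isolates finitely many minimal members $\p_1,\dots,\p_n\in\spec^{(\Sigma_0)}(R)$ with $\{I\}_{\Sigma_0}=\p_1\cap\cdots\cap\p_n$. This is the $\Sigma_0$-analogue of \ref{strictcomp}, and it is precisely what forces the generic points into the intersection locus $\spec^{(\Sigma_0)}(R)$ rather than merely into some single $\spec^\sigma(R)$.

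Next I spread these pieces by conjugation. By \ref{fulfinorb} each $\lexp{a}{\tau}\colon X^\sigma\to X^{\sigma^\tau}$ is bijective on points, and since $I$ (hence each $\p_i\supseteq I$) is $\Sigma$-reflexive, we have $\lexp{a}{\tau}(V(\p_i))\subseteq V(\tau^{-1}\p_i)$; taking the union over $\sigma\in\Sigma_0$ gives $\lexp{a}{\tau}(V^{\Sigma_0}(\p_i))=V^{\Sigma_0^\tau}(\tau^{-1}\p_i)$, which both explains the defining formula for $\iota_*V^{\Sigma_0}(\p_i)$ and delivers the inclusion $\supseteq$ of the main identity, as any prime containing some $\tau^{-1}\p_i\supseteq\tau^{-1}I=I$ and lying in a $\spec^{\sigma^\tau}(R)$ belongs to $V^\Sigma(I)$. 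Irredundancy is then inherited from the minimality of the $\p_i$ together with the fact that pieces attached to distinct $\Sigma$-conjugacy orbits are mutually incomparable.

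The reverse inclusion is the heart of the matter and the main obstacle. Given $\q\in V^\Sigma(I)$, say $\q\in\spec^\mu(R)$ with $\q\supseteq I$, I must untwist $\mu$ into $\Sigma_0$: the generalised conjugation of the full structure should supply $\tau\in\Sigma$ and $\sigma\in\Sigma_0$ with $\sigma^\tau=\mu$, whereupon $(\lexp{a}{\tau})^{-1}(\q)\in\spec^\sigma(R)$ is a prime over $\{I\}_{\Sigma_0}$, hence contains some $\p_i$, and transporting back by $\lexp{a}{\tau}$ places $\q\in V^{\Sigma_0^\tau}(\tau^{-1}\p_i)\subseteq\iota_*V^{\Sigma_0}(\p_i)$. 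The delicate points, where $\Sigma_0$-well-mixedness, $\Sigma$-reflexivity and fullness must be invoked together, are that every $\mu$ carrying a prime over $I$ really is a $\Sigma$-conjugate of an element of $\Sigma_0$, and that the untwisted prime is genuinely $\Sigma_0$-reflexive, landing in $\spec^{(\Sigma_0)}(R)$ rather than being reflexive for a single endomorphism only. This is exactly the place at which a difference scheme may decompose topologically without decomposing structurally.
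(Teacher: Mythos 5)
Your proposal has the same skeleton as the paper's proof: produce the $\p_i$ from the $\Sigma_0$-structure using the ascending chain condition and the techniques of \ref{perfintrs}, then spread them along the conjugation maps $\lexp{a}{\tau}$, whose bijectivity on points is \ref{fulfinorb}. However, the two points you yourself flag as ``delicate'' are precisely the mathematical content of the lemma, and you leave both unproven. The first genuine gap is the reduction ``I may replace $I$ by its $\Sigma_0$-perfect closure $\{I\}_{\Sigma_0}$ without altering any of the loci in sight'': this does not follow from the chain condition (the ``so'' is a non sequitur), and it is exactly where the hypothesis that $I$ is \emph{$\Sigma_0$-well-mixed} must do its work --- note that your argument never invokes well-mixedness at all, whereas the paper's proof opens with it. Concretely, your step ``$(\lexp{a}{\tau})^{-1}(\q)\in\spec^\sigma(R)$ is a prime over $\{I\}_{\Sigma_0}$, hence contains some $\p_i$'' is unsupported: a prime $\q'$ reflexive for a \emph{single} $\sigma\in\Sigma_0$ and containing $I$ has no a priori reason to contain $\{I\}_{\Sigma_0}$. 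Indeed, if $a\in\{I\}_{\Sigma_0}$ then (by \ref{perfwm}) $a^\nu\in I\subseteq\q'$ for some $\nu\in\N[\Sigma_0]$, and primality of $\q'$ only yields $a^{w}\in\q'$ for one word $w\in\langle\Sigma_0\rangle$; $\sigma$-reflexivity of $\q'$ cannot strip a word involving the other operators of $\Sigma_0$. Closing this gap requires a well-mixed/reflexive calculus (in the spirit of \ref{perfwm} and \ref{perf1}): one uses $ab\in I\Rightarrow ab^\rho\in I$ to replace the exponent words of the factors of $a^\nu$ by left multiples until they coincide, then uses $\Sigma$-reflexivity of $I$ to strip the common word and conclude $a^N\in I$. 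Only after this does every relevant prime over $I$ contain $\{I\}_{\Sigma_0}=\p_1\cap\cdots\cap\p_n$, hence some $\p_i$; this is what the paper compresses into its decomposition $V^{\Sigma_0}(I)=\bigcup_i V^{\Sigma_0}(\p_i)$, which is strictly stronger than the intersection statement $\{I\}_{\Sigma_0}=\bigcap_i\p_i$ that you derive from \ref{perfintrs}.

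The second gap is the untwisting step, which you describe as what fullness ``should supply'': that every $\mu\in\Sigma$ carrying a point of $V^\Sigma(I)$ has the form $\sigma^\tau$ with $\sigma\in\Sigma_0$, $\tau\in\Sigma$. This is not a formal consequence of fullness of $\Sigma$ --- the conjugation orbit $\bigcup_{\tau\in\Sigma}\Sigma_0^\tau$ of a sub-structure need not exhaust $\Sigma$ --- and you give no argument; naming the obstacle is not overcoming it. The paper's proof runs the covering inclusion in the opposite direction, which at least localises this issue: having established the equality $V^{\Sigma_0}(I)=\bigcup_i V^{\Sigma_0}(\p_i)$, it applies $\lexp{a}{\tau}$ to both sides (here \ref{fulfinorb} and $\Sigma$-reflexivity of $I$ give $V^{\Sigma_0^\tau}(I)=\bigcup_i V^{\Sigma_0^\tau}(\tau^{-1}\p_i)$) and takes the union over $\tau\in\Sigma$, so that the only residual point is the covering claim itself, rather than, as in your version, the covering claim \emph{plus} the unproven containment for each untwisted prime. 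A minor further slip: your parenthesis ``since $I$ (hence each $\p_i\supseteq I$) is $\Sigma$-reflexive'' suggests reflexivity passes to larger ideals; it does not (the $\p_i$ are only $\Sigma_0$-reflexive), though this is harmless where you use it, because $\lexp{a}{\tau}(V(\p_i))\subseteq V(\tau^{-1}\p_i)$ needs no reflexivity of $\p_i$.
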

The $\Sigma$-difference schemes $\iota_*V^{\Sigma_0}(\p_i)$ should be
thought of as `structural components' of $V^\Sigma(I)$, even though they
decompose further into topological components which are shuffled by $\Sigma$.
\begin{proof}
Since $I$ is $\Sigma_0$-well-mixed, using the ascending chain condition on
$\Sigma_0$-perfect ideals and techniques of \ref{perfintrs}, we can 
find a decomposition 
$V^{\Sigma_0}(I)=\cup_{i\leq n}V^{\Sigma_0}(\p_i)$, with 
$\p_i\in\spec^{(\Sigma_0)}(R)$. Applying $\tau\in\Sigma$ to the equality,
using \ref{fulfinorb}, yields
$V^{\Sigma_0^\tau}(I)=\cup_{i\leq n}V^{\Sigma_0^\tau}(\tau^{-1}\p_i)$,
and it suffices to take the union over all $\tau\in\Sigma$. 
\end{proof}

 When necessary, we shall assume the following {\bf finiteness condition}.
A \emph{difference scheme} will  mean an almost strict difference scheme $(X,\Sigma)$
which can be covered by a finite number of affine difference schemes which are
themselves almost strict of finite $\Sigma$-type over a difference field.

\subsection{Dimensions and degrees}
For a point $x$ on a difference scheme $(X,\Sigma)$, we denote by $(\OO_x,\Sigma_x)$ the local
(difference) ring at $x$, and by $(\kk(x),\Sigma^x)$ the residue (difference) field at
 $x$.

\begin{definition}\label{def-limdeg}
Let $(K,\sigma)\subseteq (L,\sigma)$ be an extension of difference fields.
\begin{enumerate}
\item An element $\alpha\in L$ is \emph{$\sigma$-algebraic} over $K$
if the set $\{\alpha,\sigma(\alpha),\sigma^2(\alpha),\ldots\}$ is
algebraically dependent over $K$.
\item The $\sigma$-algebraic closure over $K$ defines a \emph{pregeometry} on $L$
and the dimension with respect to this pregeometry is called the $\sigma$-transcendence
degree. Alternatively, $\sigma\text{\rm-tr.deg}(L/K)$ is the supremum of numbers
$n$ such that the difference polynomial ring $K\{x_1,\ldots,x_n\}$ in $n$ variables
embeds in $L$.
\item $L$ is \emph{$\sigma$-separable} over $K$ if $L$ is linearly disjoint
from $K^\text{inv}$ over $K$, where the \emph{inversive closure} 
$(K^\text{inv},\sigma)$, as in \ref{definvring}, is the unique
(up to $K$-isomorphism) difference field extension of $(K,\sigma)$ where 
$\sigma$ is an automorphism of $K^\text{inv}$ and 
$$
K^\text{inv}=\bigcup_mK^{\sigma^{-m}}.
$$
\item Suppose $L$ is $\sigma$-algebraic of finite $\sigma$-type over $K$,
$\sigma$-generated by a finite set $A$. 
Let $A_k:=\bigcup_{i\leq k}\sigma^i(A)$ and let $d_k:=[K(A_k):K(A_{k-1})]$.
It is shown in \cite{cohn} that for every $k$, $d_k\geq d_{k+1}$ and
we may define the \emph{limit degree} as
$$
\dl((L,\sigma)/(K,\sigma)):=\min_k d_k.
$$
This definition is independent of the choice of the generators. 
When $L/K$ is $\sigma$-algebraic but not necessarily finitely $\sigma$-generated,
 one defines 
$\dl(L/K)$ as the maximum of $\dl(L'/K)$ where $L'$ runs over the 
extensions of finite $\sigma$-type contained in $L$.
\end{enumerate}
\end{definition}

\begin{lemma}\label{fulllimdeg}
Let $(K,\Sigma_0)\subseteq(L,\Sigma)$ be an extension of difference fields,
with $\Sigma$ full and finite 
and let $\pi:\Sigma\to\Sigma_0$ be the associated $\diff$-morphism. Then, for any $\sigma,\sigma'\in\Sigma$,
$$
\dl((L,\sigma)/(K,\pi(\sigma)))=\dl((L,\sigma')/(K,\pi(\sigma')).
$$
\end{lemma}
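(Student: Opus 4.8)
The plan is to evaluate both limit degrees from one and the same symmetric generating set, and to show that the two associated towers of intermediate fields literally coincide. Write $L=K(a)_\Sigma$ for a finite tuple $a$, and set $\bar a=\{\tau a:\tau\in\Sigma\}$. The proof of \ref{fullfintyp} already shows that $L=K(\bar a)_\sigma$ for every $\sigma\in\Sigma$, so for each fixed $\sigma$ I may compute $\dl((L,\sigma)/(K,\pi(\sigma)))$ from the generators $\bar a$ as in \ref{def-limdeg}: setting $A_k^\sigma=\bigcup_{i\le k}\sigma^i(\bar a)$ and $d_k^\sigma=[K(A_k^\sigma):K(A_{k-1}^\sigma)]$, one has $\dl=\min_k d_k^\sigma$. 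Everything then reduces to proving that the subfields $K(A_k^\sigma)$ do not depend on $\sigma$.

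For $m\ge1$ let $\Sigma^m a=\{w(a):w\in\langle\Sigma\rangle,\ |w|=m\}$ denote the image of $a$ under all difference monomials of length $m$ (notation as in \ref{diff-ops}); this set is visibly independent of any choice of $\sigma$. The key claim is that
\begin{equation*}
\rho\,\Sigma^m a=\Sigma^{m+1}a\qquad\text{for every }\rho\in\Sigma,\ m\ge1.
\end{equation*}
The inclusion $\subseteq$ is immediate since $\rho w$ has length $m+1$. For $\supseteq$, take $w'$ of length $m+1$ and write $w'=stu$ with $s,t\in\Sigma$ its first two letters and $u$ of length $m-1$. Reading the generalised conjugation relation of \ref{def-regular} in $L$ in the form $s\,t=\rho\,\lrexp{\rho}{s}{t}$ with $\lrexp{\rho}{s}{t}\in\Sigma$, I may rewrite $w'=\rho\,(\lrexp{\rho}{s}{t}\,u)$, a word of length $m+1$ beginning with $\rho$; hence $w'(a)\in\rho\,\Sigma^m a$. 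The upshot is that $\rho\,\Sigma^m a$ is the same for all $\rho\in\Sigma$.

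Since $\bar a=\Sigma^1 a$, iterating the claim yields $\sigma^i(\bar a)=\Sigma^{i+1}a$ for all $i\ge0$ and all $\sigma\in\Sigma$; in particular $\sigma^i(\bar a)$ is independent of $\sigma$. Consequently
\begin{equation*}
A_k^\sigma=\bigcup_{j=1}^{k+1}\Sigma^j a=\{w(a):1\le|w|\le k+1\}
\end{equation*}
is independent of $\sigma$, whence $K(A_k^\sigma)$ and $d_k^\sigma=[K(A_k^\sigma):K(A_{k-1}^\sigma)]$ are as well, and therefore so is $\min_k d_k^\sigma=\dl((L,\sigma)/(K,\pi(\sigma)))$. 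As a byproduct, since each degree $d_k$ is now seen to be intrinsic, finiteness of all the $d_k$ --- that is, $\sigma$-algebraicity of $L/K$ --- is also uniform across $\sigma\in\Sigma$, so the statement is meaningful for all $\sigma$ simultaneously.

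I expect the only genuinely non-formal point to be the identification of the correct $\sigma$-invariant object, namely the length-graded orbit $\Sigma^m a$, together with the one-step reduction $s\,t=\rho\,\lrexp{\rho}{s}{t}$ that lets an arbitrary length-$(m+1)$ monomial be re-expressed with prescribed leading letter $\rho$. Once this combinatorial identity is in place, the passage to fields and the comparison of the two towers is routine bookkeeping.
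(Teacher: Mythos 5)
Your proof is correct and follows essentially the same route as the paper: both pass to the symmetric generating set $\bar a=\{\tau a:\tau\in\Sigma\}$, use the generalised conjugation relation $st=\rho\,\lrexp{\rho}{s}{t}$ to see that the resulting tower of intermediate fields is independent of the chosen $\sigma$, and then invoke the generator-independence of the limit degree. The only difference is that your graded-orbit claim $\rho\,\Sigma^m a=\Sigma^{m+1}a$ makes explicit the induction that the paper compresses into the single assertion $\sigma\bar a=\sigma'\bar a$ (citing the proof of \ref{fullfintyp}), which is a welcome clarification rather than a departure.
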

\begin{proof}
We may suppose $L=K(a)_\Sigma$ for a finite tuple $a\in L$.
Let $\bar{a}=\{\tau a:\tau\in\Sigma\}$. 
As in the proof of \ref{fullfintyp} we conclude
that $\sigma\bar{a}=\sigma'\bar{a}$ for any $\sigma,\sigma'\in\Sigma$. Let 
$$L_n=K(\bar{a},\sigma\bar{a},\ldots,\sigma^n\bar{a})
=K(\bar{a},\sigma'\bar{a},\ldots,{\sigma'}^n\bar{a}).
$$
Since the limit degree does not depend on the choice of generators,
$$
\dl((L,\sigma)/(K,\pi(\sigma)))=\lim_n[L_{n+1}:L_n]=\dl((L,\sigma')/(K,\pi(\sigma')).
$$
\end{proof}
 The following definition makes sense by \ref{fulllimdeg}.
\begin{definition}
Let $(K,\Sigma_0)\subseteq(L,\Sigma)$ be an extension of difference fields
with $\Sigma$ full and finite. Let $\dl((L,\Sigma)/(K,\Sigma_0))=\dl((L,\sigma)/K)$,
for any $\sigma\in\Sigma$.
\end{definition}

Before introducing the various dimensions and degree invariants of difference
schemes, it is useful to define an auxiliary structure where some of those
invariants will take values.
 
The \emph{rig} (ring without negatives) $\N\cup\{\infty\}[\LL]$ admits a
natural lexicographic polynomial ordering $\leq$, and an equivalence relation
 $\approx$, where
$u\approx v$ if $u,v\in\N[\LL]$ have the same degree in $\LL$ and
 and their leading coefficients are equal. We will consider the
 rig $\N\cup\{\infty\}[\LL]/{\approx}$. 

\begin{definition}\label{dimtot-ring}
Let $(k,\sigma_0)$ be a difference field, $(K,\Sigma)$ a full 
difference field extension and let $(R,\Sigma)$ be a full $(k,\sigma)$-algebra.
\begin{enumerate}
\item
Let the \emph{transformal degree} of $(K,\Sigma)$ be 
$$\dd(K/k)=\dl(K/k)\LL^{\mathop{\rm tr.deg}(K/k)}$$ in $\N\cup\{\infty\}[\LL]/{\approx}$.
\item
Let the \emph{effective transformal degree} of $X$ be
$\dde(K/k)=\dd(K^{\rm inv}/k^{\rm inv})$.
\item
Let $$\dd(R/k)=\sum\limits_{\substack{\p\in\spec(R) \\  \sigma(\p)\subseteq\p}}\dd(\kk(\p)/k),$$ and analogously for $\dde(R/k)$.
\item The  \emph{limit degree} $\dl(R/k)$ and \emph{total dimension}  $\dt(R/k)$
are defined through 
$$
\dl(R/k)\LL^{\dt(R/k)}\approx\dd(R/k),
$$ 
and analogously for the \emph{effective total dimension}.
\end{enumerate}
\end{definition}

\begin{definition}\label{trdim-sch}
Let $(k,\sigma)$ be a difference field, and consider a morphism 
$\varphi:(X,\sigma)\to (Y,\sigma)$ of $(k,\sigma)$-difference schemes.
\begin{enumerate}
\item The \emph{$\sigma$-dimension} of $X$ is 
$
\sd(X)=\sup\limits_{x\in X}\sigma\text{\rm-tr.deg}(\kk(x)/k).
$
\item The \emph{relative $\sigma$-dimension} 
$$
\sd(\varphi)=\sup_{y\in Y}\sd(X_y),
$$
where $X_y=X\times_Y\specd(\kk(y))$ is the fibre over $y$.
\end{enumerate}
\end{definition}

\begin{definition}\label{dimtot-sch}
Let $(k,\sigma)$ be a difference field, and consider a morphism $\varphi:(X,\Sigma)\to (Y,T)$ of full $(k,\sigma)$-difference schemes.
\begin{enumerate}
\item 
Let the \emph{transformal degree} of $X$ be
$$\dd(X)=\sum_{x\in X}\dd(\OO_x/k),$$
 and analogously for $\dde(X)$.

\item The  \emph{limit degree} $\dl(X)$ and \emph{total dimension}  $\dt(X)$
are defined through 
$$
\dl(X)\LL^{\dt(X)}\approx\dd(X),
$$ 
and analogously for the \emph{effective total dimension}.

\item The \emph{relative transformal degree} 
$$
\dd(\varphi)=\sup_{y\in Y}\dd(X_y),
$$
and analogously for $\dde(\varphi)$. From these we derive the notions
of \emph{relative limit degree} and \emph{relative total dimension}.
\end{enumerate}
\end{definition} 

\begin{remark}
\begin{enumerate}
\item Clearly (cf.~\cite{udi}, \cite{laszlo}), $\sd(X)=0$ if and only $\dt(X)$ and $\dte(X)$ are
finite, and analogously for the relative dimensions. In this case, if in addition $\varphi$
is of finite $\sigma$-type, 
$\dd(\varphi)\in\N[\LL]$, i.e., the limit degree is finite.
\item
When $X=\specd(R)$, $\dt(X)=\dt(R)$, so the above definition is consistent. Indeed,
as remarked in \cite{laszlo}, the inequality
\[
\dt(R)\geq\sup_{\p\in\specd(R)}\dt(R_{\p})
\]
is obvious. In the other direction, let $\p\in\spec(R)$ such that $\sigma(\p)\subseteq\p$.
Then $\p$ induces a $\sigma$-ideal in $\spec(R_{\bar{\p}})$, where 
$\bar{\p}=\cup_{m>0}\sigma^{-m}(\p)$ is the
perfect closure of $\p$, and the opposite inequality follows.

\item When $(L,\sigma)$ is a $\sigma$-separable $\sigma$-algebraic extension of $(K,\sigma)$,
$L^{\rm inv}$ is an algebraic extension of $LK^{\rm inv}$ and 
$$\trdeg(L^{\rm inv}/K^{\rm inv})=\trdeg(LK^{\rm inv}/K^{\rm inv})=\trdeg(L/K).$$ Thus, when
$\varphi:X\to Y$ is \emph{$\sigma$-separable} in the sense that for every $x\in X$, the extension 
$\kk(x)/\kk(\varphi(x))$ is
$\sigma$-separable, we get that $$\dd(\varphi)\approx\dde(\varphi).$$

\item\label{tower} Thanks to the corresponding property of the limit degree and
the additivity of total dimension, the transformal degree is multiplicative in towers. 

\item\label{comp} Let $X=\specd(R)$. 
By the Ritt ascending chain condition
for perfect ideals in $R$ (\cite{cohn}), $X$ is a Noetherian topological space and therefore
we get a decomposition of $X$ into irreducible components,
$$
X=X_1\cup\cdots\cup X_n,
$$
where $X_i=\specd(R/\p_i)$ for some $\p_i\in \specd(R)$.
Equivalently, the zero ideal in $R$ can
be represented as
$$
0= \p_1\cap\cdots\cap\p_n.
$$
 Since $X$ is of transformal dimension $0$ (equivalently, of finite total dimension),
 for $i\neq j$, $\dt(X_i\cap X_j)<\dt(X)$ and the results of \cite{laszlo} entail
 $$
 \dd(X)\approx\sum_{i} \dd(X_i)
 \approx\sum_i\dl(\mathop{\rm Fract}(R/\p_i)/k)\LL^{\trdeg(\mathop{\rm Fract}(R/\p_i)/k)}.
 $$
An analogous statement holds for $\dde$.

\end{enumerate}

\end{remark}

\subsection{Normalisation}

\begin{definition}
Let $A$ be an integral domain with fraction field $K$ and let $V$ be a
finite dimensional vector space over $K$. An $A$-submodule $M$ of $V$
is called an \emph{$A$-lattice}
 in $V$ if $M$ contains a basis for $V$ as a $K$-module
(i.e.\ $KM=V$) and $M$ is a submodule of a finitely generated $A$-submodule
of $V$.
\end{definition}
 
\begin{proposition}[\cite{eisenbud-comm-alg}, 13.14]\label{norm-lattice}
Let $A$ be a normal domain with fraction field $K$ and let $L$ be a finite separable extension of $K$. Let $B$ be the integral closure of $A$ in $L$. Choose
an integral basis $b_1,\ldots,b_n\in B$ for $K$ over $L$ and let
$\Delta=\Delta(b_1,\ldots,b_n)$ be its discriminant. Then 
$$
B\subseteq\frac{1}{\Delta}\left( Ab_1+\cdots+Ab_n\right).
$$
In particular, $B$ is an $A$-lattice in $L$. 
\end{proposition}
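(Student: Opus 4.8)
The plan is to derive everything from the nondegeneracy of the trace form, which is exactly what separability of $L/K$ provides. Write $\mathrm{Tr}=\mathrm{Tr}_{L/K}$, and recall that $\Delta=\det\left(\mathrm{Tr}(b_ib_j)\right)_{i,j}$. The single substantive input I would isolate first is the following: for any $\beta\in B$ one has $\mathrm{Tr}(\beta)\in A$. Indeed, $\beta$ is integral over $A$, hence so is each of its $K$-conjugates (they satisfy the same monic polynomial), hence so is their sum $\mathrm{Tr}(\beta)$; but this sum lies in $K$, and since $A$ is normal, an element of $K$ integral over $A$ must lie in $A$. Separability also guarantees $\Delta\neq 0$, so that the trace pairing is nondegenerate and dividing by $\Delta$ is meaningful.

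Next I would run the standard linear-algebra argument. Take an arbitrary $\beta\in B$ and expand it in the $K$-basis $b_1,\ldots,b_n$ of $L$ as $\beta=\sum_j c_j b_j$ with $c_j\in K$; the goal is to prove $\Delta c_j\in A$ for every $j$. Pairing with each $b_i$ under the trace gives the linear system
$$
\mathrm{Tr}(\beta b_i)=\sum_j \mathrm{Tr}(b_ib_j)\,c_j,\qquad i=1,\ldots,n.
$$
Here every $\beta b_i\in B$, so by the observation above all the left-hand sides lie in $A$; and the coefficient matrix $T=\left(\mathrm{Tr}(b_ib_j)\right)$ has all entries in $A$ with $\det T=\Delta$. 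Solving by Cramer's rule yields $\Delta c_j=\det T_j$, where $T_j$ is obtained from $T$ by replacing its $j$-th column with the vector $\left(\mathrm{Tr}(\beta b_i)\right)_i$. Since all entries of $T_j$ lie in $A$, the determinant $\det T_j$ lies in $A$, whence $\Delta c_j\in A$. Therefore $\Delta\beta=\sum_j(\Delta c_j)b_j\in Ab_1+\cdots+Ab_n$, which is precisely the claimed inclusion $B\subseteq \frac{1}{\Delta}\left(Ab_1+\cdots+Ab_n\right)$.

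To finish with the lattice assertion I would check the two defining conditions directly. The $b_j$ lie in $B$ and already form a $K$-basis of $L$, so $KB=L$, giving the first condition. The inclusion just established exhibits $B$ as an $A$-submodule of $\frac{1}{\Delta}\left(Ab_1+\cdots+Ab_n\right)$, which is a finitely generated $A$-module; this is the second condition. Hence $B$ is an $A$-lattice in $L$.

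The only genuinely delicate step is the first one, namely that traces of integral elements land back in $A$, and this is where normality of $A$ is indispensable; everything afterwards is Cramer's rule over $K$ together with careful bookkeeping of which quantities are integral. I would expect no real obstacle beyond making the role of separability explicit: it is used once to ensure $\Delta\neq 0$ (so the trace form is nondegenerate and the $b_i$ genuinely form a basis detected by the trace) and implicitly throughout in treating $\mathrm{Tr}_{L/K}$ as a well-behaved $K$-linear form.
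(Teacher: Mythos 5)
Your proof is correct, and it is essentially the argument behind the result as cited: the paper gives no proof of its own (it quotes \cite{eisenbud-comm-alg}, 13.14), and the standard proof there is precisely your two steps — traces of integral elements lie in $A$ by normality, then Cramer's rule (equivalently, expansion in the dual basis for the nondegenerate trace pairing) shows $\Delta c_j\in A$, with separability used exactly where you say, to make $\Delta\neq 0$. Note only that the statement's phrase ``integral basis for $K$ over $L$'' is a typo for ``$L$ over $K$'', which you have silently and correctly fixed.
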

 
Suppose $(A,\Sigma_0)$ is a normal domain with fraction field $(K,\Sigma_0)$ and
let $(L,\Sigma)$ be an extension of $(K,\Sigma_0)$ with $[L:K]$ finite.
Let $B$ be the integral closure of $A$ in $L$. Then $\sigma(B)\subseteq B$ for
every $\sigma\in\Sigma$
so $(B,\Sigma)$ is a difference ring extension of $(A,\Sigma_0)$.
\begin{corollary}\label{cor:norm-lattice}
With above notation, assume $L$ is separable over $K$. Then there is 
a $\Sigma_0$-localisation $A'=A[1/f]_{\Sigma_0}$ of $A$, for an element 
$f\neq 0$, such that the corresponding normalisation 
$B[1/f]_{\Sigma}$ is a finite $A'$-module.
\end{corollary}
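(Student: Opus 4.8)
The plan is to reduce everything to the classical lattice estimate \ref{norm-lattice} and then to dispose of the gap between $B$ and a free module by a single localisation, thereby sidestepping the fact that $A$ and its localisations are typically far from Noetherian. First I would choose a $K$-basis $b_1,\ldots,b_n$ of $L$ lying in $B$: starting from any $K$-basis of $L$ and clearing denominators by nonzero elements of $A$ produces integral elements still spanning $L$ over $K$. Set $M=\sum_i Ab_i$, a free $A$-submodule of rank $n$ with $M\subseteq B$, and let $\Delta=\Delta(b_1,\ldots,b_n)$ be the associated discriminant; since $L/K$ is separable the trace form is nondegenerate, so $\Delta\neq 0$. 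Applying \ref{norm-lattice} to the normal domain $A$ with fraction field $K$ and the finite separable extension $L$ yields $B\subseteq\frac{1}{\Delta}M$, whence the squeeze
$$
M\subseteq B\subseteq\tfrac{1}{\Delta}M.
$$

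Next I would set $f=\Delta$ and $A'=A[1/f]_{\Sigma_0}$. Because $f\in A$, for every $\sigma\in\Sigma$ one has $\sigma(f)=\pi(\sigma)(f)\in A$, so the multiplicative system $S_{f_\Sigma}=\{f^\nu:\nu\in\N[\Sigma]\}$ coincides with $S_{f_{\Sigma_0}}$; consequently $B[1/f]_\Sigma=S_{f_\Sigma}^{-1}B$ is an ordinary localisation of $B$ at a set containing the powers of $\Delta$, and (localisation commuting with integral closure) it is exactly the normalisation of $A'$ in $L$. The crucial observation is that $\Delta B\subseteq M$, that is, the quotient $B/M$ is annihilated by $\Delta$. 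Localising the exact sequence $0\to M\to B\to B/M\to 0$ at $S_{f_\Sigma}$, which contains $\Delta$, kills the $\Delta$-torsion module $B/M$, so that
$$
B[1/f]_\Sigma=S_{f_\Sigma}^{-1}M=\bigoplus_{i=1}^n A'b_i.
$$
This exhibits $B[1/f]_\Sigma$ as a free $A'$-module of rank $n$, in particular finite, as required; the $\Sigma$-action is carried along automatically since $S_{f_\Sigma}$ is $\Sigma$-stable.

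The only genuine difficulty is that $A'$ is not Noetherian, so one cannot conclude, as in the classical case, that the sub-$A'$-module $B[1/f]_\Sigma$ of the finite free module $\frac{1}{\Delta}M$ is finitely generated merely because it sits inside a finite free module. This is precisely what the squeeze $M\subseteq B\subseteq\frac{1}{\Delta}M$ circumvents: inverting $\Delta$ collapses the two outer terms onto the same free $A'$-module, forcing $B$ to coincide with it after localisation, so that no finiteness hypothesis on $A'$ is ever invoked. All the substantive content therefore resides in \ref{norm-lattice}, with the difference-theoretic input amounting only to the remark that inverting the scalar $\Delta\in A$ has the same effect whether one localises in the $\Sigma$- or the $\Sigma_0$-sense.
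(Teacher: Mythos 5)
Your proof is correct and follows exactly the route of the paper, whose entire proof of \ref{cor:norm-lattice} is the single sentence ``Perform a transformal localisation at the discriminant.'' Your argument simply makes this explicit: applying \ref{norm-lattice} to get the squeeze $M\subseteq B\subseteq\frac{1}{\Delta}M$, noting $\Delta\neq 0$ by separability, and observing that inverting the generalised powers of $\Delta$ (which lie in $A$, so the $\Sigma$- and $\Sigma_0$-localisations agree) collapses the squeeze onto a finite free $A'$-module.
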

\begin{proof}
Perform a transformal localisation at the discriminant.
\end{proof}
\begin{remark}
If $(A,\sigma)$ is of finite $\sigma$-type over $(B,\sigma)$ with both $A$, $B$
domains such that the fraction field of $A$ is finite separable over the fraction
field of $B$, there is a $\sigma$-localisation $B'$ of $B$ such that 
$A\otimes_BB'$ is a finite $B'$-module.
\end{remark}

\section{Galois covers}\label{sect:galois}

\subsection{Finite group actions and quotient difference schemes}

Suppose a finite group $G$ acts on  a difference scheme $(X,\Sigma)$ by automorphisms from the right, through a homomorphism $G^{op}\to\aut(X,\Sigma)$. We will not require a notational
device for distinguishing an element $g$ of $G$ from the corresponding automorphism, since
it only makes sense to compose $g$ with other morphisms of difference varieties when
$g$ is considered as an automorphism. 
 For any difference scheme $(Z,T)$, $G$ acts on the set $\Hom((X,\Sigma),(Z,T))$ on the
left and we can consider the set $\Hom((X,\Sigma),(Z,T))^G$ of $G$-invariant morphisms.
A natural question to ask is whether the functor $(Z,T)\mapsto\Hom((X,\Sigma),(Z,T))^G$
is representable, i.e., isomorphic to a functor $(Z,T)\mapsto\Hom((Y,\bar{\Sigma}),(Z,T))$.
In other words, is there a difference scheme $(Y,\bar{\Sigma})$ and a $G$-invariant morphism
$p:(X,\Sigma)\to(Y,\bar{\Sigma})$ such that for every $(Z,T)$, the function
$$\Hom((Y,\bar{\Sigma}),(Z,T))\to \Hom((X,\Sigma),(Z,T))^G,$$
defined by $\varphi\mapsto\varphi p$ is  bijective. If this is the case, we say that $(Y,\bar{\Sigma})$
is a \emph{quotient} of $(X,\Sigma)$ by $G$, and it is determined up to a unique isomorphism. 

It would be difficult to consider the existence of a completely general categorical quotient (in the above sense) in the difference context, but if we assume some additional reasonable 
(universal categorical or geometric) properties from the quotient,
such as that the fibres of $p$ are in fact $G$-orbits, we uncover the existence of a richer
structure by the following heuristic. Suppose, in the best possible case,  that for any (structural) morphisms $f,f':X\to X$, $pf=pf'$
implies the existence of a (unique) $h\in G$ such that $f'=hf$. 
Then, since $p\sigma g=\sigma^ppg=\sigma^pp=p\sigma$, there must be an $h\in G$
such that $\sigma g=h\sigma$. By the assumption that $\sigma$ is an epimorphism, it
follows that $h$ is unique and we denote it $g^\sigma$. Thus we obtain a
homomorphism $()^\sigma:G\to G$ for every $\sigma\in\Sigma$, with the property that
for every $g\in G$,
$$
\sigma g=g^\sigma\sigma.
$$
If $\sigma$ is invertible, %
it follows that $()^\sigma$ is in fact a group automorphism.

In this context, we will say that $(G,\tilde{\Sigma})$ acts on $(X,\Sigma)$, where
$\tilde{\Sigma}=\{()^\sigma:\sigma\in\Sigma\}$.

Let us prove the existence of quotients for affine difference schemes, where $(G,\tilde{\Sigma})$ acts (on the left) by automorphisms  on the difference ring $(A,\Sigma)$, i.e., for every 
$\sigma\in\Sigma$ and $g\in G$, $$g\sigma=\sigma g^\sigma.$$

\begin{proposition}\label{quotaff}
Suppose a finite group $(G,\tilde{\Sigma})$ acts on a difference ring $(A,\Sigma)$ 
so that
$\Sigma G=\Sigma$ and
let $(B,\bar{\Sigma})=A^G$ be the subring of invariants of $A$, $X=\spec^\Sigma(A)$, 
$Y=\spec^{\bar{\Sigma}}(B)$ and $p:(X,\Sigma)\to (Y,{\bar{\Sigma}})$ the
canonical ($G$-invariant) morphism. Then the following holds.
\begin{enumerate}
\item\label{ajn} $A$ is integral over $B$.

\item\label{cvaj} The morphism $p$ is surjective, its fibres are $G$-orbits and the topology of 
$Y$ it the quotient of the topology of $X$.

\item\label{draj} Let $x\in X$, $y=p(x)$, let $G_x$ be the stabiliser of $x$ and let 
$\Sigma_x=\{\sigma\in\Sigma: \sigma(x)=x\}=\{\sigma\in\Sigma: x\in X^\sigma\}$. 
Let $\tilde{\Sigma}_x=\{()^\sigma\in\tilde{\Sigma}: \sigma\in\Sigma_x\}$ and
$\tilde{\Sigma}^x=\{()^{\sigma^x}:\sigma\in\Sigma_x\}$, where $\sigma^x:\kk(x)\to\kk(x)$
is induced by $\sigma_x^\sharp:\OO_x\to\OO_x$ for every $\sigma\in\Sigma_x$.
Then $\kk(x)$ is
a quasi-Galois algebraic extension of $\kk(y)$ and the canonical map
$$(G_x,\tilde{\Sigma}_x)\to\left(\Gal(\kk(x)/\kk(y)),\tilde{\Sigma}^x\right)$$ is surjective.

\item\label{funf} The natural homomorphism $\OO_Y\to\left(p_*\OO_X\right)^G$ is an isomorphism.

\item\label{fir} $(Y,{\bar{\Sigma}})$ is a quotient difference scheme of $(X,\Sigma)$ by $G$.
\end{enumerate}
\end{proposition}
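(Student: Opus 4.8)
The plan is to reduce the first four assertions to the classical theory of the quotient $\spec(A)\to\spec(A^G)$ of an affine scheme by a finite group, and then to transport that information along the inclusions $\spec^\Sigma(A)\hookrightarrow\spec(A)$ and $\spec^{\bar\Sigma}(B)\hookrightarrow\spec(B)$, keeping track of the extra difference datum. For (\ref{ajn}) I would use the standard symmetric-function argument: every $a\in A$ is a root of the monic $\prod_{g\in G}(T-ga)$, whose coefficients are $G$-invariant and hence lie in $B$; this is insensitive to $\Sigma$. The classical theory also gives at once that $\pi\colon\spec(A)\to\spec(B)$ is surjective and closed, that $G$ acts transitively on the primes of $A$ over a fixed prime of $B$, that each residue extension $\kk(\p)/\kk(\p\cap B)$ is quasi-Galois with decomposition group surjecting onto $\Gal(\kk(\p)/\kk(\p\cap B))$, and that $\OO_{\spec(B)}\simeq(\pi_*\OO_{\spec(A)})^G$. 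Before invoking any of this I would record the two compatibilities that make the restriction to difference spectra legitimate: for $\sigma\in\Sigma$ and $g\in G$ the relation $g\sigma=\sigma g^\sigma$ forces $\sigma(B)\subseteq B$, so that $\bar\Sigma$ really is the restriction of $\Sigma$ to $B$; and it forces $g\tau g^{-1}=\tau(g^\tau g^{-1})\in\Sigma G=\Sigma$, so $G$ carries $\spec^\tau(A)$ into $\spec^{\tau(g^\tau g^{-1})}(A)$ and therefore acts on $X=\spec^\Sigma(A)$.

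The genuinely difference-theoretic point in (\ref{cvaj}) is that $\pi$ restricts to a \emph{surjection} $p\colon X\to Y$ with orbit fibres, and here the hypothesis $\Sigma G=\Sigma$ does the work. Given $y=\q\in\spec^{\bar\sigma}(B)$, lift $\bar\sigma$ to $\sigma\in\Sigma$ and choose, by classical lying-over, a prime $\p_0\in\spec(A)$ over $\q$. Since $\bar\sigma^{-1}\q=\q$, the prime $\sigma^{-1}\p_0$ again lies over $\q$, so by transitivity $\sigma^{-1}\p_0=g(\p_0)$ for some $g\in G$; then $(\sigma g)^{-1}\p_0=g^{-1}(\sigma^{-1}\p_0)=\p_0$, whence $\p_0\in\spec^{\sigma g}(A)\subseteq X$ because $\sigma g\in\Sigma G=\Sigma$. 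This proves surjectivity. Since $\p_0\in X$ and $G$ preserves $X$, the entire classical orbit of $\p_0$ lies in $X$; as that orbit is exactly the $\pi$-fibre over $\q$, the $p$-fibre over $y$ is a single $G$-orbit. The assertions that $p$ is closed and that $Y$ carries the quotient topology then follow by restricting the corresponding classical facts, using that $\spec^\Sigma$ and $\spec^{\bar\Sigma}$ carry the induced topologies and that closed sets have the form $V^\Sigma(I)$.

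For (\ref{draj}), the extension $\kk(x)/\kk(y)$ being quasi-Galois and the surjectivity of $G_x\to\Gal(\kk(x)/\kk(y))$ are the classical decomposition-group statements applied verbatim to the $G$-action on $A$, ignoring $\Sigma$. What remains is to check that this surjection is a morphism of groups with operators, that is, that it intertwines $()^\sigma$ on $G_x$ with $()^{\sigma^x}$ on $\Gal(\kk(x)/\kk(y))$ for $\sigma\in\Sigma_x$; this is immediate from the definition of $\sigma^x$ as the map induced by $\sigma$ on $\kk(x)$, together with the defining relation $g\sigma=\sigma g^\sigma$ passed to residue fields.

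I expect the main obstacle to be (\ref{funf}) and its use in (\ref{fir}), because the structure sheaves here are the \emph{induced} sheaves $\OO_{X^\Sigma}=i^{-1}\OO_{\spec(A)}$ of \ref{defspecsigma} rather than the ambient ones, and global sections need not be exact. The plan is to deduce $\OO_Y\simeq(p_*\OO_X)^G$ from the classical identity on the ambient spectra by applying $i^{-1}$ and invoking Proposition~\ref{wmaff}: the identification $\bar A=H^0(\spec^\Sigma(A))$ of \ref{wmaff}(\ref{sesttt}) and the injectivity statements of \ref{wmaff}(\ref{jennn}) reduce the computation of invariants to the ring-level equality $\bar B=\bar A^G$, with well-mixedness keeping these global-section rings well-behaved. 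Finally, (\ref{fir}) should follow formally: by the adjunction \ref{embedcat}, a $G$-invariant morphism $(X,\Sigma)\to(\spec^T(S),T)$ corresponds to a difference-ring homomorphism $(S,T)\to(\bar A,\Sigma)$ whose image is $G$-invariant, hence lands in $\bar A^G=\bar B=H^0(Y)$ by (\ref{funf}), and therefore factors uniquely through $p$; globalising over an affine cover of an arbitrary target $(Z,T)$, the topological quotient property of (\ref{cvaj}) ensures the local factorisations glue, yielding the desired bijection $\Hom((Y,\bar\Sigma),(Z,T))\stackrel{\sim}{\to}\Hom((X,\Sigma),(Z,T))^G$.
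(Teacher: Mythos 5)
Your treatment of parts (\ref{ajn})--(\ref{draj}) is correct and is essentially the paper's own argument: integrality via the $G$-symmetric polynomial $\prod_{g\in G}(t-ga)$, and reduction of (\ref{cvaj}), (\ref{draj}) to the classical quotient $\tilde{p}:\spec(A)\to\spec(A^G)$ of SGA1 V.1.1, the sole difference-theoretic input being the hypothesis $\Sigma G=\Sigma$, which converts $\sigma^{-1}\p_0=g\p_0$ into $\p_0\in\spec^{\sigma g}(A)\subseteq X$. The paper packages this as the single statement $\tilde{p}^{-1}(Y)=X$, whereas you package it as surjectivity of $p$ plus $G$-stability of $X$ (your verification $g\tau g^{-1}=\tau(g^{\tau}g^{-1})\in\Sigma G=\Sigma$ is correct); the two are equivalent, and both yield the orbit-fibre and quotient-topology claims by restriction of the classical facts.

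The genuine gap is (\ref{funf}), which you rightly single out as the hard part but do not actually close. Writing $i:X\hookrightarrow\tilde{X}=\spec(A)$ and $j:Y\hookrightarrow\tilde{Y}=\spec(B)$, applying $j^{-1}$ to the classical isomorphism $\OO_{\tilde{Y}}\stackrel{\sim}{\to}(\tilde{p}_*\OO_{\tilde{X}})^G$ only gives $\OO_Y\simeq j^{-1}(\tilde{p}_*\OO_{\tilde{X}})^G$; the whole difficulty is then to identify $j^{-1}(\tilde{p}_*\OO_{\tilde{X}})^G$ with $(p_*i^{-1}\OO_{\tilde{X}})^G=(p_*\OO_X)^G$, i.e.\ to commute the inverse image along $j$ with the pushforward and with $G$-invariants. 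Proposition~\ref{wmaff} cannot do this: it provides the injections $A\hookrightarrow\bar{A}$, $B\hookrightarrow\bar{B}$ and the isomorphisms on spectra, but says nothing about descending a $G$-invariant section of $\OO_X$ to a section of $\OO_Y$; indeed the equality $\bar{B}=\bar{A}^G$ you propose to ``reduce to'' is precisely the global-sections instance ($V=Y$) of (\ref{funf}) itself, so the reduction is circular. (A further mismatch: \ref{wmaff} and \ref{embedcat} assume well-mixedness, which is not among the hypotheses of \ref{quotaff}, and the paper's proof deliberately avoids it.) The paper's actual mechanism is a cofinality argument: for $V$ open in $Y$, $(p_*\OO_X)^G(V)$ is the filtered colimit of $\bigl(\OO_{\tilde{X}}(\tilde{U})\bigr)^G$ over ambient opens $\tilde{U}\supseteq p^{-1}(V)$, and the saturated opens $\tilde{U}=\tilde{p}^{-1}(\tilde{V})$, $\tilde{V}\supseteq V$ open in $\tilde{Y}$, are cofinal among these. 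That cofinality uses exactly the two outputs of (\ref{cvaj}) -- namely $\tilde{p}^{-1}(Y)=X$, so that $p^{-1}(V)=\tilde{p}^{-1}(V)$ is $G$-saturated, and closedness of the integral map $\tilde{p}$, so that $\tilde{V}=\tilde{Y}\setminus\tilde{p}(\tilde{X}\setminus\tilde{U})$ works -- together with the fact that invariants under a finite group commute with filtered colimits. Some such argument is indispensable; without it your (\ref{funf}) is unproved, and with it falls your (\ref{fir}), which rests on $\bar{A}^G=H^0(Y,\OO_Y)$ (your globalisation of (\ref{fir}) over an affine cover is otherwise fine, and close to the paper's ``immediate from (\ref{cvaj}) and (\ref{funf})'').
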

\begin{proof}
Let $\bar{\Sigma}=\{\sigma\restriction B:\sigma\in\Sigma\}$ and let 
$()^p:\Sigma\to\bar{\Sigma}$ be the restriction map. We need to check that
the elements of $\bar{\Sigma}$ are endomorphisms of $B$. For $\sigma\in\Sigma$
and $b\in B=A^G$,
$$
g\sigma b=\sigma g^\sigma b=\sigma b,
$$
for every $g\in G$, which shows that $\sigma(B)\subseteq B$, as required. Thus
$p$ is the morphism associated to the inclusion $(B,\bar{\Sigma})\hookrightarrow(A,\Sigma)$.

\noindent(\ref{ajn}) It is well-known that $A$ is integral over $B$ since every $a\in A$ is a root of the monic polynomial 
$\prod_{g\in G}(t-ga)\in B[t]$.

\noindent(\ref{cvaj}) Let us denote $\tilde{p}:\tilde{X}\to\tilde{Y}$ the morphism of
ambient affine schemes $\tilde{X}=\spec(A)$ and $\tilde{Y}=\spec(B)$ 
induced by $B\hookrightarrow A$, so that $p=\tilde{p}\restriction X$.
The statement of (\ref{cvaj}) is known for $\tilde{p}$ (\cite{sga1}, V.1.1), so it suffices
to prove that $\tilde{p}^{-1}(Y)=X$. Pick an $y\in Y$, say $y\in Y^\tau$, for some $\tau\in\bar{\Sigma}$. Then, for any $\sigma\in\Sigma$ restricting to $\tau$  ($\sigma^p=\tau$),
we have $\tilde{p}\circ\lexp{a}{\sigma}=\lexp{a}{\tau}\circ\tilde{p}$ on $\tilde{X}$, 
so given any $x\in\tilde{X}$ such that $\tilde{p}(x)=y$,
$$
\tilde{p}(\lexp{a}{\sigma}x)=\lexp{a}{\tau}(\tilde{p}x)=\lexp{a}{\tau}(y)=y=\tilde{p}(x),
$$
so there exists a $g\in G$ with $\lexp{a}{\sigma}x=gx$. By assumption, $G\Sigma\subseteq\Sigma$, so  $x\in X^{g^{-1}\sigma}\subseteq X$.

\noindent(\ref{draj}) The fact that the natural homomorphism $G_x\to\Gal(\kk(x)/\kk(y))$ is
surjective is known (loc.~cit), and the difference superstructure is a bookkeeping exercise.

\noindent(\ref{funf}) It is known (loc.~cit.) that $\OO_{\tilde{Y}}\to\left(\tilde{p}_*\OO_{\tilde{X}}\right)^G$ is an isomorphism. Let us write $i:X\hookrightarrow\tilde{X}$ and $j:Y\hookrightarrow\tilde{Y}$.
By applying the exact functor $j^{-1}$, we obtain an isomorphism
$$
\OO_Y=j^{-1}\OO_{\tilde{Y}}\to j^{-1}\left(\tilde{p}_*\OO_{\tilde{X}}\right)^G.
$$
On the other hand, for $V$ open in $Y$,
$$
\left(p_*\OO_X\right)^G(V)=\left(p_*i^{-1}\OO_{\tilde{X}}\right)^G(V)=
\left(i^{-1}\OO_{\tilde{X}}(p^{-1}(V))\right)^G=\left(\varinjlim_{\smash{\tilde{U}\supseteq p^{-1}(V)\ \,}}
\OO_{\tilde{X}}(\tilde{U})\right)^G,
$$
where the limit is taken over open subsets $\tilde{U}$ of $\tilde{X}$.  From the proof of (\ref{cvaj}), we know that $p^{-1}(V)=\tilde{p}^{-1}(V)$
is a $G$-invariant set and we can replace the above limit by the limit over the cofinal system of invariant neighbourhoods $p^{-1}(\tilde{V})$ of  $\tilde{p}^{-1}(V)$ with $\tilde{V}$ an open
neighbourhood of $V$ in $\tilde{Y}$, allowing the identifications
$$
 \left(\varinjlim_{\smash{\tilde{V}\supseteq V}}\OO_{\tilde{X}}(\tilde{p}^{-1}(\tilde{V}))\right)^G=
 \varinjlim_{\tilde{V}\supseteq V}\left(\OO_{\tilde{X}}(\tilde{p}^{-1}(\tilde{V}))\right)^G=
 j^{-1}\left(\tilde{p}_*\OO_{\tilde{X}}\right)^G(V),
 $$
 thus providing the desired isomorphism.
 
\noindent(\ref{fir}) This statement is immediate from (\ref{cvaj}) and (\ref{funf}).
\end{proof}

\begin{proposition}\label{propadmissibl}
Let $(X,\Sigma)$ be a difference scheme with a finite group of automorphisms
$(G,\tilde{\Sigma})$, let $p:(X,\Sigma)\to (Y,T)$ an affine invariant morphism
such that $\OO_Y\stackrel{\sim}{\to}(p_*\OO_X)^G$. Then the conclusions
(\ref{ajn}), (\ref{cvaj}), (\ref{draj}), (\ref{fir}) of \ref{quotaff} still hold.
\end{proposition}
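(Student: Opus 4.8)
The plan is to reduce the whole statement to the affine situation already settled in \ref{quotaff}, exploiting that $p$ is an affine morphism. Since $p$ is affine, we may cover $Y$ by open affine difference schemes $V_i=\spec^T(B_i)$ for which each $U_i:=p^{-1}(V_i)$ is affine, say $U_i=\spec^\Sigma(A_i)$. Because $p$ is $G$-invariant, every $U_i$ is a $G$-stable open, so $(G,\tilde{\Sigma})$ acts on $(A_i,\Sigma)$; the compatibility $\Sigma G=\Sigma$ demanded by \ref{quotaff} is inherited from the ambient action of $(G,\tilde{\Sigma})$ on $(X,\Sigma)$. Evaluating the isomorphism $\OO_Y\stackrel{\sim}{\to}(p_*\OO_X)^G$ on $V_i$ gives $B_i\cong\OO_X(U_i)^G=A_i^G$, so each restriction $U_i\to V_i$ is precisely an instance of the affine quotient of \ref{quotaff}.

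First I would dispose of the conclusions (\ref{ajn}), (\ref{cvaj}) and (\ref{draj}), each of which is local on the base $Y$. Integrality of $\OO_X$ over $\OO_Y$ is tested on the $V_i$, where it is \ref{quotaff}(\ref{ajn}). Surjectivity of $p$, the identification of its fibres with $G$-orbits, and the assertion that $Y$ carries the quotient topology are all checkable after restriction to the members of an open cover, and over each $V_i$ they follow from \ref{quotaff}(\ref{cvaj}); indeed a subset of $Y$ is open iff its trace on each $V_i$ is open iff its preimage meets each $U_i$ in an open set iff its preimage is open. Conclusion (\ref{draj}) concerns a single point $x\in X$, its image $y=p(x)$, and the attached stabilisers and residue difference fields, so choosing $V_i\ni y$ reduces it verbatim to \ref{quotaff}(\ref{draj}) for $U_i\to V_i$.

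The heart of the matter is the universal property (\ref{fir}). Given a difference scheme $(Z,W)$ and a $G$-invariant morphism $f\colon(X,\Sigma)\to(Z,W)$, I would build the factorisation through $p$ from the global structure just established. As $p$ is surjective with fibres the $G$-orbits and $f$ is constant on those orbits, $f$ factors uniquely through a map of sets $\bar{f}\colon Y\to Z$ with $\bar{f}\circ p=f$, and $\bar{f}$ is continuous because $Y$ has the quotient topology. The comorphism $f^\sharp\colon\OO_Z\to f_*\OO_X=\bar{f}_*p_*\OO_X$ is $G$-equivariant by the invariance of $f$, so it factors through $\bar{f}_*(p_*\OO_X)^G\cong\bar{f}_*\OO_Y$, producing $\bar{f}^\sharp\colon\OO_Z\to\bar{f}_*\OO_Y$ by means of the hypothesis $\OO_Y\stackrel{\sim}{\to}(p_*\OO_X)^G$.

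The main obstacle will be confirming that $(\bar{f},\bar{f}^\sharp)$ is a morphism in the difference category --- that $\bar{f}^\sharp$ is local on stalks and intertwines the $T$-structure of $Y$ with the $W$-structure of $Z$ --- together with uniqueness of the factorisation. Both are local over $Y$, and over each $V_i$ the pair $(\bar{f},\bar{f}^\sharp)$ must coincide with the factorisation supplied by the representability in \ref{quotaff}(\ref{fir}) for $U_i\to V_i$; hence locality on stalks, compatibility with the difference operators, and local uniqueness are all inherited from the affine case. Global uniqueness then follows since $p$ is surjective, so $\bar{f}$ is pinned down on points by $f$ and on sections through the injection $\OO_Y\cong(p_*\OO_X)^G\hookrightarrow p_*\OO_X$.
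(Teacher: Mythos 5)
Your overall strategy --- use affineness of $p$ to reduce to an open affine cover of $Y$ and invoke \ref{quotaff} chart by chart --- is the same as the paper's, and your locality arguments for (\ref{cvaj}), (\ref{draj}) and (\ref{fir}) are sound. But there is one step that fails as written, and it is precisely the point where the difference setting diverges from the classical one. You evaluate the hypothesis $\OO_Y\stackrel{\sim}{\to}(p_*\OO_X)^G$ on $V_i=\spec^T(B_i)$ and conclude $B_i\cong\OO_X(U_i)^G=A_i^G$. Neither identification is available: by the definition of the structure sheaf of an affine difference scheme, $\OO_Y(V_i)=H^0(\spec^T(B_i))=\bar{B}_i$ and $\OO_X(U_i)=\bar{A}_i$, and in this framework the canonical maps $B_i\to\bar{B}_i$ and $A_i\to\bar{A}_i$ are injective but in general \emph{not} surjective (\ref{wmaff}, \ref{rem-embedcat}). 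Indeed, by the adjunction \ref{embedcataff}, the comorphism of $p\restriction U_i$ is a priori only a ring map $B_i\to\bar{A}_i$, so the assertion $B_i=A_i^G$ does not even typecheck with the chosen presenting rings. What the hypothesis actually yields is an isomorphism of rings of global sections, $\bar{B}_i\cong\bar{A}_i^G$, and with only that in hand your claim that $U_i\to V_i$ ``is precisely an instance of the affine quotient of \ref{quotaff}'' is not yet justified.

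The repair is short, and it is exactly what the paper's proof does: by \ref{wmaff}(\ref{sesttt}) (equivalently, through the adjunction \ref{embedcat}) one may replace $A_i$ by $\bar{A}_i$ and $B_i$ by $\bar{B}_i$ without changing the difference spectra, since $\spec^\Sigma(\bar{A}_i)\cong\spec^\Sigma(A_i)$ and $\spec^T(\bar{B}_i)\cong\spec^T(B_i)$; the $G$-action on $A_i$ extends to $\bar{A}_i$ by functoriality of $H^0$, and then $U_i\to V_i$ is identified with the affine quotient $\spec^\Sigma(\bar{A}_i)\to\spec^T(\bar{A}_i^G)$ to which \ref{quotaff} applies. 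With this substitution the remainder of your argument, including the reduction of (\ref{ajn})--(\ref{draj}) to the charts and the gluing argument for the universal property (\ref{fir}), goes through unchanged.
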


\begin{proof}
For (\ref{ajn}), (\ref{cvaj}), (\ref{draj}) we may assume that $Y$ and thus $X$
are affine, and if $(B,T)$ and $(A,\Sigma)$ are their rings, the hypothesis
implies through \ref{embedcat} that $\bar{B}=\bar{A}^G$, so we can apply
\ref{quotaff} again. The statement (\ref{fir}) follows from (\ref{cvaj}) and
$\OO_Y=(p_*\OO_X)^G$.
\end{proof}

\begin{corollary}
With assumptions of \ref{propadmissibl}, for every open $U$ in $Y$, $U$ is
the quotient of $p^{-1}(U)$ by $G$.
\end{corollary}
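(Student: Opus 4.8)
The plan is to reduce the statement to Proposition~\ref{propadmissibl} applied to the restricted morphism, by verifying that each of its hypotheses is inherited under passage to an open subset of the base. Since everything in \ref{propadmissibl} and its source \ref{quotaff} is formulated locally on $Y$, this should be essentially automatic once the right compatibilities are recorded.

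First I would observe that, because $p$ is $G$-invariant, the preimage $p^{-1}(U)$ is a $G$-stable open difference subscheme of $(X,\Sigma)$: for $g\in G$ and $x\in p^{-1}(U)$ one has $p(gx)=p(x)\in U$, so $gx\in p^{-1}(U)$. Hence $(G,\tilde{\Sigma})$ acts on $(p^{-1}(U),\Sigma)$ and the restriction $p':=p\restriction p^{-1}(U):(p^{-1}(U),\Sigma)\to(U,T)$ is again a $G$-invariant morphism. It is affine because the property of being an affine morphism is local on the target and is preserved by base change along the open immersion $U\hookrightarrow Y$; thus the hypotheses on $p'$ of the same type as for $p$ are already in place.

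The one point genuinely requiring verification is the structure-sheaf condition $\OO_U\stackrel{\sim}{\to}(p'_*\OO_{p^{-1}(U)})^G$. I would deduce it from the given isomorphism $\OO_Y\stackrel{\sim}{\to}(p_*\OO_X)^G$ together with the fact that formation of the $G$-invariant pushforward commutes with restriction to opens. Concretely, for an open $V\subseteq U$ one has $p'^{-1}(V)=p^{-1}(V)$, whence $(p'_*\OO_{p^{-1}(U)})^G(V)=\left(\OO_X(p^{-1}(V))\right)^G=(p_*\OO_X)^G(V)$; therefore $(p'_*\OO_{p^{-1}(U)})^G=(p_*\OO_X)^G\restriction U$, and since $\OO_U=\OO_Y\restriction U$, the hypothesised sheaf isomorphism on $Y$ restricts to the required isomorphism on $U$.

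Having checked the hypotheses, I would apply Proposition~\ref{propadmissibl} to $p'$: its conclusion~(\ref{fir}), inherited from~\ref{quotaff}, states precisely that $(U,T)$ is a quotient difference scheme of $(p^{-1}(U),\Sigma)$ by $G$, which is the assertion. The only mildly delicate step is the bookkeeping that affineness and the invariant-sheaf isomorphism are genuinely local on the base; everything else follows at once from the $G$-invariance of $p$ and from $p^{-1}$ preserving inclusions of opens.
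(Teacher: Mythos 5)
Your proposal is correct and is exactly the paper's argument: the paper's proof is the one-line observation that the restricted morphism $p^{-1}(U)\to U$ satisfies the same hypotheses as $p$, and your write-up simply makes explicit the three verifications (stability of $p^{-1}(U)$ under $G$, affineness being local on the target, and $(p'_*\OO_{p^{-1}(U)})^G=(p_*\OO_X)^G\restriction U$ via $p'^{-1}(V)=p^{-1}(V)$) that the paper leaves implicit.
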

\begin{proof}
Indeed, the morphism $p^{-1}(U)\to U$ induced by $p$ satisfies the same
assumptions as $p$.
\end{proof}

\begin{corollary}
In addition to the conditions of \ref{propadmissibl}, let $X$ be a difference scheme over $Z$ and suppose the action of $G$
is by $Z$-automorphisms. Then $Y$ is again a difference scheme over $Z$.
Moreover,
$X$ is affine over $Z$ if and only if $Y$ is. If $X$ is of finite $\Sigma$-type over $Z$,
then $X$ is $\Sigma$-finite over $Y$. 
\end{corollary}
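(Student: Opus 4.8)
The plan is to dispatch the three assertions separately, in each case reducing to the affine situation and invoking \ref{quotaff} together with the preceding corollary, which records that the formation of the quotient $p\colon(X,\Sigma)\to(Y,T)$ commutes with restriction to open subsets of $Y$.

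First I would treat the claim that $(Y,T)$ lies over $Z$. Let $a\colon(X,\Sigma)\to Z$ be the structure morphism. Since $G$ acts by $Z$-automorphisms we have $a\circ g=a$ for every $g\in G$, so $a$ is a $G$-invariant morphism. By \ref{propadmissibl} (via \ref{quotaff}(\ref{fir})) the object $(Y,T)$ is the categorical quotient of $(X,\Sigma)$ by $G$, so its universal property yields a unique morphism $b\colon(Y,T)\to Z$ with $b\circ p=a$; this is the desired $Z$-structure, and the point is essentially immediate.

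Next, for the equivalence of affineness over $Z$, one implication is formal: $p$ is affine by hypothesis and affine morphisms compose, so if $b$ is affine then $a=b\circ p$ is affine. For the converse I would argue locally on $Z$. Given an affine open $V\subseteq Z$, the preimage $a^{-1}(V)=p^{-1}(b^{-1}(V))$ is affine because $a$ is affine, and the preceding corollary identifies the open set $b^{-1}(V)\subseteq Y$ with the quotient of the \emph{affine} difference scheme $a^{-1}(V)$ by $G$; that quotient is affine by the affine construction underlying \ref{quotaff}. Letting $V$ range over an affine cover of $Z$ shows $b$ is affine.

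Finally, for $\Sigma$-finiteness of $p$ under the finite-$\Sigma$-type hypothesis, recall this means $p$ is integral and of finite $\Sigma$-type, both of which are local on the target $Y$, and that $p$ is affine. Covering $Y$ by affine opens $V=\spec^T(B)$ we get $p^{-1}(V)=\spec^\Sigma(A)$, and the preceding corollary identifies $B$ with $A^{G}$, so \ref{quotaff}(\ref{ajn}) gives that $A$ is integral over $B$. For finite $\Sigma$-type I would shrink $V$ so that it maps into an affine open $\spec^{\sigma_Z}(C)\subseteq Z$; because $G$ acts by $Z$-automorphisms the image of $C$ in $A$ is $G$-fixed, so the structure map $C\to A$ factors as $C\subseteq B\subseteq A$. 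Writing $A=C[a_1,\dots,a_n]_\Sigma$ from the hypothesis, the inclusion $C\subseteq B$ gives $A=B[a_1,\dots,a_n]_\Sigma$, i.e.\ $A$ is of finite $\Sigma$-type over $B$; with integrality this is precisely transformal finiteness. The genuine work, and the main obstacle, is the local-to-global bookkeeping in this last part: one must reconcile restriction to an affine open of $Y$ (where affineness of $p$ makes $p^{-1}(V)$ affine) with restriction to an affine open of $Z$ over which finite $\Sigma$-type is available, and justify passing from ``$p^{-1}(V)$ of finite $\Sigma$-type over $Z$'' to ``$A$ of finite $\Sigma$-type over $C$''. This rests on the Noetherian/Ritt finiteness conditions in force (so that open subschemes of finite-$\Sigma$-type schemes remain of finite $\Sigma$-type, cf.\ \ref{strictritt}) and on \ref{intrmed} and \ref{fintyp}, which realise morphisms of affine difference schemes of finite transformal type at the level of finite-$\Sigma$-type rings; once inside the picture $C\subseteq B\subseteq A$ the remaining algebra is trivial.
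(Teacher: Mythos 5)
Your proposal is correct and follows essentially the same route as the paper: the paper likewise treats the first two claims as formal, reduces the finiteness statement to the affine picture $C\to B=A^G\to A$, gets integrality of $A$ over $B$ from \ref{quotaff}(\ref{ajn}) via the monic polynomials $\prod_{g\in G}(t-ga_i)$, and observes that the $\Sigma$-generators of $A$ over $C$ already serve as generators over $B$, giving finite $\Sigma$-type and hence transformal finiteness. The paper's proof only adds two side remarks you omit (that the finite-type subring $B_0=C[b_1,\ldots,b_n]_{\Sigma_0}$ of $B$ already suffices, and that $B$ is a well-mixed $(B,\sigma)$-submodule of $A$), neither of which is needed for the statement as given.
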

\begin{proof}
In order to prove the finiteness statements, we reduce to the case
$X=\spec^\Sigma(A)$, $Y=\spec^{\Sigma_0}(B)$, $Z=\spec^T(C)$,
where both $(A,\Sigma)$ and $(B,\Sigma_0)=A^G$ are $(C,T)$-algebras, with
$A$ of finite $\Sigma$-type over $C$. Let $a_1,\ldots,a_n$ be the 
$\Sigma$-generators of $A$ over $C$, i.e., $A=C[a_1,\ldots,a_n]_\Sigma$.
As already noted in the proof of \ref{quotaff}(\ref{ajn}), each $a_i$ is a root
of the monic polynomial $\prod_{g\in G}(t-ga_i)\in B[t]$, and let $b_i\in B$ denote the
tuple of its coefficients.

Firstly, $A$ is clearly $\Sigma$-finite over $B$, being integral and of finite
$\sigma$-type over $B$ (as it was of finite $\Sigma$-type already over $C$).

Moreover, if we let $B_0=C[b_1,\ldots,b_n]_{\Sigma_0}$, $A$ is again 
$\sigma$-finite over $B_0$, being both integral and of finite $\sigma$-type over it,
but now $B_0$ is of finite $\Sigma$-type (equivalently, $\Sigma_0$-type) over $C$.

Note that $B$ is a well-mixed $(B,\sigma)$-submodule of $A$.
Indeed, if $ba\in B$ for some $b\in B$ and $a\in A$, then
$bg(a)=g(b)g(a)=g(ba)=ba$ for every $g\in G$, so $b(g(a)-a)=0$. By well-mixedness
of $A$, we get that $\sigma b(g(a)-a)=0$. Since $\sigma b\in B$ again, 
$\sigma b\cdot a=\sigma b g(a)=g(\sigma b)g(a)=g(\sigma b\cdot a)$ for every $g\in G$
so $\sigma b\cdot a\in B$.
\end{proof}

\begin{definition}
Let $(X,\Sigma)$ be a difference scheme with a finite group of automorphisms
$(G,\tilde{\Sigma})$. We say that $G$ acts in an \emph{admissible} way, if there
exists a morphism $p:(X,\Sigma)\to(Y,T)$ with properties listed in \ref{propadmissibl}.
This implies that $(Y,T)$ is isomorphic to the quotient difference scheme
of $(X,\Sigma)/(G,\tilde{\Sigma})$.
\end{definition}

\begin{proposition}
If $(G,\tilde{\Sigma})$ acts  admissibly on $(X,\Sigma)$, so does every 
subgroup-with-operators $(H,\tilde{\Sigma})$.
\end{proposition}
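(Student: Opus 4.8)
The plan is to reduce to the affine case, apply the affine quotient construction \ref{quotaff} to $H$, and then glue. Since $(G,\tilde{\Sigma})$ acts admissibly, by definition there is an affine invariant morphism $p:(X,\Sigma)\to(Y,T)$ with $\OO_Y\stackrel{\sim}{\to}(p_*\OO_X)^G$. Because $p$ is affine, I would cover $Y$ by affine opens $V=\spec^T(B)$; as $p$ is invariant, each preimage $p^{-1}(V)=\spec^\Sigma(A)$ is affine and $G$-stable, and exactly as in the proof of \ref{propadmissibl} the group $(G,\tilde{\Sigma})$ acts on $(A,\Sigma)$ with $\Sigma G=\Sigma$ and $\bar{B}=\bar{A}^G$ (via \ref{embedcat}). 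A subgroup-with-operators $H\leq G$ is, by definition, stable under each operator $()^\sigma$; moreover, since $e\in H$ we have $\Sigma=\Sigma e\subseteq\Sigma H\subseteq\Sigma G=\Sigma$, so that $\Sigma H=\Sigma$ holds automatically. Thus the full set of hypotheses of \ref{quotaff} is available for $(H,\tilde{\Sigma})$ acting on $(A,\Sigma)$.

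The one place where the subgroup-with-operators hypothesis genuinely enters is the verification that $\Sigma$ preserves the ring of $H$-invariants $A^H$: for $b\in A^H$, $\sigma\in\Sigma$ and $h\in H$, the action relation $h\sigma=\sigma h^\sigma$ gives $h\sigma(b)=\sigma h^\sigma(b)=\sigma(b)$, where $h^\sigma(b)=b$ because $h^\sigma\in H$. Hence $\bar{\Sigma}_H:=\{\sigma\restriction A^H:\sigma\in\Sigma\}$ is a well-defined $\diff$-structure on $A^H$ (and $A^H$ is again well-mixed, being a $\Sigma$-stable subring of the well-mixed $A$). Applying \ref{quotaff} verbatim to $(H,\tilde{\Sigma})$ then produces an affine invariant morphism $q_V:\spec^\Sigma(A)\to\spec^{\bar{\Sigma}_H}(A^H)$ enjoying conclusions \ref{ajn}--\ref{fir}; in particular $A$ is integral over $A^H$, the fibres of $q_V$ are $H$-orbits, and $\OO\stackrel{\sim}{\to}((q_V)_*\OO)^H$.

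It remains to glue these local quotients. I would let $Y'$ be the topological quotient $|X|/H$ with projection $q:X\to Y'$, set $\OO_{Y'}=(q_*\OO_X)^H$, and endow it with the difference structure induced by $\Sigma$, which descends because $\Sigma$ preserves $H$-orbits and $H$-invariants. For each affine $V\subseteq Y$ as above, $p^{-1}(V)$ is $H$-stable, so $V':=q(p^{-1}(V))$ is open with $q^{-1}(V')=p^{-1}(V)$, and over $V'$ the map $q$ is the affine $H$-quotient $q_V$; thus the $V'$ cover $Y'$, $q$ is affine, and the identification $\OO_{Y'}\cong(q_*\OO_X)^H$ is precisely the local statement \ref{funf}. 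Therefore $q:(X,\Sigma)\to(Y',\bar{\Sigma}_H)$ is an affine invariant morphism satisfying the hypotheses of \ref{propadmissibl}, and I conclude that $(H,\tilde{\Sigma})$ acts admissibly with quotient $Y'\cong X/H$. The main care is needed in this final globalization step, but it is routine rather than deep: both the affineness of $q$ and the condition $\OO_{Y'}\cong(q_*\OO_X)^H$ are local on the target, and the structure sheaf is defined intrinsically as the sheaf of $H$-invariants of $q_*\OO_X$, so compatibility on overlaps is automatic.
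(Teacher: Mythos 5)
The paper itself offers no proof of this proposition (it is stated as an immediate consequence of the preceding results), so there is no official argument to compare against; your proposal is a correct execution of the evidently intended route. You isolate precisely the two points where the subgroup-with-operators hypothesis does real work: $h^\sigma\in H$ gives $\sigma(A^H)\subseteq A^H$, so that $\bar{\Sigma}_H$ is a well-defined difference structure on the invariants, and together with $e\in H$ it gives $\Sigma H=\Sigma$, after which \ref{quotaff} applies verbatim and the gluing over an affine cover of $Y$ is routine, exactly as you say. One repair is needed, though it is notational rather than structural: an action of $G$ (hence of $H$) on the affine difference scheme $p^{-1}(V)\cong\spec^\Sigma(A)$ corresponds via \ref{embedcat} to an action on $\bar{A}=H^0(p^{-1}(V))$, not on $A$ itself, so --- exactly as the proof of \ref{propadmissibl} passes to $\bar{B}=\bar{A}^G$ --- you should form the invariants $\bar{A}^H$ and apply \ref{quotaff} to $(\bar{A},\Sigma)$; this costs nothing, since $\spec^\Sigma(\bar{A})\to\spec^\Sigma(A)$ is an isomorphism by \ref{wmaff}(\ref{sesttt}), but as written your ring $A^H$ is not quite the right object.
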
 

\begin{proposition}
Suppose $(G,\tilde{\Sigma})$ acts admissibly on $(X,\Sigma)$ and
suppose that $(Y,\Sigma_0)=(X,\Sigma)/(G,\tilde{\Sigma})$ is a $(Z,T)$-difference scheme. Taking a base change morphism $(Z',T')\to (Z,T)$,
denote $(X',\Sigma')=(X,\Sigma)\times_{(Z,T)}(Z',T')$, 
$(Y',\Sigma'_0)=(Y,\Sigma_0)\times_{(Z,T)}(Z',T')$, so that $(G,\tilde{\Sigma}')$
acts on $X'$ by transport of structure and $p':X'\to Y'$ is invariant.
If $Z'$ is flat over $Z$, then $p'$ satisfies the hypotheses of \ref{propadmissibl}.
Thus, $(G,\tilde{\Sigma}')$ acts on $(X',\Sigma')$ in an admissible fashion and we have that
$(X/G)\times_ZZ'\simeq (X\times_ZZ')/G$.
\end{proposition}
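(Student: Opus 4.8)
The plan is to verify the three requirements packaged into \ref{propadmissibl}: that $p'$ is affine, that it is $G$-invariant, and that the natural map $\OO_{Y'}\to(p'_*\OO_{X'})^G$ is an isomorphism. The first two are formal. Affineness is stable under base change because the construction of \ref{fibreprod} exhibits $X'$ as affine over $Y'$ as soon as $X$ is affine over $Y$; and the action of $(G,\tilde{\Sigma}')$ on $X'=(X,\Sigma)\times_{(Z,T)}(Z',T')$ obtained by transport of structure operates through the first factor while fixing $Z'$, so it commutes with $p'$ and renders $p'$ invariant. Everything therefore reduces to the sheaf isomorphism $\OO_{Y'}\cong(p'_*\OO_{X'})^G$. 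This is local on $Z$, $Z'$ and $Y$, so I would pass to the affine situation $Z=\spec^T(C)$, $Y=\spec^{\Sigma_0}(B)$, $X=\spec^\Sigma(A)$, $Z'=\spec^{T'}(C')$, with $C\to C'$ a flat ring homomorphism and $G$ acting on $(A,\Sigma)$ by $C$-algebra difference automorphisms with $\Sigma G=\Sigma$. By \ref{embedcat} the admissibility of the action on $X$ translates into the equality $\bar B=\bar A^G$ of rings of global sections.

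The conceptual heart is a flat-base-change computation of invariants, performed at the level of the ambient tensor product rather than after any closure operation. Since $G$ is finite and acts $C$-linearly, $A^G$ is the kernel of the $C$-linear map $\delta\colon A\to\bigoplus_{g\in G}A$, $a\mapsto(ga-a)_g$, so that the sequence
$$
0\longrightarrow A^G\longrightarrow A\stackrel{\delta}{\longrightarrow}\bigoplus_{g\in G}A
$$
is exact. Tensoring over $C$ with the flat algebra $C'$ preserves exactness, and as the transported action on $A\otimes_C C'$ is $g(a\otimes c')=ga\otimes c'$, the map $\delta\otimes 1$ is exactly the one whose kernel computes invariants. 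Hence
$$
(A\otimes_C C')^G=A^G\otimes_C C'.
$$
Flatness is indispensable here: without exactness of $-\otimes_C C'$ the left-hand side could be strictly larger than the right.

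With this identity I would apply \ref{quotaff} to the transported action of $(G,\tilde{\Sigma}')$ on the difference ring $(A\otimes_C C',\Sigma')$, whose invariant subring is $A^G\otimes_C C'$ and whose structure satisfies $\Sigma'G=\Sigma'$. Because $\spec^{\Sigma'}$ and $H^0$ are insensitive to passage to the well-mixed quotient, $X'$ is $\spec^{\Sigma'}(A\otimes_C C')$ and $Y'$ is $\spec^{\Sigma'_0}(B\otimes_C C')$, so \ref{quotaff}(\ref{cvaj}) and (\ref{funf}) apply to give a canonical invariant map $p''\colon X'\to W$, with $W:=\spec^{\Sigma'_0}(A^G\otimes_C C')$ and $\OO_W\cong(p''_*\OO_{X'})^G$. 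It then remains to identify $W$ with $Y'$ compatibly with the maps from $X'$: from $\bar B=\bar A^G=\overline{A^G}$ (the last equality coming from \ref{quotaff}(\ref{funf}) for $X$ together with left-exactness of $H^0$) and \ref{glsprod}, both $\spec^{\Sigma'_0}(B\otimes_C C')$ and $\spec^{\Sigma'_0}(A^G\otimes_C C')$ reduce to $\spec^{\Sigma'_0}((\bar B\otimes_C\bar{C'})_w)$, hence coincide. Transporting $p''$ across this identification yields $p'$ and the desired isomorphism $\OO_{Y'}\cong(p'_*\OO_{X'})^G$; \ref{propadmissibl} then makes the action on $X'$ admissible with quotient $Y'$. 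Since $Y'=(X/G)\times_Z Z'$ and $X'=X\times_Z Z'$ while $Y'=X'/G$, the asserted isomorphism $(X/G)\times_Z Z'\simeq(X\times_Z Z')/G$ follows.

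I expect the main obstacle to be precisely the reconciliation of $G$-invariants with the closure operations of difference algebra. The danger is that $A\otimes_C C'$ need not be well-mixed, so that the invariants $((A\otimes_C C')_w)^G$ of its well-mixed quotient and the well-mixed quotient $(A^G\otimes_C C')_w$ of its invariants need not agree as difference rings. The plan sidesteps this by taking invariants only at the ambient level, where the displayed identity holds without any loss, and deferring all well-mixing until after $\spec^{\Sigma'}$ has been applied, at which stage it is invisible. Checking that the quotient morphism furnished by \ref{quotaff} is genuinely $p'$, and not merely abstractly isomorphic to it, is the residual piece of bookkeeping, which I would settle by tracking the universal property of \ref{quotaff}(\ref{fir}) through the identification above.
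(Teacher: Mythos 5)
Your proposal is correct and takes essentially the same route as the paper: the heart of both arguments is that the exact sequence $0\to A^G\to A\to\bigoplus_{g\in G}A$, $a\mapsto(ga-a)_g$, stays exact after flat base change, which identifies the invariants of the base-changed ring with the base change of the invariants. The only cosmetic difference is that the paper first reduces to the case $Z=Y$ (so the tensoring is over $B$ with a flat $B'$), whereas you tensor over $C$ directly and make explicit the well-mixedness and global-sections bookkeeping that the paper dismisses as ``an easy exercise.''
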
 
\begin{proof}
We can clearly reduce to the case where $Z=Y$ and $Y$, $Y'$ affine.
It remains to prove that if $(B,T)$ is the subring of invariants of $(G,\tilde{\Sigma})$ acting on $(A,\Sigma)$, and if $(B',T')$ is a flat $(B,T)$-algebra, %
then 
$(B',T')$ is the subalgebra of invariants of 
$(A',\Sigma')=(A,\Sigma)\otimes_{(B,T)}(B',T')$, but this follows from the
fact that the exact sequence
$$
0\to B\stackrel{i}{\to} A\stackrel{j}{\to} A^{(G)},
$$
where the last term is a power of $A$ and $j(x)$ is the tuple with entries 
$s\cdot x-x$ for $s\in G$,
remains exact upon tensoring by $B'$, while the compatibility of the difference
structure is an easy exercise.
\end{proof}

\begin{proposition}
In addition to the assumptions of \ref{quotaff}, suppose $A$ is integral.
Then $\bar{\Sigma}=\Sigma/G$.
\end{proposition}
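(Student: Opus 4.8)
The statement to prove is that the restriction map $()^p\colon\Sigma\to\bar\Sigma$ of \ref{quotaff} exhibits $\bar\Sigma$ as the orbit space of the right $G$-action $\sigma\mapsto\sigma g$ on $\Sigma$ (well defined since $\Sigma G=\Sigma$); concretely, for $\sigma,\sigma'\in\Sigma$ I must show that $\sigma\restriction B=\sigma'\restriction B$ if and only if $\sigma'=\sigma g$ for some $g\in G$. The plan is to dispatch the easy inclusion by a direct computation, and to obtain the hard inclusion by ascending to fraction fields and exploiting the quasi-Galois property at the generic point furnished by \ref{quotaff}(\ref{draj}). The domain hypothesis on $A$ is what makes this generic-point argument available.

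First I would treat the direction that $G$-orbits lie inside the fibres of $()^p$: if $\sigma'=\sigma g$ for some $g\in G$, then for every $b\in B=A^G$ we have $\sigma'(b)=\sigma(g(b))=\sigma(b)$, so $()^p$ is constant on $G$-orbits and descends to a surjection $\Sigma/G\to\bar\Sigma$.

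For the converse I would localise at the generic point. Since $A$ is a domain the zero ideal is prime, and as each $\sigma\in\Sigma$ is a monomorphism we have $\sigma^{-1}(0)=0$; hence the corresponding point $\eta\in X$ lies in $\spec^{(\Sigma)}(A)$ and, being generic, is fixed by all of $G$, so $\Sigma_\eta=\Sigma$ and $G_\eta=G$. Its residue field is $\kk(\eta)=K:=\operatorname{Fract}(A)$, while $\kk(p(\eta))=L:=\operatorname{Fract}(B)$. Applying \ref{quotaff}(\ref{draj}) at $\eta$ then gives that $K/L$ is quasi-Galois, i.e.\ normal, and that the canonical map $G=G_\eta\to\Gal(K/L)$ is surjective. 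Now suppose $\sigma,\sigma'\in\Sigma$ satisfy $\sigma\restriction B=\sigma'\restriction B$. Passing to fraction fields, they extend to embeddings $\tilde\sigma,\tilde\sigma'\colon K\hookrightarrow K$ agreeing on $L$, with common restriction $\tilde\tau\colon L\to L$. Regarding $\tilde\tau$ as an embedding of $L$ into a fixed algebraic closure, $\tilde\sigma$ and $\tilde\sigma'$ are two extensions of $\tilde\tau$ to $K$; normality of $K/L$ forces them to have the same image $K_1:=\tilde\sigma(K)=\tilde\sigma'(K)$. Therefore $g:=\tilde\sigma^{-1}\circ\tilde\sigma'$ is a well-defined automorphism of $K$, and it fixes $L$ pointwise because $\tilde\sigma'\restriction L=\tilde\tau=\tilde\sigma\restriction L$; thus $g\in\Gal(K/L)$. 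Lifting $g$ to some $\hat g\in G$ by the surjectivity just noted, I obtain for every $a\in A$ the identity $\sigma'(a)=\tilde\sigma'(a)=\tilde\sigma(g(a))=\sigma(\hat g(a))$, using that $\hat g(a)\in A$; hence $\sigma'=\sigma\hat g$, and $\sigma\hat g\in\Sigma G=\Sigma$. This identifies the fibres of $()^p$ with the $G$-orbits and yields $\bar\Sigma=\Sigma/G$.

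The crux of the argument — and the only place where integrality of $A$ is genuinely used — is the equality of images $\tilde\sigma(K)=\tilde\sigma'(K)$, which rests on the normality of $K/L$ and is precisely what makes $\tilde\sigma$ invertible on its image and produces the comparison automorphism $g$. I anticipate the main subtlety to be that the elements of $\Sigma$ are merely monomorphisms of $A$, not automorphisms, so one cannot form $\sigma^{-1}\sigma'$ directly and must ascend to $K$ to gain invertibility; the reduction to the single generic point, where $\Sigma_\eta=\Sigma$ and $G_\eta=G$, is what collapses the whole statement into one application of \ref{quotaff}(\ref{draj}).
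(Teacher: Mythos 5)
Your proof is correct. The paper disposes of the statement in one line: it considers the two composites $j\circ\sigma_i\colon A\to K=\mathop{\rm Fract}(A)$ and invokes Bourbaki (Alg.\ comm., V \S 2.2, Corollaire to Th.~2), which says precisely that two homomorphisms of $A$ into a field agreeing on $A^G$ differ by composition with an element of $G$. You instead re-derive this fact in the case at hand from the paper's own machinery: you pass to the generic point $\eta$ (legitimately a point of $\spec^{(\Sigma)}(A)$, since the elements of $\Sigma$ are injective and $G$ acts by automorphisms, so all of them fix the zero ideal of the domain $A$), apply \ref{quotaff}(\ref{draj}) there to get that $K/L$ is quasi-Galois with $G\to\Gal(K/L)$ surjective, use normality to conclude that the two extensions $\tilde\sigma,\tilde\sigma'$ of $\sigma,\sigma'$ to $K$ have the same image, and then lift the resulting automorphism $\tilde\sigma^{-1}\circ\tilde\sigma'\in\Gal(K/L)$ back to some $\hat g\in G$. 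The two arguments share the same mathematical core — ascend to fraction fields, which is exactly where integrality of $A$ enters — and indeed your normality-plus-surjectivity argument is essentially the proof of the cited Bourbaki corollary; what the citation buys the paper is brevity, while your version is self-contained modulo \ref{quotaff}, makes the role of quasi-Galoisness explicit, and also records the easy converse (that $()^p$ is constant on $G$-orbits), which the paper leaves implicit.
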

\begin{proof}
Suppose $\sigma_1,\sigma_2\in\Sigma$ with $\sigma_1\restriction B=\sigma_2\restriction B$. Consider the maps $j\circ\sigma_i:A\to K$, $i=1,2$, where
$j:A\hookrightarrow K=\mathop{\rm Fract}(A)$ is the natural inclusion.
By \cite[V \S2.2, Corollaire to Th.~2]{bourbaki-alg-comm}, there exists a $g\in G$
such that $j\circ\sigma_2=j\circ\sigma_1\circ g$, so $\sigma_2=\sigma_1\circ g$.
\end{proof}

\subsection{Decomposition and inertia groups}\label{ss:decompinert}

Suppose $(G,\tilde{\Sigma})$ acts on $(X,\Sigma)$. For $x\in X$, the
\emph{decomposition group} at $x$ is the stabiliser $G_d(x)=G_x$ of $x$.
With the notation of \ref{quotaff}, $(G_d(x),\tilde{\Sigma}_x)$ acts
on $(\kk(x),\tilde{\Sigma}^x)$ and the \emph{inertia group} $G_i(x)$ at $x$
is the set of elements of $G_d(x)$ which act trivially on $\kk(x)$.

Assume that $(G,\tilde{\Sigma})$ acts admissibly on $(X,\Sigma)$ and that
$(Y,\Sigma_0)=(X,\Sigma)/(G,\tilde{\Sigma})$ is a $(Z,T)$-difference scheme.
Fix a $z\in Z$ and an embedding of $(\kk(z),\tau^z)$ into a model 
$(\Omega,\omega)$ of ACFA, whose saturation is sufficient to accommodate all
extensions $\kk(x)/\kk(z)$ where $x\in X$ is above $z$. We can consider
$\spec^\omega(\Omega)$ as a $(Z,T)$-difference scheme and
$(\Omega,\omega)$-valued points of $X$ correspond to 
$(\kk(z),\tau^z)$-algebra homomorphisms $(\kk(x),\Sigma^x)\to(\Omega,\omega)$,
where $x$ is a point of $X$ above $z$ (since $\Omega$ is large enough, every
point $x$ above $z$ is a locus of some point in $X(\Omega,\omega)$).

Thus we deduce a natural map $X(\Omega,\omega)\to Y(\Omega,\omega)$
which is invariant under the action of $G$ on $X(\Omega,\omega)$.
By the conclusions (\ref{cvaj}), (\ref{draj}) of \ref{quotaff}, this map
is surjective and $Y(\Omega,\omega)\simeq X(\Omega,\omega)/G$.
Moreover, if $x$ is the locus of $a\in X(\Omega,\omega)$, the stabiliser of
$a$ in $G$ is exactly the inertia group $G_i(x)$.

Bearing this in mind, in case $G_i(x)=(e)$ it makes sense to define the 
\emph{local $\omega$-substitution $\omega_a$ at $a$} as 
the unique element $\omega_a\in\Sigma_x$ satisfying
$$
\omega_aa=a\omega.
$$
In other words, $\omega_a$ is the element of $\Sigma_x$ corresponding (via
the conclusion of \ref{quotaff}(\ref{draj}))
to the image $\omega^a$ of $\omega$ by the morphism of difference structure
$()^a:\{\omega\}\to \Sigma^x$.
Note, if $\pi(a)=\pi(a')=b\in Y(\Omega,\omega)$, there exists a $g\in G$
with $a'=ga$ and
$$
\omega_aa=a\omega=g^{-1}a'\omega=g^{-1}\omega_{a'}a'=g^{-1}\omega_{a'}ga,
$$
so we conclude that $\omega_a$ and $\omega_{a'}$ are $G$-conjugate.
Therefore, 
we can define the \emph{local $\omega$-substitution at $b$} as the
$G$-conjugacy class (equivalently, the $\Sigma$-conjugacy class) 
$\omega_b=\omega_b^{X/Y}$ of any $\omega_a$ with $\pi(a)=b$.  With hindsight, all of this holds even without the assumption of saturation or 
largeness of $\Omega$, and when $b\in Y(\Omega,\omega)$ is already given, we can even discuss $\omega_b$ when $\Omega$ is just algebraically closed since this
alone already guarantees the existence of some $a\in X(\Omega,\omega)$ above 
$b$.

Equivalently, if we fix a section $\Sigma_0\to \Sigma$, %
writing $\tilde{\sigma}\in\Sigma$ for the image of $\omega^b\in\Sigma_0$, 
we could consider
as the relevant part of the datum for the local $\omega$-substitution the
unique element $\dot{\omega}_a\in G_d(x)$ with the property 
$\omega_a=\dot{\omega}_a\tilde{\sigma}$. As explained above, for 
$\pi(a)=\pi(a')=b$, 
there is a $g\in G$ such that 
$$
\dot{\omega}_a\tilde{\sigma}=g^{-1}\dot{\omega}_{a'}\tilde{\sigma}g=
g^{-1}\dot{\omega}_{a'}g^{\tilde{\sigma}}\tilde{\sigma},
$$ 
so 
we can define
$\dot{\omega}_b$ as the $()^{\tilde{\sigma}}$-conjugacy class in $G$ of
any $\dot{\omega}_a$ with $\pi(a)=b$.

In the special case when $\Omega=\bar{k}$ is the algebraic closure 
of a finite field $k$ and $\omega=\varphi_k$ is the Frobenius automorphism
generating $\Gal(\bar{k}/k)$, and $a\in X(\bar{k},\varphi_k)$ is a 
$(\bar{k},\varphi_k)$-rational point mapping onto $b$ in $Y$, 
we obtain the notion of the 
\emph{local Frobenius substitution} at $b$, denoted by $\varphi_{k,b}$ when
considered as a conjugacy class in $\Sigma$, or $\dot{\varphi}_{k,b}$
when considered as a twisted conjugacy class in $G$.

From now on, a morphism satisfying the equivalent conditions of the Corollary
will be called a \emph{Galois covering} of $(X,\Sigma)/G$ with group 
$(G,\tilde{\Sigma})$. For the purposes of this paper, a 
Galois covering will be called \emph{\'etale} if all the inertia groups are trivial. 
It is shown in \cite[\ref{trivin-et},\ref{et-trivin}]{ive-tgs} that the adjective `\'etale'
is justified.
Note that for \'etale Galois coverings, the notion of local substitutions
is well defined, as discussed above.

\section{A twisted theorem of Chebotarev}\label{ch:ttCh}

\begin{notation}\label{fig1}
A recurring situation in this chapter is a morphism of two (families of) Galois
coverings, and it is convenient to fix the notation for use throughout the chapter.
In the diagram
$$
 \begin{tikzpicture} 
 [cross line/.style={preaction={draw=white, -,
line width=3pt}}]
\matrix(m)[matrix of math nodes, minimum size=1.7em, nodes={circle},
inner sep=0pt,
row sep=1em, column sep=.75em, text height=1.5ex, text depth=0.25ex]
 { 
&&& [-1em] &  |(uz)|{Z_s} &                  &  |(uw)|{W_s} \\[2em]
&&&&                 |(ux)|{X_s} &			 & |(uy)| {Y_s} \\[.3em]
&&&&                               &  |(us)|{\tilde{s}}      &    		   \\[-7em]
          |(z)|{Z} &                  &  |(w)|{W}  & 		     & &&&\\[2em]
          |(x)|{X} &			 & |(y)| {Y}    &		     & &&&\\[.3em]
                      & |(s)|{S}      &    		    &		     & &&&\\};
\path[->,font=\scriptsize,>=to, thin]
(uz) edge (z) edge (ux) edge (uw)
(ux) edge node[above]{$f_s$} (uy) edge node[below=-1pt,pos=0.05]{$p_s$} (us) 
(uw) edge (uy)
(uy) edge node [right]{$q_s$} (us)
(ux) edge node[above]{$r_s$}(x)
(z) edge (x) edge (w)
(x) edge node[above=-1pt,pos=0.7]{$f$} (y) edge node[left]{$p$} (s) 
(w) edge[cross line] (y)
(y) edge node[left,pos=0.3]{$q$} (s)
(uw) edge [cross line](w)
(uy) edge [cross line] node [below, pos=0.75]{$t_s$}(y)
(us) edge node[below]{$s$} (s)
;
\end{tikzpicture} 
$$
we assume that $(Z,\Sigma)\to (X,\sigma)$ and $(W,T)\to (Y,\sigma)$ are
Galois coverings of $(S,\sigma)$-difference schemes. 
For a given point $s\in S$, we consider 
$\tilde{s}=\spec(\kk(s),\sigma^s)$ and, by slightly abusing the notation, we
denote by $s$ the natural morphism $\tilde{s}\to (S,\sigma)$, and we let 
$(X_s,\sigma_s)=(X,\sigma)\times_{(S,\sigma)}\tilde{s}$,
$(Y_s,\sigma_s)=(Y,\sigma)\times_{(S,\sigma)}\tilde{s}$,
$(Z_s,\Sigma_s)=(Z,\Sigma)\times_{(S,\sigma)}\tilde{s}$,
$(W_s,T_s)=(W,T)\times_{(S,\sigma)}\tilde{s}$ be the relevant fibres above $s$.
\end{notation}

\subsection{Hrushovski's twisted Lang-Weil estimate}
\begin{theorem}[Hrushovski, \cite{udi}]\label{udiLW}
Let $(S,\sigma)$ be a normal connected difference scheme of finite $\sigma$-type
over $\Z$, and let $(X,\sigma)\to (S,\sigma)$ be a morphism of finite transformal type with geometrically
transformally integral fibres of finite %
total dimension $d$, %
limit degree $\delta$ and purely inseparable dual degree $\iota'$, and
let $\mu=\delta/\iota'$. There is a constant $C>0$ and a localisation $(S',\sigma)$
of $(S,\sigma)$ such that for every closed $s\in S'$, and every finite field $k$
with $(\bar{k},\varphi_k)$ extending $(\kk(s),\sigma^s)$,
$$
\left|
|(X_s(\bar{k},\varphi_k)|-\mu|k|^d
\right|
<C|k|^{d-1/2}.
$$
\end{theorem}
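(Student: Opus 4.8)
Since this is a deep result of Hrushovski \cite{udi}, I indicate the strategy I would follow rather than a complete argument. The plan is to translate the point count into the count of fixed points of a Frobenius-twisted algebraic correspondence, and then to extract the main term and the error from a Lefschetz-type trace formula together with Deligne's weight estimates. First I would pass to the geometric picture. Because $(X,\sigma)\to(S,\sigma)$ is of finite transformal type with fibres of finite total dimension $d$ (equivalently $\sd(X_s)=0$), the dictionary between difference equations and algebraic correspondences (\cite[\ref{s:diff-pro}]{ive-tgs}) presents each fibre by an algebraic datum: a variety $V$ over $\kk(s)$ of dimension $d$, together with a correspondence $\Phi\subseteq V\times V$ finite of degree $\delta$ over the first factor, in such a way that a point $x\in X_s(\bar{k},\varphi_k)$ corresponds to a point $v\in V(\bar{k})$ with $(v,\varphi_k(v))\in\Phi$, i.e.\ to a fixed point of the Frobenius-twisted multivalued map attached to $\Phi$. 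Using generic flatness and constructibility I would spread this presentation out over a dense open $(S',\sigma)$, so that along the fibres over $S'$ the variety $V$, the correspondence $\Phi$, the degree $\delta$, the dimension $d$ and the purely inseparable quantity $\iota'$ are all constant; here the hypothesis that the fibres are geometrically transformally integral is what lets me take $V$ and $\Phi$ geometrically irreducible of the expected dimension.

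The count $|X_s(\bar{k},\varphi_k)|$ then equals the number of closed points of the intersection of the graph of $\Phi$ with the graph of the $|k|$-power Frobenius inside $V\times V$. The dominant contribution comes from the open stratum where this intersection is transverse and reduced: each such geometric point is genuinely $\varphi_k$-rational, and a degree computation identifies the leading-order count with $\delta\,|k|^d$ corrected by the purely inseparable factor $\iota'$, yielding the asserted main term $\mu|k|^d$ with $\mu=\delta/\iota'$. To make this rigorous I would invoke a trace formula writing the number of Frobenius-twisted fixed points as $\sum_i(-1)^i\mathrm{Tr}(\Phi\circ\mathrm{Frob}_k\mid H^i_c(V_{\bar{k}},\Q_\ell))$; Deligne's weight bounds (Weil~II) force the top term $H^{2d}_c$ to contribute exactly $\mu|k|^d$ and bound every lower cohomology by $O(|k|^{d-1/2})$, with an implied constant controlled by the $\ell$-adic Betti numbers of $V$ and $\Phi$. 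Since those Betti numbers are uniformly bounded across the constructible family obtained after passing to $S'$, one gets a single constant $C$ valid for all closed $s\in S'$ and all finite fields $k$.

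The main obstacle is precisely this trace formula in the \emph{non-proper} setting: the naive Lefschetz--Verdier local terms are those of Deligne's conjecture, which at the time was available only under strong properness hypotheses (Pink, Fujiwara, Varshavsky) and cannot be quoted for a general quasi-projective $V$ and arbitrary $\Phi$. Hrushovski's real contribution is to circumvent this by a more geometric route: one chooses a compactification and controls the boundary contributions, exploiting that for the \emph{large} Frobenius powers $\varphi_k$ occurring here the problematic local terms are negligible, so that the estimate can ultimately be reduced to the classical untwisted Lang--Weil through a fibration and induction on dimension. The remaining uniformity in $s\in S'$ and in $k$ is then secured by Noetherian and constructibility arguments, and in Hrushovski's treatment by compactness in the theory ACFA.

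The two most delicate points are the uniform boundedness of the error constant $C$ and the exact determination of the coefficient $\mu$ as $\delta/\iota'$. For the latter I would isolate the purely inseparable correction by reducing to the separable case via a purely inseparable base change, and then track how, over the perfect field $\bar{k}$, a purely inseparable cover is a bijection on points and hence collapses $\iota'$ distinct geometric preimages into a single rational solution; this is exactly what converts the separable count $\delta\,|k|^d$ into $(\delta/\iota')\,|k|^d$. I expect the hardest part of the entire argument to be the interplay between the trace-formula error bound and this uniformity over the family, since it is here that one must combine Deligne's estimates with the spreading-out of $(X,\sigma)\to(S,\sigma)$ to obtain a constant independent of both $s$ and $k$.
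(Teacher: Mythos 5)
This statement is not proved in the paper at all: it is imported verbatim as Hrushovski's theorem from \cite{udi}, and everything downstream (\ref{geomgaleq}, \ref{trace-formula}, \ref{chebotarev-density}) treats it as a black box. So there is no internal argument to measure your text against; the only question is whether your sketch could stand as a proof, and it cannot. What you have written is a research plan that concedes exactly the decisive points. The route you centre on --- presenting each fibre by a correspondence $\Phi\subseteq V\times V$ and then applying a Lefschetz--Verdier-type trace formula with Deligne's weight bounds --- breaks down precisely where you admit it does: for a general non-proper $V$ and arbitrary $\Phi$ the trace formula with naive local terms is not available, and the theorems of Pink \cite{pink}, Fujiwara \cite{fujiwara} and Varshavsky \cite{varshavsky} that would supply it carry properness-type hypotheses (and require a Frobenius twist that is ``sufficiently large'' relative to the data). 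This is exactly why the introduction of the present paper says those results ``cannot be applied to a general difference polynomial system'' and why Hrushovski's proof takes a different, essentially geometric route. Your fallback sentence --- compactify, control boundary terms, reduce to classical Lang--Weil by fibration and induction --- is not a step of a proof; it is a one-line summary of the actual content of \cite{udi}, asserted without any argument.

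Concretely, the gaps are: (i) the fixed-point formula itself in the non-proper setting, for which you offer no mechanism beyond the citation of results you acknowledge do not apply; (ii) the uniformity of the constant $C$ over closed $s\in S'$ and over all $k$, which you attribute to ``Noetherian and constructibility arguments'' and ``compactness in ACFA'' without producing either --- note that in Fujiwara-style statements the admissible twist depends on the correspondence, so uniformity across a family is a genuine issue, not bookkeeping; (iii) the identification of the main coefficient $\mu=\delta/\iota'$, where your discussion of the purely inseparable factor is heuristic (a correct treatment has to say what the correspondence does on the relevant cohomology or, in Hrushovski's approach, run an intersection-theoretic degree count, not just observe that purely inseparable covers are bijections on $\bar{k}$-points); and (iv) even the dictionary reducing a morphism of finite transformal type with transformally integral fibres of finite total dimension to a uniform family of correspondences requires the devissage carried out in \cite{udi} and \cite[\ref{s:diff-pro}]{ive-tgs}, which you invoke but do not perform. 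Given that the paper itself cites this theorem rather than proving it, the honest options here were either to do the same or to reproduce Hrushovski's argument; your proposal does neither, and should be regarded as an (accurate and well-informed) survey of the difficulties rather than a proof.
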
 

\begin{corollary}\label{geomgaleq}
Let $(S,\sigma)$ be a normal connected difference scheme of finite $\sigma$-type over $\Z$, and 
let $(Z,\Sigma)\to(X,\sigma)$ be an \'etale Galois covering with Galois group 
$(G,\tilde{\Sigma})$ of  normal 
$(S,\sigma)$-difference schemes of finite transformal type such that the fibres of
$(Z,\Sigma)\to (S,\sigma)$ are geometrically transformally 
integral of total dimension $d$.
There is a constant $C>0$ and a localisation $(S',\sigma)$
of $(S,\sigma)$ such that for every closed $s\in S'$, and every finite field $k$
with $(\bar{k},\varphi_k)$ extending $(\kk(s),\sigma^s)$, and every $\tau\in\Sigma$,
$$
\left|
|(Z^\tau_s(\bar{k},\varphi_k)|-|X_s(\bar{k},\varphi_k)|
\right|
<C|k|^{d-1/2}.
$$
\end{corollary}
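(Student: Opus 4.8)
The plan is to deduce the estimate from two applications of Hrushovski's twisted Lang--Weil estimate \ref{udiLW}: one to the base family $(X,\sigma)\to(S,\sigma)$, and one, for each $\tau\in\Sigma$, to the family of twisted fixed loci $(Z^\tau,\tau)\to(S,\sigma)$, and then to match the two leading terms. First I would record that, since $(Z,\Sigma)\to(X,\sigma)$ is an \'etale Galois covering with group $(G,\tilde{\Sigma})$, we have $X=Z/G$ by \ref{quotaff}, and the quotient of a geometrically transformally integral fibre by the finite group $G$ is again geometrically transformally integral of the same total dimension $d$. Thus \ref{udiLW} applies to $(X,\sigma)\to(S,\sigma)$ and yields, on a suitable localisation of $S$, an estimate $\bigl||X_s(\bar k,\varphi_k)|-\mu_X|k|^d\bigr|<C_0|k|^{d-1/2}$, where $\mu_X=\delta_X/\iota'_X$.

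Next, fixing a lift $\sigma_0\in\Sigma$ of $\sigma$, the almost-strict hypothesis together with $\Sigma/G=\{\sigma\}$ gives $\Sigma=G\sigma_0$ and $|\Sigma|=|G|$, so that each $\tau\in\Sigma$ may be written $\tau=g_\tau\sigma_0$ and $Z^\tau$ is cut out inside $Z$ by $\sigma_0(z)=g_\tau^{-1}z$. Since $p$ is $G$-invariant, it restricts to a morphism $(Z^\tau,\tau)\to(X^\sigma,\sigma)=(X,\sigma)$, and $(Z^\tau,\tau)\to(S,\sigma)$ is of finite transformal type. The crucial geometric input, which I would isolate as a lemma, is that for every $\tau\in\Sigma$ the fibre $(Z^\tau_s,\tau)$ is geometrically transformally integral of total dimension $d$: it is the fixed-point locus of the single automorphism $\tau$ acting on the geometrically transformally integral $Z_s$, and over a model of ACFA the fixed locus of such an inertia-free automorphism remains transformally integral and retains the full transformal dimension. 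Granting this, \ref{udiLW} applied to each $(Z^\tau,\tau)\to(S,\sigma)$ gives $\bigl||Z^\tau_s(\bar k,\varphi_k)|-\mu_\tau|k|^d\bigr|<C_\tau|k|^{d-1/2}$ with $\mu_\tau=\delta_\tau/\iota'_\tau$; as $\Sigma$ is finite, intersecting the finitely many localisations and taking $C=C_0+\max_\tau C_\tau$ produces a single $S'$ and a single constant.

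The heart of the matter is then the identity $\mu_\tau=\mu_X$ for every $\tau\in\Sigma$, which forces the two leading terms to cancel and leaves only the error $O(|k|^{d-1/2})$. Here I would use the untwisting trick: over the algebraically closed $\bar k$ the torsor $Z\to X$ splits, and the equation $\sigma_0(z)=g_\tau^{-1}z$ identifies $Z^\tau_{\bar k}$ with a twist of $X_{\bar k}$ by an element of $G$, so that $(Z^\tau_s,\tau)$ and $(X_s,\sigma)$ acquire the same total dimension $d$, the same limit degree and the same purely inseparable dual degree; hence $\mu_\tau=\delta_\tau/\iota'_\tau=\delta_X/\iota'_X=\mu_X$. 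A useful consistency check, and a way to reduce the work, is the exact counting relation $|Z_s(\bar k,\varphi_k)|=|G|\,|X_s(\bar k,\varphi_k)|$, valid because the covering is \'etale and each $(\bar k,\varphi_k)$-point of $X$ has exactly $|G|$ freely permuted lifts. Combined with the disjoint decomposition $Z_s(\bar k,\varphi_k)=\bigsqcup_{\tau\in\Sigma}Z^\tau_s(\bar k,\varphi_k)$ according to the value $\varphi_z=\tau$ and with $|\Sigma|=|G|$, this forces $\sum_{\tau}\mu_\tau=|\Sigma|\mu_X$; together with the $G$-equivariant isomorphisms $Z^\tau\cong Z^{\tau'}$ for twisted-conjugate $\tau,\tau'$, it reduces $\mu_\tau=\mu_X$ to the independence of $\mu_\tau$ from the twisted-conjugacy class of $\tau$.

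The main obstacle is exactly the pair of transformal facts feeding the last two steps: that each twisted fixed locus $(Z^\tau_s,\tau)$ stays geometrically transformally integral of total dimension $d$, and that its leading point-count density $\mu_\tau$ is independent of $\tau$ and equal to $\mu_X$. Neither has a classical counterpart at the level of a single finite fibre — indeed, together they encode, at the level of leading terms, precisely the equidistribution being sought — and I expect to establish them through the model theory of generic difference fields, where the irreducibility, dimension and limit degree of fixed-point sets of inertia-free automorphisms of a Galois covering can be controlled uniformly in $\tau$.
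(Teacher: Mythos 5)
Your overall skeleton is in fact the paper's: reduce to the affine case, use the exact identity $\sum_{\tau\in\Sigma}|Z^\tau_s(\bar{k},\varphi_k)|=|Z_s(\bar{k},\varphi_k)|=|G|\,|X_s(\bar{k},\varphi_k)|$ (\'etaleness giving $|G|$ freely permuted lifts per point) together with $|\Sigma|=|G|$, apply Hrushovski's estimate \ref{udiLW} to each strict difference scheme $(Z^\tau,\tau)\to(S,\sigma)$, and reduce everything to the $\tau$-independence of the leading coefficient $\mu_\tau$. Two remarks on economy: the paper never applies \ref{udiLW} to $X\to S$, so it never needs your unproved claim that the quotient $X=Z/G$ again has geometrically transformally integral fibres of dimension $d$ --- once the $|Z^\tau_s|$ are pairwise equal up to $O(|k|^{d-1/2})$, the exact identity and $|\Sigma|=|G|$ already force $|Z^\tau_s|=|X_s|+O(|k|^{d-1/2})$. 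Likewise the dimension half of your ``fixed-locus lemma'' is not a lemma about fixed loci in ACFA: $(Z^\tau,\tau)$ is the same ambient ring $A$ with a different distinguished endomorphism, so transformal integrality and total dimension $d$ for every $\tau$ are what the hypothesis on the fibres of $(Z,\Sigma)\to(S,\sigma)$ supplies; only the arithmetic invariants entering $\mu_\tau$ can vary with $\tau$.

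The genuine gap is the step you yourself flag as the main obstacle: the independence of $\mu_\tau$ from (the twisted conjugacy class of) $\tau$, equivalently of the limit degree of $(A,g\tilde{\sigma})$ over $(S,\sigma)$ from $g\in G$. Neither of your proposed routes yields this. The untwisting trick does not transfer: $Z^\tau$ is not a cocycle twist of $X_{\bar{k}}$ --- the ambient scheme does not change with $\tau$, only the endomorphism does --- and in \ref{udiLW} the leading coefficient is $\mu=\delta/\iota'$ with $\delta$ the limit degree of \ref{def-limdeg}, an invariant with no classical counterpart; classical untwisting succeeds precisely because in the algebraic case the leading term is governed by dimension and geometric irreducibility alone, both blind to the twist, whereas here the $\tau$-dependence of $\delta$ is the entire content of the corollary. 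The appeal to ``the model theory of generic difference fields'' controlling fixed loci uniformly in $\tau$ restates the goal rather than proving it. The paper closes exactly this gap with a short Galois-orbit computation (the argument of \ref{fulllimdeg} redone for function fields): after localising $S$, write $(F,\sigma)$, $(K,\sigma)$, $(L,\Sigma)$ for the function fields of $S$, $X$, $Z$, take $L=K(\alpha)$, $\bar{\alpha}=\{h\alpha:h\in G\}$ and $K=F(\bar{\beta})_\sigma$; then $g\tilde{\sigma}\bar{\alpha}=\{\tilde{\sigma}(g^{\tilde{\sigma}}h)\alpha:h\in G\}=\tilde{\sigma}\bar{\alpha}$, so the towers $L_i^{g\tilde{\sigma}}=F(\bar{\alpha},\bar{\beta},\ldots,(g\tilde{\sigma})^i\bar{\alpha},\sigma^i\bar{\beta})$ coincide for all $g\in G$, and since the limit degree is independent of the choice of generators, $\dl((L,g\tilde{\sigma})/(F,\sigma))=\dl((L,\tilde{\sigma})/(F,\sigma))$. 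Without this computation, or an equivalent replacement for it, your proof does not close.
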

\begin{proof}
Let us assume for simplicity that the purely inseparable dual degree $\iota'=1$.
Since we only need an estimate, we can immediately reduce to the affine case.
Let $Z=\spec^\Sigma(A)$, $B=A^G$, $\Sigma=G\tilde{\sigma}$ for some
fixed $\sigma\in\Sigma$, and $X=\spec^\sigma(B)$. Since 
$$
\sum_{\tau\in\Sigma}|Z_s^\tau(\bar{k},\varphi_k)|=|Z_s(\bar{k},\varphi_k)|=
|G||X_s(\bar{k},\varphi_k)|,
$$
the result will follow if we can show that for any $\tau,\tau'\in\Sigma$,
$$
|Z_s^\tau(\bar{k},\varphi_k)|\approx|Z_s^{\tau'}(\bar{k},\varphi_k)|,
$$
up to $O(|k|^{d-1/2})$.
In view of \ref{udiLW}, it suffices to show that 
the relative limit degree of $(A,g\tilde{\sigma})$ over $(S,\sigma)$ 
does not depend on $g\in G$. Since we are allowed to localise the base $S$,
we can reduce to the consideration of the limit degree of the associated
function fields. The situation is reminiscent of \ref{fulllimdeg}, but we find it
informative to proceed with the proof in the present context.
Write $(F,\sigma)$, $(K,\sigma)$, $(L,\Sigma)$ for the
function fields of $(S,\sigma)$, $(X,\sigma)$ and $(Z,\Sigma)$, respectively.
We are given that $\Gal(L/K)=G$ and $\Sigma=G\tilde{\sigma}$ and
for any $g\in G$, $g\tilde{\sigma}=\tilde{\sigma}g^{\tilde{\sigma}}$.   
Suppose $L=K(\alpha)$, and let $\bar{\alpha}=\{h\alpha:h\in G\}$.
Let $\bar{\beta}$ be a tuple of $\sigma$-generators of $K$ over $F$, $K=F(\bar{\beta})_\sigma$.
Then $L=F(\bar{\alpha},\bar{\beta})_{\tilde{\sigma}}$, and since the
definition of limit degree does not depend on the choice of generators by 
\ref{def-limdeg},
$$
\dl((L,g\tilde{\sigma})/(F,\sigma))=\lim_i[L_{i+1}^{g\tilde{\sigma}}:L_i^{g\tilde{\sigma}}],
$$
where 
$$
L_i^{g\tilde{\sigma}}=F(\bar{\alpha},\bar{\beta},(g\tilde{\sigma})\bar{\alpha},
(g\tilde{\sigma})\bar{\beta},\ldots,
(g\tilde{\sigma})^i\bar{\alpha},(g\tilde{\sigma})^i\bar{\beta})
=
F(\bar{\alpha},\bar{\beta},(g\tilde{\sigma})\bar{\alpha},\sigma\bar{\beta},\ldots,
(g\tilde{\sigma})^i\bar{\alpha},\sigma^i\bar{\beta}).
$$
On the other hand, 
$$
g\tilde{\sigma}\bar{\alpha}=\{g\tilde{\sigma}h\alpha:h\in G\}=
\{\tilde{\sigma}(g^{\tilde{\sigma}}h)\alpha:h\in G\}=\tilde{\sigma}\bar{\alpha},
$$
So $L_i^{\tilde{\sigma}}=L_i^{g\tilde{\sigma}}$ for any $g\in G$ and
$\dl((L,\tilde{\sigma})/(F,\sigma))=\dl((L,g\tilde{\sigma})/(F,\sigma))$ for any $g\in G$.
\end{proof}

\subsection{Central functions on difference structures}

\begin{definition}
Let $\Sigma$ be an object of $\diff$.
\begin{enumerate}
\item
A function $\alpha:\Sigma\to\C$ is called \emph{central} if 
$\alpha(\sigma^\tau)=\alpha(\sigma)$ for every $\sigma,\tau\in\Sigma$.
\item
A subset of  $C$ of $\Sigma$ is called a \emph{conjugacy domain}
when $\sigma\in C$ if and only if $\sigma^\tau\in C$ for all $\sigma,\tau\in\Sigma$.
\item
We denote by $\cC(\Sigma)$ the algebra of all central functions on $\Sigma$.
\end{enumerate}
\end{definition}
\begin{example}
A \emph{representation} of a $\diff$-object is an inversive object $(V,\Sigma)$
in the category of vector spaces over $\C$. Then the morphism
$$
\sigma\mapsto\mathop{\rm tr}(\sigma|V)
$$
is a central function on $\Sigma$.
\end{example}

It is a trivial but useful observation that $\cC(\Sigma)$ is spanned by the
characteristic functions of conjugacy domains in $\Sigma$.
When the underlying set of $\Sigma$ is finite, we can equip 
$\cC(\Sigma)$ with an inner product as follows.
\begin{definition}
Let $\alpha,\gamma:\Sigma\to\C$ be central functions with $\Sigma$ finite. 
Their \emph{inner product} is defined as
$$
(\alpha, \gamma)_\Sigma=\frac{1}{|\Sigma|}\sum_{\sigma\in\Sigma}\alpha(\sigma)\overline{\gamma}(\sigma),
$$
where $\overline{z}$ denotes the complex conjugate of $z$.
\end{definition}

We can define the pullbacks and pushforwards of central functions along
$\diff$-morphisms which are analogous to the classical operations of restriction and (generalised) induction of characters.
\begin{definition}\label{defpushpull}
Let $\psi:\Sigma\to T$ be a $\diff$-morphism of finite difference structures, and let $\alpha:\Sigma\to\C$ and
$\beta:T\to\C$ be central functions.
\begin{enumerate}
\item The \emph{pullback} of $\beta$ along $\psi$ is the central function 
$$
\psi^*\beta=\beta\circ\psi.
$$
\item If $\psi:\Sigma\hookrightarrow T$ is injective, we let the \emph{pushforward} of
$\alpha$ along $\psi$ be the central function
$$
\psi_*\alpha(\tau)=\frac{1}{|\Sigma|}\sum_{\substack{\rho\in T\\ \tau^\rho\in\Sigma}}
\alpha(\tau^\rho),
$$
for $\tau\in T$.
\item If $\psi:\Sigma\rightarrow T$ is surjective, we let the \emph{pushforward} of
$\alpha$ along $\psi$ be the central function
$$
\psi_*\alpha(\tau)=\frac{1}{|\psi^{-1}(\tau)|}\sum_{\sigma\in\psi^{-1}(\tau)}\alpha(\sigma),
$$
for $\tau\in T$.
\item An arbitrary $\psi:\Sigma\to T$ decomposes as $\psi=\psi'\circ\psi''$, with
$\psi''$ a surjection and $\psi'$ an injection, so we can define
$$
\psi_*\alpha=\psi'_*\psi''_*\alpha.
$$
\end{enumerate}
\end{definition}

\begin{lemma}\label{pushpullfunct}
The operation ${}^*$ defines a contravariant functor from the category of finite
difference structures to the category of (inner product) algebras,
$$
\Sigma\stackrel{\psi}{\to}T\ \ \mapsto\ \ \cC(T)\stackrel{\psi^*}{\to}\cC(\Sigma),
$$
while ${}_*$ defines a covariant functor from the category of finite
difference structures to the category of (inner product) vector spaces,
$$
\Sigma\stackrel{\psi}{\to}T\ \ \mapsto\ \ \cC(\Sigma)\stackrel{\psi_*}{\to}\cC(T).
$$
To explicate the functoriality:
\newcounter{tempcounter}
\begin{enumerate}
\item $(\phi\circ\psi)^*=\psi^*\phi^*$ and $\mathop{\rm id}^*=\mathop{\rm id}$;
\item $(\phi\circ\psi)_*=\phi_*\psi_*$ and $\mathop{\rm id}_*=\mathop{\rm id}$.
 \setcounter{tempcounter}{\value{enumi}}
\end{enumerate}
Moreover, there is a \emph{projection formula}
\begin{enumerate}
  \setcounter{enumi}{\value{tempcounter}}
\item 
$
\psi_*(\alpha\cdot\psi^*\beta)=\psi_*\alpha\cdot\beta,
$
\end{enumerate}
for any  $\alpha\in\cC(\Sigma)$ and
$\beta\in\cC(T)$.
\end{lemma}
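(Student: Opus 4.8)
The plan is to reduce the whole statement to the injective and surjective building blocks of Definition~\ref{defpushpull}, using the canonical factorisation of any $\diff$-morphism $\psi:\Sigma\to T$ through its image, $\psi=\iota\circ\pi$ with $\pi:\Sigma\twoheadrightarrow\psi(\Sigma)$ surjective and $\iota:\psi(\Sigma)\hookrightarrow T$ injective, and the fact that $\psi_*$ is \emph{by definition} $\iota_*\pi_*$. Functoriality of $(\cdot)^*$ is immediate, since $(\phi\circ\psi)^*\beta=\beta\circ\phi\circ\psi=\psi^*(\phi^*\beta)$ and $\mathrm{id}^*=\mathrm{id}$; I will use this freely to move pullbacks across the factorisations.

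The first tool I would set up is an adjunction for injective morphisms: for $\iota:\Sigma\hookrightarrow T$ injective, $\alpha\in\cC(\Sigma)$ and $\beta\in\cC(T)$, one has $(\iota_*\alpha,\beta)_T=(\alpha,\iota^*\beta)_\Sigma$. Identifying $\Sigma$ with $\iota(\Sigma)\subseteq T$, I would unwind the definition of $\iota_*$, use centrality of $\beta$ to replace $\beta(\tau)$ by $\beta(\tau^\rho)$, and for each fixed $\rho$ perform the substitution $\sigma=\tau^\rho$; since $\Sigma$ is regular the map $(\cdot)^\rho$ is a bijection, so the inner double sum collapses into $|T|$ copies of $\sum_{\sigma\in\Sigma}\alpha(\sigma)\overline{\beta(\sigma)}$ and the normalising constants match. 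Non-degeneracy of the (positive definite) inner product on $\cC(U)$ then upgrades this adjunction to functoriality of $(\cdot)_*$ along injections, $(\iota'\circ\iota)_*=\iota'_*\circ\iota_*$.

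The projection formula I would prove by a one-line check in each case and then glue. For $\iota$ injective, centrality of $\beta$ gives $\beta(\tau^\rho)=\beta(\tau)$, so $\iota_*(\alpha\cdot\iota^*\beta)=\iota_*\alpha\cdot\beta$ termwise; for $\pi$ surjective, $\pi^*\beta$ is constant equal to $\beta(\tau)$ on each fibre $\pi^{-1}(\tau)$, so it factors out of the fibre-average. The general case follows from the canonical factorisation and functoriality of $(\cdot)^*$:
\[
\psi_*(\alpha\cdot\psi^*\beta)=\iota_*\bigl(\pi_*(\alpha\cdot\pi^*\iota^*\beta)\bigr)=\iota_*\bigl(\pi_*\alpha\cdot\iota^*\beta\bigr)=\iota_*\pi_*\alpha\cdot\beta=\psi_*\alpha\cdot\beta .
\]

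It remains to handle functoriality of $(\cdot)_*$ for a general composite $\phi\circ\psi$. Writing $\psi=\iota\pi$ and $\phi=\iota_1\pi_1$ in canonical form, the middle piece $\pi_1\iota$ is a surjection following an injection; factoring it canonically as $\iota'\pi'$ reassembles the canonical factorisation of $\phi\psi$ as $(\iota_1\iota')\circ(\pi'\pi)$, whence $(\phi\circ\psi)_*=(\iota_1\iota')_*(\pi'\pi)_*$ directly from the definition. Granting the injective functoriality above, matching this against $\phi_*\psi_*=\iota_{1*}(\pi_{1*}\iota_*)\pi_*$ reduces the claim to two facts about the surjective pushforward: functoriality along surjections, $(\pi'\pi)_*=\pi'_*\pi_*$, and the base-change identity $\pi_{1*}\iota_*=\iota'_*\pi'_*$. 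This is the main obstacle, and it is genuinely delicate: unwinding definitions, $(\pi'\pi)_*$ is a single fibre-average while $\pi'_*\pi_*$ is an average of fibre-averages, and these coincide precisely when the fibres of a surjective $\diff$-morphism lying over a common image point have equal cardinality. I would extract this equicardinality from the morphism condition $\psi(\sigma^{\tau})=\psi(\sigma)^{\psi(\tau)}$, which makes each conjugation $(\cdot)^\rho$ a bijection permuting fibres of $\pi$ and carrying $\pi^{-1}(\tau)$ onto $\pi^{-1}(\tau^{\pi(\rho)})$; in the regular (homogeneous) structures occurring here this balances the fibres, so both averages reduce to the total average and the base-change identity becomes a bookkeeping of normalising constants. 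With these in hand,
\[
\phi_*\psi_*=\iota_{1*}(\pi_{1*}\iota_*)\pi_*=\iota_{1*}(\iota'_*\pi'_*)\pi_*=(\iota_1\iota')_*(\pi'\pi)_*=(\phi\circ\psi)_* ,
\]
and $\mathrm{id}_*=\mathrm{id}$ is clear. The fibre-balance step is where essentially all the content lies; everything else is formal manipulation of the two defining formulas and the adjunction.
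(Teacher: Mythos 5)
Your handling of the projection formula is correct and is, in fact, exactly the paper's own proof: the paper also checks the injective case via centrality of $\beta$, the surjective case by constancy of $\psi^*\beta$ on fibres, and glues the two through the canonical factorisation $\psi=\iota\circ\pi$ together with functoriality of $(\cdot)^*$ and the \emph{definition} $\psi_*=\iota_*\pi_*$. Where you diverge is item (2): the paper dismisses it (``the only statement which requires checking \dots\ is the projection formula'') and never argues $(\phi\circ\psi)_*=\phi_*\psi_*$, whereas you correctly see that it is not formal and reduce it, as you say, to functoriality along surjections and a base-change identity, both of which hinge on fibres of a surjective $\diff$-morphism lying over a common point having equal cardinality. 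Your instinct that this is where the content lies is right; it is your proposed resolution of it that fails.

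The equicardinality cannot be extracted from the morphism condition. What conjugation gives you (under regularity) is only that $(\cdot)^\rho$ carries $\pi^{-1}(\tau)$ bijectively onto $\pi^{-1}(\tau^{\pi(\rho)})$, i.e.\ fibre size is constant \emph{along conjugation orbits}; points of $T$ in different orbits may have fibres of different sizes, and then the average-of-averages is not the total average. Concretely, take $\Sigma=\{a,b,c\}$, $T=\{x,y\}$, $U=\{*\}$, all with the trivial operation $\sigma^\tau=\sigma$: these are regular (indeed full) finite difference structures, every function on them is central, and every map is a $\diff$-morphism. With $\pi(a)=\pi(b)=x$, $\pi(c)=y$ and $\phi$ the unique map, one computes $(\phi\circ\pi)_*\alpha(*)=\tfrac{1}{3}\bigl(\alpha(a)+\alpha(b)+\alpha(c)\bigr)$ while $\phi_*\pi_*\alpha(*)=\tfrac{1}{4}\alpha(a)+\tfrac{1}{4}\alpha(b)+\tfrac{1}{2}\alpha(c)$. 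So functoriality of $(\cdot)_*$ is simply false for arbitrary finite difference structures, and no amount of bookkeeping can close your ``fibre-balance'' step without an additional hypothesis: one must restrict to surjections with fibres of constant size (equivalently, in the cases of interest, to quotients of structures $\Sigma=G\tilde{\sigma}$ by group actions), which is precisely the hypothesis the paper does impose where it matters, in \ref{frobrecip}(2) and the remark following it. (A smaller instance of the same issue: your Frobenius-reciprocity route to injective functoriality needs every $(\cdot)^\tau:T\to T$ to be bijective, i.e.\ regularity of the target, again a hypothesis present in \ref{frobrecip}(1) but absent from the lemma as stated.) In short: you located the genuine gap that the paper's one-line dismissal papers over, but the argument you sketch for it would not succeed, because the statement itself needs the constant-fibre/regularity hypotheses that hold in the Galois-covering applications.
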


\begin{proof}
The only statement which requires checking due to our unconventional framework
is the projection formula. In view of the above functoriality, it suffices to check
it separately for cases where $\psi$ is injective or surjective. 

When $\psi$
is injective, identifying $\Sigma\subseteq T$ and using the fact that $\beta$ is central,
$$
\psi_*(\alpha\cdot\psi^*\beta)(\tau)=
\frac{1}{|\Sigma|}\sum_{\substack{\rho\in T\\ \tau^\rho\in\Sigma}}\alpha(\tau^\rho)
\beta(\tau^\rho)=
\frac{1}{|\Sigma|}\sum_{\substack{\rho\in T\\ \tau^\rho\in\Sigma}}\alpha(\tau^\rho)
\beta(\tau)=
\psi_*\alpha(\tau)\beta(\tau).
$$
When $\psi$ is surjective, the verification is entirely trivial. Indeed,
\begin{multline*}
\psi_*(\alpha\cdot\psi^*\beta)(\tau)=
\frac{1}{|\psi^{-1}(\tau)|}\sum_{\sigma\in\psi^{-1}(\tau)}
\alpha(\sigma)\beta(\psi(\sigma))\\
=\frac{1}{|\psi^{-1}(\tau)|}\sum_{\sigma\in\psi^{-1}(\tau)}
\alpha(\sigma)\beta(\tau)=\psi_*\alpha(\tau)\beta(\tau).
\end{multline*}
\end{proof}

\begin{lemma}[Base change]\label{repbasech}
Suppose we have a Cartesian diagram
$$
\begin{tikzpicture} 
\matrix(m)[matrix of math nodes, row sep=1.5em, column sep=-.3em] %
 {                       & |(P)|{\Sigma_1\times_T\Sigma_2} &           \\
 |(1)|{\Sigma_1} &                         & |(2)| {\Sigma_2}\\
                & |(h)|{T}            &\\}; 
\path[->,font=\scriptsize,>=to, thin]
(P) edge node[left,pos=0.35]{$\pi_1$} (1) edge node[right,pos=0.35]{$\pi_2$} (2)
(1) edge node[left,pos=0.65]{$\psi_1$}  (h)
(2) edge node[right,pos=0.65]{$\psi_2$} (h);
\end{tikzpicture}
$$
of surjective $\diff$-morphisms, and let $\alpha$ be a central function on $\Sigma$.
Then
$$
\pi_{2*}\pi_1^*\alpha=\psi_2^*\psi_{1*}\alpha.
$$
\end{lemma}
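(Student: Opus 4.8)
The plan is to evaluate both sides of the asserted identity at an arbitrary point of $\Sigma_2$ and to check that they reduce to the same sum over a fibre. First I would identify the fibre product concretely: one checks directly that the set $\Sigma_1\times_T\Sigma_2=\{(\sigma_1,\sigma_2):\psi_1(\sigma_1)=\psi_2(\sigma_2)\}$, equipped with componentwise conjugation $(\sigma_1,\sigma_2)^{(\tau_1,\tau_2)}=(\sigma_1^{\tau_1},\sigma_2^{\tau_2})$ and the coordinate projections $\pi_1,\pi_2$, is an object of $\diff$ representing the fibre product. Indeed the idempotency $\sigma^\sigma=\sigma$ and the defining relation are preserved componentwise, closure under conjugation uses that $\psi_1,\psi_2$ are $\diff$-morphisms, and the universal property is the set-theoretic one. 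Since $\psi_1,\psi_2$ are surjective, so are $\pi_1,\pi_2$, so the surjective-case formulas of \ref{defpushpull} apply to both $\pi_{2*}$ and $\psi_{1*}$. Here $\alpha$ is understood to be a central function on $\Sigma_1$ (the ``$\Sigma$'' of the statement), so that $\pi_1^*\alpha$ and $\psi_{1*}\alpha$ both make sense.

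The key combinatorial observation is a fibrewise bijection. Fix $\sigma_2\in\Sigma_2$ and set $\tau=\psi_2(\sigma_2)$. The assignment $(\sigma_1,\sigma_2)\mapsto\sigma_1$ restricts to a bijection
$$
\pi_2^{-1}(\sigma_2)\;\xrightarrow{\;\sim\;}\;\psi_1^{-1}(\tau),
$$
since $(\sigma_1,\sigma_2)$ lies in $\Sigma_1\times_T\Sigma_2$ exactly when $\psi_1(\sigma_1)=\psi_2(\sigma_2)=\tau$. In particular $|\pi_2^{-1}(\sigma_2)|=|\psi_1^{-1}(\tau)|$, which is nonzero by surjectivity of $\psi_1$.

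With this in hand the computation is immediate. For the left-hand side, $(\pi_1^*\alpha)(\sigma_1,\sigma_2)=\alpha(\sigma_1)$, and the surjective pushforward formula gives
$$
(\pi_{2*}\pi_1^*\alpha)(\sigma_2)=\frac{1}{|\pi_2^{-1}(\sigma_2)|}\sum_{(\sigma_1,\sigma_2)\in\pi_2^{-1}(\sigma_2)}\alpha(\sigma_1)=\frac{1}{|\psi_1^{-1}(\tau)|}\sum_{\sigma_1\in\psi_1^{-1}(\tau)}\alpha(\sigma_1),
$$
where the second equality is the fibrewise bijection. For the right-hand side, $(\psi_{1*}\alpha)(\tau)=\tfrac{1}{|\psi_1^{-1}(\tau)|}\sum_{\sigma_1\in\psi_1^{-1}(\tau)}\alpha(\sigma_1)$, and $(\psi_2^*\psi_{1*}\alpha)(\sigma_2)=(\psi_{1*}\alpha)(\psi_2(\sigma_2))=(\psi_{1*}\alpha)(\tau)$ is the very same expression. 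Hence the two sides agree at every $\sigma_2\in\Sigma_2$.

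Since everything reduces to this bookkeeping, there is no serious obstacle; the only points requiring care are the identification of the underlying set of the $\diff$-fibre product with the set-theoretic one and the resulting matching of fibre cardinalities under $\pi_2$ and $\psi_1$. Functoriality from \ref{pushpullfunct} guarantees that the intermediate functions $\pi_1^*\alpha$ and $\psi_{1*}\alpha$ are again central, so the whole argument takes place within the central-function algebras $\cC(\Sigma_1\times_T\Sigma_2)$ and $\cC(\Sigma_2)$.
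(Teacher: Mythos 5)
Your proof is correct and is precisely the ``direct calculation using nothing but \ref{defpushpull}'' that the paper invokes (the paper omits the details entirely). Your explicit identification of the $\diff$-fibre product with the set-theoretic one and the fibrewise bijection $\pi_2^{-1}(\sigma_2)\simeq\psi_1^{-1}(\tau)$ is exactly the bookkeeping that calculation requires.
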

A proof is obtained by a direct calculation using nothing but \ref{defpushpull}.
\begin{proposition}[Frobenius reciprocity]\label{frobrecip}
Let $\psi:\Sigma\to T$ be a $\diff$-morphism of finite difference structures.
Assume that either
\begin{enumerate}
\item\label{oneinj} $\psi$ is injective and every map $()^\tau:T\to T$ is injective (equivalently, bijective), or
\item\label{twosurj} $\psi$ is surjective with all fibres of size $c$. 
\end{enumerate}
Then we have an `adjunction'
$$
(\alpha,\psi^*\beta)_\Sigma=(\psi_*\alpha,\beta)_T,
$$
for any two central functions $\alpha:\Sigma\to\C$ and
$\beta:T\to\C$.
\end{proposition}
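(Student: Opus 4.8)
The plan is to verify the adjunction separately in the two cases, since the definitions of $\psi_*$ in \ref{defpushpull} are given case-by-case, and the surjective case is essentially immediate while the injective case carries the real content.

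First I would dispose of the surjective case (\ref{twosurj}). Here every fibre $\psi^{-1}(\tau)$ has size $c$, so $|\Sigma|=c|T|$. Expanding the left-hand inner product and grouping the sum over $\Sigma$ according to fibres of $\psi$, using that $\psi^*\beta=\beta\circ\psi$ is constant on each fibre, one gets
$$
(\alpha,\psi^*\beta)_\Sigma=\frac{1}{|\Sigma|}\sum_{\tau\in T}\sum_{\sigma\in\psi^{-1}(\tau)}\alpha(\sigma)\overline{\beta(\tau)}
=\frac{1}{c|T|}\sum_{\tau\in T}\left(\sum_{\sigma\in\psi^{-1}(\tau)}\alpha(\sigma)\right)\overline{\beta(\tau)}.
$$
Since $\psi_*\alpha(\tau)=\frac{1}{c}\sum_{\sigma\in\psi^{-1}(\tau)}\alpha(\sigma)$, the inner sum equals $c\,\psi_*\alpha(\tau)$, and the $c$'s cancel to leave exactly $(\psi_*\alpha,\beta)_T$. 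I expect no obstacle here; it is a reindexing of a finite sum.

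The injective case (\ref{oneinj}) is where care is needed, and I expect it to be the main obstacle. Identifying $\Sigma\subseteq T$ via $\psi$, I would write out
$$
(\psi_*\alpha,\beta)_T=\frac{1}{|T|}\sum_{\tau\in T}\left(\frac{1}{|\Sigma|}\sum_{\substack{\rho\in T\\ \tau^\rho\in\Sigma}}\alpha(\tau^\rho)\right)\overline{\beta(\tau)},
$$
and the goal is to show this equals $\frac{1}{|\Sigma|}\sum_{\sigma\in\Sigma}\alpha(\sigma)\overline{\beta(\sigma)}$. The natural move is to interchange the order of summation and reparametrise the double sum $\sum_{\tau\in T}\sum_{\rho}$ in terms of the element $\sigma=\tau^\rho\in\Sigma$. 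The crucial combinatorial input is that, by the hypothesis that every conjugation map $()^\rho:T\to T$ is a bijection, for each fixed $\sigma\in\Sigma$ the pairs $(\tau,\rho)$ with $\tau^\rho=\sigma$ are in bijection with $\rho\in T$ (given $\rho$, the element $\tau$ with $\tau^\rho=\sigma$ is unique). Summing over such pairs and using that $\beta$ is central — so $\beta(\tau)=\beta(\tau^\rho)=\beta(\sigma)$ — collapses the weight $\overline{\beta(\tau)}$ to $\overline{\beta(\sigma)}$, producing a factor of $|T|$ that cancels against the $1/|T|$ out front. The delicate point I would check carefully is exactly this counting: that the regularity/bijectivity of $()^\rho$ guarantees each $\sigma\in\Sigma$ is hit with multiplicity precisely $|T|$ across the $(\tau,\rho)$ indexing, so that no spurious factor survives; this mirrors the projection-formula verification in the proof of \ref{pushpullfunct} and should go through by the same bookkeeping.
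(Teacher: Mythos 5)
Your proposal is correct and follows essentially the same route as the paper's proof: the surjective case is the same fibre-by-fibre reindexing (you merely start from the $\Sigma$-side rather than the $T$-side), and in the injective case you use exactly the paper's two ingredients — centrality of $\beta$ to replace $\overline{\beta}(\tau)$ by $\overline{\beta}(\tau^\rho)$, and the bijectivity of each $()^\rho$ to count precisely $|T|$ pairs $(\tau,\rho)$ with $\tau^\rho=\sigma$, cancelling the $1/|T|$ prefactor.
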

\begin{proof}
In case (\ref{oneinj}), 
\begin{equation*}
\begin{split}
(\psi_*\alpha,\beta)_T & =\frac{1}{|T|}\sum_{\tau\in T}\psi_*\alpha(\tau)\overline{\beta}(\tau)=
\frac{1}{|T|}\sum_{\tau\in T}\frac{1}{|\Sigma|}
\sum_{\substack{\rho\in T\\ \tau^\rho\in\Sigma}}\alpha(\tau^\rho)\overline{\beta}(\tau)\\
& = \frac{1}{|\Sigma|}
\sum_{\substack{\tau, \rho\in T\\ \tau^\rho\in\Sigma}}\frac{1}{|T|}\alpha(\tau^\rho)\overline{\beta}(\tau^\rho)\stackrel{(\dag)}{=}
\frac{1}{|\Sigma|}\sum_{\sigma\in\Sigma}\alpha(\sigma)\overline{\beta}(\sigma)=
(\alpha,\psi^*\beta)_\Sigma,
\end{split}
\end{equation*}
where the equality $(\dag)$ follows from the fact that, for a fixed $\sigma\in\Sigma$, there are exactly $|T|$
pairs $(\tau,\rho)\in T\times T$ with $\tau^\rho=\sigma$.

Regarding (\ref{twosurj}), 
\begin{equation*}
\begin{split}
(\psi_*\alpha,\beta)_T & =\frac{1}{|T|}\sum_{\tau\in T}\psi_*\alpha(\tau)\overline{\beta}(\tau)=
\frac{1}{|T|}\sum_{\tau\in T}
\frac{1}{|\psi^{-1}(\tau)|}
\sum_{\sigma\in\psi^{-1}(\tau)}\alpha(\sigma)\overline{\beta}(\tau)\\
& \stackrel{(\ddag)}{=} \frac{1}{|T|}\frac{1}{c}
\sum_{\sigma\in\Sigma}\alpha(\sigma)\overline{\beta}(\psi(\sigma))=
\frac{1}{|\Sigma|}\sum_{\sigma\in\Sigma}\alpha(\sigma)\overline{\psi^*\beta}(\sigma)=
(\alpha,\psi^*\beta)_\Sigma,
\end{split}
\end{equation*}
where the equality $(\ddag)$ follows from the assumption on constant fibre size.
\end{proof}
Since the statement is compatible with composites of structure maps via
\ref{pushpullfunct}, we have the following.
\begin{corollary}
The Frobenius reciprocity 
$$
(\alpha,\psi^*\beta)_\Sigma=(\psi_*\alpha,\beta)_T,
$$
holds for any $\psi$ which is a composite of maps satisfying (\ref{oneinj})
or (\ref{twosurj}) from \ref{frobrecip}.
\end{corollary}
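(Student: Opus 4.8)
The plan is to reduce the statement entirely to the two elementary cases already settled in \ref{frobrecip}, using the functoriality of pullback and pushforward recorded in \ref{pushpullfunct}. Suppose $\psi\colon\Sigma\to T$ factors as $\psi=\psi_n\circ\cdots\circ\psi_1$, where each $\psi_i$ is a $\diff$-morphism satisfying either \ref{frobrecip}(\ref{oneinj}) or \ref{frobrecip}(\ref{twosurj}). I would argue by induction on the number $n$ of factors, the case $n=1$ being precisely the content of \ref{frobrecip}.

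For the inductive step, write $\psi=\psi'\circ\chi$, where $\chi=\psi_{n-1}\circ\cdots\circ\psi_1$ (a shorter composite, to which the inductive hypothesis applies) and $\psi'=\psi_n$ (a single factor satisfying one of the two hypotheses, hence \ref{frobrecip}); let $U$ denote the intermediate difference structure, so that $\chi\colon\Sigma\to U$ and $\psi'\colon U\to T$. By the contravariance of ${}^{*}$ and the covariance of ${}_{*}$ established in \ref{pushpullfunct}, we have $\psi^{*}=\chi^{*}\,{\psi'}^{*}$ and $\psi_{*}=\psi'_{*}\,\chi_{*}$. Then for any $\alpha\in\cC(\Sigma)$ and $\beta\in\cC(T)$,
$$
(\alpha,\psi^{*}\beta)_\Sigma=\bigl(\alpha,\chi^{*}({\psi'}^{*}\beta)\bigr)_\Sigma=(\chi_{*}\alpha,{\psi'}^{*}\beta)_U=(\psi'_{*}\chi_{*}\alpha,\beta)_T=(\psi_{*}\alpha,\beta)_T,
$$
where the second equality is the adjunction for $\chi$ furnished by the inductive hypothesis, the third is the adjunction for $\psi'$ furnished by \ref{frobrecip}, and the outer equalities are the functoriality identities above. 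This closes the induction.

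The argument is thus purely formal: once \ref{pushpullfunct} guarantees that pullbacks and pushforwards compose correctly, the two adjunctions chain telescopically through the intermediate inner product $(\cdot,\cdot)_U$. The only point that genuinely deserves attention—and the single place I would flag as a potential obstacle—is that the adjunction must be available at \emph{every} link of the chain, which requires each elementary factor $\psi_i$ to meet the precise hypotheses of \ref{frobrecip}. This is exactly the standing assumption built into the statement of the corollary, so no further verification of constant fibre sizes or of injectivity of the maps $()^\tau$ is needed beyond what the prescribed factorisation already supplies; in particular one should resist the temptation to invoke the canonical surjection-then-injection decomposition of \ref{defpushpull}(4) for an arbitrary $\psi$, since its factors need not satisfy \ref{frobrecip}.
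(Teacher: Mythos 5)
Your proof is correct and follows essentially the same route as the paper, which justifies the corollary in one line by noting that the adjunction is compatible with composites via the functoriality of $\psi^*$ and $\psi_*$ recorded in \ref{pushpullfunct}; your induction simply makes explicit the telescoping through intermediate inner products that the paper leaves implicit. Your closing caution against using the canonical surjection--injection factorisation of \ref{defpushpull} for an arbitrary $\psi$ is also well placed, since the corollary's hypothesis is exactly that a factorisation into maps satisfying \ref{frobrecip} is given.
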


\begin{remark}
Suppose that
there exists a finite group $G$ acting faithfully on $\Sigma$ such that
$\psi$ can be identified with the canonical projection $\Sigma\to\Sigma/G=T$. 
Then the assumption (\ref{twosurj}) from \ref{frobrecip} is satisfied.
\end{remark}

\subsection{Constructible functions on difference schemes}

\begin{definition}\label{def:basic-constr}
With notation of \ref{fig1}, let $(Z,\Sigma)\to (X,T)$ be an \'etale Galois covering of $(S,\sigma)$-difference schemes
with group $(G,\tilde{\Sigma})$ (such that $\Sigma/G=T$) and let 
$\alpha:\Sigma\to\C$ be a central function. 
We shall say that a pair $(Z/X,\alpha)$ defines a
\emph{basic $(S,\sigma)$-constructible function}
on $X$, in the sense that for any $s\in S$ and every  $(F,\varphi)$ extending 
$(\kk(s),\sigma^s)$ with $F$ algebraically closed,
we obtain an actual function 
$$
\alpha_{s,(F,\varphi)}:X_s(F,\varphi)\to \C, \ \ \ x\mapsto \alpha(\varphi_{r_s(x)}).
$$
We reserve the possibility of writing the last term in an oversimplified manner
as $\alpha(\varphi_x)$.
\end{definition}

\begin{definition}
Let $(X,\sigma)$ be an $(S,\sigma)$-difference scheme. 
A \emph{constructible function} 
$$
\alpha=\langle X,Z_i/X_i,\alpha_i\ |\ i\in I\rangle
$$
on $(X,\sigma)$ is defined by a partition of $(X,\sigma)$ into a finite set
of integral normal locally closed difference $(S,\sigma)$-subschemes 
$(X_i,\sigma_i)$ of $(X,\sigma)$, each equipped with an \'etale Galois
covering $(Z_i,\Sigma_i)\to (X_i,\sigma)$ with group $(G_i,\tilde{\Sigma}_i)$,
and a central function $\alpha_i:\Sigma_i\to\C$.
In other words, $\alpha$ is determined by a normal stratification of $X$
and a basic constructible function on each stratum. 

Accordingly, for each $s\in S$ and algebraically closed $(F,\varphi)$
extending $(\kk(s),\sigma^s)$, we obtain a function, which we dub the
\emph{$(F,\varphi)$-realisation} of $\alpha$, 
$$
\alpha_{s,(F,\varphi)}:X_s(F,\varphi)\to\C, \ \ \ \alpha_{s,(F,\varphi)}\restriction_{X_i(F,\varphi)}=\alpha_{i,s,(F,\varphi)}.
$$
\end{definition}

\begin{lemma}\label{geomequid}
Let $(S,\sigma)$ be a normal connected difference scheme of finite $\sigma$-type over $\Z$, and 
let $f:(Z,\Sigma)\to(X,\sigma)$ be an \'etale Galois covering with Galois group 
$(G,\tilde{\Sigma})$ of  normal connected 
$(S,\sigma)$-difference schemes of finite transformal type
such that the fibres of $(Z,\Sigma)\to (S,\sigma)$
are geometrically transformally integral of total dimension $d$. 
Let $\alpha$ be a $(S,\sigma)$-constructible function on $X$ associated
with the covering $Z/X$. There is a constant $C>0$ and a localisation
$S'$ of $S$ such that for any closed $s\in S'$, and any finite field $k$ with
$(\bar{k},\varphi_k)$ extending $(\kk(s),\sigma^s)$,
$$
\left|
\sum_{x\in X(\bar{k},\varphi_k)}\alpha(\varphi_{k,x})-
|X(\bar{k},\varphi_k)|\sum_{\sigma\in\Sigma}\alpha(\sigma)
\right|
\leq C|k|^{d-1/2}.
$$
\end{lemma}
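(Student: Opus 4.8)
The plan is to prove the estimate by an \emph{untwisting} argument that rewrites the weighted point count over $X$ as a sum of ordinary point counts over the fixed loci $Z^\tau$, to which Corollary~\ref{geomgaleq} applies verbatim. Since $\alpha$ here is the basic constructible function attached to the single covering $Z/X$, no stratification is needed: for $x\in X(\bar{k},\varphi_k)$ (which necessarily lies over $s$, so that $X(\bar{k},\varphi_k)=X_s(\bar{k},\varphi_k)$) the number $\alpha(\varphi_{k,x})$ is by definition the common value of $\alpha$ on the conjugacy class $\varphi_{k,x}\subseteq\Sigma$. First I would produce $\tau$-independent data: applying Corollary~\ref{geomgaleq} and using that $\Sigma$ is \emph{finite}, there are a single constant $C_0>0$ and a single localisation $S'$ with $\bigl||Z^\tau_s(\bar{k},\varphi_k)|-|X_s(\bar{k},\varphi_k)|\bigr|<C_0|k|^{d-1/2}$ for \emph{every} $\tau\in\Sigma$ at once.

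The heart of the matter is the untwisting identity
\[
|\Sigma|\sum_{x\in X(\bar{k},\varphi_k)}\alpha(\varphi_{k,x})
=\sum_{z\in Z(\bar{k},\varphi_k)}\alpha(\varphi_z)
=\sum_{\tau\in\Sigma}\alpha(\tau)\,|Z^\tau(\bar{k},\varphi_k)|.
\]
For the right-hand equality I would partition $Z(\bar{k},\varphi_k)$ by the value of the local substitution: since the covering is \'etale (trivial inertia), the analysis of \S\ref{ss:decompinert} identifies the set of $z\in Z(\bar{k},\varphi_k)$ with $\varphi_z=\tau$ with the $(\bar{k},\varphi_k)$-points of the strict difference scheme $(Z^\tau,\tau)$, so this subset has cardinality $|Z^\tau(\bar{k},\varphi_k)|$, and as $\varphi_z$ is the unique element of $\Sigma$ matching $\varphi_k$ these subsets genuinely partition $Z(\bar{k},\varphi_k)$. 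For the left-hand equality I would use that, inertia being trivial and $\Sigma=G\tilde\sigma$, every geometric point $z$ over a fixed $x\in X(\bar{k},\varphi_k)$ carries a \emph{unique} $\varphi_z\in\Sigma$ solving $\varphi_z z=z\varphi_k$ (solvable since $G$ acts simply transitively on the $G$-torsor $f^{-1}(x)$ and each $\tau\in\Sigma$, lying over $\sigma$, permutes that fibre); hence each of the $|G|=|\Sigma|$ geometric points over $x$ is $(\bar{k},\varphi_k)$-rational, its substitution lies in the single class $\varphi_{k,x}$, and centrality of $\alpha$ gives $\sum_{z\mapsto x}\alpha(\varphi_z)=|\Sigma|\,\alpha(\varphi_{k,x})$.

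Combining the identity with the uniform estimate, and bounding the finitely many error terms, yields
\[
\Bigl|\,|\Sigma|\sum_{x\in X(\bar{k},\varphi_k)}\alpha(\varphi_{k,x})
-|X(\bar{k},\varphi_k)|\sum_{\tau\in\Sigma}\alpha(\tau)\,\Bigr|
\le C_0\Bigl(\sum_{\tau\in\Sigma}|\alpha(\tau)|\Bigr)|k|^{d-1/2},
\]
so that setting $C=C_0\bigl(\sum_{\tau}|\alpha(\tau)|\bigr)/|\Sigma|$ gives the asserted inequality, with the expected density $\tfrac1{|\Sigma|}\sum_{\sigma\in\Sigma}\alpha(\sigma)$ appearing as the main term (consistent with the value $|C|/|\Sigma|$ in Theorem~\ref{main-ch-th} upon taking $\alpha$ to be the characteristic function of a conjugacy domain).

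The step I expect to be the main obstacle is the left-hand untwisting equality, namely the verification that each geometric point above a $(\bar{k},\varphi_k)$-rational $x$ is itself $(\bar{k},\varphi_k)$-rational with a unique local substitution, and that these substitutions sweep out exactly the conjugacy class $\varphi_{k,x}$. This is precisely where the triviality of inertia and the description of $\Sigma$ as the $G$-coset $G\tilde\sigma$ (Proposition~\ref{quotaff} together with the decomposition/inertia discussion) are indispensable: without trivial inertia the fibre count $|\Sigma|$ would fail and the weights would have to be corrected by inertia orders, breaking the clean reduction to Corollary~\ref{geomgaleq}.
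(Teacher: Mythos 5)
Your proposal is correct and is essentially the paper's own argument: the paper's proof of this lemma consists of the single remark that it ``is straightforward using \ref{geomgaleq}'', and your untwisting identity is exactly the mechanism by which that corollary yields the estimate, mirroring the identity $\sum_{\tau\in\Sigma}|Z_s^\tau(\bar{k},\varphi_k)|=|Z_s(\bar{k},\varphi_k)|=|G|\,|X_s(\bar{k},\varphi_k)|$ already used inside the proof of \ref{geomgaleq} itself. Note also that your normalised main term $\tfrac{1}{|\Sigma|}|X_s(\bar{k},\varphi_k)|\sum_{\sigma\in\Sigma}\alpha(\sigma)$ is the correct reading of the statement --- as printed it omits the factor $1/|\Sigma|$ (take $\alpha\equiv1$ to see it is needed), and it is this normalised form that is used later, e.g.\ in Step 1 of \ref{adjpushpull} via Frobenius reciprocity \ref{frobrecip}.
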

\begin{proof}
This is straightforward using \ref{geomgaleq}.
\end{proof}

\begin{definition}[Inflation]\label{def:inflat}
Let  $\alpha$ be an $(S,\sigma)$-constructible function on $(X,\sigma)$
given by $\langle X,(Z_i,\Sigma_i)/(X_i,\sigma),\alpha_i\rangle$ and
suppose that for each $i$ we have an \'etale Galois $(S,\sigma)$-covering
$(Z'_i,\Sigma')\to (X_i,\sigma)$ which
dominates  $Z_i/X_i$. Thus, we have a diagram (over $(S,\sigma)$)
$$
 \begin{tikzpicture} 
\matrix(m)[matrix of math nodes, row sep=1.8em, column sep=1em, text height=1.2ex, text depth=0.25ex]
 {
 |(11)|{Z'_i} & 		& |(12)|{Z_i}\\
			 & |(22)|{X_i} &	\\};
\path[->,font=\scriptsize,>=to, thin]
(11) edge (12) edge(22)
(12) edge (22);
\end{tikzpicture}
$$
and let us denote by $\pi_i:\Sigma'_i\to\Sigma_i$ the $\diff$-morphism part of
$(Z'_i,\Sigma'_i)\to(Z_i,\Sigma_i)$.
 
 The \emph{inflation} of $\alpha$ with respect to this data
  is defined as
$$
\alpha'=\langle X,Z'_i/X_i,\pi_i^*\alpha_i\rangle.
$$ 
\end{definition}
\begin{remark}
With the above notation, it is clear that  $\alpha'$ and
$\alpha$ can be thought of as the same $(S,\sigma)$-constructible function  on $X$,
since for every $s\in (S,\sigma)$ and every algebraically closed $(F,\varphi)$
extending $(\kk(s),\sigma^s)$,
$$
\alpha'_{s,(F,\varphi)}=\alpha_{s,(F,\varphi)},
$$
as functions $X_s(F,\varphi)\to\C$.
\end{remark}
 
\begin{definition}[Refinement]\label{def:refin}
Let $\alpha=\langle X,(Z_i,\Sigma_i)/(X_i,\sigma),\alpha_i\ |\ i\in I\rangle$ be an
$(S,\sigma)$-constructible function
on $(X,\sigma)$ and assume we have a further stratification of each $X_i$
into finitely many integral, normal, locally closed $(S,\sigma)$-subschemes
 $X_{ij}$.
For each $i,j$, let $Z_{ij}$ be a connected component of 
$(Z_i,\Sigma_i)\times_{(X_{i},\sigma)}(X_{ij},\sigma)$, and let
$D_{ij}$ be its decomposition subgroup in $Z_i/X_i$. Moreover, let
$\Sigma_{ij}=\{\sigma\in\Sigma_i:\sigma Z_{ij}=Z_{ij}\}$. Then 
$(Z_{ij},T_{ij})\to(X_{ij},\sigma)$ is a Galois covering over $(S,\sigma)$ with group 
$(D_{ij},\tilde{\Sigma}_{ij})$ and denote $\iota_{ij}:\Sigma_{ij}\hookrightarrow\Sigma_i$.
We define the \emph{refinement of $\alpha$ to the stratification $\{X_{ij}\}$ of $X$} 
to be the $(S,\sigma)$-constructible function
$$
\alpha'=\langle X,Z_{ij}/X_{ij},\iota^*_{ij}\alpha_i\rangle.
$$
\end{definition}

\begin{remark}\label{rem:refin}
Again, the refinement procedure does not fundamentally change the
constructible \emph{function}, since  for each $s\in S$, and algebraically closed
$(F,\varphi)$ extending $(\kk(s),\sigma^s)$, the realisations of $\alpha$ and
$\alpha'$ are the same:
$$
\alpha'_{s,(F,\varphi)}=\alpha_{s,(F,\varphi)},
$$
as functions $X_s(F,\varphi)\to \C$.
\end{remark}

\begin{definition}[Pullback]\label{def:pullbk}
Let $f:(X,\sigma)\to (Y,\sigma)$ be a morphism, and let 
$$
\beta=\langle Y,(W_j,T_j)/(Y_j,\sigma),\beta_j\ |\ j\in J\rangle
$$ 
be a constructible function on $Y$, where each $(W_j,T_j)\to (Y_j,\sigma)$  is
an \'etale Galois covering with
group $(G_j,\tilde{T}_j)$ such that $\beta_j$ is %
a central function on $T_j$. 
Let $Z_j$ be a component of $(X,\sigma)\times_{(Y,\sigma)}{(W_j,T)}$, and let $D_{Z_j}$ be its decomposition subgroup in $W_j/Y_j$.
Moreover, let $T_{Z_j}=\{\tau\in T_j:\tau Z_j=Z_j\}$.
Then $(Z_j,T_{Z_j})\to(X_j,\sigma)=f^{-1}(Y_j)$ is a Galois covering with group 
$(D_{Z_j},\tilde{T}_{Z_j})$ and denote $\iota_j:T_{Z_j}\hookrightarrow T_j$.
Morally speaking, we would like to 
define the \emph{pullback of $\beta$ along $f$} to be
$$
\langle X,Z_j/X_j,\iota_j^*\beta_j\ |\ j\in J\rangle,$$
but the strata $X_j$ need not be normal.
Thus, to be precise, we must choose (non-canonically, using \cite[\ref{locnorm}]{ive-tgs}) a normal stratification $X_{ij}$ which refines the stratification of $X$ into $X_j$  
and we define $f^*\beta$ to be the refinement of the above data. In spite of
the non-canonical choice of a normal refinement, the notation $f^*\beta$ can
be justified via \ref{rem:refin}.
\end{definition}

\begin{remark}
A prominent feature of the pullback construction 
is that for any $x\in X_s(F,\varphi)$, 
$$
f^*\beta(\varphi_x)=\beta(\varphi_{f(x)}).
$$
\end{remark}

\begin{definition}[Algebra of constructible functions]\label{constr-algebra}
Let us denote by $\cC(X,\sigma)$ the set of $(S,\sigma)$-constructible
functions on $(X,\sigma)$. We wish to endow $\cC(X,\sigma)$ with a 
\emph{$\C$-algebra}
structure. Suppose $\alpha, \beta\in\cC(X,\sigma)$, and let
$\alpha=\langle X,Z_i/X_i,\alpha_i\rangle$, $\beta=\langle X,Z'_j/X'_j,\beta_j\rangle$.
Upon a refinement of the underlying stratifications, we may assume
that $X_i=X'_i$, and upon an inflation, we may even assume that 
$(Z_i,\Sigma)/(X_i,\sigma)$ and $(Z_i',\Sigma_i)/(X_i,\sigma)$ 
are the same Galois coverings, and $\alpha_i,\beta_i:\Sigma_i\to\C$.
Then we can define:
\begin{enumerate}
\item $\alpha+\beta=\langle X,Z_i/X_i,\alpha_i+\beta_i\rangle$, and
\item $\alpha\cdot\beta=\langle X,Z_i/X_i,\alpha_i\cdot\beta_i\rangle$.
\end{enumerate}
\end{definition}

For the sake of simplicity, we will only define the pushforward along morphisms
of a very special kind. Note that we will never use it for more general morphisms.
For simplicity of notation, the definition is given in an absolute case, but it
is clear that all operations can be performed in a relative setting, over a given
base $(S,\sigma)$.

\begin{definition}[Pushforward]\label{defpushfwd}
Let $f:(X,\sigma)\to (Y,\sigma)$ be a morphism of finite transformal type of normal connected difference
schemes with geometrically integral fibres, and let $\alpha$ be
a basic constructible function on $X$ (we leave the sorites of the definition for a general constructible function to the reader). 
Let $(Z,\Sigma)\to (X,\sigma)$ be an \'etale
Galois covering with group $(G,\tilde{\Sigma})$ such that $\alpha$
is associated with a central function on $\Sigma$.
\begin{enumerate}%
\item%
\label{geomconn}
Assume that $Z$ is connected (i.e.,\ integral) and let $(W,T)$ be the
normalisation of $(Y,\sigma)$ in the algebraic closure of $(\kk(Y),\sigma)$ in
$(\kk(Z),\Sigma)$ as depicted in the diagram
$$
 \begin{tikzpicture} 
 [cross line/.style={preaction={draw=white, -,
line width=3pt}}]
\matrix(m)[matrix of math nodes, minimum size=1.7em, nodes={circle},
inner sep=0pt,
row sep=.5em, column sep=2.5em, text height=1.5ex, text depth=0.25ex]
 { 
 	 |(3)|{Z}  &                    &\\
                       & |(P)|{W_X} & |(2)| {W}          \\[2em]
                        &|(1)|{X}                         & |(h)|{Y} \\};
\path[->,font=\scriptsize,>=to, thin]
(P) edge  (1) edge (2)
(1) edge  (h)
(2) edge  (h)
(3) edge (1) edge (2) edge (P)
;
\end{tikzpicture}
$$
where $(W_X,T_X)=(X,\sigma)\times_{(Y,\sigma)}(W,T)$.
Then $(W,T)$ is a Galois cover of $(Y,\sigma)$ with some group $(H,\tilde{T})$.
However, $W/Y$ is not necessarily \'etale, but in view of \ref{cor:norm-lattice}
we can find a finite $\sigma$-localisation $Y'$ to achieve that the corresponding
covering $(W',T)\to (Y',\sigma)$ is finite \'etale Galois. Thus, we may continue
to define $f_*\alpha$ on the stratum $Y'$ and we postpone the definition of the pushforward
by the morphism $f\restriction_{f^{-1}(Y\setminus Y')}$ for the next stage
of devissage, remarking that this morphism shares the required properties of $f$.
We henceforth relabel $W'$ and $Y'$ back to $W$ and $Y$.
 We obtain an exact sequence 
$$
1\to\Gal(Z/W_X)\to\Gal(Z/X)\to\Gal(W/Y)\to 1.
$$
Moreover, $K=\Gal(Z/W_X)$ acts faithfully on $\Sigma$ so that $T=\Sigma/K$.
If $\pi:\Sigma\to T$ denotes the difference structure part of the
morphism $(Z,\Sigma)\to (W,T)$, we define
$$
f_*\alpha:=\pi_*\alpha,
$$
as a central function on $T$ associated with the covering $W/Y$.
If $V$ is a representation of $\Sigma$, we define $f_*V=V^K$, as a representation
of $T$.

\item\label{notgeomconn} If $(Z,\Sigma)$ is not connected, 
$Z=\coprod_{i\in I}Z_i$ as a topological
space, but $\Sigma$ is transitive on the set of components $I$, we let
$D^i\leq G$ be the decomposition subgroup of $Z_i$, and let 
$\Sigma_i=\{\sigma\in\Sigma:\sigma Z_i=Z_i\}$.
Then each $(Z_i,\Sigma_i)$ is a Galois covering of $(X,\sigma)$ with group $D^i$
and maps onto $(W,T)$. Let us denote by $\alpha_i$ the restriction of $\alpha$
on $D^i$, associated with the covering $Z_i/X_i$.
We define, resorting to the previous case,
$$
f_*\alpha=\frac{1}{|I|}\sum_{i\in I}f_*\alpha_i, %
$$
as a central function on $T$ associated with the covering $W/T$.

\item\label{disconn} If $(Z,\Sigma)=\coprod_{i\in I}(Z_i,\Sigma_i)$, let $\alpha_i$ be a
restriction of $\alpha$ to $Z_i$, and we define
$$
f_*\alpha=\sum_{i\in I}f_*\alpha_i,
$$
where the sum of constructible functions is defined in terms of \ref{constr-algebra}.
\end{enumerate}
\end{definition}

\begin{remark}
The case \ref{notgeomconn} is a peculiarity of the difference framework,
since in the algebraic case one need consider only geometrically connected/disconnected dichotomy. One might argue that our definition in  
\ref{notgeomconn} ignores the full structure of $(Z,\Sigma)$
by breaking it up into $(Z_i,\Sigma_i)$, but the key underlying principle is that we are dealing
with fixed-point sets, and the structure morphisms which shuffle components too wildly cannot have fixed points.
Let us illustrate by showing a case where the definitions
\ref{geomconn} and  \ref{notgeomconn} agree.

With notation of \ref{notgeomconn}, suppose we have a representation 
$\rho:\Sigma\to GL(V)$
of $\Sigma$ such that $V=\oplus_{i\in I}V_i$, where each $V_i$ is associated
with a representation $\rho_i$
 of $\Sigma_i$. In other words, if 
$\iota:\Sigma_{i_0}\hookrightarrow\Sigma$, $V=\iota_*V_{i_0}$.

Now, for every $\sigma\in\Sigma$, 
$$
\mathop{\rm tr}(\rho(\sigma))=\sum_{\substack{i\in I \\ \rho(\sigma)V_i\subseteq V_i}}
\mathop{\rm tr}(\rho_i(\sigma))=
\sum_{\substack{i\in I \\ \sigma\in \Sigma_i}}\mathop{\rm tr}(\rho_i(\sigma)).
$$
Thus, writing $\alpha$ for the character of $\rho$ and $\alpha_i$ for the
character of $\rho_i$, we have
\begin{equation*}
\begin{split}
\pi_*\alpha(\tau) &=\frac{1}{|\pi^{-1}(\tau)|}\sum_{\sigma\in\pi^{-1}(\tau)}\alpha(\sigma)=
\frac{1}{|\pi^{-1}(\tau)|}\sum_{\sigma\in\pi^{-1}(\tau)}\sum_{\substack{i\in I\\ \sigma\in \Sigma_i}}\alpha_i(\sigma) \\
& =
\frac{1}{|\pi^{-1}(\tau)|}\sum_{i\in I}\sum_{\sigma\in\pi^{-1}(\tau)\cap \Sigma_i}\alpha_i(\sigma)=
\frac{1}{|\pi^{-1}(\tau)|}\sum_{i\in I}\sum_{\sigma\in\pi_i^{-1}(\tau)}\alpha_i(\sigma)\\
& =
\frac{1}{|\pi^{-1}(\tau)|}\sum_{i\in I}|\pi_i^{-1}(\tau)|\pi_{i*}\alpha_i(\tau)=
\frac{1}{|I|}\sum_{i\in I}\pi_{i*}\alpha_i(\tau)=f_*\alpha(\tau),
\end{split}
\end{equation*}
bearing in mind that $|\pi_i^{-1}(\tau)|/|\pi^{-1}(\tau)|=|D|/|G|=1/|I|$.

A justification for the above choice of a particular form for $\rho$
is beyond the scope
of this paper, and it can be clarified in the framework of difference \'etale sheaves
 from the forthcoming work of the present author
\cite{ive-etale}. 
Intuitively, suppose we have a locally constant \'etale sheaf $\cF$ on $(X,\sigma)$.
The scheme $(Z,\Sigma)$ can be thought as induced from $(Z_{i_0},\Sigma_{i_0})$ via $\iota:\Sigma_{i_0}\to\Sigma$. A consequence of the appropriate sheaf
condition is that 
$\cF(Z)=\cF(\iota_*Z_{i_0})=\iota_*\cF(Z_{i_0})$, where $V_{i_0}$ above should be thought
as the value $\cF(Z_{i_0})$ and $V$ as the value $\cF(Z)$.
\end{remark}

\begin{proposition}[Base change]\label{BC}
Ket $f:(X,\sigma)\to(Y,\sigma)$ be a morphism of finite transformal type 
of normal connected  difference
schemes with geometrically integral fibres. Let $y$ be a point of $Y$, and
let $X_y$ be a fibre over $y$. Writing $y:\spec(\kk(y),\sigma^y)\to (Y,\sigma)$
and $r_y:(X_y,\sigma_y)\to (X,\sigma)$,
for any constructible
function $\alpha$ on $(X,\sigma)$,
$$
f_{y*}r_y^*\alpha=y^*f_*\alpha.
$$
\end{proposition}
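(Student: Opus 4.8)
The plan is to unwind both sides into operations on the associated $\diff$-structures and then to deduce the identity from the base-change formula for central functions, \ref{repbasech}, the only genuinely new inputs being geometric compatibilities under passage to a fibre.

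First I would reduce to a basic constructible function. Each of the four operations $f_*$, $f_{y*}$, $r_y^*$, $y^*$ is $\C$-linear in its argument and is built stratum by stratum from the data on each normal stratum (see \ref{def:pullbk} and \ref{defpushfwd}(\ref{disconn})), so it suffices to treat $\alpha=(Z/X,\alpha)$ for an \'etale Galois covering $(Z,\Sigma)\to(X,\sigma)$ with group $(G,\tilde\Sigma)$ and a central $\alpha$ on $\Sigma$. By the localisation step permitted in \ref{defpushfwd}(\ref{geomconn}), justified through \ref{cor:norm-lattice}, I may shrink $Y$ so that the cover $(W,T)\to(Y,\sigma)$ produced from the algebraic closure of $(\kk(Y),\sigma)$ in $(\kk(Z),\Sigma)$ is finite \'etale Galois with group $(H,\tilde T)$; then $f_*\alpha=\pi_*\alpha$ for the surjection $\pi\colon\Sigma\to T=\Sigma/K$, where $K=\Gal(Z/W_X)$.

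Next I would describe the fibre over $y$. Base change along $y\colon\spec^{\sigma^y}(\kk(y))\to(Y,\sigma)$ produces $(Z_y,\Sigma_y)\to(X_y,\sigma_y)$ and $(W_y,T_y)\to\spec^{\sigma^y}(\kk(y))$, and since the structure maps $\Sigma\to\{\sigma\}$ and $T\to\{\sigma\}$ are constant, the fibre-product computation of \ref{fibreprod} gives canonical identifications $\Sigma_y\cong\Sigma$ and $T_y\cong T$ compatible with $\pi$. Under these, $r_y^*\alpha$ is the restriction of $\alpha$ to the stabiliser $\diff$-structure of a component of $Z_y$, and $y^*f_*\alpha$ is the restriction of $\pi_*\alpha$ to the stabiliser $\diff$-structure of a component of $W_y$, exactly as in \ref{def:pullbk}. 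The one properly geometric step is to show that the normalisation entering the definition of $f_{y*}$ coincides with $W_y$: I must check that the algebraic closure of $(\kk(X_y),\sigma_y)$ in $(\kk(Z_y),\Sigma_y)$ is obtained from $\kk(W)$ by base change, which follows (after a further $\sigma$-localisation of $Y$) from the hypothesis that the fibres of $f$ are geometrically integral, so that the fibre introduces no new constants.

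With $W_y$ so identified, the pushforward $f_{y*}$ becomes $\pi_{y*}$ for the induced surjection $\pi_y\colon\Sigma_y\to T_y$, and the two restriction-to-$y$ morphisms sit in a Cartesian square of surjective $\diff$-morphisms to which \ref{repbasech} applies, giving $f_{y*}r_y^*\alpha=y^*f_*\alpha$ in the case where $Z_y$ and $W_y$ stay connected. The hard part is precisely that they need not: when they break into components permuted by the difference structures one must use \ref{defpushfwd}(\ref{notgeomconn}) and verify that the averaging $\tfrac1{|I|}\sum_i$ over the components of $Z_y$, carried out at the level of the decomposition-group $\diff$-structures $\Sigma_{Z_{y,i}}\hookrightarrow\Sigma$ and $T_{W_{y,j}}\hookrightarrow T$, reassembles to the restriction of $\pi_*\alpha$ demanded by the right-hand side. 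This reconciliation --- matching the constant-fibre-size bookkeeping $|\pi_i^{-1}(\tau)|/|\pi^{-1}(\tau)|=1/|I|$ of the remark following \ref{defpushfwd} with an application of \ref{repbasech} to each component square --- is where the real work of the proof lies.
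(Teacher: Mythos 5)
Your reduction to a basic constructible function, the localisation making $(W,T)\to(Y,\sigma)$ finite \'etale Galois, and the identification of the constants of the fibre with the base change of $\kk(W)$ all match the paper's first steps. The genuine gap is the one you flag yourself: the disconnected case is never carried out, and it is not routine. When several points of $W$ lie above $y$ --- the typical situation --- the fibres $W_y$ and $Z_y$ decompose into components permuted by the difference structures, and by \ref{def:pullbk} both $r_y^*\alpha$ and $y^*f_*\alpha$ are restrictions along \emph{injective} $\diff$-morphisms $\Sigma_{Z_i}\hookrightarrow\Sigma$ and $T_{W_j}\hookrightarrow T$ to component stabilisers. The square you would then feed into \ref{repbasech} has injective horizontal arrows, whereas \ref{repbasech} is stated and proved only for Cartesian squares in which all four $\diff$-morphisms are surjective; the pushforward along an injection in \ref{defpushpull} is given by a different formula than along a surjection, so what you actually need is a mixed injective/surjective base-change compatibility, combined with the $\frac{1}{|I|}$ averaging of \ref{defpushfwd}(\ref{notgeomconn}). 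Nothing in your proposal supplies this, so as written it proves the proposition only when $Z_y$ and $W_y$ stay connected, i.e., when the decomposition group at $y$ is the whole of $\Gal(W/Y)$.

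The paper's proof is designed to make this issue evaporate rather than to confront it. After reducing (at the level of $X$, where it is a formal consequence of the definition of $f_*$) to the case of $Z$ integral, it does not base the fibre at $\tilde y=\spec(\kk(y),\sigma^y)$ but at $\tilde y'=\spec^{\Sigma^{y'}}(\kk(y'))$ for a point $y'$ of $W$ above $y$, taking the residue field with its \emph{full} generalised difference structure. This single connected difference scheme packages all the points of $W$ above $y$ together with the action of $\Gal(\kk(y')/\kk(y))$ --- exactly the data that in your setup appears as components of $W_y$ with their stabilisers. The fibres $X_{y'}$ and $Z_{y'}$ taken over $W$ then remain geometrically integral, a morphism of short exact sequences of Galois groups with an isomorphism on kernels gives $\Gal(Z_{y'}/X_y)\simeq\Gal(Z/X)\times_{\Gal(W/Y)}\Gal(\kk(y')/\kk(y))$, and this is precisely a Cartesian square of \emph{surjective} $\diff$-morphisms, so \ref{repbasech} applies verbatim and finishes the proof in one stroke. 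If you wanted to rescue your route, you would have to state and prove the mixed base-change formula and redo the component bookkeeping along the lines of the remark following \ref{defpushfwd}; the paper's choice of base point is the cleaner alternative, and it is exactly the kind of step the generalised difference formalism was built to allow.
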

\begin{proof}
We may assume that $\alpha$ is basic, associated with an \'etale Galois covering
$(Z,\Sigma)\to (X,\sigma)$ and let $(W,T)$ be as in \ref{defpushfwd}, and we
may also assume that $y$ does not fall in the ramification locus of $W/Y$,
so without any loss, we shall continue with the assumption that $W/Y$ is finite
\'etale Galois. 
By the compatibility of the statement with the case \ref{notgeomconn} of \ref{defpushfwd},
we may further reduce to the case
where $Z$ is %
integral over $S$. Let $\tilde{y}=\spec^{\sigma^y}(\kk(y))$.
Choose an $y'\in (W_y,T_y)=(W,T)\times_{(Y,\sigma)}\tilde{y}$, let
$\tilde{y}'=\spec^{\Sigma^{y'}}(\kk(y'))$ and let
 $(X_{y},\sigma_y)=(X,\sigma)\times_{(Y,\sigma)}\tilde{y}$ be the fibre of $X$ over $y$. Moreover, let $(X_W,T_{X_W})=(X,\sigma)\times_{(Y,\sigma)}(W,T)$ and 
let $(X_{y'},T_{y'})=(X_W,T_{X_W})\times_{(W,T)}\tilde{y}'$ and 
$(Z_{y'},\Sigma_{y'})=(Z,\Sigma)\times_{(W,T)}\tilde{y}'$ denote the fibres of $X_W$ and
$Z$ above $y'$.
$$
 \begin{tikzpicture} 
 [cross line/.style={preaction={draw=white, -,
line width=3pt}}]
\matrix(m)[matrix of math nodes, minimum size=1.7em, nodes={circle},
inner sep=0pt,
row sep=1em, column sep=1em, text height=1.5ex, text depth=0.25ex]
 { 
&&[2em]&[-3.2em] 	 |(u3)|{Z_{y'}} &  & [2em] \\
&&&                       & |(uP)|{X_{y'}} &    |(u2)| {\tilde{y}'}      \\[2em]
&&&                       &  |(u1)|{X_y} &    |(uh)|{\tilde{y}}           \\  [-7em]
 	 |(3)|{Z}  & &&&&\\
                       & |(P)|{X_W} & |(2)| {W}          &&&&\\[2em]
                        &|(1)|{X}                         & |(h)|{Y} &&&&\\};
\path[->,font=\scriptsize,>=to, thin]
(uP) edge (u1) edge (u2)
(u3) edge (u1) edge (u2) edge (uP)
(u2) edge (uh)
(P) edge  (1) edge (2)
(1) edge  (h)
(u3) edge (3)
(uP) edge[cross line] (P)
(u1) edge (1)
(u2) edge[cross line] node[below=-2pt,pos=0.3]{$y'$}(2)
(uh) edge node[below=-1pt]{$y$}(h)
(u1) edge[cross line] (uh)
(2) edge[cross line]  (h)
(3) edge (1) edge[cross line] (2) edge (P)
;
\end{tikzpicture}
$$

By our assumptions, $X_y$ and $Z_{y'}$ are geometrically integral
and we obtain a morphism of exact sequences
\begin{center}\begin{tikzpicture}
\matrix(m)[matrix of math nodes, row sep=2.4em, column sep=1.5em, text height=1.5ex, text depth=0.25ex]
{
|(l1)|{1} &|(l2)|{\Gal(Z_{y'}/X_{y'})}& |(l3)|{{\Gal}(Z_{y'}/X_y)} &|(l4)|{{\Gal}(\kk(y')/\kk(y))}&|(l5)|{1}\\
|(1)|{1} & |(2)| {\Gal(Z/X_W)}& |(3)| {{\Gal}(Z/X)} & |(4)|{{\Gal}(W/S)} &|(5)|{1}\\
};
\path[->,font=\scriptsize,>=to, thin]
(1) edge (2)
(2) edge (3)
(3) edge (4)
(4) edge (5)
(l1)edge(l2)
(l2)edge(l3)
(l3)edge(l4)
(l4)edge(l5)
(l2)edge(2)
(l3)edge(3)
(l4)edge(4);
\end{tikzpicture}\end{center}
where the left vertical arrow is an isomorphism. A bit of diagram chasing
yields that $\Gal(Z_{y'}/X_y)\simeq\Gal(Z/X)\times_{\Gal(W/S)}\Gal(\kk(y')/\kk(y))$.
Consequently we obtain a Cartesian diagram of difference structures
$$
\begin{tikzpicture} 
\matrix(m)[matrix of math nodes, row sep=1.5em, column sep=1.4em,text height=1.3ex, text depth=0.25ex]
 {                       & |(P)|{\Sigma_{\smash{\mathrlap{y'}{}}}} &           \\
 |(1)|{\Sigma} &                         & |(2)| {\Sigma^{\smash{\mathrlap{y'}{}}}}\\
                & |(h)|{T}            &\\}; 
\path[->,font=\scriptsize,>=to, thin]
(P) edge  (1) edge (2)
(1) edge  (h)
(2) edge  (h);
\end{tikzpicture}
$$
so the result follows from \ref{repbasech}.
\end{proof}
\begin{corollary}[Uniform base change]\label{unfiBC}
With notation from \ref{fig1}, let $(S,\sigma)$ be a normal connected difference scheme and 
let $f:(X,\sigma)\to(Y,\sigma)$ be a morphism of finite transformal type of normal connected  difference
$(S,\sigma)$-schemes with geometrically integral fibres.
For any  $(S,\sigma)$-constructible
function $\alpha$ on $(X,\sigma)$,
$$
f_{s*}r_s^*\alpha=t_s^*f_*\alpha.
$$
\end{corollary}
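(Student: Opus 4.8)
The plan is to deduce this from the pointwise base change \ref{BC} by relativising that argument over the base $(S,\sigma)$. First I would reduce to the case of a basic constructible function: since the pushforward of \ref{defpushfwd} is additive in the disconnected case \ref{disconn} and the pullback is defined stratum by stratum, both sides of the claimed identity are additive over a common normal stratification, so it suffices to treat a single basic $\alpha$ attached to an \'etale Galois covering $(Z,\Sigma)\to(X,\sigma)$ with a central function on $\Sigma$. Using the compatibility of the pushforward with case \ref{notgeomconn} exactly as in the proof of \ref{BC}, I would further reduce to the case in which $Z$ is (geometrically) integral.

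The key structural observation is that, because $f$ is an $(S,\sigma)$-morphism, the fibre $X_s$ fits into a Cartesian square $X_s=X\times_Y Y_s$, with $r_s\colon X_s\to X$ the base change of $t_s\colon Y_s\to Y$ and $f_s\colon X_s\to Y_s$ the base change of $f$. Thus the statement is precisely the assertion that the pushforward $f_*$ commutes with pullback along $t_s$. Accordingly $Z_s\to X_s$ and $W_s\to Y_s$ are the base changes of $Z\to X$ and of the normalisation $(W,T)$ from \ref{defpushfwd}, and I would check that $W_s\to Y_s$ is again the covering that computes $f_{s*}(r_s^*\alpha)$: since the fibres of $f$ are geometrically integral, base change preserves connectedness and the Galois-theoretic exact sequence $1\to\Gal(Z/W_X)\to\Gal(Z/X)\to\Gal(W/Y)\to1$, so one obtains the corresponding sequence over $X_s$ and, passing to difference structures, a Cartesian square $\Sigma_s\cong\Sigma\times_T T_s$ of surjective $\diff$-morphisms, exactly as the square $\Sigma_{y'}\cong\Sigma\times_T\Sigma^{y'}$ was produced in \ref{BC}.

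Granting this square, writing $\pi_1\colon\Sigma_s\to\Sigma$, $\pi_2\colon\Sigma_s\to T_s$ for the projections and $\psi_1=\pi\colon\Sigma\to T$, $\psi_2\colon T_s\to T$ for the structure maps, we have $f_{s*}r_s^*\alpha=\pi_{2*}\pi_1^*\alpha$ and $t_s^*f_*\alpha=\psi_2^*\psi_{1*}\alpha$, so the identity is exactly the conclusion $\pi_{2*}\pi_1^*\alpha=\psi_2^*\psi_{1*}\alpha$ of the base change lemma \ref{repbasech} for central functions, and the proof concludes. The main obstacle I anticipate is the bookkeeping around the normalisation and the \'etale locus: the pushforward $f_*$ is defined only after replacing $Y$ by a $\sigma$-localisation on which the normalisation becomes finite \'etale Galois, so I must verify that this localisation is compatible with base change to the fibre over $s$ (equivalently, that the construction of $f_{s*}$ may be performed using the base-changed localisation and away from the ramification locus), and that forming the normalisation $W$ genuinely commutes with the flat base change $\tilde s\to S$. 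This compatibility is precisely where the hypothesis of geometrically integral fibres is indispensable, as it is what forces the decomposition groups and the induced difference structures to match on both sides.
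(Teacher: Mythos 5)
Your strategy---relativising the proof of \ref{BC} over the whole fibre $Y_s$ and then invoking the combinatorial base change lemma \ref{repbasech}---is genuinely different from the paper's, and as written it has a gap at exactly the step you flag. The crux of your argument is the claim that the covering on which $f_{s*}r_s^*\alpha$ lives is the base change $(W_s,T_s)=(W,T)\times_{(Y,\sigma)}(Y_s,\sigma_s)$ of the normalisation from \ref{defpushfwd}, i.e.\ that forming the relative algebraic closure of $\kk(Y)$ in $\kk(Z)$ commutes with passage to the fibre over $s$. You propose to justify this by ``flat base change'', but $\tilde{s}\to S$ is \emph{not} flat when $s$ is a closed point of a positive-dimensional base (and closed points are the case of interest here), and normalisation/relative algebraic closure does not commute with non-flat base change in general. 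The compatibility is a genuine assertion that has to be proved, and proving it amounts to redoing the Galois-theoretic diagram chase of \ref{BC} fibre by fibre: for instance one must show that $Z\times_W W'$ stays connected for $W'$ a component of $W_s$, which requires geometric integrality of the fibres of $Z\to W$ over the (non-generic) points of $W$ above $s$ --- the fibral analogue of what \ref{BC} asserts pointwise --- and not merely of the fibres of $f$. A second, smaller, inaccuracy: when $W_s$ and $Z_s$ decompose, the relevant square of difference structures has the component stabilisers $T_{W'}\hookrightarrow T$ and $\Sigma_{Z'}\hookrightarrow\Sigma$ as two of its sides, so two of its maps are injective rather than surjective, and \ref{repbasech} as stated (for surjective $\diff$-morphisms) does not literally apply; one needs the mixed injective/surjective variant implicit in the paper's own use of that lemma.

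The paper avoids all of this by exploiting the fact that constructible functions are identified through their realisations at geometric points (cf.\ \ref{rem:refin}): it suffices to verify $y^*f_{s*}r_s^*\alpha=y^*t_s^*f_*\alpha$ for every geometric point $y\in Y_s(F,\varphi)$. Applying \ref{BC} once to the square $(X_s,Y_s,X_y,y)$ and once to $(X,Y,X_y,y)$, and noting that the composite $X_y\to X_s\stackrel{r_s}{\longrightarrow}X$ is the canonical map $X_y\to X$, both sides equal $f_{y*}$ of the pullback of $\alpha$ to $X_y$. Since you already granted yourself \ref{BC}, this purely formal argument was available to you and requires none of the base-change compatibilities your route leaves open; if you wish to salvage the relative approach, the fibral connectedness and normalisation compatibilities above are precisely what must be supplied.
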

\begin{proof}
The statement will be proven if we can show 
that for every geometric $y\in Y_s(F,\varphi)$,
$y^*f_{s*}r_s^*\alpha=y^*t_s^*f_*\alpha$, but this is readily obtained by applying \ref{BC}
to squares $(X_s,Y_s, X_y,y)$ and $(X,Y,X_y,y)$.
\end{proof}

\begin{definition}
Let $(X,T)$ be a difference scheme of finite total dimension over a finite difference field $(k_0,\varphi_0)$
and let $\alpha$, $\beta$ be two
constructible functions on $X$ (not necessarily associated with the same
Galois covering).
Let $k$ be a finite field such that $(\bar{k},\varphi_k)$ extends $(k_0,\varphi_0)$,
When $X(\bar{k},\varphi_k)\neq\emptyset$, we define a pairing
$$
(\alpha,\beta)_{X(\bar{k},\varphi_k)}=\frac{1}{|X(\bar{k},\varphi_k)|}\sum_{x\in X(\bar{k},\varphi_k)}
\alpha_{(\bar{k},\varphi_k)}(x)
\overline{\beta}_{(\bar{k},\varphi_k)}(x).
$$
When $X(\bar{k},\varphi_k)=\emptyset$, we stipulate the expression
to be $0$.

\end{definition}

\begin{proposition}[Adjointness of pullbacks and pushforwards]\label{adjpushpull}
Let $(S,\sigma)$ be a normal connected difference scheme of finite $\sigma$-type over $\Z$, and 
let $f:(X,\sigma)\to(Y,\sigma)$ be a morphism of finite transformal type of normal connected finite-dimensional difference
$(S,\sigma)$-schemes with geometrically transformally integral fibres.
Let $\alpha$ be a $(S,\sigma)$-constructible
function on $(X,\sigma)$ and let $\beta$ be a $(S,\sigma)$-constructible function on $(Y,\sigma)$. There is a constant $C>0$ and a localisation $(S',\sigma)$ of $(S,\sigma)$ such that for every closed $s\in S'$ and
every finite field $k$ with $(\bar{k},\varphi_k)$ extending $(\kk(s),\sigma^s)$,
$$
\left| (\alpha,f^*\beta)_{X_s(\bar{k},\varphi)}- (f_*\alpha,\beta)_{Y_s(\bar{k},\varphi)}
\right| < C|k|^{-1/2}.
$$
\end{proposition}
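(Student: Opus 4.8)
The plan is to evaluate each of the two normalised point-pairings, up to $O(|k|^{-1/2})$, as an inner product of central functions on one and the same $\diff$-structure, and then to identify the two inner products by Frobenius reciprocity \ref{frobrecip}. First I would reduce to the case where $\alpha$ and $\beta$ are \emph{basic}: the pairing is linear in $\alpha$ and conjugate-linear in $\beta$, while $f^*$, $f_*$ and complex conjugation are linear, so by \ref{constr-algebra}, \ref{def:refin} and \ref{def:inflat} it suffices to treat one stratum at a time, the finitely many resulting error terms being added at the end. Along the way I would shrink $S$ to a localisation $S'$ and refine the (finitely many) stratifications so that every covering appearing below has geometrically transformally integral fibres over $S$ and $f$ restricts to a morphism with geometrically transformally integral fibres; this is exactly what is required to invoke \ref{udiLW} and \ref{geomequid}, and only finitely many localisations are intersected thanks to the Ritt/Noetherian finiteness.

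Next I would set up a single surjection of $\diff$-structures through which both operations factor. Choose a connected \'etale Galois covering $(Z',\Sigma')\to(X,\sigma)$ dominating both the covering defining $\alpha$ and a component of $(X,\sigma)\times_{(Y,\sigma)}(W,T)$ defining $f^*\beta$, where $(W,T)\to(Y,\sigma)$ is the covering defining $\beta$; write $\alpha'$, $\beta'$ for the corresponding inflations (\ref{def:inflat}). Let $(W',T')\to(Y,\sigma)$ be the pushforward covering of $Z'$ from \ref{defpushfwd}, chosen large enough to dominate $(W,T)$, and let $\pi\colon\Sigma'\to T'$ be the induced surjective $\diff$-morphism, so that $T'=\Sigma'/K'$ with $K'=\Gal(Z'/W'_X)$ acting faithfully. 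By the definition of the pullback \ref{def:pullbk}, of the pushforward \ref{defpushfwd}(\ref{geomconn}) and the functoriality \ref{pushpullfunct}, one then obtains the two identities $(f^*\beta)'=\pi^*\beta'$ and $f_*\alpha'=\pi_*\alpha'$.

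With this in hand the two evaluations are symmetric. Using $f^*\beta(\varphi_x)=\beta(\varphi_{f(x)})$ (the remark after \ref{def:pullbk}), the product $\alpha\cdot\overline{f^*\beta}$ is the constructible function on $X$ carrying the central function $\alpha'\cdot\overline{\pi^*\beta'}$ on $\Sigma'$, so \ref{geomequid} gives
\[
\sum_{x\in X_s(\bar k,\varphi_k)}\alpha'(\varphi_x)\,\overline{\pi^*\beta'(\varphi_x)}=|X_s(\bar k,\varphi_k)|\,(\alpha',\pi^*\beta')_{\Sigma'}+O(|k|^{d-1/2}).
\]
Dividing by $|X_s(\bar k,\varphi_k)|=\mu|k|^{d}+O(|k|^{d-1/2})$, which is nonzero for $|k|$ large by \ref{udiLW}, yields $(\alpha,f^*\beta)_{X_s(\bar k,\varphi)}=(\alpha',\pi^*\beta')_{\Sigma'}+O(|k|^{-1/2})$. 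Running the identical argument over $Y$ with $(W',T')$ gives $(f_*\alpha,\beta)_{Y_s(\bar k,\varphi)}=(\pi_*\alpha',\beta')_{T'}+O(|k|^{-1/2})$. Since $\pi$ is the canonical projection $\Sigma'\to\Sigma'/K'$ for the faithful action of $K'$, the hypothesis \ref{twosurj} of \ref{frobrecip} holds (remark after \ref{frobrecip}), whence $(\alpha',\pi^*\beta')_{\Sigma'}=(\pi_*\alpha',\beta')_{T'}$; subtracting the two displays proves the estimate. The finitely many small $|k|$ for which a fibre is empty (and the pairing is set to $0$) are absorbed into $C$, as every pairing is bounded by $\|\alpha\|_\infty\|\beta\|_\infty$.

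The main obstacle is precisely the construction in the second paragraph: arranging a \emph{single} surjection $\pi\colon\Sigma'\to T'$ through which the pullback of $\beta$ and the pushforward of $\alpha$ both factor, that is, making the pullback covering of $\beta$ and the pushforward covering of $\alpha$ sit over a common $Z'\to W'$ compatibly with their $\diff$-structures. This compatibility is governed by the base-change lemma \ref{repbasech} (and its geometric incarnation \ref{BC}, \ref{unfiBC}), which guarantees that the relevant square of $\diff$-structures over the fibre is Cartesian, so that the two reductions genuinely land on the same $\pi$; without it, Frobenius reciprocity could not be applied to a common map. A secondary technical point, handled by the localisation and refinement of the first paragraph, is to ensure that every covering fed into \ref{geomequid} does have geometrically transformally integral fibres over $S$.
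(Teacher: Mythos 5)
Your overall strategy --- turn both pairings into inner products of central functions via equidistribution (\ref{geomequid}) and identify them by Frobenius reciprocity (\ref{frobrecip}) applied to a quotient map of difference structures, with a base-change lemma supplying compatibility --- is the same as the paper's. But there is a genuine gap in your reduction step. You claim that by localising $S$ and refining stratifications you can arrange that ``every covering appearing below has geometrically transformally integral fibres over $S$''. This is impossible: geometric disconnectedness of the fibres of a covering over $S$ cannot be removed by localisation. The pushforward covering $(W',T')\to(Y,\sigma)$ is, by construction, the normalisation of $Y$ in the relative algebraic closure of $\kk(Y)$ in $\kk(Z')$, i.e.\ exactly the ``relative constant field extension'' part of $Z'$; whenever this extension is nontrivial (which is precisely when $f_*\alpha$ is interesting), the fibres of $W'\to S$, and likewise those of $Z'\to S$, are geometrically disconnected over every localisation of $S$. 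Hence \ref{geomequid}, whose hypothesis is geometric transformal integrality of the fibres of the covering scheme over the base, cannot be invoked for $Z'/X$ and $W'/Y$ as your two displayed estimates require. This is not a technicality: the paper's proof devotes its entire Step~2 to exactly this situation, decomposing $Z_s$ into topological components $Z_i$ with decomposition groups $D^i$, applying the integral case componentwise, and recombining via the averaged pushforward of \ref{defpushfwd}(\ref{notgeomconn}) (the factor $\frac{1}{|I|}\sum_{i}$); nothing in your proposal plays this role.

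A secondary problem: even granting your estimates, what you prove is the statement with $f_*\alpha$ replaced by $\pi_*\alpha'$, the pushforward of the \emph{inflation} of $\alpha$ computed from $Z'$ and $W'$, whereas the Proposition concerns the pushforward computed from the original covering $Z$ and its own pushforward covering. You appeal to \ref{repbasech} to identify the two, but \ref{repbasech} requires the square of difference structures relating $\Sigma'$, $\Sigma$, $T'$ and the original target structure to be Cartesian, and this fails in general (for instance when $Z'$ strictly dominates the compositum of $Z$ with $W'_X$). The identity does hold in the present setting, but proving it requires the Galois-theoretic argument (surjectivity of $\Gal(Z'/W'_X)$ onto its counterpart for $Z$, resting on geometric integrality of the fibres of $f$) that the paper packages, in its fibre-versus-global form, into \ref{BC} and \ref{unfiBC}. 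The paper sidesteps both difficulties simultaneously by proving the adjointness fibrewise for the coverings $Z_s/X_s$ and $W_s/Y_s$ (Steps 1--2) and only then transferring to $f_*\alpha$ via the uniform base change theorem \ref{unfiBC} (Step 3); your proposal would need both a component-decomposition step and a correct justification of the inflation--pushforward compatibility to be repaired.
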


\begin{proof}
By the usual tricks with refinement, inflation and pullback, we may assume that 
we have a situation as in \ref{defpushfwd}, and that $\alpha$ is associated
with the covering $Z/X$ and $\beta$ is associated with $W/Y$ and we 
can continue using the notation of \ref{fig1}.

Fix $C>0$ and a localisation $S'$ of $S$ which depend only on the above data, 
which make all subsequent Hrushovski's estimates work. 
Choose an $s\in S'$.
We proceed in several steps and reductions.

\noindent\emph{Step 1.} Assuming momentarily that $Z_s$ is geometrically integral, 
 for any central $\alpha'$ associated with $Z_s/X_s$ and any
central $\beta'$ associated with $W_s/Y_s$,  
$$
\left| (\alpha',f_s^*\beta')_{X_s(\bar{k},\varphi)}- (f_{s*}\alpha',\beta')_{Y_s(\bar{k},\varphi)}
\right| < C|k|^{-1/2}.
$$
Indeed, using \ref{geomequid}, we have
\begin{multline*}
\sum_{x\in X_s(\bar{k},\varphi_k)}\alpha'(\varphi_{k,x})f_s^*\overline{\beta}'(\varphi_{k,x})
=\sum_{x\in X_s(\bar{k},\varphi_k)}\alpha'(\varphi_{k,x})\pi_s^*\overline{\beta}'(\varphi_{k,x})\\
=|X_s(\bar{k},\varphi_k)|\sum_{\sigma\in\Sigma}\alpha'(\sigma)\pi_s^*\overline{\beta}'(\sigma)
+O(|k|^{\dt(X_s)-1/2}).
\end{multline*}
Similarly,
\begin{multline*}
\sum_{y\in Y_s(\bar{k},\varphi_k)}f_{s*}\alpha'(\varphi_{k,y})\overline{\beta}'(\varphi_{k,y})
=\sum_{y\in Y_s(\bar{k},\varphi_k)}\pi_{s*}\alpha'(\varphi_{k,y})\overline{\beta}'(\varphi_{k,y})\\
=|Y_s(\bar{k},\varphi_k)|\sum_{\tau\in T}\pi_{s*}\alpha'(\tau)\overline{\beta}'(\tau)
+O(|k|^{\dt(Y_s)-1/2}).
\end{multline*}
The result follows from Frobenius reciprocity \ref{frobrecip} and Hrushovski's 
estimate \ref{udiLW}.

\noindent\emph{Step 2.} The conclusion of Step~1 holds for arbitrary $Z_s$.

Denoting topological components of $Z_s$ by $Z_i$, $i\in I$ and assuming
$k$ is large enough to yield some rational points, we obtain
\begin{multline*}
\frac{1}{|X_s(\bar{k},\varphi_k)|}\sum_{x\in X_s(\bar{k},\varphi_k)}\alpha'(\varphi_{k,x})f_s^*\overline{\beta}'(\varphi_{k,x})\\
=\frac{1}{|X_s(\bar{k},\varphi_k)||G|}\sum_{z\in Z_s(\bar{k},\varphi_k)}\alpha'(\varphi_{k,z})\overline{\beta}'(\varphi_{k,h_s(z)})\\
=\frac{1}{|X_s(\bar{k},\varphi_k)||G|}\sum_{i\in I}\sum_{z\in Z_i(\bar{k},\varphi_k)}\alpha'_i(\varphi_{k,z})\overline{\beta}'(\varphi_{k,h_i(z)})\\
=\frac{1}{|X_s(\bar{k},\varphi_k)||G|}\sum_{i\in I}|D^i|\sum_{x\in X(\bar{k},\varphi_k)}\alpha'_i(\varphi_{k,x})\overline{\beta}'(\varphi_{k,h_s(z)})\\
\stackrel{\text{Step 1}}{\approx}
\frac{1}{|Y_s(\bar{k},\varphi_k)||I|}\sum_{i\in I}\sum_{y\in Y_s(\bar{k},\varphi_k)}f_{s*}\alpha'_i(\varphi_{k,y})\overline{\beta}'(\varphi_{k,y})\\
=\frac{1}{|Y_s(\bar{k},\varphi_k)|}\sum_{y\in Y_s(\bar{k},\varphi_k)}f_{s*}\alpha'(\varphi_{k,y})\overline{\beta}'(\varphi_{k,y}),
\end{multline*}
where $\approx$ denotes  equality up to $O(|k|^{-1/2})$.

\noindent\emph{Step 3.} We have
\begin{equation*}
\begin{split}
(\alpha,f^*\beta)_{X_s(\bar{k},\varphi_k)} & \stackrel{\text{notation}}{=} 
(r_s^*\alpha,r_s^*f^*\beta)_{X_s(\bar{k},\varphi_k)}=
(r_s^*\alpha,f_s^*t_s^*\beta)_{X_s(\bar{k},\varphi_k)}\\
& \stackrel{\text{\hbox to 10ex{\hfil Step 2\hfil }}}{\approx} 
(f_{s*}r_s^*\alpha,t_s^*\beta)_{Y_s(\bar{k},\varphi_k)}
\stackrel{\text{BC}}{=}
(t_s^*f_*\alpha,t_s^*\beta)_{Y_s(\bar{k},\varphi_k)}\\
&\stackrel{\text{notation}}{=}
(f_*\alpha,\beta)_{Y_s(\bar{k},\varphi_k)},
\end{split}
\end{equation*}
where `notation' refers to the convention at the end of \ref{def:basic-constr},
and `BC' to \ref{unfiBC}.
\end{proof}

\subsection{The trace formula}

\begin{theorem}[The trace formula]\label{trace-formula}
With notation of \ref{fig1}, let 
$p:(X,\sigma)\to(S,\sigma)$ be a morphism of finite transformal type of normal connected difference schemes with geometrically transformally integral fibres of finite relative total dimension $d$, and suppose
$(S,\sigma)$ is of finite $\sigma$-type over $\Z$.
Let $\alpha$ be an $S$-constructible function on $Y$.
 There is a constant $C>0$ and a localisation $(S',\sigma)$
of $(S,\sigma)$ such that for every  
closed $s\in S'$ and every finite field $k$ with $(\bar{k},\varphi_k)$ extending
$(\kk(s),\sigma^s)$,
\[
\biggl|\sum_{x\in X_s(\bar{k},\varphi_k)}
\alpha(\varphi_{k,r_s(x)})-
p_*\alpha(\varphi_{k,s})\left|X_s(\bar{k},\varphi_k)\right|\biggr|
< C|k|^{d-1/2}.\]
\end{theorem}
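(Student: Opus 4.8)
The plan is to deduce the trace formula as a \emph{denormalised specialisation} of the adjointness statement \ref{adjpushpull}, applied to $p$ itself regarded as a morphism of $(S,\sigma)$-schemes with target $Y=(S,\sigma)$ (the identity serving as the structure morphism $S\to S$, so that $q$ is the identity and $t_s$ is the inclusion of the point in the notation of \ref{fig1}), paired against the constant constructible function. First I would check that the hypotheses of \ref{adjpushpull} hold in this degenerate configuration: $X$ and $S$ are normal, connected and finite-dimensional ($S$ because it is of finite $\sigma$-type over $\Z$, and $X$ because its fibres over $S$ have finite total dimension $d$), while $p$ is of finite transformal type with geometrically transformally integral fibres by assumption. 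Let $\mathbf{1}$ denote the constant constructible function $1$ on $S$, i.e.\ the basic constructible function attached to the trivial Galois covering $S/S$ together with the constant central function $1$; then its pullback $p^*\mathbf{1}$ realises as the constant function $1$ on $X$. I would also take $S'$ to be a common localisation of $S$ serving simultaneously to make $p_*\alpha$ well defined (the localisation making the relevant covers of \ref{defpushfwd} finite \'etale), to make \ref{adjpushpull} valid, and to make Hrushovski's estimate \ref{udiLW} valid.

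With $\beta=\mathbf{1}$, \ref{adjpushpull} then furnishes a constant $C>0$ and a localisation $S'$ so that for every closed $s\in S'$ and every finite field $k$ with $(\bar{k},\varphi_k)$ extending $(\kk(s),\sigma^s)$,
\[
\left| (\alpha,p^*\mathbf{1})_{X_s(\bar{k},\varphi_k)}-(p_*\alpha,\mathbf{1})_{S_s(\bar{k},\varphi_k)}\right|<C|k|^{-1/2}.
\]
Next I would evaluate the two pairings. Since $p^*\mathbf{1}$ realises as the constant $1$, the left-hand pairing equals $\frac{1}{|X_s(\bar{k},\varphi_k)|}\sum_{x\in X_s(\bar{k},\varphi_k)}\alpha(\varphi_{k,r_s(x)})$ whenever the fibre is non-empty. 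On the other side, the fibre $S_s=(S,\sigma)\times_{(S,\sigma)}\tilde{s}$ is just $\tilde{s}=\spec^{\sigma^s}(\kk(s))$, which carries a single $(\bar{k},\varphi_k)$-rational point, namely $s$ itself (by \ref{def:basic-constr} its realisation there is $p_*\alpha(\varphi_{k,s})$); hence the normalising factor $|S_s(\bar{k},\varphi_k)|$ equals $1$ and the right-hand pairing collapses to the single value $p_*\alpha(\varphi_{k,s})$. This collapse of the target pairing to one evaluation is the conceptual crux, and is exactly what converts the averaged statement of \ref{adjpushpull} into the pointwise statement of the theorem.

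Multiplying the displayed inequality through by $|X_s(\bar{k},\varphi_k)|$ (for non-empty fibres) then gives
\[
\left|\sum_{x\in X_s(\bar{k},\varphi_k)}\alpha(\varphi_{k,r_s(x)})-p_*\alpha(\varphi_{k,s})\,|X_s(\bar{k},\varphi_k)|\right|<C|k|^{-1/2}\,|X_s(\bar{k},\varphi_k)|,
\]
and to finish I would bound $|X_s(\bar{k},\varphi_k)|$ using \ref{udiLW}, which (after possibly shrinking $S'$ and enlarging $C$ to a common choice) yields $|X_s(\bar{k},\varphi_k)|\leq\mu|k|^d+O(|k|^{d-1/2})=O(|k|^d)$, so that $C|k|^{-1/2}|X_s(\bar{k},\varphi_k)|=O(|k|^{d-1/2})$ and the desired inequality holds with a suitably adjusted constant. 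The case of an empty fibre is trivial, since then the sum and the term $p_*\alpha(\varphi_{k,s})|X_s(\bar{k},\varphi_k)|$ both vanish. I do not expect any genuinely hard step: the analytic substance---Frobenius reciprocity and the twisted Lang--Weil bound---is already packaged inside \ref{adjpushpull} and \ref{udiLW}, so the only points requiring care are the bookkeeping identifications ($p^*\mathbf{1}=1$ and the single rational point of $S_s$) and the passage from the normalised pairing to the unnormalised sum.
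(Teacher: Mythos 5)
Your proposal is correct and is essentially the paper's own argument: the paper likewise writes the sum as $|X_s(\bar{k},\varphi_k)|$ times the pairing $(r_s^*\alpha,p_s^*1)_{X_s(\bar{k},\varphi_k)}$, applies the adjointness \ref{adjpushpull} (with base change \ref{unfiBC}) in the degenerate case $Y=S$, $\beta=1$, and collapses the pairing over $\tilde{s}$ to the single value $p_*\alpha(\varphi_{k,s})$. Your explicit bookkeeping of the final multiplication by $|X_s(\bar{k},\varphi_k)|=O(|k|^d)$ via \ref{udiLW} is exactly what the paper leaves implicit.
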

\begin{proof}
The given sum is in fact $|X_s(\bar{k},\varphi_k)|$ multiplied
by the expression 
$$
(r_s^*\alpha,p_s^*1)_{X_s(\bar{k},\varphi_k)}\stackrel{\text{adj}}{\approx}
(p_{s*}r_s^*\alpha,1)_{\tilde{s}(\bar{k},\varphi_k)}\stackrel{\text{BC}}{=}
(s^*p_*\alpha,1)_{\tilde{s}(\bar{k},\varphi_k)}=p_*\alpha(\varphi_{k,s}).
$$
\end{proof}

In the next section we will need the following corollary, which conceptually states
that every conjugacy class in $\Sigma$ which is not explicitly banned, is achieved
as a local Frobenius substitution of some rational point over some field with
a high enough power of Frobenius. 

From a slightly different viewpoint, it states that the family of
fields with Frobenii (resp.\ a model of ACFA) possesses  a difference analogue
of the \emph{Frobenius property} of \cite{fried-jarden}.

\begin{corollary}\label{twisted-cebotarev}
Let $(Z,\Sigma)\to(X,\sigma)$ be a Galois covering over $(S,\sigma)$ with
group $(G,\tilde{\Sigma})$ such that $X\to S$ has geometrically transformally 
integral fibres,
and let $C$ be a conjugacy class in $\Sigma$. 
There exists a $\sigma$-localisation $S'$ of $S$ such that for every $s\in S'$, for every large enough 
finite field $k$ with $(\bar{k},\varphi_k)$ extending $(\kk(s),\sigma^s)$, 
if $C$ restricts to $\varphi_k$, there exists a point
 $x\in X_s(\bar{k},\varphi_k)$ with $\varphi_{k,x}=C$.
\end{corollary}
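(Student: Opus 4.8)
The plan is to feed the characteristic function of $C$ into the trace formula \ref{trace-formula} and to extract positivity of the leading term. We may assume $Z/X$ is \'etale (localising $S$ if necessary, as in \ref{defpushfwd}), so that the local substitutions $\varphi_{k,x}$ are defined. Let $\mathbf{1}_C\colon\Sigma\to\C$ be the characteristic function of $C$; it is central because $C$ is a conjugacy domain, so $\alpha=\langle X,Z/X,\mathbf{1}_C\rangle$ is a basic $(S,\sigma)$-constructible function on $X$. For any closed $s$ and any $(\bar k,\varphi_k)$ extending $(\kk(s),\sigma^s)$, the realisation of $\alpha$ is $x\mapsto\mathbf{1}_C(\varphi_{k,x})$, which is $1$ when $\varphi_{k,x}=C$ and $0$ otherwise; hence
$$
\sum_{x\in X_s(\bar k,\varphi_k)}\alpha(\varphi_{k,r_s(x)})=\#\{x\in X_s(\bar k,\varphi_k):\varphi_{k,x}=C\},
$$
and it suffices to prove this integer is positive for all large $k$.

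First I would apply \ref{trace-formula} to $p\colon(X,\sigma)\to(S,\sigma)$ and $\alpha$: there are $C_0>0$ and a localisation $S'$ so that for closed $s\in S'$ the sum above lies within $C_0|k|^{d-1/2}$ of $p_*\alpha(\varphi_{k,s})\,|X_s(\bar k,\varphi_k)|$. Since the fibres of $X\to S$ are geometrically transformally integral of total dimension $d$, after shrinking $S'$ Hrushovski's estimate \ref{udiLW} gives $|X_s(\bar k,\varphi_k)|=\mu|k|^d+O(|k|^{d-1/2})$ with $\mu>0$. Thus the count equals $\mu\,p_*\alpha(\varphi_{k,s})\,|k|^d+O(|k|^{d-1/2})$, and everything reduces to showing that the scalar $p_*\alpha(\varphi_{k,s})$ is strictly positive exactly when $C$ restricts to $\varphi_k$.

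This last point is the heart of the matter, and I would argue it by unwinding the pushforward of \ref{defpushfwd}. In the connected case (the general, possibly disconnected, situation reducing to this one via \ref{notgeomconn} and \ref{disconn}), $p_*\alpha$ is the central function $\pi_*\mathbf{1}_C$ on the difference structure $T$ of the associated Galois cover $(W,T)\to(S,\sigma)$, where $\pi\colon\Sigma\to T$ is the structure map; the surjective clause of \ref{defpushpull} then gives
$$
p_*\alpha(\varphi_{k,s})=\pi_*\mathbf{1}_C(\varphi_{k,s})=\frac{|C\cap\pi^{-1}(\varphi_{k,s})|}{|\pi^{-1}(\varphi_{k,s})|}.
$$
By the functoriality of local substitutions under $Z\to X\to S$ developed in Section~\ref{ss:decompinert}, one has $\pi(\varphi_{k,x})=\varphi_{k,s}$ for any $x$ above $s$; the hypothesis that $C$ restricts to $\varphi_k$ is precisely the statement $\pi(C)=\varphi_{k,s}$, i.e.\ $C\subseteq\pi^{-1}(\varphi_{k,s})$. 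In that case the numerator equals $|C|\geq1$, so $p_*\alpha(\varphi_{k,s})>0$ (while if $C$ does not restrict the numerator vanishes, matching the fact that the count is then identically zero).

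Combining the three inputs, for closed $s\in S'$ the count is $\mu\,p_*\alpha(\varphi_{k,s})\,|k|^d+O(|k|^{d-1/2})$ with strictly positive leading coefficient, hence at least $1$ once $|k|$ exceeds a bound depending only on the given data; this produces the desired $x$. The step I expect to be the main obstacle is the identification in the third paragraph: carefully matching the local $\varphi_k$-substitution $\varphi_{k,s}$ in $W/S$ with the image under $\pi$ of $\varphi_{k,x}$, so that the informal phrase ``$C$ restricts to $\varphi_k$'' acquires the precise meaning $C\subseteq\pi^{-1}(\varphi_{k,s})$ required for the positivity argument.
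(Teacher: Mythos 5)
Your proposal is correct and follows essentially the same route as the paper: the paper's own proof is the one-line remark that the statement is immediate from the trace formula \ref{trace-formula} when $X$ has finite relative total dimension over $S$, and your three steps (the trace formula applied to $\mathbf{1}_C$, Hrushovski's estimate \ref{udiLW} for $|X_s(\bar{k},\varphi_k)|$, and positivity of $\pi_*\mathbf{1}_C(\varphi_{k,s})$ via the surjective clause of \ref{defpushpull}) are precisely the details suppressed in that remark. The only point you omit is that the corollary does not assume finite relative total dimension, whereas \ref{trace-formula} and \ref{udiLW} both require it; the paper covers the remaining case by passing to a suitable sub-$(S,\sigma)$-scheme of finite relative total dimension, so your argument needs that short reduction prepended to be complete.
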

If $X$ is of finite relative total dimension over $S$, the  statement is immediate from 
\ref{trace-formula}. Otherwise, one can consider a sitation over a suitable
 sub-$(S,\sigma)$-scheme
of finite relative total dimension.

\subsection{Dirichlet density and Chebotarev}

\begin{definition}[Zeta and $L$-functions]
Let $(X,\sigma)$ be a geometrically integral difference scheme of finite transformal type over 
$(k,\varphi_0)$, where $k=\F_q$ is a finite field and $\varphi_0$ is a power of
Frobenius on $k$, $\varphi_0:u\mapsto u^{q_0}$, and $q_0$ divides $q$. 
We shall write $\varphi_n=\varphi_{k}^n\varphi_0$ %
for the appropriate power
of Frobenius on $\bar{k}$. 
Let $\alpha$ be a constructible function on $(X,\sigma)$.
\begin{enumerate}
\item The \emph{zeta function} of $(X,\sigma)$ over $(k,\varphi_0)$ is
defined as the formal power series
$$
Z((X,\sigma)/(k,\varphi_0),t)=\exp\left(\sum_{n\geq1}\frac{|X(\bar{k},\varphi_n)|}{n}t^n\right).
$$
\item The \emph{$L$-function} associated with $\alpha$ is defined as the formal
power series
$$
L((X,\sigma)/(k,\varphi_0),\alpha,t)=
\exp\left(\sum_{n\geq1}\frac{t^n}{n}\sum_{x\in X(\bar{k},\varphi_n)}\alpha_{(\bar{k},\varphi_n)}(x)\right).
$$
\end{enumerate}
\end{definition}

\begin{lemma}
Suppose $(X,\sigma)$ is of total dimension $d$ over $(k,\varphi_0)$. 
The zeta function can be written as
$$
Z(X,t)=\frac{E(t)}{(1-q^dt)^{\mu q_0^d}},
$$
where the function $E(t)$ is holomorphic for $|t|<\frac{1}{q^{d-1/2}}$.
\end{lemma}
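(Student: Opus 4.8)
The plan is to extract from Hrushovski's twisted Lang--Weil estimate \ref{udiLW} the exact asymptotics of the point counts $a_n:=|X(\bar{k},\varphi_n)|$, and then to split the generating series into a main term producing the displayed singular factor and a tail that assembles into the holomorphic function $E(t)$.

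First I would record that each $\varphi_n=\varphi_k^n\varphi_0$ is itself the Frobenius $\varphi_{k_n}$ of the finite field $k_n:=\Fix(\varphi_n)$. Indeed $\varphi_n$ acts as $u\mapsto u^{q_0q^n}$, so $k_n=\F_{q_0q^n}$ has cardinality $q_0q^n$; and since $\varphi_k$ is the identity on $k=\F_q$ while $q_0\mid q$, the restriction of $\varphi_n$ to $k$ equals $\varphi_0$, so $(\bar{k},\varphi_n)$ extends $(k,\varphi_0)$. Consequently $a_n=|X(\bar{k},\varphi_{k_n})|$ is exactly a count of the type governed by \ref{udiLW}, with the finite field $k_n$ in the role of the residue extension. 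Realising $X$ as a fibre of a family over a base of finite $\sigma$-type over $\Z$ and applying \ref{udiLW}---whose constant $C$ and invariant $\mu=\delta/\iota'$ depend only on $X$ and are therefore uniform in $n$---I obtain, for all $n\geq1$,
$$
\bigl|\,a_n-\mu\,(q_0q^n)^d\,\bigr|<C\,(q_0q^n)^{d-1/2}.
$$
Writing $a_n=\mu q_0^d\,(q^d)^n+b_n$, this reads $|b_n|<C\,q_0^{d-1/2}\,(q^{d-1/2})^n$.

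Next I would substitute this decomposition into the defining series and use $\sum_{n\geq1}x^n/n=-\log(1-x)$. The main term contributes
$$
\mu q_0^d\sum_{n\geq1}\frac{(q^dt)^n}{n}=-\mu q_0^d\log(1-q^dt)=\log\bigl((1-q^dt)^{-\mu q_0^d}\bigr),
$$
so, setting
$$
E(t):=\exp\Bigl(\sum_{n\geq1}\frac{b_n}{n}\,t^n\Bigr),
$$
exponentiation of $\sum_{n\geq1}a_nt^n/n$ gives the factorisation $Z(X,t)=E(t)\,(1-q^dt)^{-\mu q_0^d}$, valid as an identity of holomorphic functions on the disc $|t|<q^{-d}$ on which both series converge.

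Finally, the holomorphy claim follows from the bound on $b_n$: since $\limsup_n|b_n/n|^{1/n}\leq q^{d-1/2}$, the series $\sum_{n\geq1}b_nt^n/n$ converges, hence is holomorphic, on $|t|<q^{-(d-1/2)}=1/q^{d-1/2}$, and composing with the entire function $\exp$ keeps $E$ holomorphic there. I expect no genuine obstacle: the only points demanding care are the identification of $a_n$ with a Hrushovski count together with the uniformity of $C$ and $\mu$ over the whole sequence $(k_n)_n$---both delivered by \ref{udiLW}---and the reading of $(1-q^dt)^{\mu q_0^d}$ when $\mu q_0^d\notin\Z$, which is harmless because $E(t)$ is defined intrinsically by the displayed exponential and the factorisation need only be verified on $|t|<q^{-d}$.
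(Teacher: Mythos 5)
Your proof is correct and is essentially the paper's own argument: the paper likewise writes $|X(\bar{k},\varphi_n)|=\mu(q^nq_0)^d+\epsilon_n(q^nq_0)^{d-1/2}$ with $|\epsilon_n|<C$ via \ref{udiLW}, splits $\log Z$ into $-\mu q_0^d\log(1-q^dt)$ plus the error series $q_0^{d-1/2}\sum_n\epsilon_n(tq^{d-1/2})^n/n$, and reads off the radius of convergence. Your added remarks (identifying $\varphi_n$ with the Frobenius of $\F_{q_0q^n}$ so that \ref{udiLW} applies uniformly, and interpreting the factor $(1-q^dt)^{\mu q_0^d}$ via the intrinsically defined $E$) are sound elaborations of points the paper leaves implicit.
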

\begin{proof}
Using Hrushovski's estimate \ref{udiLW}, there exists a constant $C>0$ and
numbers $\epsilon_n$ with $|\epsilon_n|<C$ so that
\begin{multline*}
\log Z(X,t)={\sum_n\frac{t^n}{n}\left[\mu(q^nq_0)^d+\epsilon_n(q^nq_0)^{d-1/2}\right]}
\\=-\mu q_0^d\log(1-q^dt)+q_0^{d-1/2}\sum_n\epsilon_n\frac{(tq^{d-1/2})^n}{n},
\end{multline*}
and it is a simple exercise to calculate the radius of convergence for the latter series.
\end{proof}

\begin{conjecture}[Weil rationality for difference zeta]
The zeta function of a finite-dimensional difference scheme $(X,\sigma)$
of finite $\sigma$-type over a finite difference field $(k,\varphi_0)$
is \emph{near-rational}, i.e., its logarithmic derivative is rational.
\end{conjecture}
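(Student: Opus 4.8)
The plan is to interpret the point counts $|X(\bar{k},\varphi_n)|$ cohomologically via a twisted Lefschetz trace formula, and to read off near-rationality of $Z$ from the resulting trace expressions. First I would reduce to the essential case. A finite-dimensional difference scheme of finite $\sigma$-type has $\sd(X)=0$, equivalently finite total dimension $\dt(X)=d$; stratifying $X$ into integral, geometrically transformally integral locally closed pieces makes the counts additive, so the zeta functions multiply and the logarithmic derivatives add. Since a sum of rational functions is rational, it suffices to establish near-rationality on a single such stratum.

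On such a stratum I would use the translation between difference schemes of finite transformal type and algebraic correspondences recalled in \cite[\ref{s:diff-pro}]{ive-tgs}. Under this dictionary a point of $X(\bar{k},\varphi_n)$ is a point $a$ of the ambient variety $V$ satisfying $u(a)=\varphi_n(a)$, i.e.\ a fixed point of the correspondence $u$ twisted by the $n$-th power of the geometric Frobenius $\Phi$, so that
\[
|X(\bar{k},\varphi_n)|=\#\Fix(\Phi^{n}\circ u).
\]
The heart of the argument is then to invoke Deligne's conjecture on the number of fixed points of Frobenius-twisted correspondences, as established by Fujiwara \cite{fujiwara} and Varshavsky \cite{varshavsky} (cf.\ Pink \cite{pink}): for all $n$ sufficiently large the naive local terms are correct and
\[
\#\Fix(\Phi^{n}\circ u)=\sum_i(-1)^i\,\mathop{\rm tr}(\Phi^{n}\circ u^{*}\mid H^i_c(V_{\bar{k}},\Q_\ell)).
\]

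Granting this, rationality follows formally, exactly as in Weil's deduction from the ordinary Lefschetz trace formula. Each summand $\mathop{\rm tr}(\Phi^{n}u^{*}\mid H^i_c)$ is, as a function of $n$, a finite sum $\sum_j\lambda_{ij}^{\,n}$ over the eigenvalues $\lambda_{ij}$ of the single operator $\Phi u^{*}$ on the finite-dimensional space $H^i_c$. Hence, for a suitable $N$, the tail $\sum_{n\geq N}|X(\bar{k},\varphi_n)|\,t^{n-1}$ is a $\Q_\ell$-rational function, being an alternating sum of geometric series $\sum_{n}\lambda_{ij}^{\,n}t^{n-1}=\lambda_{ij}/(1-\lambda_{ij}t)$. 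Since the trace formula is only guaranteed for large $n$, the full logarithmic derivative $Z'/Z=\sum_{n\geq1}|X(\bar{k},\varphi_n)|t^{n-1}$ differs from this rational function by a polynomial, correcting finitely many low-degree coefficients; a polynomial is rational, so $Z$ is near-rational.

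The hard part is the appeal to the twisted trace formula, and this is precisely where the statement is genuinely delicate: the theorems of Fujiwara and Varshavsky require properness hypotheses — a proper compactification of $V$ on which $u$ extends, or a contracting condition near the fixed locus — that a correspondence arising from an arbitrary difference polynomial system need not satisfy, as flagged in the introduction. A complete proof would therefore demand a version of the twisted Lefschetz formula for open varieties with compactly supported cohomology, controlling the boundary contributions. One might instead hope to extract the rational structure directly from a refinement of Hrushovski's estimate \ref{udiLW}, but that estimate yields only the leading term $\mu|k|^{d}$ with error $O(|k|^{d-1/2})$, and not the exact eigenvalue expansion that rationality requires.
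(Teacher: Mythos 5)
The statement you are trying to prove is stated in the paper as a \emph{conjecture}: the paper offers no proof of it, and your proposal does not close that gap. Indeed, your own final paragraph concedes the decisive point. The entire argument funnels through the identity
$$
\#\Fix(\Phi^{n}\circ u)=\sum_i(-1)^i\,\mathop{\rm tr}\bigl(\Phi^{n}\circ u^{*}\mid H^i_c(V_{\bar{k}},\Q_\ell)\bigr),
$$
and this is exactly what is not available. The theorems of Fujiwara \cite{fujiwara} and Varshavsky \cite{varshavsky} (following Pink \cite{pink}) on Deligne's conjecture carry properness (or contraction-near-the-boundary) hypotheses on the correspondence, and, as the introduction of the paper stresses, a correspondence obtained from a general difference polynomial system via the dictionary of \cite[\ref{s:diff-pro}]{ive-tgs} need not satisfy them; one cannot in general extend $u$ to a compactification so that the boundary terms in a compactly supported Lefschetz--Verdier formula vanish or are controlled. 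So the ``heart of the argument'' is precisely the open problem, not a citable input. Your fallback suggestion fares no better, again for the reason you state: Hrushovski's estimate \ref{udiLW} only pins down the leading term $\mu|k|^{d}$ with an $O(|k|^{d-1/2})$ error, which is compatible with wildly non-rational behaviour of the lower-order coefficients, whereas near-rationality is a statement about the exact sequence of point counts.

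The formal scaffolding around the gap is fine, and it is worth saying why: the stratification reduction (log derivatives add), the observation that finite total dimension is the right finiteness hypothesis, and the Weil-style deduction that a trace formula valid for all large $n$ yields a rational logarithmic derivative up to a polynomial correction are all correct, and the last point even explains why the conjecture is phrased in terms of near-rationality rather than rationality of $Z$ itself. But a correct reduction of an open problem to an unavailable theorem is a programme, not a proof; as it stands, the proposal establishes nothing beyond what the paper already asserts in motivating the conjecture.
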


\begin{definition}[Dirichlet density]
Let $\alpha$ be a constructible function on a difference scheme $(X,\sigma)$ 
of total dimension $d$ over
a finite $(k,\varphi_0)$.
We define the integral of $\alpha$ with respect to the \emph{Dirichlet density} $\delta$
to be the expression
$$
\int_X\alpha\, d\delta=\lim_{t\nearrow {q^{-d}}}\frac{\log L(X,\alpha,t)}{\log Z(X,t)}=
\lim_{t\nearrow {q^{-d}}}\frac{\sum_{n\geq1}\frac{t^n}{n}\sum_{x\in X(\bar{k},\varphi_n)}\alpha_{(\bar{k},\varphi_n)}(x)}
{\sum_{n\geq1}\frac{t^n}{n}|X(\bar{k},\varphi_n)|}.
$$
\end{definition}

\begin{lemma}\label{lem-dirichl}
Let $\alpha_n$ be a periodic sequence with period $m$. Let $|c_n|, |c'_n|<C$ for
all $n$, and let $0\leq \lambda<1$.
Then
$$
\lim_{t\nearrow 1}\frac{\sum_n\alpha_n\frac{t^n}{n}+\sum_n c_n\frac{(\lambda t)^n}{n}}
{\sum_n\frac{t^n}{n}+\sum_n c'_n\frac{(\lambda t)^n}{n}}
=
\lim_{t\nearrow 1}\frac{\sum_n\alpha_n\frac{t^n}{n}}
{\sum_n\frac{t^n}{n}}
=
\frac{1}{m}\sum_{i=1}^{m}\alpha_i.
$$
\end{lemma}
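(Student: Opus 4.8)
The plan is to exploit the asymmetry between the genuinely diverging part and the bounded perturbations. The denominator's main term $\sum_n t^n/n = -\log(1-t)$ tends to $+\infty$ as $t\nearrow 1$, whereas every series of the shape $\sum_n c_n(\lambda t)^n/n$ with $\lambda<1$ stays bounded. So first I would dispose of the perturbation terms: for $t\in[0,1]$ one has $|(\lambda t)^n|\le\lambda^n$, hence $\left|\sum_n c_n(\lambda t)^n/n\right|\le C\sum_n\lambda^n/n=-C\log(1-\lambda)$, a finite bound independent of $t$, and identically for the $c'_n$ series. Thus both correction series are uniformly bounded on $[0,1)$.

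Second, I would analyse the main numerator $\sum_n\alpha_n t^n/n$ by splitting off the mean. Set $\bar\alpha=\frac1m\sum_{i=1}^m\alpha_i$ and $\beta_n=\alpha_n-\bar\alpha$, so that $\beta_n$ is periodic of period $m$ with zero mean over a period. The crux is the claim that $E(t):=\sum_n\beta_n t^n/n$ remains bounded as $t\nearrow 1$. I would establish this by a discrete Fourier expansion: writing $\zeta=e^{2\pi i/m}$, one has $\beta_n=\sum_{j=1}^{m-1}a_j\zeta^{jn}$ (the $j=0$ coefficient is the mean, which vanishes), so that $E(t)=-\sum_{j=1}^{m-1}a_j\log(1-\zeta^j t)$; since $\zeta^j\ne 1$ for $1\le j\le m-1$, each logarithm converges to the finite value $\log(1-\zeta^j)$ as $t\nearrow 1$. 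A Dirichlet-test argument gives the same conclusion for readers who prefer it, the partial sums of $\beta_n$ being bounded and $1/n$ decreasing to zero. Consequently $\sum_n\alpha_n t^n/n=\bar\alpha\cdot(-\log(1-t))+E(t)$ with $E$ bounded.

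Finally I would assemble the ratio. Writing $D(t)=-\log(1-t)\to+\infty$, the full left-hand quotient is
$$
\frac{\bar\alpha\,D(t)+E(t)+\sum_n c_n(\lambda t)^n/n}{D(t)+\sum_n c'_n(\lambda t)^n/n}=\frac{\bar\alpha+O(1/D(t))}{1+O(1/D(t))}\longrightarrow\bar\alpha,
$$
since numerator and denominator corrections are all bounded while $D(t)\to\infty$; deleting the correction terms gives exactly the middle expression $\frac{N(t)}{D(t)}=\bar\alpha+E(t)/D(t)\to\bar\alpha$, yielding all three equalities at once.

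I expect the only real subtlety to be the boundedness of $E(t)$, that is, the Abelian fact that a periodic mean-zero sequence produces a series $\sum_n\beta_n t^n/n$ that converges, hence stays bounded, up to $t=1$; everything else is the soft observation that bounded perturbations are annihilated by the diverging factor $D(t)$. I would therefore lead with the Fourier computation as the cleanest route and mention the Dirichlet-test alternative in passing.
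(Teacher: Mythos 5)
Your proposal is correct, and there is nothing in the paper to compare it against: the paper dismisses this lemma with the single remark that ``the proof is an exercise in calculus,'' so your argument is precisely the kind of filling-in that was left to the reader. The three ingredients you use are the right ones and are all sound: the uniform bound $\bigl|\sum_n c_n(\lambda t)^n/n\bigr|\le -C\log(1-\lambda)$ on $[0,1)$; the splitting $\alpha_n=\bar\alpha+\beta_n$ with $\beta_n$ periodic of mean zero, together with the finite Fourier expansion $\beta_n=\sum_{j=1}^{m-1}a_j\zeta^{jn}$ (the $j=0$ coefficient vanishing exactly because the mean does), which gives $E(t)=-\sum_{j=1}^{m-1}a_j\log(1-\zeta^jt)$ with a finite limit as $t\nearrow1$ since $\zeta^j\neq1$; and the observation that all bounded corrections are killed upon division by $D(t)=-\log(1-t)\to\infty$. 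The Dirichlet-test alternative you mention also works (bounded partial sums of $\beta_n$ plus $1/n\searrow0$ give convergence of $\sum_n\beta_n/n$, and Abel's theorem then bounds $E(t)$ near $t=1$), so either route yields a complete proof of all three claimed equalities.
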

The proof is an exercise in calculus.

\begin{theorem}[Twisted Chebotarev]\label{chebotarev-density}
Let $(X,\sigma)$ be a geometrically transformally integral normal difference scheme of finite
transformal type
over a finite $(k,\varphi)$ and let $\alpha$ be a constructible function on $(X,\sigma)$
associated with an \'etale Galois covering $(Z,\Sigma)\to (X,\sigma)$ and a central function $\alpha:\Sigma\to\C$.
Then
$$
\int_X\alpha\, d\delta=\int_{\Sigma}\alpha\, d\mu_{\mathop{\rm Haar}},
$$
where the latter denotes the integral with respect to the normalised counting measure
on $\Sigma$. In other words, the local Frobenius elements $\varphi_{n,x}$
are \emph{equidistributed} on $X$ with respect to the Dirichlet density.
\end{theorem}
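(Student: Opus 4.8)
The plan is to read off the Dirichlet density straight from its defining ratio $\lim_{t\nearrow q^{-d}}(\log L)/(\log Z)$ and to reduce the whole computation to the calculus lemma \ref{lem-dirichl}. Write
\[
a_n=\sum_{x\in X(\bar k,\varphi_n)}\alpha_{(\bar k,\varphi_n)}(x)=\sum_{x\in X(\bar k,\varphi_n)}\alpha(\varphi_{n,x}),\qquad b_n=|X(\bar k,\varphi_n)|,
\]
so that $\log L=\sum_n a_n t^n/n$ and $\log Z=\sum_n b_n t^n/n$, and $\int_X\alpha\,d\delta=\lim_{t\nearrow q^{-d}}\bigl(\sum_n a_n t^n/n\bigr)/\bigl(\sum_n b_n t^n/n\bigr)$. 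The whole problem is thus to understand the two coefficient sequences $a_n,b_n$ asymptotically, uniformly in $n$.

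First I would pin down these asymptotics. Since every $\varphi_n=\varphi_k^n\varphi_0$ restricts to $\varphi_0$ on $k$, each $(\bar k,\varphi_n)$ extends the base $(k,\varphi_0)$; after the routine spreading-out realising $(X,\sigma)/(k,\varphi_0)$ as a fibre of a family of finite $\sigma$-type over $\Z$, Hrushovski's estimate \ref{udiLW} and the geometric equidistribution \ref{geomequid} apply to all these fibres with a \emph{single} constant $C$ and a single localisation. This gives
\[
b_n=\mu(q^n q_0)^d+O\bigl((q^n q_0)^{d-1/2}\bigr),\qquad a_n=\gamma\,b_n+O\bigl((q^n q_0)^{d-1/2}\bigr),
\]
where $\gamma=\frac{1}{|\Sigma|}\sum_{\sigma\in\Sigma}\alpha(\sigma)=\int_\Sigma\alpha\,d\mu_{\mathop{\rm Haar}}$ is the equidistribution value furnished by \ref{geomequid}. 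The decisive feature is that the leading coefficient $\gamma$ of $a_n$ is the \emph{same} for every power $\varphi_n$, because \ref{geomequid} equidistributes $\alpha$ over the whole of the fixed structure $\Sigma$ for each finite field extending $(k,\varphi_0)$.

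Next I would substitute $s=q^d t$ (so $s\nearrow 1$ as $t\nearrow q^{-d}$) and separate leading and error parts. With $\lambda=q^{-1/2}<1$ one computes
\[
\sum_n\frac{t^n}{n}b_n=\mu q_0^d\sum_n\frac{s^n}{n}+q_0^{d-1/2}\sum_n\eta_n\frac{(\lambda s)^n}{n},
\]
\[
\sum_n\frac{t^n}{n}a_n=\gamma\mu q_0^d\sum_n\frac{s^n}{n}+q_0^{d-1/2}\sum_n\theta_n\frac{(\lambda s)^n}{n},
\]
with $|\eta_n|,|\theta_n|$ bounded. Dividing numerator and denominator by $\mu q_0^d$ puts the ratio exactly in the form treated by \ref{lem-dirichl}, with the periodic sequence taken to be the constant $\gamma$ (period $1$) and with $\lambda=q^{-1/2}$ damping the error series. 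Lemma \ref{lem-dirichl} then yields $\int_X\alpha\,d\delta=\gamma=\int_\Sigma\alpha\,d\mu_{\mathop{\rm Haar}}$, which is the assertion.

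The hard part will be the uniformity invoked in the second paragraph: one must secure a single constant $C$ and a single localisation valid simultaneously for the entire sequence of Frobenius powers $\varphi_n$, so that the error terms are genuinely $O\bigl((q^n q_0)^{d-1/2}\bigr)$ with coefficients bounded independently of $n$ and can be fed into \ref{lem-dirichl}; this rests on the fact that \ref{udiLW} and \ref{geomequid} are uniform in the size of the finite field extending a fixed base. A secondary point to check carefully is that the equidistribution value attached to $\varphi_n$ is truly independent of $n$ (equivalently, that every element of $\Sigma$ is realised as a Frobenius substitution over each $(\bar k,\varphi_n)$); should it instead vary periodically in $n\bmod N$, the full periodic form of \ref{lem-dirichl} still applies, and one would then need to verify that the average of the values over one period equals the normalised sum $\frac{1}{|\Sigma|}\sum_{\sigma\in\Sigma}\alpha(\sigma)$.
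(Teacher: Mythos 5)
Your central claim is that $a_n=\gamma\,b_n+O\bigl((q^nq_0)^{d-1/2}\bigr)$ with $\gamma=\frac{1}{|\Sigma|}\sum_{\sigma\in\Sigma}\alpha(\sigma)$ \emph{independent of $n$}, justified by \ref{geomequid}. This is where the argument breaks down: the hypothesis of \ref{geomequid} is that the fibres of $(Z,\Sigma)\to(S,\sigma)$ --- the \emph{covering}, not $X$ --- are geometrically transformally integral, whereas \ref{chebotarev-density} assumes this only for $(X,\sigma)$. If the covering involves a nontrivial constant field extension, i.e.\ if the algebraic closure $k_1$ of $k$ inside $\kk(Z)$ is strictly larger than $k$, then $Z$ is \emph{not} geometrically transformally integral over $k$, and the per-$n$ statement is simply false: every point $x\in X(\bar k,\varphi_n)$ has its Frobenius substitution $\varphi_{n,x}$ confined to the fibre $\pi^{-1}(\varphi_n\restriction k_1)$, where $\pi:\Sigma\to T$ is the restriction map to the difference structure $T=\Gal(k_1/k)\varphi_0$ on $k_1$. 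Concretely, if $\alpha$ is the characteristic function of a conjugacy domain contained in $\pi^{-1}(\tau')$ for some $\tau'\neq\varphi_n\restriction k_1$, then $a_n=0$ for that $n$ while $\gamma b_n$ has order $(q^nq_0)^d$, so the error is nowhere near $O\bigl((q^nq_0)^{d-1/2}\bigr)$. The correct asymptotics are $a_n\approx \pi_*\alpha(\varphi_n\restriction k_1)\,b_n$, with a coefficient that genuinely varies with $n$, periodically with period $|T|$ --- the same phenomenon as the constant-field-extension obstruction in the classical function field Chebotarev theorem.

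Your final paragraph names this possibility but calls it ``a secondary point'' and leaves unverified exactly the step that carries the theorem; it is in fact the crux, and resolving it is the substance of the paper's proof. After reducing to connected $Z$ by additivity, the paper applies the trace formula \ref{trace-formula} to get $a_n\approx p_*\alpha(\varphi_{n,s})\,b_n$, identifies $p_*\alpha=\pi_*\alpha$ via the pushforward construction \ref{defpushfwd} (with $T=\Sigma/K$ for $K=\Gal(Z/W_X)$ acting freely, so $\pi$ has constant fibre size $|\Sigma|/|T|$), observes that $n\mapsto\pi_*\alpha(\varphi_n)$ is periodic with period $|T|$, applies the periodic form of \ref{lem-dirichl}, and concludes with the identity
$$
\frac{1}{|T|}\sum_{\tau\in T}\pi_*\alpha(\tau)
=\frac{1}{|T|}\sum_{\tau\in T}\frac{1}{|\pi^{-1}(\tau)|}\sum_{\sigma\in\pi^{-1}(\tau)}\alpha(\sigma)
=\frac{1}{|\Sigma|}\sum_{\sigma\in\Sigma}\alpha(\sigma),
$$
which is precisely the ``average over one period equals the normalised sum'' that you postponed. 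Without identifying the periodic values as $\pi_*\alpha(\tau)$ and proving this identity, the proof does not reach the conclusion. By contrast, the uniformity you single out as ``the hard part'' is immediate here: the base is $S=\spec^{\varphi_0}(k)$, a single closed point, and \ref{udiLW} and \ref{trace-formula} are already stated uniformly in all $(\bar k,\varphi_k)$ extending $(\kk(s),\sigma^s)$, which covers every $\varphi_n=\varphi_k^n\varphi_0$ at once.
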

\begin{proof}
By additivity of the statement, we can reduce to the case of connected $Z$. Writing
$(S,\varphi_0)=\spec^{\varphi_0}(k)$ and $p:(X,\sigma)\to(S,\varphi_0)$ for the
structure morphism,
let us compute $p_*\alpha$. Denoting by
$(k_1,T)$ the algebraic closure of $(k,\varphi)$ inside $(\kk(Z),\Sigma)$,
$p_*\alpha$ is just $\pi_*\alpha$, where $\pi:\Sigma\to T$ is the difference
structure part of the morphism $(Z,\Sigma)\to\spec^T(k_1)$. 
Note that $T$ is just the coset $\Gal(k_1/k)\varphi_0$,
and thus the sequence $\pi_*\alpha(\varphi_n)=\pi_*\alpha(\varphi_k^n\varphi_0)$
is periodic with period of length $|T|$. 
Thus, 
\begin{equation*}
\begin{split}
\int_X\alpha\, d\delta & =
\lim_{t\nearrow q^{-d}}\frac{\sum_n\frac{t^n}{n}\sum_{x\in X(\bar{k},\varphi_n)}\alpha(\varphi_{n,x})}{\sum_n\frac{t^n}{n}|X(\bar{k},\varphi_n)|}\\
&
\stackrel{\ref{trace-formula}}{=}
\lim_{t\nearrow q^{-d}}\frac{\sum_n\frac{t^n}{n}\left[p_*\alpha(\varphi_{n,s})\mu(q^nq_0)^d+\epsilon_n(q^nq_0)^{d-1/2}\right]}
{\sum_n\frac{t^n}{n}\left[\mu(q^nq_0)^d+\epsilon'_n(q^nq_0)^{d-1/2}\right]}\\
& \stackrel{\ref{lem-dirichl}}{=}
\lim_{t\nearrow q^{-d}}\frac{\sum_n\frac{(tq^d)^n}{n}p_*\alpha(\varphi_{n,s})}
{\sum_n\frac{(tq^d)^n}{n}}
\stackrel{\ref{lem-dirichl}}{=}
\frac{1}{|T|}\sum_{\tau\in T}\pi_*\alpha(\tau)\\
&
\stackrel{\ref{defpushpull}}{=}
\frac{1}{|T|}\sum_{\tau\in T}\frac{1}{|\pi^{-1}(\tau)|}\sum_{\sigma\in\pi^{-1}(\tau)}\alpha(\sigma)=\frac{1}{|\Sigma|}\sum_{s\in\Sigma}\alpha(\sigma), \\
\end{split}
\end{equation*}
as required.
\end{proof}
Theorem~\ref{main-ch-th} is obtained by applying the above to the characteristic
function of the given conjugacy class.

\bibliographystyle{plain}

\begin{thebibliography}{10}

\bibitem{bourbaki-alg-comm}
N.~Bourbaki.
\newblock {\em \'{E}l\'ements de math\'ematique. {F}asc. {XXX}. {A}lg\`ebre
  commutative. {C}hapitre 5: {E}ntiers. {C}hapitre 6: {V}aluations}.
\newblock Actualit\'es Scientifiques et Industrielles, No. 1308. Hermann,
  Paris, 1964.

\bibitem{acfa2}
Zo{\'e} Chatzidakis, Ehud Hrushovski, and Ya'acov Peterzil.
\newblock Model theory of difference fields. {II}. {P}eriodic ideals and the
  trichotomy in all characteristics.
\newblock {\em Proc. London Math. Soc. (3)}, 85(2):257--311, 2002.

\bibitem{CDM}
Zo{\'e} Chatzidakis, Lou van~den Dries, and Angus Macintyre.
\newblock Definable sets over finite fields.
\newblock {\em J. Reine Angew. Math.}, 427:107--135, 1992.

\bibitem{cohn}
Richard~M. Cohn.
\newblock {\em Difference algebra}.
\newblock Interscience Publishers John Wiley \& Sons, New York-London-Sydney,
  1965.

\bibitem{eisenbud-comm-alg}
David Eisenbud.
\newblock {\em Commutative algebra}, volume 150 of {\em Graduate Texts in
  Mathematics}.
\newblock Springer-Verlag, New York, 1995.
\newblock With a view toward algebraic geometry.

\bibitem{fried-sacer}
M.~Fried and G.~Sacerdote.
\newblock Solving {D}iophantine problems over all residue class fields of a
  number field and all finite fields.
\newblock {\em Ann. of Math. (2)}, 104(2):203--233, 1976.

\bibitem{FHJ}
Michael~D. Fried, Dan Haran, and Moshe Jarden.
\newblock Effective counting of the points of definable sets over finite
  fields.
\newblock {\em Israel J. Math.}, 85(1-3):103--133, 1994.

\bibitem{fried-jarden}
Michael~D. Fried and Moshe Jarden.
\newblock {\em Field arithmetic}, volume~11 of {\em Ergebnisse der Mathematik
  und ihrer Grenzgebiete (3)}.
\newblock Springer-Verlag, Berlin, 1986.

\bibitem{fujiwara}
Kazuhiro Fujiwara.
\newblock Rigid geometry, {L}efschetz-{V}erdier trace formula and {D}eligne's
  conjecture.
\newblock {\em Invent. Math.}, 127(3):489--533, 1997.

\bibitem{EGAI}
A.~Grothendieck.
\newblock \'{E}l\'ements de g\'eom\'etrie alg\'ebrique. {I}. {L}e langage des
  sch\'emas.
\newblock {\em Inst. Hautes \'Etudes Sci. Publ. Math.}, (4), 1960.

\bibitem{sga1}
Alexandre Grothendieck.
\newblock {\em Rev\^etements \'etales et groupe fondamental ({SGA} 1)}.
\newblock Documents Math\'ematiques (Paris), 3. Soci\'et\'e Math\'ematique de
  France, Paris, 2003.
\newblock S{\'e}minaire de g{\'e}om{\'e}trie alg{\'e}brique du Bois Marie
  1960--61. Directed by A. Grothendieck, With two papers by M. Raynaud, Updated
  and annotated reprint of the 1971 original [Lecture Notes in Math., 224,
  Springer, Berlin; (50 \#7129)].

\bibitem{udi-MM}
Ehud Hrushovski.
\newblock The {M}anin-{M}umford conjecture and the model theory of difference
  fields.
\newblock {\em Ann. Pure Appl. Logic}, 112(1):43--115, 2001.

\bibitem{udi}
Ehud Hrushovski.
\newblock The elementary theory of the {F}robenius automorphism.
\newblock Preprint, 2004.

\bibitem{laszlo}
Yves Laszlo.
\newblock Notes informelles sur la g{\'e}om{\'e}trie aux diff{\'e}rences.
\newblock Preprint, 2005.

\bibitem{levin}
Alexander Levin.
\newblock {\em Difference algebra}, volume~8 of {\em Algebra and Applications}.
\newblock Springer, New York, 2008.

\bibitem{pink}
Richard Pink.
\newblock On the calculation of local terms in the {L}efschetz-{V}erdier trace
  formula and its application to a conjecture of {D}eligne.
\newblock {\em Ann. of Math. (2)}, 135(3):483--525, 1992.

\bibitem{ive-mark}
Mark Ryten and Ivan Toma{\v{s}}i{\'c}.
\newblock A{CFA} and measurability.
\newblock {\em Selecta Math. (N.S.)}, 11(3-4):523--537, 2005.

\bibitem{serre-L}
Jean-Pierre Serre.
\newblock Zeta and {$L$} functions.
\newblock In {\em Arithmetical {A}lgebraic {G}eometry ({P}roc. {C}onf. {P}urdue
  {U}niv., 1963)}, pages 82--92. Harper \& Row, New York, 1965.

\bibitem{ive-etale}
Ivan Toma{\v s}i{\'c}.
\newblock {\'E}tale cohomology of difference schemes.
\newblock In preparation, 2012.

\bibitem{ive-tgs}
Ivan Toma{\v s}i{\'c}.
\newblock Twisted {G}alois stratification.
\newblock {\tt arXiv:1112.0802v2}

\bibitem{varshavsky}
Yakov Varshavsky.
\newblock Lefschetz-{V}erdier trace formula and a generalization of a theorem
  of {F}ujiwara.
\newblock {\em Geom. Funct. Anal.}, 17(1):271--319, 2007.

\bibitem{wibmer}
Michael Wibmer.
\newblock A {C}hevalley theorem for difference equations.
\newblock {\tt arXiv:1010.5066v1}.

\end{thebibliography}

\end{document}